\newtheorem{question}{Question}
\numberwithin{question}{section}
\newtheorem{conjecture}[question]{Conjecture}
\newtheorem{problem}[question]{Problem}
\newtheorem{corollary}[question]{Corollary}
\newtheorem{theorem}[question]{Theorem}
\newtheorem{proposition}[question]{Proposition}
\newtheorem{lemma}[question]{Lemma}
\newtheorem{claim}[question]{Claim}
\newtheorem{definition}[question]{Definition}
\newtheorem{remark}[question]{Remark}
\numberwithin{equation}{section}
\crefname{problem}{problem}{problems}
\newcommand{\eps}{\varepsilon}
\newcommand{\NN}{\mathbb{N}}
\newcommand{\QQ}{\mathbb{Q}}
\newcommand{\RR}{\mathbb{R}}
\newcommand{\ZZ}{\mathbb{Z}}
\newcommand{\cA}{\mathcal{A}}
\newcommand{\cC}{\mathcal{C}}
\newcommand{\cF}{\mathcal{F}}
\newcommand{\cP}{\mathcal{P}}
\newcommand{\cQ}{\mathcal{Q}}
\newcommand{\rT}{\mathrm{T}}
\newcommand{\bfr}{\mathbf{r}}
\newcommand{\bfx}{\mathbf{x}}
\newcommand{\defined}{\mathrel{\coloneqq}}
\DeclarePairedDelimiter{\p}{\lparen}{\rparen}
\newcommand{\st}{\mathbin{\colon}}
\DeclarePairedDelimiter{\set}{\lbrace}{\rbrace}
\DeclarePairedDelimiter{\abs}{\lvert}{\rvert}
\newcommand{\emptyset}{\varnothing}
\DeclareMathOperator{\ind}{\mathbf{1}}
\newcommand{\from}{\colon}
\DeclarePairedDelimiter{\floor}{\lfloor}{\rfloor}
\DeclarePairedDelimiter{\ceil}{\lceil}{\rceil}
\DeclarePairedDelimiter{\card}{\lvert}{\rvert}
\DeclareMathDelimiter{\given}
  {\mathbin}{symbols}{"6A}{largesymbols}{"0C}
\DeclareMathOperator{\Prob}{\mathbb{P}}
\DeclarePairedDelimiterXPP{\prob}[1]
  {\Prob}{\lparen}{\rparen}{}
  {\renewcommand{\given}{\nonscript\;\delimsize\vert\nonscript\;\mathopen{}}#1}
\DeclareMathOperator{\Expec}{\mathbb{E}}
\DeclarePairedDelimiterXPP{\expec}[1]
  {\Expec}{\lparen}{\rparen}{}
  {\renewcommand{\given}{\nonscript\;\delimsize\vert\nonscript\;\mathopen{}}#1}
\DeclareMathOperator{\Var}{Var}
\DeclarePairedDelimiterXPP{\var}[1]
  {\Var}{\lparen}{\rparen}{}
  {\renewcommand{\given}{\nonscript\;\delimsize\vert\nonscript\;\mathopen{}}#1}
\DeclareMathOperator{\Cov}{Cov}
\DeclarePairedDelimiterXPP{\cov}[2]
  {\Cov}{\lparen}{\rparen}{}{#1,#2}
\DeclareMathOperator{\ex}{ex}
\DeclareMathOperator{\Blow}{Blow}
\DeclareMathOperator{\Petersen}{Petersen}
\DeclareMathOperator{\Forb}{Forb}
\newcommand{\SIGMA}[2]{\Sigma_{#2}(#1)}
\newcommand{\PI}[2]{\Pi_{#2}(#1)}
\newcommand{\TUR}[2]{\rT[#1 ; #2]}
\begin{document}

\title{On problems in extremal multigraph theory}

\author{Victor Falgas--Ravry}
\address{Institutionen för matematik och matematisk statistik, Umeå Universitet, Sweden}
\email{victor.falgas-ravry@umu.se}

\author{Adva Mond}
\address{Department of Mathematics, King's College London, UK}
\email{adva.mond@kcl.ac.uk}

\author{Rik Sarkar}
\address{Indian Institute of Science Education and Research (IISER), Pune, India.}
\email{rik.sarkar@students.iiserpune.ac.in}

\author{Victor Souza}
\address{Department of Computer Science and Technology, University of Cambridge, UK.}
\email{vss28@cam.ac.uk}

\begin{abstract}
A multigraph $G$ is said to be an $(s,q)$-graph if every $s$-set of vertices in $G$ supports at most $q$ edges (counting multiplicities). In this paper we consider the maximal sum and product of edge multiplicities in an $(s,q)$-graph on $n$ vertices. These are multigraph analogues of a problem of Erdős raised by Füredi and Kündgen and Mubayi and Terry respectively, with applications to counting problems and extremal hypergraph theory.

We make major progress, settling conjectures of Day, Falgas--Ravry and Treglown and of Falgas-Ravry, establishing intricate behaviour for both the sum and the product problems, and providing both a general picture and evidence that the problems may prove computationally intractable in general.
\end{abstract}

\maketitle

%----------------------------------------------------------%

\section{Introduction}
We investigate the extremal theory of \emph{$(s,q)$-sparse multigraphs}, that is, multigraphs for which every $s$-set of vertices supports a total of at most $q$ edges, counting multiplicities. For brevity's sake, we refer to such multigraphs as \emph{$(s,q)$-graphs}. We determine the asymptotically maximal sum and product of edge multiplicities in $(s,q)$-graphs on $n$ vertices for a wide range of $s$ and $q$. We begin this paper in Section~\ref{subsection: background and motivation} by giving some background and motivation for considering these problems, which are related to a number of other well-studied questions in extremal combinatorics. In Section~\ref{subsection: previous work}, we present previous work and conjectures on our multigraph problems, and introduce some of the notions and notation needed to state our results. Our own contributions can then be found in Section~\ref{subsection: our contributions}, and the structure of the rest of the paper is described in Section~\ref{subsection: structure of the paper}.

%--------------------------------------%
\subsection{Background and motivation}\label{subsection: background and motivation}

In the 1960's, Erdős~\cite{erdos1967,erdos1964} raised the problem of determining $\ex(n,s,q)$, the maximum number of edges in an $n$-vertex graph in which every $s$-set of vertices spans at most $q$ edges, for some integer $0 \leq q < \binom{s}{2}$.
This can be viewed as asking for a generalisation of Turán's theorem~\cite{turan1941}, one of the foundational results of extremal graph theory.
Indeed, a $K_s$-free graph is a graph in which every $s$-set of vertices spans at most $q = \binom{s}{2} - 1$ edges, and thus in this special case $\ex(n,s,q)$ is the maximum number of edges in a $K_s$-free graph, i.e.\ the Tur\'an number $\mathrm{ex}(n, K_s)$.

This problem of Erd{\H o}s has received a significant amount of attention in the literature over the years.
Erdős~\cite{erdos1964} resolved the case $s \leq 5$, while a theorem of Dirac~\cite{Dirac1963-vv} dealt with the cases $q \geq \binom{s}{2} - s/2$.
In the range $q \geq \floor{s^2/4}$, the function $n \mapsto \ex(n,s,q)$ is quadratic in $n$, and its asymptotic rate of growth follows easily from the Erdős--Stone theorem~\cite{Erdos1946-df}.
In the same regime, there are also a number of results about the size of the lower order terms of $n \mapsto \ex(n,s,q)$, see Gol'berg and Gurvich~\cite{gol1987maximum}, and Griggs, Simonovits and Thomas~\cite{griggs1998extremal}.
On the other hand, it can be shown that $n \mapsto \ex(n,s,q)$ is linear when $q < s - 1$, and superlinear but subquadratic in the range $s - 1 \leq q < \floor{s^2 / 4}$, see~\cite{gol1987maximum}.
In the case $q < s - 1$, the value of $\ex(n, s, q)$ is known exactly, however, in the `polynomial zone' $s - 1 \leq q< \floor{s^2 /4}$, the situation is much more difficult: as Erdős himself observed, the case $(s, q) = (s, s - 1)$ is equivalent to the notoriously difficult and still open problem of determining the Turán number of the even cycle $C_{2\floor{s/2}}$.

In the late 1990's, Bondy and Tuza~\cite{Bondy1997-ji} and Kuchenbrod~\cite{Kuchenbrod1999-as} independently began to investigate the integer-weighted version of Erdős's problem.
Recall that an \emph{integer-weighted graph} is a pair $(V, w)$, where $V$ is a set of vertices and $w \from \binom{V}{2} \to \ZZ$ assigns to each edge (i.e.\ each pair of vertices from $V$) an integer weight.
Let $\ex_\ZZ(n, s, q)$ denote the maximum sum of the edge-weights in an integer-weighted graph on $n$ vertices in which the sum of the edge-weights inside any $s$-set of vertices is at most $q$. Bondy and Tuza showed that an analogue of Turán's theorem holds in this setting, with $\ex_\ZZ(n, s, \binom{s}{2} - 1) = \ex(n, s, \binom{s}{2} - 1)$, but that unlike in the ordinary graph setting, there may be several non-isomorphic extremal constructions that attain $\ex_\ZZ(n, s, \binom{s}{2} - 1)$.
Kuchenbrod, for his part, determined $\ex_\ZZ(n, s, q)$ for all $s \leq 7$.

A major advance on this problem came in 2002, when Füredi and Kündgen~\cite{Furedi2002-fi} determined the asymptotic growth rate of $n \mapsto \ex_\ZZ(n, s, q)$.
More precisely, they showed that for all integers $s \geq 2$ and $q \geq 0$, we have $\ex_\ZZ(n, s, q) = m(s, q) \binom{n}{2} + O(n)$, where $m(s, q) \in \QQ_{\geq 0}$ is given by $m(s, q) \defined \min \set{ m \st \sum_{i < s} \floor{1 + mi} > q}$.
Füredi and Kündgen observed that for $q > (s - 1) \binom{s}{2}$, their constructions of integer-weighted graphs giving asymptotically tight lower bounds for $\ex_\ZZ(n, s, q)$ have non-negative edge weights --- in particular, their results on integer-weighted graphs asymptotically resolve Erdős's problem for multigraphs for those pairs $(s, q)$ satisfying $q > (s - 1) \binom{s}{2}$.

Recall here that a multigraph is a pair $G = (V, w)$, where $V = V(G)$ is a set of vertices, and $w = w_G$ is a function $w \from \binom{V}{2} \mapsto \ZZ_{\geq 0}$.
For each $u, v \in V$, the value $w(uv)$ is a non-negative integer called the \emph{multiplicity} of the edge $uv \defined \set{u,v}$ in $G$.
For integers $s \geq 2$, $q \geq 0$, we say a multigraph $G =(V, w)$ is an \emph{$(s,q)$-graph} if for every $s$-set of vertices $X$, the sum of the multiplicities of the edges inside $X$ is at most $q$.

Let $\ex_\Sigma(n,s,q)$ denote the maximum sum of the edge-multiplicities in an $(s,q)$-graph on $n$ vertices.
Clearly, $\ex_\Sigma(n,s,q)\leq \ex_\ZZ(n,s,q)$, and Füredi and Kündgen's results show that for $q>(s-1)\binom{s}{2}$, these quantities are equal up to a $O(n)$ error term, and thus $\ex_\Sigma(n,s,q)=m(s,q)\binom{n}{2}+O(n)$.
As Füredi and Kündgen noted, however (see the remarks at the end of~\cite[Section 11]{Furedi2002-fi}), for smaller values of $q$, it is not necessarily the case that $\ex_\Sigma(n,s,q)$ and $\ex_\ZZ(n,s,q)$ differ by $o(n^2)$. 
The multigraph version of Erdős's problem remains wide open for $q$ in this range.

%--------------------------------------%
\begin{problem}[Füredi--Kündgen multigraph problem]
\label{problem: Furedi Kundgen}
Given a pair of non-negative integers $(s,q)$ with $0\leq q \leq(s-1)\binom{s}{2}$, determine the growth rate of $\ex_\Sigma(n,s,q)$.
\end{problem}
%--------------------------------------%

More recently, Mubayi and Terry \cite{Mubayi2019-le} introduced a product version of the Erdős problem for multigraphs.
Let $P(G)$ denote the product of the edge multiplicities of a multigraph $G$, that is, $P(G) \defined \prod_{uv \in \binom{V}{2}} w_G(uv)$.
Define $\ex_\Pi(n,s,q)$ to be the maximum of $P(G)$ over all $(s,q)$-graphs $G$ on $n$ vertices.

%--------------------------------------%
\begin{problem}[Mubayi--Terry multigraph problem]
\label{problem: Mubayi Terry}
Given a pair of non-negative integers $(s,q)$, determine the growth rate of $\ex_\Pi(n,s,q)$.
\end{problem}
%--------------------------------------%

Mubayi and Terry's motivation for studying the quantity $\ex_\Pi(n,s,q)$ was to prove counting theorems for $(s,q)$-graphs.
In an influential 1976 paper, Erdős, Kleitman and Rothschild~\cite{Erdos1976-ad} showed that the number of $K_{r}$-free graphs on $n$ vertices is $2^ {\ex(n,K_{r})+o(n^2)}$. Since their foundational result, it has been a major goal of extremal combinatorics to prove similar counting results for other classes of graphs and discrete structures. Indeed, such results are a necessary prerequisite to a characterisation of the typical structure of graphs satisfying a given property.

Counting results of the Erdős--Kleitman--Rothschild-type are too numerous to list here. In keeping with the themes treated in this paper, we content ourselves with the observation that the Alekseev--Bollobás--Thomason theorem~\cite{alekseev1993entropy, bollobas1997hereditary} implies that the number of graphs on $n$ vertices in which every $s$-set spans at most $q$ edges is $2^{\ex(n,s,q)+o(n^2)}$. What can be said about multigraphs with the same property? More precisely, let $\cA(n,s,q)$ denote the collection of $(s,q)$-graphs on the vertex-set $[n] \defined \set{1, \dotsc, n}$.

%--------------------------------------%
\begin{problem}[Erdős--Kleitman--Rothschild problem for $(s,q)$-graphs]
\label{problem: EKR for multigraphs}
Determine the asymptotic growth rate of $\card{\cA(n,s,q)}$.
\end{problem}
%--------------------------------------%

Using the powerful hypergraph container theories developed by Balogh, Morris and Samotij~\cite{Balogh2015-al} and Saxton and Thomason \cite{Saxton2015-cr}, Mubayi and Terry~\cite{Mubayi2019-le} showed that for $q > \binom{s}{2}$, 
\begin{align}
\label{eq: MT reduction of counting to extremal}
    \card[\big]{ \cA\p[\big]{n,s,q - \tbinom{s}{2}} } = \ex_\Pi(n, s, q)e^{o(n^2)}.
\end{align}
Thus, \Cref{problem: EKR for multigraphs} is actually equivalent to the Mubayi--Terry problem, \Cref{problem: Mubayi Terry}. In particular, with regards to counting and typical structure results, the (multiplicative) Mubayi--Terry problem is the `right' generalisation of Turán's theorem to multigraphs, rather than the (additive) Füredi--Kündgen problem. There are, however, some links between the two problems, and while our main focus in this paper is the Mubayi--Terry problem, whose history we discuss in the next subsection, we will naturally be led to study the Füredi--Kündgen problem as well in order to address the Mubayi--Terry problem.

Before closing our discussion of the general background of the questions studied in this paper, we should make two further remarks. First of all, the counting \Cref{problem: EKR for multigraphs} is related to the well-studied Erdős--Rothschild problem~\cite{erdos1974some} of counting Gallai colourings of graphs. Indeed, it follows from an observation of Falgas--Ravry, O'Connell and Uzzell~\cite{Falgas-Ravry2019-ne} that Gallai colourings in complete graphs can be counted by solving an entropy maximisation problem in graphs bearing a close resemblance to \Cref{problem: Mubayi Terry}. For more on the Erdős--Rothschild problem and its generalisations, see e.g.\ the works of Hoppen, Lefmann and Odermann~\cite{hoppen2017rainbow}, Bastos, Benevides and Han~\cite{de2020number}, and Pikhurko and Staden~\cite{pikhurko2024exact}, and the references therein.

Secondly, \Cref{problem: Furedi Kundgen} is a natural intermediate problem between the well-understood Turán problem for graphs and the much more poorly understood Turán problem for hypergraphs. Indeed, this was pointed out by Füredi and Kündgen, who noted that \Cref{problem: Furedi Kundgen} does not exhibit \emph{stability}: there are instances $(s,q)$ for which there are near-extremal multigraphs that lie at edit distance $\Omega(n^2)$ from each other. This stands in contrast to the situation for graphs, for which we have the Simonovits stability theorem~\cite{simonovits1968method}, and resembles rather the situation for hypergraphs where such instability was established recently by Liu and Mubayi~\cite{liu2022hypergraph}. Extremal multigraph problems mirror extremal hypergraph problems in other ways too: Rödl and Sidorenko~\cite{rodl1995jumping} showed in a 1995 paper that the collection of limiting edge densities of properties of multigraphs with edge multiplicities bounded by $4$ is not well-ordered, providing an analogue of the celebrated ``Hypergraphs do not jump'' result of Frankl and Rödl~\cite{frankl1984hypergraphs}.

Further, an initial motivation for the study of the Füredi--Kündgen problem was the investigation of problems in extremal hypergraph theory via link graphs, and Füredi and Kündgen's work was a key tool in de Caen and Füredi's determination of the Turán density of the Fano plane~\cite{de2000maximum}. Finally, it would be remiss not to mention in this context that the hypergraph version of Erdős's original problem (so the hypergraph analogue of \Cref{problem: Furedi Kundgen}) is both well-studied and poorly understood --- in $3$-uniform hypergraphs, this is known as the Brown--Erdős--Sós problem~\cite{brown1973some}, and the special case $(s,q)=(6,3)$ corresponds to the celebrated Ruzsa--Szemerédi theorem~\cite{ruzsa1978triple}. For recent work on the Brown--Erdős--Sós problem, see~\cite{delcourt2024limit, glock20246, keevash2020brown, shapira2025new}.

%--------------------------------------%
\subsection{Previous work on the Mubayi--Terry problem}\label{subsection: previous work}

Adapting a classical averaging argument of Katona, Nemetz and Simonovits~\cite{katona1964graph}, one can show that the sequences $\ex_\Sigma(n,s, q)/\binom{n}{2}$ and $\left(\ex_\Pi(n,s,q)\right)^{1/\binom{n}{2}}$ are non-increasing in $n$ and hence tend to limits, which we refer to as the (extremal) arithmetic density and geometric density of $(s,q)$-graphs respectively and denote by
\begin{align*}
    \ex_\Sigma(s,q) \defined \lim_{n \to \infty} \frac{\ex_\Sigma(n,s,q)}{\binom{n}{2}},
    \quad \text{and}
    \quad \ex_\Pi(s,q) \defined \lim_{n \to \infty} \p[\big]{\ex_\Pi(n,s,q)}^{1/\binom{n}{2}}.
\end{align*}
These two limit quantities can be thought of as additive and multiplicative analogues of the classical Turán density for graphs; $\ex_\Sigma(s,q)$ is the asymptotically maximal arithmetic mean edge multiplicity in an $(s,q)$-graph, while $\ex_\Pi(s,q)$ is the asymptotically maximal geometric mean edge multiplicity in an $(s,q)$-graph.
It is thus immediate from the AM-GM inequality that we have
\begin{align*}
    \ex_\Pi(s,q)\leq \ex_\Sigma(s,q).
\end{align*}
What is more, one can show that equality is achieved if and only if $q = a\binom{s}{2} + r$ for some integers $a \geq 1$ and $s - 2 \geq r \geq 0$, in which case both quantities are equal to $a$, and extremal multigraphs are within edit distance $o(n^2)$ of the multigraph in which every edge has multiplicity $a$ (for $r = 0$, this is an easy averaging argument, and for $1\leq r\leq s-2$, this follows easily from results of Mubayi and Terry~\cite{Mubayi2020-fb}).

The goal in \Cref{problem: Furedi Kundgen,problem: Mubayi Terry} is thus to understand the behaviour of the arithmetic and geometric densities in $(s,q)$-graphs for $q$ outside the narrow range $[0,s-2]+ \binom{s}{2}\ZZ_{>0}$, and to quantify the extent to which sum-maximisation and product-maximisation in $(s,q)$-graphs differ for those values of $q$.
To facilitate a discussion of previous known results on \Cref{problem: Furedi Kundgen,problem: Mubayi Terry}, it is helpful to introduce the notion of a \emph{(multigraph) pattern}.

%--------------------------------------%
\begin{definition}[Patterns]
A (multigraph) pattern $P$ on a vertex set $V=V(P)$ is a function $P \from V \sqcup \binom{V}{2} \to \ZZ_{\geq 0}$ assigning non-negative integer multiplicities to the singletons (loops) and pairs (edges) from $V$. 
\end{definition}
%--------------------------------------%

One can identify a pattern $P$ with its adjacency matrix, the $\card{V} \times \card{V}$ symmetric matrix $A_P$ whose non-negative integer entries are given by $(A_P)_{uv} \defined P(\set{u,v})$.

%--------------------------------------%
\begin{definition}[Blow-ups]
A blow-up of a pattern $P$ is a multigraph $G$ such that there exists a partition $f \from V(G) \to V(P)$ with the property that for every pair $\set{v,v'} \in \binom{V(G)}{2}$, we have $w_G(\set{v,v'})= P(\set{f(v), f(v')})$. 
\end{definition}
%--------------------------------------%

In other words, $G$ is a blow-up of $P$ if one can partition the vertices of $G$ into sets $U_v$, $v\in V(P)$, such that all edges internal to $U_v$ have multiplicity $P(\set{v})$, and all edges from $U_{v}$ to $U_{v'}$ have multiplicity $P(\set{v,v'})$. We refer to any such partition $V(G)=\sqcup_{v\in V(P)} U_v$ as a $P$-partition of $G$. We write $\Blow_n(P)$ for the collection of all blow-ups of $P$ on the vertex-set $[n]$, and $\Blow(P) = \left(\Blow_n(P)\right)_{n \in \NN}$ for the multigraph property of being a blow-up of $P$.
We define
\begin{equation*}
    \SIGMA{P}{n} \defined \max\set[\big]{e(G) \st  G\in \Blow_n(P) },
    \quad \text{and} \quad
    \PI{P}{n} \defined \max\set[\big]{P(G):\  G\in \Blow_n(P)}
\end{equation*}
to be the maximal sum and product of multiplicities achievable by $n$-vertex blow-ups of a pattern $P$, respectively. As we show in \Cref{subsection: patterns} (see~\eqref{eq: optimisation problem for arithmetic problem} and~\eqref{eq: optimisation problem for geometric problem}), for any pattern $P$, there exist explicitly computable constants $\sigma_P\in \QQ_{\geq 0}$ and $\pi_P\in \RR_{\geq 0}$ such that $\SIGMA{P}{n}= \sigma_P \binom{n}{2}+o(n^2)$ and $\PI{P}{n}= e^{\pi_P \binom{n}{2}+o(n^2)}$.

Clearly, if $\Blow_n(P) \subseteq \cA(n,s,q)$, then we have $\ex_\Sigma(n,s,q)\geq \SIGMA{P}{n}$ and $\ex_\Pi(n,s,q)\geq \PI{P}{n}$. Blow-ups of finite multigraph patterns are thus a natural source of lower-bound constructions, and all asymptotically tight results on \Cref{problem: Furedi Kundgen,problem: Mubayi Terry} have been obtained in this way. Moreover, before the present paper, all patterns were of the following particular form.

%--------------------------------------%
\begin{definition}[Generalised Turán pattern]
Given a $(d+1)$-tuple of non-negative integers $\bfr=(r_0, r_1, r_2, \dotsc, r_d)$ with $r_0,r_d>0$, we define the generalised Turán pattern $\TUR{\bfr}{a}$ as follows. 
Set $R \defined \sum_{i=0}^d r_i$ and $V(\TUR{\bfr}{a}) \defined [R]$. For every $v\in [R]$ with $\sum_{i<j}r_i <v \leq \sum_{i\leq j}r_i$ and every $v'$: $v<v'\leq R$, set $\TUR{\bfr}{a}(\set{v})=a-j$ and  $\TUR{\bfr}{a}(\set{v,v'})= a-j+1$.
\end{definition}
%--------------------------------------%

%--------------------------------------%
\begin{figure}[!ht]
\centering
\begin{tikzpicture}[scale=1.0]
    \tikzstyle{vertex}=[circle, fill=black, inner sep=0pt, minimum size=6pt, line width=1pt, white];
    
    \colorlet{col1}{blue!70!green!80!white};
    \colorlet{col2}{red!85!black};
    \colorlet{col4}{green!70!black};
    \colorlet{col3}{orange!80!white};
    \colorlet{col5}{violet!80!white};
    
    \tikzstyle{edge-prep}=[white!20, line width=2pt];
    \tikzstyle{edge-c1}=[col1, very thick];
    \tikzstyle{edge-c2}=[col2, very thick];
    \tikzstyle{edge-c3}=[col3, very thick];
    \tikzstyle{edge-c4}=[col4, very thick];
    \tikzstyle{edge-c5}=[col5, very thick];

    \begin{scope}[xshift = 0cm]
        \begin{scope}[rotate=30]
            \foreach \x/\y in {0/1,60/2,120/3,180/4,240/5,300/6}{
                \node[vertex] (v\y) at (canvas polar cs: radius=1.05cm, angle=\x) {};
            }
        \end{scope}
        
        \draw[edge-prep] (v1)--(v2)--(v3)--(v4)--(v5)--(v6)--(v1)
        (v1)--(v3)--(v5)--(v1) (v2)--(v4)--(v6)--(v2) (v1)--(v4) (v2)--(v5) (v3)--(v6);
        \draw[edge-c1] (v1)--(v2)--(v3)--(v4)--(v5)--(v6)--(v1)
        (v1)--(v3)--(v5)--(v1) (v2)--(v4)--(v6)--(v2) (v1)--(v4) (v2)--(v5) (v3)--(v6);
    
        \node[vertex, fill=col2] at (v1) {};
        \node[vertex, fill=col2] at (v2) {};
        \node[vertex, fill=col2] at (v3) {};
        \node[vertex, fill=col2] at (v4) {};
        \node[vertex, fill=col2] at (v5) {};
        \node[vertex, fill=col2] at (v6) {};

        \node[] at (0,-1.75) {$\TUR{(6)}{a}$};
    \end{scope}

    \begin{scope}[xshift=3.15cm]
        \node[vertex] at (-0.75,-1) (v1) {};
        \node[vertex] at (-1.25,-0.5) (v2) {};
        \node[vertex] at (-1.25,0.5) (v3) {};
        \node[vertex] at (-0.75,1) (v4) {};
        \node[vertex] at (0.75,-1) (v5) {};
        \node[vertex] at (1.25,-0.5) (v6) {};
        \node[vertex] at (1.25,0.5) (v7) {};
        \node[vertex] at (0.75,1) (v8) {};
        
        \draw[edge-prep] (v1)--(v2)--(v3)--(v4)--(v1)--(v3) (v2)--(v4);
        \draw[edge-c1] (v1)--(v2)--(v3)--(v4)--(v1)--(v3) (v2)--(v4);
        
        \draw[edge-prep] (v1)--(v5) (v1)--(v6) (v1)--(v7) (v1)--(v8) (v4)--(v8) (v4)--(v7) (v4)--(v6) (v4)--(v5);
        \draw[edge-c1] (v1)--(v5) (v1)--(v6) (v1)--(v7) (v1)--(v8) (v4)--(v8) (v4)--(v7) (v4)--(v6) (v4)--(v5);
        
        \draw[edge-prep] (v2)--(v5) (v2)--(v6) (v2)--(v7) (v2)--(v8) (v3)--(v8) (v3)--(v7) (v3)--(v6) (v3)--(v5);
        \draw[edge-c1] (v2)--(v5) (v2)--(v6) (v2)--(v7) (v2)--(v8) (v3)--(v8) (v3)--(v7) (v3)--(v6) (v3)--(v5);

        \draw[edge-prep] (v5)--(v6)--(v7)--(v8)--(v5)--(v7) (v6)--(v8);
        \draw[edge-c2] (v5)--(v6)--(v7)--(v8)--(v5)--(v7) (v6)--(v8);
    
        \node[vertex, fill=col2] at (v1) {};
        \node[vertex, fill=col2] at (v2) {};
        \node[vertex, fill=col2] at (v3) {};
        \node[vertex, fill=col2] at (v4) {};
        \node[vertex, fill=col3] at (v5) {};
        \node[vertex, fill=col3] at (v6) {};
        \node[vertex, fill=col3] at (v7) {};
        \node[vertex, fill=col3] at (v8) {};

        \node[] at (0,-1.75) {$\TUR{(4,4)}{a}$};
    \end{scope}

    \begin{scope}[xshift = 6.0cm, yshift = 0cm]
        \node[vertex] at (0:-0.8) (v3) {};
        \node[vertex] at (80:-1.25) (v4) {};
        \node[vertex] at (144:-1.25) (v5) {};
        \node[vertex] at (216:-1.25) (v1) {};
        \node[vertex] at (-80:-1.25) (v2) {};
        %\node[vertex] at (2.8,0.416) (v6) {};
        \node[vertex, right = 1cm of v1] (v6) {};
        \node[vertex, right = 1cm of v5] (v7) {};
        
        \draw[edge-prep] (v2)--(v1) (v2)--(v3) (v2)--(v4) (v2)--(v5) (v2)--(v6) (v2)--(v7);
        \draw[edge-c1] (v2)--(v1) (v2)--(v3) (v2)--(v4) (v2)--(v5) (v2)--(v6) (v2)--(v7);
        \draw[edge-prep] (v3)--(v1) (v3)--(v2) (v3)--(v4) (v3)--(v5) (v3)--(v6) (v3)--(v7);
        \draw[edge-c1] (v3)--(v1) (v3)--(v2) (v3)--(v4) (v3)--(v5) (v3)--(v6) (v3)--(v7);
        \draw[edge-prep] (v4)--(v1) (v4)--(v2) (v4)--(v3) (v4)--(v5) (v4)--(v6) (v4)--(v7);
        \draw[edge-c1] (v4)--(v1) (v4)--(v2) (v4)--(v3) (v4)--(v5) (v4)--(v6) (v4)--(v7);
        
        \draw[edge-prep] (v5)--(v6) (v5)--(v7);
        \draw[edge-c2] (v5)--(v6) (v5)--(v7);
        \draw[edge-prep] (v1)--(v5) (v1)--(v6) (v1)--(v7);
        \draw[edge-c2] (v1)--(v5) (v1)--(v6) (v1)--(v7);
    
        \draw[edge-prep] (v6)--(v7);
        \draw[edge-c3] (v6)--(v7);
        \draw[edge-prep] (v7)--(v6);
        \draw[edge-c3] (v7)--(v6);
    
        \node[vertex, fill=col3] at (v1) {};
        \node[vertex, fill=col2] at (v2) {};
        \node[vertex, fill=col2] at (v3) {};
        \node[vertex, fill=col2] at (v4) {};
        \node[vertex, fill=col3] at (v5) {};
        \node[vertex, fill=col4] at (v6) {};
        \node[vertex, fill=col4] at (v7) {};

        \node[] at (0.8,-1.8) {$\TUR{(3,2,2)}{a}$};
    \end{scope}

    \begin{scope}[xshift=10.35cm, yshift = 0cm]
        \node[vertex] at (0:1.35) (v7) {};
        \node[vertex] at (360/7:1.35) (v5) {};
        \node[vertex] at (2*360/7:1.35) (v3) {};
        \node[vertex] at (3*360/7:1.35) (v1) {};
        \node[vertex] at (-3*360/7:1.35) (v2) {};
        \node[vertex] at (-2*360/7:1.35) (v4) {};
        \node[vertex] at (-360/7:1.35) (v6) {};
        
        \draw[edge-prep] (v2)--(v1)--(v3)--(v2)--(v4)--(v1)--(v5)--(v2)--(v6)--(v1)--(v7)--(v2);
        \draw[edge-c1] (v2)--(v1)--(v3)--(v2)--(v4)--(v1)--(v5)--(v2)--(v6)--(v1)--(v7)--(v2);
        \draw[edge-prep] (v4)--(v3)--(v5)--(v4)--(v6)--(v3)--(v7)--(v4);
        \draw[edge-c2] (v4)--(v3)--(v5)--(v4)--(v6)--(v3)--(v7)--(v4);
        \draw[edge-prep] (v5)--(v6)--(v7)--(v5);
        \draw[edge-c3] (v5)--(v6)--(v7)--(v5);
        
        \node[vertex, fill=col2] at (v1) {};
        \node[vertex, fill=col2] at (v2) {};
        \node[vertex, fill=col3] at (v3) {};
        \node[vertex, fill=col3] at (v4) {};
        \node[vertex, fill=col4] at (v5) {};
        \node[vertex, fill=col4] at (v6) {};
        \node[vertex, fill=col5] at (v7) {};

        \node[] at (0,-1.98) {$\TUR{(2,2,2,1)}{a}$};
    \end{scope}

    % Legend
    \begin{scope}[xshift = 12.35cm, yshift = -0cm, scale=1]
        \node[] at (0,0.8) (l1) {};
        \node[] at (0.8,0.8) (l2) {};

        \node[] at (0,0.4) (l3) {};
        \node[] at (0.8,0.4) (l4) {};

        \node[] at (0,0.0) (l5) {};
        \node[] at (0.8,0.0) (l6) {};

        \node[] at (0,-0.4) (l7) {};
        \node[] at (0.8,-0.4) (l8) {};

        \node[] at (0,-0.8) (l9) {};
        \node[] at (0.8,-0.8) (l10) {};
    
        \draw[edge-c1, ultra thick] (l1)--(l2);
        \draw[edge-c2, ultra thick] (l3)--(l4);
        \draw[edge-c3, ultra thick] (l5)--(l6);
        \draw[edge-c4, ultra thick] (l7)--(l8);
        \draw[edge-c5, ultra thick] (l9)--(l10);

        \node[anchor=west] at (0.8, 0.8) {$a + 1$};
        \node[anchor=west] at (0.8, 0.4) {$a$};
        \node[anchor=west] at (0.8, 0.0) {$a - 1$};
        \node[anchor=west] at (0.8, -0.4) {$a - 2$};
        \node[anchor=west] at (0.8, -0.8) {$a - 3$};
    \end{scope}
\end{tikzpicture}
\caption{Some examples of generalised Turán patterns. The colours of vertices and edges indicates their multiplicity.}
\label{fig:generalized-turan}
\end{figure}
%--------------------------------------%

%--------------------------------------%
\begin{remark}
\hfill
\begin{enumerate}[ --- ]

\item
For $d=0$ and $a=0$, the generalised Turán pattern is exactly the graph pattern whose blow-ups give rise to the $r_0$-partite Turán graph, while for $d=0$ and $r_0=1$, it is the `all $a$' pattern, whose blow-ups are multigraphs in which every edge has multiplicity equal to $a$.
    
\item
We may think of the quantity $a$ as the `ambient edge multiplicity' of a generalised Turán pattern, and of the pattern itself as recording  deviations from $a$. Indeed, it is easily checked that $a\leq \sigma_{\TUR{\bfr}{a}} < a+1$, while $\log (a)\leq \pi_{\TUR{\bfr}{a}} < \log (a+1)$.
    
\item
Day, Falgas-Ravry and Treglown provided a recursive algorithm for computing $\pi_{\TUR{\bfr}{a}}$~\cite[Corollary 12.6]{day2022extremal}. Further, they introduced a linear order on the pairs $(\bfr, a)$ by setting $(\bfr, a) \prec (\bfr', a')$ if either (i) $a<a'$, (ii) $a=a'$, $r_j=r'_j$ for all $j$ with $j<i$ and $r_i <r'_i$, or (iii) $a=a'$ and $\bfr$ is an initial segment of $\bfr'$, and showed~\cite[Proposition 12.7]{day2022extremal} that this implies $\pi_{\TUR{\bfr}{a}} \leq \pi_{\TUR{\bfr'}{a'}}$. Their results extend mutatis mutandis to $\sigma_{\TUR{\bfr}{a}}$.

\end{enumerate}
\end{remark}
%--------------------------------------%
 
For their additive problem for integer-weighted graphs, Füredi and Kündgen provided recursive constructions of patterns of exactly this form --- though it requires a little work to read this out of their argument, and they allowed $a< d$ since they were working with integer-weighted graphs rather than just multigraphs. 
%\textcolor{red}{TO DO HERE: carefully read Füredi and Kündgen's lower bound constructions. I believe they consist exclusively of generalised Turán patterns, but we should check this!} 
 One of the major advantages of the integer-weighted graph setting is that they could exploit the fact that $\ex_\ZZ(n, s, q+\binom{s}{2})= \ex_\ZZ(n, s, q)+\binom{n}{2}$ to restrict their investigations to the case $0\leq q< \binom{s}{2}$, a relation that fails for multigraphs. Their upper-bound followed from an elementary averaging argument, and some careful estimates of floor functions --- see \Cref{cor: arithmetic averaging}. As a consequence of their work, it follows that for all $s\geq 2$ fixed and all $q>(s-1)\binom{s}{2}$, there is a generalised Turán pattern $T = \TUR{\bfr}{a}$ such that $\Blow_n(T)\subseteq \mathcal{A}(n,s,q)$ (i.e.\ blow-ups of $T$ are $(s,q)$-graphs) and $\sigma_T= \ex_\Sigma(s,q)$ both hold.

For the multiplicative problem for multigraphs, as noted above, we have that $\ex_\Pi\p[\big]{n, s, a\binom{s}{2}} = a^{\binom{n}{2}}$ for all integers $s\geq 2$ and $a\geq 1$. In a first paper on the subject~\cite{Mubayi2020-fb}, Mubayi and Terry extended this in two ways.

%--------------------------------------%
\begin{proposition}[Mubayi--Terry]
\label{prop: MT results 1}
Let $s\in \ZZ_{\geq 2}$ and $a,q\in \ZZ_{\geq 1}$. The following hold:
\begin{enumerate}[(i)]
    \item If $a \binom{s}{2} \leq q \leq a\binom{s}{2} + s - 2$, then we have $\ex_\Pi(n, s, q) = a^{\binom{n}{2}+o(n^2)} = \left(\PI{\TUR{(1)}{a}}{n} \right)^{1+o(1)}$.
    \item If $q = (a + 1)\binom{s}{2} - t$ for some $1 \leq t \leq s/2$, then we have $\ex_\Pi(n, s, q) = \PI{\TUR{(s-t)}{a}}{n}$.    
\end{enumerate}
\end{proposition}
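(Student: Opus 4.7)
The plan is to match lower and upper bounds in each part. For the lower bounds, take the constant-$a$ multigraph in (i) (every edge of multiplicity $a$), which is trivially an $(s,q)$-graph for $q \geq a\binom{s}{2}$ with product $a^{\binom{n}{2}}$; for (ii), take a balanced blow-up of $\TUR{(s-t)}{a}$ on $[n]$. To check the latter is an $(s,q)$-graph, observe that the edges in any $s$-set hitting the $s-t$ classes with multiplicities $(s_1, \dotsc, s_{s-t})$ sum to $(a+1)\binom{s}{2} - \sum_i \binom{s_i}{2}$, and the pigeonhole-type bound $\sum_i \binom{s_i}{2} \geq t$ (attained when $t$ classes have size $2$ and $s-2t$ have size $1$, feasible precisely because $t \leq s/2$) shows this sum is at most $(a+1)\binom{s}{2} - t = q$. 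Standard convexity then shows balancing the part sizes maximises the product, giving $\PI{\TUR{(s-t)}{a}}{n}$.

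For the upper bound in (i), let $G$ be an extremal $(s,q)$-graph with $P(G) > 0$, so $w(e) \geq 1$ for every edge. Writing $h(e) \defined w(e) - a$ and $r \defined q - a\binom{s}{2} \in [0, s-2]$, the $(s,q)$-condition reads $\sum_{e \in \binom{S}{2}} h(e) \leq r$ on every $s$-set $S$. The core structural step is a \emph{weight-shifting argument} reducing to $h(e) \geq 0$ everywhere: if $G$ contains both a light edge $e_-$ with $h(e_-) \leq -1$ and a heavy edge $e_+$ with $h(e_+) \geq 1$, the replacement $(w(e_-), w(e_+)) \mapsto (w(e_-)+1, w(e_+)-1)$ strictly increases $P(G)$ since $(w(e_-)+1)(w(e_+)-1) - w(e_-)w(e_+) = w(e_+) - w(e_-) - 1 \geq 1$, and it preserves the $(s,q)$-condition unless some $s$-set contains $e_-$ but not $e_+$ and is already tight; iterating such shifts while picking the extremal $G$ minimising the number of light edges handles the tight-constraint cases. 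Once $h(e) \geq 0$ everywhere, the ``heavy'' subgraph $H \defined \set{e : h(e) \geq 1}$ is $(s,r)$-sparse (since $|H \cap \binom{S}{2}| \leq \sum_{e \in H \cap \binom{S}{2}} h(e) \leq r$), and for $r \leq s - 2$ the Gol'berg--Gurvich results~\cite{gol1987maximum} cited in the introduction give $|H| = O(n)$. Combined with $h(e) \leq r$ (forced by the constraint on any $s$-set containing $e$), we obtain
\begin{equation*}
    \log P(G) \leq \binom{n}{2} \log a + |H| \log(1 + r/a) = \binom{n}{2} \log a + O(n),
\end{equation*}
which yields $\ex_\Pi(n, s, q) \leq a^{\binom{n}{2} + o(n^2)}$, matching the lower bound.

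For the upper bound in (ii), which is an exact rather than asymptotic statement, the plan is a stability-type argument showing that an extremal $G$ is a balanced blow-up of $\TUR{(s-t)}{a}$. Here the ``ambient'' weight is $a+1$ rather than $a$; the key observation is that the subgraph of edges with weight $\leq a$ must be a disjoint union of at most $s-t$ cliques, since otherwise one could exhibit an $s$-set hitting the ``low-weight'' components with $\sum_i \binom{s_i}{2} < t$, making the edge sum exceed $q$. A convexity optimisation over the part sizes then gives the exact formula. The main obstacle in (i) is executing the shifting argument at tight-constraint configurations; in (ii), the main challenge is converting the approximate structural information (most edges of weight $a+1$, low-weight edges forming an approximate clique partition) into an exact characterisation of the extremal multigraph, with no asymptotic error allowed.
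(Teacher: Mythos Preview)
The paper does not prove \Cref{prop: MT results 1}; it is quoted as a result of Mubayi and Terry~\cite{Mubayi2020-fb} and stated without proof. So there is no in-paper argument to compare against, but your proposal can still be assessed on its merits.

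For part~(i), the weight-shifting step has a genuine gap. If $G$ is product-extremal and the shift $(w(e_-),w(e_+))\mapsto(w(e_-)+1,w(e_+)-1)$ strictly increases $P(G)$, then either the shifted multigraph is feasible (contradicting extremality) or it violates a tight $(s,q)$-constraint on some $s$-set $S$ containing $e_-$ but not $e_+$. Your escape hatch---``minimising the number of light edges among extremal $G$''---does not help: the shifted multigraph is neither extremal (its product is strictly larger) nor feasible, so it witnesses nothing about the chosen minimiser. Replacing $e_+$ by a heavy edge $e'_+$ inside $S$ keeps the sum on $S$ fixed but may violate a \emph{different} tight constraint on some $S'\ni e_-$ with $e'_+\notin S'$, and you give no terminating potential. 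This is not a mere technicality.

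There is a much simpler route that sidesteps shifting entirely. The AM--GM inequality gives $\ex_\Pi(n,s,q)\leq\bigl(\ex_\Sigma(n,s,q)/\tbinom{n}{2}\bigr)^{\binom{n}{2}}$, so it suffices to show $\ex_\Sigma(n,s,q)\leq a\tbinom{n}{2}+O(n)$. For $r=0$ this is plain averaging over $s$-sets. For $1\leq r\leq s-2$ one checks that the F\"uredi--K\"undgen quantity satisfies $m(s,q)=a$: at $m=a$ one has $\sum_{i=1}^{s-1}\lfloor 1+mi\rfloor=(s-1)+a\tbinom{s}{2}>q$, while for $m$ just below $a$ the sum drops to $a\tbinom{s}{2}\leq q$. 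Hence $\ex_\Sigma(n,s,q)\leq \ex_\ZZ(n,s,q)=a\tbinom{n}{2}+O(n)$, and part~(i) follows immediately. (Your endgame---bounding the heavy subgraph by $\ex(n,s,r)=O(n)$---would yield the same estimate, but you never reach it.)

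For part~(ii), your lower bound verification is correct. Your upper bound is only a plan, and you yourself flag the hard step. Two further wrinkles you do not address: first, you tacitly assume all edge weights lie in $\{a,a+1\}$, but ruling out weights $\geq a+2$ or $\leq a-1$ in an extremal $G$ is itself part of the work; second, the claim that edges of weight $\leq a$ form a disjoint union of at most $s-t$ cliques, while the right structural target, requires a genuine argument. Since the statement is exact rather than asymptotic, none of this can be finessed with $o(n^2)$ error terms.
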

%--------------------------------------%

The smallest case of \Cref{problem: Mubayi Terry} left open by these results was 
\begin{equation*}
    (s,q)
    = (4, 15)
    = \p[\big]{4, 2 \tbinom{4}{2} +3 }
    = \p[\big]{4, \SIGMA{\TUR{(1,1)}{2}}{4}}.
\end{equation*}
This was the subject of a second paper of Mubayi and Terry~\cite{Mubayi2019-le}, who showed that
\begin{align}
\label{eq: MT 4 15}
    \ex_\Pi\p[\big]{n, 4, 2\tbinom{4}{2} + 3} = \PI{\TUR{(1,1)}{2}}{n} 
\end{align}
holds for all $n$ sufficiently large.
They also observed that $\PI{\TUR{(1,1)}{2}}{n} = e^{\pi_{\TUR{(1,1)}{2}} \binom{n}{2} + o(n^2)}$, where $e^{\pi_{\TUR{(1,1)}{2}}}$ is a transcendental number (assuming Schanuel's conjecture from number theory), and that in product-optimal blow-ups of $\TUR{(1,1)}{2}$, the proportion of vertices assigned to each of the pattern's two parts is transcendental.

Mubayi and Terry conjectured that~\eqref{eq: MT 4 15} could be generalised to the case $(s,q) = (4, a \binom{4}{2} + 3)$ for all $a\in \ZZ_{\geq 3}$, with $\TUR{(1,1)}{a}$ replacing $\TUR{(1,1)}{2}$. 
%The main theoretical difficulty of the $a\geq 3$ case relative to the $a=2$ case, once one has ruled out the presence of edges of multiplicity $a+1$ in an extremal conjecture, is the potential presence of edges of multiplicity $1,2, \dotsc a-2$ as well as edges of multiplicity $a-1, a$.
This conjecture was proved in full by Day, Falgas--Ravry and Treglown~\cite{day2022extremal}, alongside a number of additional cases of \Cref{problem: Mubayi Terry}.
Day, Falgas--Ravry and Treglown further proposed in~\cite[Conjecture 3.2]{day2022extremal} a more general form of Mubayi and Terry's conjecture:

%--------------------------------------%
\begin{conjecture}[Day, Falgas--Ravry and Treglown]
\label{conj: main}
Let $a, r, s, d \in \ZZ_{\geq 0}$ with $a, r \geq 2$, $a > d$ and $s \geq (r - 1)(d + 1) + 2$.
Let $\bfr$ denote the $(d + 1)$-tuple $\bfr = (r - 1, 0, 0, \dotsc, 0, 1)$.
Then for all $n$ sufficiently large, we have
\begin{align*}
    \ex_\Pi\p[\big]{n, s, \SIGMA{\TUR{\bfr}{a}}{s} } = \PI{\TUR{\bfr}{a}}{n}.
\end{align*}
\end{conjecture}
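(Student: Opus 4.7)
The plan is to establish the easy lower bound directly and then prove the matching upper bound in three stages. Write $T = \TUR{\bfr}{a}$ and $q = \SIGMA{T}{s}$. The inequality $\ex_\Pi(n,s,q) \ge \PI{T}{n}$ is immediate: every $s$-subset of an $n$-vertex blow-up of $T$ is itself an $s$-vertex blow-up of $T$ (with some parts possibly empty), so it supports at most $\SIGMA{T}{s} = q$ edges, and every blow-up of $T$ is therefore a valid $(s,q)$-graph. The entire content of the conjecture lies in the matching upper bound.

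The first stage of the upper bound is a reduction to a pattern-optimisation problem. Following the template used by Mubayi--Terry~\cite{Mubayi2019-le} and Day--Falgas-Ravry--Treglown~\cite{day2022extremal}, I would apply a weak multigraph regularity argument to a product-near-extremal $(s,q)$-graph $G$ on $n$ vertices: after rounding edge multiplicities on each regular pair to their average and discarding $o(n^2)$ edges, $G$ becomes a blow-up of some bounded-complexity pattern $P$ whose blow-ups are all $(s,q)$-graphs. Consequently $\ex_\Pi(s,q) \le e^{\pi_P}$ for some such $P$, and the task reduces to identifying which pattern maximises $\pi_P$ in this class.

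The heart of the argument is then to show that $T$ is the unique maximiser. Two ingredients combine here. First, the integer-weighted averaging of Füredi and Kündgen~\cite{Furedi2002-fi} applied to the $s$-subset constraint yields $\sigma_P \le \sigma_T$, with equality only when $P$ is isomorphic to a generalised Turán pattern $\TUR{\bfr'}{a'}$. Second, the linear order $\prec$ on generalised Turán patterns and the recursive formula for $\pi_{\TUR{\bfr'}{a'}}$ developed in~\cite{day2022extremal} jointly restrict the candidate optimisers to those with $\TUR{\bfr'}{a'} \preceq T$, and an explicit computation under the hypothesis $s \ge (r-1)(d+1) + 2$ verifies that $\pi_{\TUR{\bfr'}{a'}} < \pi_T$ strictly whenever $(\bfr', a') \prec (\bfr, a)$.

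Finally, to upgrade the asymptotic bound to the exact identity $\ex_\Pi(n,s,q) = \PI{T}{n}$ for $n$ large, I would combine the stability coming out of the previous stage --- any extremal $G$ is within $o(n^2)$ edit distance of a $T$-blow-up --- with a local vertex-swap argument: starting from an exact extremal $G$, iteratively re-assign each misplaced vertex to its correct part, verifying that each swap strictly increases $P(G)$ while preserving every $s$-subset constraint. The main obstacle is handling the single light part of $T$: because its loop multiplicity $a - d$ is strictly smaller than the ambient $a$, one must rule out beneficial rebalancings between the light and heavy parts that exploit remaining slack in the $s$-subset constraints. It is precisely the hypothesis $s \ge (r-1)(d+1) + 2$ that forces the optimal $s$-subset configurations to place at least one vertex in the light part and to saturate $\SIGMA{T}{s}$, closing off these subtle rebalancings.
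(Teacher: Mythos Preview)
The statement you are attempting to prove is presented in the paper as a \emph{conjecture} (Conjecture~\ref{conj: main}), not a theorem. The paper does not prove it in full; it establishes only the asymptotic version $\ex_\Pi(n,s,q) = \PI{T}{n}^{1+o(1)}$ (Corollary~\ref{cor: main conj asymptotically true}), and even that only for $a$ sufficiently large as a function of $r,d$ (specifically $a \ge d^2+d$ in the base case, Theorem~\ref{theorem: base case main conj is true}). So your proposal, if it worked, would resolve an open problem.

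Your second stage contains a genuine gap. After the regularity reduction you have an arbitrary $(s,q)$-admissible pattern $P$, and you assert that F\"uredi--K\"undgen averaging forces $\sigma_P \le \sigma_T$ with equality only for generalised Tur\'an patterns. But F\"uredi--K\"undgen's result concerns $\ex_\ZZ$, not $\ex_\Sigma$, and in any case bounding $\sigma_P$ does not by itself bound $\pi_P$: there is no reason the $\pi$-maximiser among $(s,q)$-admissible patterns need be a generalised Tur\'an pattern at all. Indeed Section~\ref{section: small a} of the paper exhibits ranges of $(s,q)$ where the $\pi$-extremal pattern is the Petersen graph pattern. Reducing the search space to the $\prec$-chain of generalised Tur\'an patterns is precisely the hard part of the problem, and you have not supplied an argument for it.

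The paper's route to its partial result is entirely different. Rather than classify admissible patterns, it works inside a near-extremal $G$ with high minimum product-degree and locates a \emph{heavy set}: an $s_1$-set $X$ (for a carefully chosen $s_1 > s$) with $e(G[X]) = \SIGMA{T}{s_1}$. It then shows $G[X]$ is forced to be degree-regular, analyses how outside vertices attach to $X$, and deduces that $G[X]$ must itself be a blow-up of $T$; only then does weighted geometric averaging over this embedded blow-up pin down the global structure. Your proposal bypasses this structural work, and the final vertex-swap step you describe --- upgrading the asymptotic bound to exact equality for large $n$ --- is exactly what the paper does \emph{not} achieve and leaves open.
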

%--------------------------------------%

%--------------------------------------%
\begin{remark}
\vbox{ %This vbox is here to prevent a pagebreak
\hfill
\begin{enumerate}[ --- ]
    \item
    \Cref{conj: main} implies the strongly counter-intuitive fact that to maximise the geometric mean of the edge multiplicities, it can be advantageous to use constructions in which the span of multiplicities is large --- quite the opposite of what one would expect given the integer AM--GM inequality (see \Cref{prop: integer AM-GM}).
    
    \item
    The condition $s\geq (r-1)(d+1)+2$ is strictly necessary: this is the smallest $s$ which can `detect' or `tell apart' blow-ups of the generalised Turán pattern $\TUR{\bfr}{a}$ from other generalised Turán patterns $T$ with $\SIGMA{T}{s} = \SIGMA{\TUR{\bfr}{a}}{s}$ and $\pi_T > \pi_{\TUR{\bfr}{a}}$.
    
    \item
    Day, Falgas--Ravry and Treglown showed in~\cite[Theorem 3.11]{day2022extremal} that for $\bfr$ fixed and all $a$ sufficiently large, the base case $s=(r-1)(d+1)+2$ implies the case $S> (r-1)(d+1)+2$.
    
    \item Generalising the earlier work of Mubayi and Terry, Day, Falgas--Ravry and Treglown also proved the case $d=0$ of \Cref{conj: main} in full.
\end{enumerate}
}
\end{remark}
%--------------------------------------%

Day, Falgas--Ravry and Treglown observed that their conjecture did not, however, cover all pairs $(s,q)$ in \Cref{problem: Mubayi Terry}, and they asked in~\cite[Question 12.3]{day2022extremal} whether an even more general form of \Cref{conj: main} might hold, with $\bfr$ allowed to take values among more general $(d+1)$-tuples $(r_0, r_1, \dotsc, r_d)$ with non-negative integer entries and $r_0, r_d>0$. 
%The cases $\bfr=(r-1, 1)$, $\bfr=(1, 0, 1)$ and $\bfr=(r_0, r_1)$ were suggested as .

Even a positive answer to that question would not, however, cover all pairs $(s, q)$, and the authors of~\cite{day2022extremal} posed in~\cite[Section 13]{day2022extremal} a number of further questions regarding these uncovered pairs, with the cases $(s,q) = (5, 2\binom{5}{2}+4)$ 
%$(5, a\binom{5}{2}+4)$ for $a \geq 3$, 
and $(5, a\binom{5}{2}+7)$, 
%$(6, a\binom{6}{2}+8)$ and $(6, a\binom{6}{2}+10)$ for $a\geq 2$ 
singled out 
%for various reasons 
as particularly interesting 
%or challenging 
cases covered by neither of~\cite[Conjecture 3.2, Question 12.3]{day2022extremal}.

In~\cite{falgas2024extremal}, Falgas--Ravry asymptotically resolved the case $d=1$ of \Cref{conj: main}.
More precisely, he proved~\cite[Theorem 1.6]{falgas2024extremal} that for all integers $a \geq 2$ and the base case $s = (r-1)2 + 2 = 2r$, we have $\ex_\Pi\p[\big]{s, \SIGMA{\TUR{(r-1,1)}{a}}{s} } = e^{\pi_{\TUR{(r-1,1)}{a}}}$.
%Falgas--Ravry reiterated the problem of resolving $(5, a\binom{5}{2}+4)$ for $a\geq 3$ and $(6, a\binom{6}{2}+8)$ for $a\geq 2$. In addition,
Further, he conjectured~\cite[Section 5]{falgas2024extremal} that one could prove stability versions of the results of~\cite{falgas2024extremal} and that $\ex_\Pi(s,q)$ should exhibit `flat intervals', that is, intervals of values of $q$ during which $\ex_\Pi(s,q)$ remains constant. In particular, he predicted in~\cite[Conjecture 5.1]{falgas2024extremal} the following:

%--------------------------------------%
\begin{conjecture}[Falgas--Ravry]
\label{conj: flat interval}
For every $r,a\in \ZZ_{\geq 1}$ and every $s\geq 2r+1$, we have that
\begin{equation*}
    \ex_\Pi \p[\big]{s, \SIGMA{\TUR{(r)}{a}}{s} }
    = \ex_\Pi \p[\big]{s, \SIGMA{\TUR{(r)}{a}}{s} + 1 }
    = \dotsb
    = \ex_\Pi \p[\big]{s, \SIGMA{\TUR{(r)}{a}}{s} + \floor{(s-1)/r}-1 }.
\end{equation*}
In particular, for all $0 \leq t \leq \floor{(s-1)/r}-1$, we have
\begin{equation*}
    \ex_\Pi \p[\big]{s, \SIGMA{\TUR{(r)}{a}}{s} + t} = a\p[\Big]{\frac{a+1}{a}}^{(r-1)/r}.
\end{equation*}
\end{conjecture}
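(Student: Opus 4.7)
The lower bound is immediate: for every $q \geq \SIGMA{\TUR{(r)}{a}}{s}$, a balanced $n$-vertex blow-up of $\TUR{(r)}{a}$ lies in $\cA(n, s, q)$, and its geometric mean edge multiplicity tends to $a^{1/r}(a+1)^{(r-1)/r} = a\bigl(\tfrac{a+1}{a}\bigr)^{(r-1)/r}$. Hence $\ex_\Pi(s, q) \geq a\bigl(\tfrac{a+1}{a}\bigr)^{(r-1)/r}$ throughout the conjectured flat interval.

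For the upper bound, the plan is as follows. First, I would extend the stability/container arguments of Mubayi--Terry and Day--Falgas-Ravry--Treglown to reduce the computation of $\ex_\Pi(s, q)$ to a supremum of $e^{\pi_T}$ over generalised Turán patterns $T$ with $\SIGMA{T}{s} \leq q$ (i.e.\ those whose $n$-vertex blow-ups lie in $\cA(n, s, q)$ for all large $n$). By the monotonicity of $\pi$ along the Day--Falgas-Ravry--Treglown linear order $\prec$, any $T$ with $\pi_T > \pi_{\TUR{(r)}{a}}$ satisfies $T \succ \TUR{(r)}{a}$. So it suffices to prove the following key inequality: for every generalised Turán pattern $T$ strictly above $\TUR{(r)}{a}$ in $\prec$,
\begin{equation*}
    \SIGMA{T}{s} \geq \SIGMA{\TUR{(r)}{a}}{s} + \floor{(s-1)/r}.
\end{equation*}
This forces $\SIGMA{T}{s} > q$ throughout the flat interval, excluding such $T$ from the optimisation.

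Verifying this inequality reduces to checking the minimal successors of $\TUR{(r)}{a}$ in $\prec$. The critical case is $T = \TUR{(r, 1)}{a}$: take a blow-up placing one vertex in the small part (of loop $a - 1$) and the remaining $s - 1$ vertices distributed as evenly as possible among the $r$ large parts. A direct computation using the identity $T(s, r) - T(s-1, r) = (s-1) - \floor{(s-1)/r}$ for Turán edge counts shows that this blow-up attains an $s$-subset edge sum of exactly $\SIGMA{\TUR{(r)}{a}}{s} + \floor{(s-1)/r}$, witnessing the bound. The other minimal successors---principally $\TUR{(r+1)}{a}$ (handled via $M(s, r) - M(s, r+1) \geq \floor{(s-1)/r}$ for $s \geq 2r+1$, where $M(t, r)$ denotes the minimum of $\sum \binom{x_i}{2}$ over partitions $x_1 + \dotsb + x_r = t$), the patterns $\TUR{(r, k)}{a}$ with $k \geq 2$ (which admit $\TUR{(r, 1)}{a}$ blow-ups as special cases by setting extra part sizes to zero), and $\TUR{\bfr'}{a+1}$ (for which $\SIGMA{T}{s} \geq (a+1)\binom{s}{2}$ is automatically large enough)---satisfy the inequality by similar direct calculations.

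The main obstacle is the reduction step. Stability and container arguments for this family of problems have been successfully deployed only in restricted regimes---the $d = 0$ case by Day--Falgas-Ravry--Treglown and the $d = 1$ case by Falgas-Ravry---whereas the conjecture ranges over all values of $d$. The flat-interval setting is qualitatively harder: near the plateau endpoints multiple non-isomorphic patterns attain the extremal geometric density, so the usual "unique stability" templates fail. A successful proof will likely require either a more flexible container framework adapted to plateau behaviour or a careful inductive argument on the parameters $r$ and $a$. The combinatorial verification in the classification step, by contrast, is fundamentally elementary once the reduction is in place.
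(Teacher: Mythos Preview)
Your proposal has a genuine gap, and the paper's approach is completely different and far more elementary.

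The gap is in your reduction step. You propose to reduce $\ex_\Pi(s,q)$ to a supremum of $e^{\pi_T}$ over \emph{generalised Tur\'an patterns} $T$ with $\SIGMA{T}{s}\le q$. This reduction is not merely ``hard in general''---it is actually false. The paper itself exhibits, for ambient multiplicity $a=2$, ranges of $q$ for which the extremal pattern is a Petersen-type graph pattern, not a generalised Tur\'an pattern. So even with unlimited container/stability technology you cannot hope to reduce to the $\prec$-ordered family of generalised Tur\'an patterns; at best you could reduce to \emph{all} admissible patterns (via regularity and cloning), and then your classification step would have to rule out every pattern $P$ with $\pi_P>\pi_{\TUR{(r)}{a}}$, not just the generalised Tur\'an ones. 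Your check of minimal $\prec$-successors is therefore insufficient in principle.

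The paper's proof bypasses all of this. The key observation you are missing is that for this particular flat interval one can bound the \emph{additive} extremal function first and then pass to the multiplicative one via the integer AM--GM inequality. Concretely, the paper proves by a F\"uredi--K\"undgen style averaging induction (using only $\ex_\Sigma(n+1,s,q)\le \lfloor \tfrac{n+1}{n-1}\ex_\Sigma(n,s,q)\rfloor$) that for all $n\ge s$,
\[
\ex_\Sigma\bigl(n,s,\SIGMA{\TUR{(r)}{a}}{s}+\lfloor(s-1)/r\rfloor-1\bigr)\ \le\ \SIGMA{\TUR{(r)}{a}}{n}+\lfloor(n-1)/r\rfloor-1.
\]
Since the right-hand side equals $a\binom{n}{2}+\tfrac{r-1}{r}\binom{n}{2}+O(n)$ and lies between $a\binom{n}{2}$ and $(a+1)\binom{n}{2}$, the integer AM--GM inequality applied to the $\binom{n}{2}$ edge multiplicities of any $(s,q)$-graph $G$ on $n$ vertices gives
\[
P(G)\ \le\ a^{\binom{n}{2}}\Bigl(\tfrac{a+1}{a}\Bigr)^{e(G)-a\binom{n}{2}}\ \le\ a^{\binom{n}{2}}\Bigl(\tfrac{a+1}{a}\Bigr)^{\frac{r-1}{r}\binom{n}{2}+O(n)},
\]
which is exactly the claimed upper bound. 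No pattern analysis, no stability, no containers---just averaging plus AM--GM. The entire argument fits on half a page and works uniformly for all $a\ge 1$.
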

%--------------------------------------%

The intuition given in~\cite{falgas2024extremal} in favour of the conjecture was that for values of $q$ between $\SIGMA{\TUR{(r)}{a}}{s}$ and $\SIGMA{\TUR{(r)}{a}}{s} + \floor{(s-1)/r}-1$, there is not enough room for a generalised Turán pattern that is both $(s,q)$-admissible and improves upon $\TUR{(r)}{a}$.

Finally and more recently, Gu and Wang~\cite{gu2024extremal} investigated the value of $\ex_\Pi(n,s,q)$ in the case $(s,q)=(5, a\binom{5}{2}+4)$ for $n$ fixed and $a \to \infty$, providing some evidence in favour of the truth of \Cref{conj: main} when $\bfr=(1,0,1)$ and $s=5$.

%Erdős--Rotschild problem: work of Pikhurko and Staden on reduction to finite patterns.

%Linked to more general question of the speed of hereditary properties of multicoloured graphs, Alekseev questions and Alekseev--Bollobás--Thomason theorem.	

%--------------------------------------%
\subsection{Our contributions}\label{subsection: our contributions}

%--------------------------------------%
\subsubsection{Overview}

We contribute to the study of \Cref{problem: Mubayi Terry,problem: Mubayi Terry} in two different directions: we make a major advance in the case where the `ambient edge multiplicity' is large, i.e. for pairs $(s,q)$ where $a\binom{s}{2}\leq q<(a+1)\binom{s}{2}$ and $a\in \mathbb{Z}_{>0}$ is `suitably large' ($a=\Omega(d^2)$, where $a-d$ is the lowest edge multiplicity of the generalised Turán pattern whose optimality we are trying to establish).

In this regime, we are able to give a satisfactory general picture, and to prove asymptotically a more general form of \Cref{conj: main}. Furthermore, the tools we develop in doing so are in principle able to handle any specific case we do not cover in the large $a$ regime --- though in practice the computations involved become too horrible for this to be palatable. We also resolve \Cref{conj: flat interval}, and in addition we obtain stability versions of all of our results in the large $a$ regime, confirming another conjecture from~\cite{falgas2024extremal}.

On the other hand, we also consider the case where the `ambient edge multiplicity' $a$ is equal to $2$. For most pairs $(s,q)$, this is covered neither by \Cref{conj: main}, nor, in the additive case, by the previous work of Füredi and Kündgen. In this setting we demonstrate the subtlety of \Cref{problem: Furedi Kundgen,problem: Mubayi Terry} by reducing them to finite combinatorial problems with a design-theoretic flavour. We are able to fully resolve the said problems for $2\binom{s}{2}\leq q \leq (2+\frac{3}{13})\binom{s}{2}+O(s)$, and show inter alia that for an interval of $q$ of length $\Omega(s^2)$, blow-ups of a Petersen-like pattern are optimal for the multiplicative \Cref{problem: Mubayi Terry}, while the additive \Cref{problem: Furedi Kundgen} exhibits extreme forms of instability.

\subsubsection{Large ambient multiplicity}

%--------------------------------------%
\begin{theorem}
\label{theorem: base case main conj is true}
Let $r, d \in \ZZ_{\geq 0}$ with $r\geq 2$. Set $s= (r-1)(d+1)+2$, and let $\bfr$ denote the $(d+1)$-tuple $\bfr=(r-1,0,0, \dotsc, 0, 1)$. Then for all integers $a> d^2+d$, we have: 
\begin{align*}
\ex_\Pi\left(s, \SIGMA{\TUR{(\bfr)}{a}}{s} \right)  = e^{\pi_{\TUR{\bfr}{a}}}.
\end{align*}
\end{theorem}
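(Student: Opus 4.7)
The lower bound $\ex_\Pi\p{s, \SIGMA{T}{s}} \geq e^{\pi_T}$, with $T \defined \TUR{\bfr}{a}$, is immediate, since blow-ups of $T$ form $(s, \SIGMA{T}{s})$-graphs by the very definition of $\SIGMA{T}{s}$. For the matching upper bound, my plan is to leverage the container/entropy machinery developed in~\cite{Mubayi2019-le, day2022extremal, falgas2024extremal} to reduce the asymptotic Mubayi--Terry problem to a finite pattern-optimisation problem: specifically, to a statement of the form $\ex_\Pi(s, q) = \sup\set{e^{\pi_P}}$, with the supremum taken over multigraph patterns $P$ of bounded order such that $\Blow(P) \subseteq \cA(n, s, q)$ for every $n$. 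It then suffices to show that $\pi_P \leq \pi_T$ for every such admissible pattern $P$.

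To carry out the pattern comparison, I would first use the hypothesis $a \geq d^2+d$ to restrict attention to patterns whose multiplicities all lie in $\set{a-d, a-d+1, \dotsc, a+1}$: values above $a+1$ violate admissibility (from the $s=2$ constraint), while a single pair of multiplicity at most $a-d-1$ can be shown to depress $\pi_P$ below $\pi_T$ by a wide margin. Writing the multiplicities as $a + \delta_{ij}$ with $\delta_{ij} \in \set{-d, \dotsc, 1}$ and Taylor-expanding
\begin{equation*}
    \log(a + \delta) = \log a + \frac{\delta}{a} - \frac{\delta^2}{2 a^2} + O\p*{\frac{d^3}{a^3}},
\end{equation*}
one obtains an expansion of the form $\pi_P = \log a + A(P)/a - B(P)/(2 a^2) + O(d^3/a^3)$, where $A(P)$ and $B(P)$ are the Lagrangian maxima of, respectively, a first-moment and a second-moment quadratic form in the $\delta_{ij}$. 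By \Cref{cor: arithmetic averaging} (Füredi--Kündgen's arithmetic averaging bound, combined with the defining equality $q = \SIGMA{T}{s}$), one has $A(P) \leq A(T) = \sigma_T - a$. Among admissible patterns attaining this equality, a direct combinatorial analysis (in the spirit of the $d \in \set{0,1}$ arguments of~\cite{day2022extremal, falgas2024extremal}) should show that $T$ uniquely minimises $B(P)$, reflecting the fact that $T$ has the smallest possible ``variance'' of edge multiplicities among arithmetic extremisers. The quantitative threshold $a \geq d^2+d$ is precisely what is needed so that: (i) when $A(P) < A(T)$, the first-order gap $(A(T) - A(P))/a$ dominates the second-order correction of order $d^2/a^2$; and (ii) when $A(P) = A(T)$, the strict second-order gap $(B(P) - B(T))/(2a^2)$ dominates the third-order correction of order $d^3/a^3$.

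The main obstacle in this programme will be the second half of the pattern comparison: classifying \emph{all} admissible patterns that achieve the arithmetic extremum $A(P) = \sigma_T - a$, and showing that $T$ strictly minimises the ``variance'' $B$ among them. For $d \leq 1$ this has been done in~\cite{day2022extremal, falgas2024extremal}, but for general $d$ the space of arithmetic extremisers is substantially richer --- it inherits the full recursive structure of the Füredi--Kündgen constructions --- and a uniform treatment will likely require an induction on $d$, or a refined extremal characterisation extending the $\prec$-ordering on generalised Turán patterns of~\cite[Proposition 12.7]{day2022extremal}. A secondary, essentially technical, challenge is to ensure that the threshold $a \geq d^2+d$ matches exactly the constants produced by the Taylor comparison, which forces a delicate tightening of error terms at several points in the argument.
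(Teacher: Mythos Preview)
Your programme is genuinely different from the paper's, but it has a real gap at its centre. The decomposition $\pi_P = \log a + A(P)/a - B(P)/(2a^2) + O(d^3/a^3)$ with $A(P)$ and $B(P)$ defined as \emph{separate} Lagrangian optima is not well-posed: $\pi_P$ is a single maximum over weightings $\bfx$, and the first- and second-moment quadratic forms are coupled through that common $\bfx$. One can try to repair this by evaluating both forms at the $\sigma$-optimal weighting (to which the $\pi$-optimal weighting converges as $a\to\infty$), but then one must control how far the $\pi$-optimal weighting drifts, and that drift is itself of order $d/a$ and feeds back into the second-order comparison. More seriously, you correctly isolate the crux --- classifying all $(s,q)$-admissible patterns with $\sigma_P = \sigma_T$ and showing $T$ uniquely minimises the second moment among them --- and then explicitly leave it open (``will likely require an induction on $d$''). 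Since the additive problem is known to be unstable (many non-isomorphic arithmetic extremisers exist), this is the genuinely hard step, and without it you have a heuristic rather than a proof. Your claim that $a \geq d^2+d$ is ``precisely what is needed'' is likewise not derived: balancing a first-order gap against a $d^2/a^2$ correction requires knowing the minimal nonzero value of $\sigma_T - \sigma_P$ over admissible $P$, which you have not computed.

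The paper sidesteps the pattern-classification difficulty entirely by working directly in a near-extremal $(s_0, \SIGMA{T}{s_0})$-graph $G$ (here $s_0 = s$) with high minimum product-degree. Its key new idea is the \emph{heavy set}: for a well-chosen $s_1 = 2s_0 - d(r-1) - 2$, one first shows via arithmetic averaging that $G$ is automatically an $(s', \SIGMA{T}{s'})$-graph for every $s' \leq s_1$, then finds an $s_1$-set $X$ with $e(G[X]) = \SIGMA{T}{s_1}$, proves $G[X]$ is degree-regular, and --- by analysing how vertices outside $X$ attach to it and repeatedly invoking the integer AM--GM inequality --- forces $G[X]$ to be a blow-up of $T$. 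A weighted geometric-averaging argument then propagates this local structure to all of $G$. The exact threshold $a \geq d^2+d$ falls out of a single explicit polynomial inequality (the case $r_d = 1$ of \Cref{prop: polynomial identity}), not from Taylor-remainder bookkeeping.
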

%--------------------------------------%

%--------------------------------------%
\begin{corollary}
\label{cor: main conj asymptotically true}
\Cref{conj: main} is asymptotically true: for all $r, d\in \ZZ_{\geq 0}$ with $r \geq 2$ and all $s\geq (r-1)(d+1)+2$, letting $\bfr$ denote the $(d+1)$-tuple $\bfr=(r-1,0,0, \dotsc, 0, 1)$, we have
\begin{align*}
\ex_\Pi\left(n, s, \SIGMA{\TUR{(\bfr)}{a}}{s} \right)  = \PI{\TUR{\bfr}{a}}{n}^{1+o(1)}
\end{align*}
holding for all $a$ sufficiently large.
\end{corollary}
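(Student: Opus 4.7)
The plan is to derive this corollary immediately from two inputs: \Cref{theorem: base case main conj is true}, which handles the base case $s = (r-1)(d+1)+2$, and the Day--Falgas-Ravry--Treglown reduction~\cite[Theorem 3.11]{day2022extremal} from that base case to the general case, as recalled in the third bullet of the remark following \Cref{conj: main}.

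In more detail, I would fix $r \geq 2$ and $d \geq 0$, set $s_0 \defined (r-1)(d+1)+2$, and let $\bfr = (r-1, 0, \dotsc, 0, 1)$ as in the statement. By \Cref{theorem: base case main conj is true}, for every integer $a \geq d^2 + d$ we have $\ex_\Pi\p[\big]{s_0, \SIGMA{\TUR{\bfr}{a}}{s_0}} = e^{\pi_{\TUR{\bfr}{a}}}$. Unpacking the definition of the extremal geometric density as the limit of $\ex_\Pi(n, s_0, q)^{1/\binom{n}{2}}$, and recalling from \Cref{subsection: patterns} that $\PI{\TUR{\bfr}{a}}{n} = e^{\pi_{\TUR{\bfr}{a}} \binom{n}{2} + o(n^2)}$, this density identity is equivalent to the finite-$n$ asymptotic
\begin{equation*}
\ex_\Pi\p[\big]{n, s_0, \SIGMA{\TUR{\bfr}{a}}{s_0}} = \PI{\TUR{\bfr}{a}}{n}^{1+o(1)},
\end{equation*}
which is \Cref{cor: main conj asymptotically true} at $s = s_0$. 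The lower bound in this asymptotic is immediate from the observation that blow-ups of $\TUR{\bfr}{a}$ are $\p[\big]{s_0, \SIGMA{\TUR{\bfr}{a}}{s_0}}$-admissible, and the matching upper bound is exactly the density statement from \Cref{theorem: base case main conj is true}.

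To propagate from $s_0$ to general $s > s_0$, I would then apply the reduction of~\cite[Theorem 3.11]{day2022extremal}: with $\bfr$ fixed, establishing the base case $s = s_0$ implies the analogous asymptotic statement at each $s > s_0$, provided $a$ is taken larger than some threshold $a_0(s, \bfr)$. Feeding in our base-case conclusion yields the corollary for every $s \geq s_0$ and every $a \geq \max(a_0(s, \bfr),\, d^2 + d)$. The entire substance of the corollary is thus absorbed into the proof of \Cref{theorem: base case main conj is true}; the main obstacle lies there, not in this packaging step, which is purely a combination of the base case with the preservation of the extremal geometric density and the cited reduction.
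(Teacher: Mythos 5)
Your proposal is correct and matches the paper's own (one-line) proof exactly: the corollary is deduced by combining \Cref{theorem: base case main conj is true} for the base case $s=(r-1)(d+1)+2$ with the step-up reduction of~\cite[Theorem 3.11]{day2022extremal}. The extra detail you supply about translating the density identity into the finite-$n$ asymptotic and the trivial lower bound via admissibility is a faithful unpacking of what the paper leaves implicit.
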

%--------------------------------------%

Further, we conjecture that our results form a part of the following more general picture completely describing the behaviour of the Mubayi--Terry problem for sufficiently large ambient edge multiplicities.

%--------------------------------------%
\begin{conjecture}[Large $a$ conjecture]
\label{conj: large a regime generalised Turan optimal}
Let $s\in \ZZ_{\geq 2}$ and let $0\leq q < \binom{s}{2}$ be an integer. Then there exists a unique generalised Turán pattern $T=\TUR{\bfr}{a}$ such that blow-ups of $T$ are $(s,a\binom{s}{2}+q)$-graphs and for all sufficiently large $a$ we have:
\begin{align}
\label{eq: conjecture large a}
    \ex_\Pi\p[\big]{n, s, a\tbinom{s}{2} + q}  = \PI{\TUR{\bfr}{a}}{n}^{1+o(1)}.
\end{align}
\end{conjecture}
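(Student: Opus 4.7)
The plan is to first identify the candidate extremal pattern, prove the lower bound by a blow-up construction, and then prove the matching upper bound by extending the argument that underpins \Cref{theorem: base case main conj is true} to arbitrary tuples $\bfr$. For the candidate: fix $s$ and $q$ with $0 \leq q < \binom{s}{2}$, and for $a$ sufficiently large let $T^\ast = \TUR{\bfr^\ast}{a}$ be the $\prec$-maximal generalised Turán pattern with ambient multiplicity $a$ such that $\Blow(T^\ast) \subseteq \cA(n, s, a\binom{s}{2} + q)$ for all $n$. Using the recursion for $\pi_{\TUR{\bfr}{a}}$ from~\cite[Corollary 12.6]{day2022extremal} together with the monotonicity in the order $\prec$ from~\cite[Proposition 12.7]{day2022extremal}, one checks that $\bfr^\ast$ depends only on $(s, q)$ once $a$ exceeds a threshold depending on $s$, and that $T^\ast$ is strictly $\pi$-larger than every other admissible generalised Turán pattern at the same $a$. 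This would give the uniqueness assertion in the conjecture. The lower bound $\ex_\Pi(n, s, a\binom{s}{2} + q) \geq \PI{T^\ast}{n}$ is then immediate from $\Blow_n(T^\ast) \subseteq \cA(n, s, a\binom{s}{2} + q)$.

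For the upper bound, I would combine an additive stability step with a structural local-swap step, in the spirit of the proof of \Cref{theorem: base case main conj is true}. In the additive step, start with a near-extremal $G \in \cA(n, s, a\binom{s}{2} + q)$, meaning one with $P(G) \geq \PI{T^\ast}{n}^{1 - o(1)}$. By the integer AM--GM inequality, $G$ is then also near-extremal for the sum problem; the Füredi--Kündgen averaging argument together with their description of the sum-extremal patterns forces $G$ to admit, up to $o(n^2)$ exceptional edges, an approximate layer decomposition corresponding to some generalised Turán pattern $T'$ with $\sigma_{T'} = \sigma_{T^\ast}$ and $\Blow(T') \subseteq \cA(\cdot, s, a\binom{s}{2} + q)$. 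In the structural step, one shows that the unique such $T'$ compatible with $P(G) \geq \PI{T^\ast}{n}^{1 - o(1)}$ is $T^\ast$ itself: for any admissible $T' \neq T^\ast$ one wants to find a bounded family of pairs whose multiplicities can be simultaneously altered so as to bring $G$ closer to a blow-up of $T^\ast$ without violating any $s$-set constraint, in such a way that the alteration strictly increases $P(G)$, contradicting near-extremality.

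The principal obstacle is the structural step for arbitrary $\bfr^\ast = (r_0, r_1, \dotsc, r_d)$. In \Cref{theorem: base case main conj is true} only $\bfr^\ast = (r - 1, 0, \dotsc, 0, 1)$ arises, so $T^\ast$ has two non-trivial multiplicity layers and local swaps reduce to moving a single vertex between them. For general $\bfr^\ast$ with several non-zero entries the layers couple in more intricate ways, and verifying that a product-increasing swap is always compatible with some $s$-set constraint will require a finer pigeonhole argument over the layer-profiles of $s$-sets meeting many parts of the blow-up. A secondary obstacle is the uniform threshold on $a$: the bound $a \geq d^2 + d$ in \Cref{theorem: base case main conj is true} already grows with the depth of $\bfr^\ast$, and extending the analysis to arbitrary $\bfr^\ast$ will likely need an induction on $d$ that mirrors the recursion satisfied by $\pi_{\TUR{\bfr}{a}}$ itself, together with a new quantitative control on how large $a$ must be as a function of the full profile $(r_0, \dotsc, r_d)$.
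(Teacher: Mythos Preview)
The statement you are attempting to prove is \emph{Conjecture}~\ref{conj: large a regime generalised Turan optimal}: it is explicitly left open in the paper, which only establishes the special cases $\bfr=(r_0,0,\dotsc,0,r_d)$ (Theorems~\ref{theorem: base case main conj is true} and~\ref{theorem: large a arbitrary number of deficient blobs}). So there is no paper proof to compare against, and your proposal is at best a programme. That said, the programme you sketch diverges from the paper's method and contains a step that would fail.

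The weak point is your additive stability step. You assert that if $P(G)\geq \PI{T^\ast}{n}^{1-o(1)}$ then ``by the integer AM--GM inequality, $G$ is then also near-extremal for the sum problem'', and then invoke F\"uredi--K\"undgen to extract an approximate layer decomposition corresponding to some generalised Tur\'an pattern $T'$. Neither implication holds. The integer AM--GM inequality bounds the product in terms of the sum, not conversely: a multigraph can have near-maximal product while having edge-sum bounded away from $\ex_\Sigma(n,s,q)$ (concentrate many edges at multiplicity $a$ and $a+1$ rather than using the full additive budget). More damagingly, even granting sum-near-extremality, the additive problem exhibits severe \emph{instability} (see Theorem~\ref{theorem: additive} and the remark following it): near-sum-extremal configurations need not resemble blow-ups of any single pattern, so your ``structural local-swap'' step has no well-defined starting point. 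The paper's approach to the proved cases is quite different: it finds \emph{heavy $s_1$-sets} (maximal-sum subsets of a carefully chosen size $s_1>s_0$), shows by degree-regularity and interaction arguments that such sets are themselves blow-ups of $T$, and then propagates this structure via weighted geometric averaging (Lemmas~\ref{lemma: have heavy s_1-sets}--\ref{lemma: contributions of outside vertices}). The obstruction the paper identifies to extending this to general $\bfr$ is not your coupling-of-layers difficulty, but the purely computational one of determining the part sizes in sum-optimal blow-ups of $\TUR{\bfr}{a}$ for arbitrary $\bfr$ (Proposition~\ref{prop: partition sizes in sum-optimal blow-ups of T}), which feeds into every subsequent estimate.
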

%--------------------------------------%

Using our methods in the proof of \Cref{theorem: base case main conj is true}, we prove several further cases of \Cref{conj: large a regime generalised Turan optimal}.

%--------------------------------------%
\begin{theorem}
\label{theorem: large a arbitrary number of deficient blobs}
For all $r_0, r_d, d\in \ZZ_{> 0}$ and all $s\geq r_0(d r_d+1)+r_d+1$, letting $\bfr$ denote the $(d+1)$-tuple $\bfr=(r_0, 0,0,0, \dotsc, 0, r_d)$, we have that
\begin{align*}
\ex_\Pi\left(n, s, \SIGMA{\TUR{(\bfr)}{a}}{s} \right)  = \PI{\TUR{\bfr}{a}}{n}^{1+o(1)}
\end{align*}
holds for all $a$ sufficiently large.
\end{theorem}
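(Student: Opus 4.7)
The plan is to follow the strategy used for \Cref{theorem: base case main conj is true}, with the main additional work concentrated in the pattern-comparison step at the end.

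For the lower bound, set $T = \TUR{\bfr}{a}$ and $q = \SIGMA{T}{s}$. By definition of $\SIGMA{T}{s}$, every $s$-vertex blow-up of $T$ has edge-sum at most $q$, so every blow-up of $T$ is an $(s,q)$-graph and thus $\ex_\Pi(n, s, q) \geq \PI{T}{n}$.

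For the upper bound, I would invoke the stability/container-style machinery developed earlier in the paper (generalising the methods of Mubayi--Terry, Day--Falgas-Ravry--Treglown, and Falgas-Ravry) to reduce, up to a $(1+o(1))$-exponent, to the finite optimisation of $\pi_Q$ over patterns $Q$ whose $s$-vertex blow-ups all have edge-sum at most $q$. An arithmetic averaging step (\Cref{cor: arithmetic averaging}) combined with the integer AM--GM inequality (\Cref{prop: integer AM-GM}) then shows that, for $a$ sufficiently large, a near-optimal such $Q$ must have all edge multiplicities in a bounded window $[a-D, a+1]$ with $D = D(r_0, r_d, d)$ independent of $a$; any multiplicity outside this window contributes a loss to $\pi_Q$ that is uniform in $a$ and thus cannot appear. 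Refining this using pattern-optimisation arguments in the spirit of \cite[Proposition 12.7]{day2022extremal} and \cite{falgas2024extremal}, one concludes that the optimal $Q$ is itself a generalised Turán pattern $\TUR{\bfr'}{a'}$ with $a' \in \{a, a+1\}$ and $\bfr'$ of length at most $d+1$.

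The final step is the comparison of $\TUR{\bfr'}{a'}$ against $T$. Writing $\pi_{\TUR{\bfr'}{a'}}$ as a Lagrangian-type optimum over blob weights and expanding in powers of $1/a$, the leading term $\log(a+1)$ is common; the first correction is controlled by the weights placed on low-multiplicity blobs, which is in turn constrained by the admissibility condition $\SIGMA{\TUR{\bfr'}{a'}}{s} \leq q = \SIGMA{T}{s}$. A direct optimisation shows that this correction is maximised precisely by the two-blob shape $\bfr = (r_0, 0, \ldots, 0, r_d)$. The main obstacle lies here: unlike the $r_d = 1$ case treated in \Cref{theorem: base case main conj is true}, the low-multiplicity part may now be subdivided in several genuinely distinct ways, each giving a separate competitor to rule out. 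The hypothesis $s \geq r_0(dr_d+1) + r_d + 1$ is the precise threshold at which $\SIGMA{\cdot}{s}$ separates $T$ from every $(s,q)$-admissible competitor $T' \succ T$ in the DFT ordering, so that the admissibility constraint is tight enough to force the two-blob shape and close the argument.
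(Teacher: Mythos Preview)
Your approach diverges from the paper's and contains a genuine gap at the pattern-reduction step.

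The paper does \emph{not} proceed via a reduction to pattern optimisation for the large-$a$ regime. Instead, its proof of \Cref{theorem: large a arbitrary number of deficient blobs} goes through the technical \Cref{theorem: large a technical statement}, which works directly with a near-extremal $(s_0, \SIGMA{T}{s_0})$-graph $G$ (here $s_0 = r_0(dr_d+1)+r_d+1$). The key idea is to find a \emph{heavy $s_1$-set} $X$ in $G$ (a set on $s_1 = 2s_0 - dr_0 - 2$ vertices with $e(G[X]) = \SIGMA{T}{s_1}$), show via degree-regularity and a clone-analysis of vertices in $A = \{v : d_X(v) \text{ is maximal}\}$ that $G[X]$ is literally a blow-up of $T$, and then use weighted geometric averaging over this explicit blow-up to control the rest of $V(G)$. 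The base case $s = s_0$ is then stepped up to general $s$ via \Cref{prop: step up from the base case}. The regularity/pattern machinery of \Cref{cor: applying colourful regularity} is used only in the small-$a$ regime of Section~3, not here.

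The gap in your outline is the sentence ``one concludes that the optimal $Q$ is itself a generalised Tur\'an pattern $\TUR{\bfr'}{a'}$''. Nothing in \cite[Proposition 12.7]{day2022extremal} or \cite{falgas2024extremal} gives this: those results compare generalised Tur\'an patterns to \emph{each other}, they do not show that an arbitrary $(s,q)$-admissible pattern with near-maximal $\pi_Q$ must be of generalised Tur\'an type. After the cloning lemma you only know that all edge multiplicities in $Q$ strictly exceed the common loop multiplicity; this is very far from the nested Tur\'an structure. Indeed, establishing that extremal patterns have this structure is precisely the content of \Cref{conj: large a regime generalised Turan optimal}, which the paper leaves open in general. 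The heavy-set argument is the paper's substitute for this missing reduction: rather than classifying all competing patterns, it locates the blow-up of $T$ \emph{inside} the multigraph directly. Your asymptotic expansion in $1/a$ at the final comparison step is plausible once the competitor is known to be a generalised Tur\'an pattern, but the argument does not reach that point.
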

%--------------------------------------%

%--------------------------------------%
\begin{remark}
\hfill
\begin{enumerate}[ --- ]
    \item
    The elaborate structure of the extremal examples in \Cref{theorem: large a arbitrary number of deficient blobs} should be highlighted: product-maximising blow-ups of the generalised Turán patterns $\TUR{\bfr}{a}$ with these parameters involve edges with multiplicities $a+1, a, a-d+1$ and $a-d$, and two kinds of parts. The fact that it can be optimal to use four different edge multiplicities (including, for $d$ large, multiplicities vastly apart) in order to maximise the product of the multiplicities is striking.
    
    \item
    $\PI{\TUR{\bfr}{a}}{n} = e^{\pi_{\TUR{\bfr}{a}} \binom{n}{2} +o(n^2)}$, and we can give an explicit formula for $e^{\pi_{\TUR{\bfr}{a}}}$.
    Indeed,
    \begin{align*}
    e^{\pi_{\TUR{\bfr}{a}}} = a \p[\Big]{\frac{a+1}{a}}^{(r_0 - 1 + r_d x_\star)/r_0}
    \end{align*}
    where $0 \leq x_\star \leq 1$ is defined as
    \begin{equation*}
        x_\star \defined  \frac{\log((a+1)/a)}{\log \p[\big]{ (a+1)^{r_d(r_0+1)}  \big/ a^{r_d} (a-d+1)^{r_0(r_d -1)}(a-d)^{r_0}}}.
    \end{equation*}
    (See \Cref{prop: product-optimal blow ups of generalised Turan}.)
    Further, by generalisations of~\cite[Proposition A2]{day2022extremal} all of $x_\star=x_\star(\bfr, a)$, $\pi_{\TUR{\bfr}{a}}$ and $e^{\pi_{\TUR{\bfr}{a}}}$ are transcendental (conditional on Schanuel's conjecture, and relying crucially on Mih\v{a}ilescu's proof of the Catalan conjecture).         
         
    \item
    The condition $s\geq r_0(d r_d+1)+r_d+1$ is exactly the right generalisation of the condition $s\geq (r-1)(d+1)+2$ in \Cref{conj: main}: it is the smallest value of $s$ able to `detect'  $\TUR{\bfr}{a}$.
     
    %\item
    %\Cref{theorem: large a arbitrary number of deficient blobs} asymptotically settles the cases $(s,q)=(5, 2\binom{5}{2}+4)$ and $(5, a\binom{5}{2}+7)$ ($a\geq $), answering two questions of Day, Falgas--Ravry and Treglown~\cite{day2022extremal}.
    
    \item
    Our proof of \Cref{theorem: large a arbitrary number of deficient blobs} also provides a unified, streamlined and much simpler proof of the main results of~\cite{day2022extremal, falgas2024extremal, Mubayi2019-le}.  Indeed, specialising our proof to the case of what was the previously most general result~\cite[Theorem 1.6]{falgas2024extremal}, we more than halve the case analysis and calculations required for the result.
\end{enumerate}
\end{remark}
%--------------------------------------%

Settling a conjecture of Falgas--Ravry~\cite{falgas2024extremal} in a strong form, we obtain a stability version of \Cref{theorem: large a arbitrary number of deficient blobs}.

%--------------------------------------%
\begin{theorem}
\label{theorem: stability}
For all $r_0, r_d, d\in \ZZ_{> 0}$, all $s\geq r_0(d r_d+1)+r_d+1$ and all $a$ sufficiently large, letting $\bfr$ denote the $(d+1)$-tuple $\bfr \defined (r_0, 0,0,0, \dotsc, 0, r_d)$, and $q \defined \SIGMA{\TUR{(\bfr)}{a}}{s} $, we have that every $(s,q)$-graph $G$ on $n$ vertices with $P(G)\geq \PI{\TUR{\bfr}{a}}{n}^{1-o(1)}$ is $o(n^2)$-close in edit distance to a product-maximising blow-up of $\TUR{\bfr}{a}$.
\end{theorem}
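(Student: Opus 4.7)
The plan is to bootstrap the proof of \Cref{theorem: large a arbitrary number of deficient blobs} into a stability statement by carefully tracking the slack at each step and invoking a quantitative strict-concavity property of the underlying optimisation. Write $T \defined \TUR{\bfr}{a}$ and $\pi \defined \pi_T$, and let $G$ be an $(s,q)$-graph on $n$ vertices with $\log P(G) \geq \pi \binom{n}{2} - \eps n^2$ for some $\eps = o(1)$; the goal is to show that $G$ is at edit distance $o(n^2)$ from a product-maximising blow-up of $T$.

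First I would revisit the proof of \Cref{theorem: large a arbitrary number of deficient blobs} and extract from it a \emph{defect inequality} of the form
\begin{equation*}
\log P(G) \leq \pi \tbinom{n}{2} - \sum_{v \in V(G)} D(v),
\end{equation*}
where $D(v) \geq 0$ measures the deviation of the ``multiplicity profile'' of $v$ (the sorted vector of multiplicities of the edges incident to $v$) from one of the $r_0 + r_d$ canonical profiles exhibited by vertices of a product-optimal blow-up of $T$. An inequality of this type is typical in averaging-over-$s$-sets arguments for multigraph Turán problems; the large-$a$ hypothesis guarantees a uniform lower bound on $D(v)$ whenever $v$'s profile is bounded away from every canonical profile. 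Near-extremality $\sum_v D(v) \leq \eps n^2$ then forces all but $o(n)$ vertices to have profile within $o(1)$ of some canonical profile, yielding a partition $V(G) = U_1 \sqcup \dotsb \sqcup U_{r_0 + r_d} \sqcup B$ with $\card{B} = o(n)$.

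Next I would show that the cluster sizes $\card{U_i}/n$ lie within $o(1)$ of the optimal proportions determined by the value $x_\star$ in the formula following \Cref{theorem: large a arbitrary number of deficient blobs}. This reduces to establishing that the concave functional whose unique maximiser is $x_\star$ is \emph{quantitatively} strictly concave near its maximum, so that a near-optimum of the product-of-multiplicities objective must be close to the exact optimum. Again the assumption that $a$ is large is essential: the relevant second-order bounds can be made uniform only once $a$ exceeds an explicit polynomial in $d, r_0, r_d$. Finally, a standard cleanup step shows that on the clustered vertices all but $o(n^2)$ edges already carry the multiplicity prescribed by $T$: any larger deviation would either violate the $(s,q)$-constraint on some $s$-set meeting the appropriate clusters, or introduce a loss in $\log P(G)$ contradicting the defect inequality.

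The main obstacle is the first step: distilling from the proof of \Cref{theorem: large a arbitrary number of deficient blobs} a \emph{robust} defect inequality with an explicit quantitative lower bound on $D(v)$, i.e.\ upgrading ``the averaging inequality is tight only at canonical profiles'' into ``$\eps$-close to tight only at profiles that are $\delta(\eps)$-close to canonical, with $\delta(\eps) \to 0$''. This requires a careful second-order analysis of the concave functional underlying the extremal proof and is what imposes the large-$a$ threshold on this stability statement; once the defect inequality is in place, the clustering, proportion-stability, and edge-cleanup steps proceed in a relatively routine manner, by now standard in the literature on Mubayi–Terry-type stability.
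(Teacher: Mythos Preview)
Your proposal rests on a misconception about how \Cref{theorem: large a arbitrary number of deficient blobs} is actually proved, and the approach you outline has a genuine structural gap.

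The paper does \emph{not} prove \Cref{theorem: large a arbitrary number of deficient blobs} via a per-vertex defect inequality of the form $\log P(G) \leq \pi\binom{n}{2} - \sum_v D(v)$, so there is nothing of this shape to ``extract'' and make quantitative. Instead, both \Cref{theorem: large a arbitrary number of deficient blobs} and \Cref{theorem: stability} are derived from a single technical statement (\Cref{theorem: large a technical statement}) whose proof proceeds by a completely different mechanism: one first shows that $G$ must contain a \emph{heavy $s_1$-set} $X$ (an $s_1$-subset supporting exactly $\SIGMA{T}{s_1}$ edges, where $s_1 = 2s_0 - dr_0 - 2$), then proves via a chain of structural lemmas that $G[X]$ is itself a blow-up of $T$ with prescribed part sizes. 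Only \emph{after} this concrete reference structure has been located does one classify the remaining vertices, and one does so by their edges into $X$, not by any intrinsic ``multiplicity profile''. The stability then comes essentially for free from \Cref{lemma: stability geometric averaging} applied to a weighted geometric average over a carefully chosen subset $U \cup W \subseteq X$, together with elementary degree bounds on vertices of $X$ to pin down the part sizes (\Cref{lemma: stability part sizes}); a final light-edge count finishes the argument.

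The gap in your approach is that the ``sorted vector of multiplicities of edges incident to $v$'' does not carry enough information to assign $v$ to a part of $T$: two vertices in different parts of a product-optimal blow-up of $T$ can have identical multisets of incident multiplicities, and what distinguishes them is \emph{which other vertices} those edges go to. The paper sidesteps this circularity by first anchoring everything to the heavy set $X$ (proved to be a genuine blow-up), so that ``canonical profile'' becomes ``pattern of edges into $X$''. Without such an anchor, your Step~1 is not well-posed, and your Step~2 (clustering) has no basis. If you want to salvage a profile-based route, you would need something like regularity plus a reduced-pattern analysis, which is a different argument altogether.
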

%--------------------------------------%

Finally, we prove the flat intervals conjecture of Falgas--Ravry, \Cref{conj: flat interval}, which in particular answers a question of Day, Falgas--Ravry and Treglown about the case $(s,q)=(5, a\binom{5}{2}+7)$:

%--------------------------------------%
\begin{theorem}
\label{theorem: flat intervals}
For every $r,a \in \ZZ_{\geq 1}$ and every $s\geq 2r+1$, we have that
\begin{equation*}
    \ex_\Pi \p[\big]{s, \SIGMA{\TUR{(r)}{a}}{s} }
    = \ex_\Pi \p[\big]{s, \SIGMA{\TUR{(r)}{a}}{s} + 1 }
    = \dotsb
    = \ex_\Pi \p[\big]{s, \SIGMA{\TUR{(r)}{a}}{s} + \floor{(s-1)/r}-1 }.
\end{equation*}
In particular, for all $0 \leq t \leq \floor{(s-1)/r}-1$, we have
\begin{equation*}
    \ex_\Pi \p[\big]{s, \SIGMA{\TUR{(r)}{a}}{s} + t} = a\p[\Big]{\frac{a+1}{a}}^{(r-1)/r}.
\end{equation*}
\end{theorem}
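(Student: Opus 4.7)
For the lower bound, the balanced $r$-equipartition of $[n]$ is a $\TUR{(r)}{a}$-blow-up; this is an $(s, q^*)$-graph (and hence an $(s, q^*+t)$-graph for every $t \geq 0$) with geometric mean edge multiplicity tending to $e^{\pi_{\TUR{(r)}{a}}} = a\bigl(\tfrac{a+1}{a}\bigr)^{(r-1)/r}$, where $q^* := \SIGMA{\TUR{(r)}{a}}{s}$. Since $\ex_\Pi(s, \cdot)$ is non-decreasing in its second argument, this yields $\ex_\Pi(s, q^*+t) \geq a\bigl(\tfrac{a+1}{a}\bigr)^{(r-1)/r}$ for every $t \geq 0$, and by the same monotonicity it suffices to establish the matching upper bound at the right endpoint $t = L - 1$.

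The central tool is the scalar inequality
\begin{equation*}
    \log w \leq \log a + (w - a) \log \tfrac{a+1}{a}, \quad w \in \ZZ_{\geq 0},
\end{equation*}
with equality iff $w \in \set{a, a+1}$. This follows from examining the discrete difference $f(w+1) - f(w) = \log\tfrac{a+1}{a} - \log\tfrac{w+1}{w}$ of the slack function $f(w) := \log a + (w-a)\log((a+1)/a) - \log w$: it is strictly positive for $w \geq a+1$ and strictly negative for $w \leq a-1$, so $f$ is strictly decreasing on $[0, a]$ and strictly increasing on $[a+1, \infty)$, and since $f(a) = f(a+1) = 0$ one concludes $f \geq 0$ everywhere with equality only at $a$ and $a+1$. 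Summing over all edges of any $(s, q^*+L-1)$-graph $G$ on $n$ vertices gives $\log P(G) \leq \binom{n}{2}\log a + \log\tfrac{a+1}{a}\sum_e (w(e) - a)$, so it suffices to establish the additive bound $\sum_e (w(e)-a) \leq \bigl(\tfrac{r-1}{r} + o(1)\bigr)\binom{n}{2}$.

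To this end I would decompose into layers $F_k := \set{e \st w(e) \geq a + k}$ ($k \geq 1$), so that $\sum_e (w(e)-a)^+ = \sum_{k \geq 1}|F_k|$ dominates $\sum_e (w(e) - a)$. The key combinatorial input is the identity $c_r - c_{r+1} \geq L$, where $c_k := \min\set{\sum_i \binom{k_i}{2} \st \sum_i k_i = s, k_i \in \ZZ_{\geq 0}}$; this is verified by splitting off one vertex from the largest part of a balanced $r$-partition, yielding a saving of $\lambda = \lfloor s/r\rfloor$ when $s \not\equiv 0 \pmod r$ and of $\lambda - 1 = L$ when $s \equiv 0 \pmod r$. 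The identity implies that every $s$-vertex graph with strictly more than $\binom{s}{2} - c_r$ edges has chromatic number at least $r + 1$. Provided the per-$s$-set bound $|F_1 \cap S| \leq K := \binom{s}{2} - c_r + L - 1$ holds (the generic case where all multiplicities of $G$ are $\geq a$, which follows from $(a+1)|F_1 \cap S| \leq a\binom{s}{2} + |F_1 \cap S| \leq \sum_{e \in S} w(e) \leq q$), Erdős–Stone gives $|F_1| \leq \bigl(\tfrac{r-1}{r} + o(1)\bigr)\binom{n}{2}$. For $k \geq 2$, the crude bound $k|F_k \cap S| \leq K < \binom{s}{2}$ yields $K/k < \lfloor s^2/4\rfloor$, so the forbidden family for $F_k$ admits bipartite members and the degenerate Erdős–Stone bound gives $|F_k| = o(n^2)$, of which there are only finitely many since $w(e) \leq q$.

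The main obstacle is the possibility of edges with multiplicity strictly below $a$: they weaken the per-$s$-set constraint to $|F_1 \cap S| \leq K + Y|_S$, where $Y|_S := \sum_{e \in S}(a - w(e))^+$ is the local deficit. I would handle this via a case split on the total deficit $Y := \sum_e (a - w(e))^+$. If $Y = o(n^2)$, then on average $Y|_S = o(1)$, so all but a vanishing fraction of $s$-sets satisfy the clean bound $|F_1 \cap S| \leq K$; a supersaturation / removal version of Erdős–Stone then absorbs the negligible discrepancy. If $Y = \Omega(n^2)$, then the strict positivity of the scalar slack $\delta(w) := \log a + (w-a)\log((a+1)/a) - \log w$ at $w \notin \set{a, a+1}$ (e.g.\ $\delta(a-1) = \log\tfrac{a^2}{a^2-1} > 0$), accumulated over $\Omega(n^2)$ anomalous edges, yields $\sum_e \delta(w(e)) = \Omega(n^2)$, which strictly outweighs the extra allowance on $X-Y$ from averaging and pushes $\log P(G)/\binom{n}{2}$ below $\pi_{\TUR{(r)}{a}}$. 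Combining the two cases yields $\ex_\Pi(s, q^*+L-1) \leq a\bigl(\tfrac{a+1}{a}\bigr)^{(r-1)/r}$, closing the flat interval.
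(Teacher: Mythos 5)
Your lower bound and your first reduction are exactly right and match the paper: balanced blow-ups of $\TUR{(r)}{a}$ give the lower bound, and the linearisation $\log w \leq \log a + (w-a)\log\frac{a+1}{a}$ is just the integer AM--GM inequality (\Cref{prop: integer AM-GM}) in logarithmic form, so everything comes down to the additive bound $\sum_e (w(e)-a) \leq \bigl(\tfrac{r-1}{r}+o(1)\bigr)\binom{n}{2}$. Where you diverge from the paper is in how that additive bound is proved. The paper proves the sharper statement $\ex_\Sigma\bigl(n,s,\SIGMA{\TUR{(r)}{a}}{s}+\floor{(s-1)/r}-1\bigr) \leq \SIGMA{\TUR{(r)}{a}}{n}+\floor{(n-1)/r}-1$ by induction on $n$ using the floor-averaging recursion $\ex_\Sigma(n+1,\cP)\leq\floor{\tfrac{n+1}{n-1}\ex_\Sigma(n,\cP)}$ (\Cref{lemma: additive averaging flat interval}); this bounds the raw edge-sum, so edges of multiplicity below $a$ only help and never need separate treatment. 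You instead bound the positive excess $\sum_{k\geq 1}\card{F_k}$ by applying Erd\H{o}s--Stone to each multiplicity layer. Your combinatorial input $c_r-c_{r+1}\geq\floor{(s-1)/r}$ is correct and does place $K$ strictly below $\ex(s,K_{r+2})$, so in the deficit-free case your route works and is a legitimately different (if less sharp) argument.

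The gap is in the treatment of deficit edges, and it is not cosmetic: your two cases do not meet. The supersaturation branch handles $Y\leq\eps n^2$ only for $\eps$ small enough relative to the supersaturation constants of the Erd\H{o}s problem at threshold $K$. The other branch compares the accumulated slack $\sum_e\delta(w(e))\geq c_a Y$ against ``the extra allowance from averaging''; but single-step averaging over $s$-sets gives $\sum_e(w(e)-a)\leq\tfrac{K}{\binom{s}{2}}\binom{n}{2}$, and $\tfrac{K}{\binom{s}{2}}-\tfrac{r-1}{r}$ is a fixed positive quantity of order $1/(rs)$, independent of $Y$. So the slack outweighs the allowance only when $Y\geq C(a,r,s)\,n^2$ for a specific constant $C\approx 1/(c_a rs)$, and nothing in the proposal covers the intermediate regime $\eps n^2<Y<Cn^2$. (A relaxed per-set bound $\card{F_1\cap S}\leq K+Y|_S$ does not obviously yield $\card{F_1}\leq\ex(n,s,K)+O(Y)$ with a constant small enough to be beaten by $c_a\approx a^{-2}$.) The clean repair is to abandon the layer decomposition and bound $\sum_e(w(e)-a)$ directly --- either by the paper's iterated floor-averaging, or by invoking F\"uredi--K\"undgen's integer-weighted result --- since that quantity already charges each deficit edge negatively and makes the case analysis unnecessary.
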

%--------------------------------------%

Our methods are in principle able to show that $\ex_\Pi\left(s, \SIGMA{\TUR{(\bfr)}{a}}{s} \right)=e^{\pi_{\TUR{(\bfr)}{a}}}$ for arbitrary fixed $\bfr$ and $s$ and $a$ sufficiently large, though the computations involved become increasingly intricate.
We discuss further cases of \Cref{conj: large a regime generalised Turan optimal} we were able to verify as well as the limitations of our approach in \Cref{section: concluding remarks}.
One of our main innovations on previous work is the notion of `heavy sets' --- large regular structures that one can find in near extremal multigraphs; see \Cref{section: large a strategy} for details.

%--------------------------------------%
\subsubsection{Small ambient multiplicity}

For small $a$, many of the constructions corresponding to generalised Turán patterns are not available, since they would involve non-positive edge multiplicities.  In this paper, we focus on the case $a=1$ for the additive Füredi--Kündgen multigraph problem and on the case $a=2$ for the multiplicative Mubayi--Terry multigraph problem --- the first challenging cases for these two problems.

In both cases, for the given value of $a$, we fully resolve the problem for all pairs $(s,q)$ with $a\binom{s}{2}\leq q< a\binom{s}{2}+\lfloor\frac{3s^2+13s}{26}\rfloor$, and show that we see extremal constructions very different from blow-ups of generalised Turán patterns --- and in particular that we get significant instability for the additive case, and blow-ups of Petersen-like patterns for the multiplicative case.

To state our results more precisely, we must introduce some notation and patterns.
Given a graph $G$ and an integer $a\in \ZZ_{\geq 1}$, we denote by $G^{(a)}$ the multigraph pattern on $V(G)$ in which
\begin{enumerate}[ --- ]
    \item
    for every vertex $v \in V(G)$, the loop $\set{v}$ has multiplicity $a - 1$;
    
    \item
    for every non-edge $uv \in E(\overline{G})$, the pair $\set{u,v}$ has multiplicity $a$;
    
    \item
    for every edge $uv \in E(G)$, the pair $\set{u,v}$ has multiplicity $a + 1$.
\end{enumerate}
We shall consider several patterns of this form in the special cases $a=1$ and $a=2$, and in this subsection as well as in \Cref{section: small a}, we write $G$ to denote the pattern $G^{(a)}$ whenever there is no risk of confusion and the value of $a$ is clear from context.

We write $K_r$ for the complete graph on $r$ vertices, $P_\ell$ for the path on $\ell$ vertices, $C_\ell$ for the cycle on $\ell$ vertices, and $K_{1, \ell}$ for the star on $\ell +1$ vertices.  We further consider a family $\cC_5$ of \emph{subcubic} (maximum degree at most $3$) graphs of \emph{girth} (length of the shortest cycle) at least $5$, which we now define.

%--------------------------------------%
\begin{figure}[ht]
\centering
\begin{tikzpicture}[scale=1]
    \tikzstyle{vertex_style}=[circle, fill = black, inner sep=0pt, minimum size=6pt]
    \tikzstyle{edge_style}=[very thick, black]

    %% K13
    \begin{scope}[xshift = 0cm]
        \node[vertex_style] (0) at (-0.5,0){};
        \node[vertex_style] (1) at (0.5,-0.5){};
        \node[vertex_style] (2) at (0.5,0){};
        \node[vertex_style] (3) at (0.5,0.5){};
    
        \draw[edge_style] (0) -- (1);
        \draw[edge_style] (0) -- (2);
        \draw[edge_style] (0) -- (3);
        \node[] at (0,-0.9) {$K_{1,3}$};
    \end{scope}

    %% C5
    \begin{scope}[xshift = 2.25cm]
        \begin{scope}[rotate=90]
            \foreach \x/\y in {0/1,72/2,144/3,216/4,288/5}{
                \node[vertex_style] (\y) at (canvas polar cs: radius=0.65cm, angle=\x){};
            }
        \end{scope}
        \draw[edge_style] (1) -- (2) -- (3) -- (4) -- (5) -- (1);
        \node[] at (0,-0.9) {$C_5$};
    \end{scope}

    %% H6
    \begin{scope}[xshift = 4.25cm]
        \node[vertex_style] (0) at (-0.4,-0.5){};
        \node[vertex_style] (1) at (-0.4,0){};
        \node[vertex_style] (2) at (-0.4,0.5){};
        \node[vertex_style] (3) at (0.4,-0.5){};
        \node[vertex_style] (4) at (0.4,0){};
        \node[vertex_style] (5) at (0.4,0.5){};
    
        \draw[edge_style] (0) -- (1) -- (2);
        \draw[edge_style] (3) -- (4) -- (5);
        \draw[edge_style] (1) -- (4);
        \node[] at (0,-1) {$H_6$};
    \end{scope}

    %% H7
    \begin{scope}[xshift = 6.35cm]
        \begin{scope}[rotate=0]
            \foreach \x/\y in {0/1,60/2,120/3,180/4,240/5,300/6}{
                \node[vertex_style] (\y) at (canvas polar cs: radius=0.75cm, angle=\x){};
            }
        \end{scope}
        \node[vertex_style] (0) at (0,0){};
        \draw[edge_style] (1) -- (2) -- (3) -- (4) -- (5) -- (6) -- (1);
        \draw[edge_style] (1) -- (0) -- (4);
        \node[] at (0,-1.) {$H_7$};
    \end{scope}

    %% H9
    \begin{scope}[xshift = 8.7cm]
        \begin{scope}[rotate=0]
            \foreach \x/\y in {0/1,60/2,120/3,180/4,240/5,300/6}{
                \node[vertex_style] (\y) at (canvas polar cs: radius=0.85cm, angle=\x){};
            }
        \end{scope}
        \begin{scope}[rotate=-30]
            \foreach \x/\y in {0/7,120/8,240/9}{
                    \node[vertex_style] (\y) at (canvas polar cs: radius=0.35cm, angle=\x){};
                }
        \end{scope}
        \draw[edge_style] (1) -- (2) -- (3) -- (4) -- (5) -- (6) -- (1);
        \draw[edge_style] (1) -- (8) -- (4);
        \draw[edge_style] (2) -- (7) -- (5);
        \draw[edge_style] (3) -- (9) -- (6);
        \node[] at (0,-1.1) {$H_9$};
    \end{scope}

    %% Petersen
    \begin{scope}[xshift = 11.35cm]
        \begin{scope}[rotate=90]
            \foreach \x/\y in {0/1,72/2,144/3,216/4,288/5}{
                \node[vertex_style] (\y) at (canvas polar cs: radius=0.5cm, angle=\x){};
            }
            \foreach \x/\y in {0/6,72/7,144/8,216/9,288/10}{
                \node[vertex_style] (\y) at (canvas polar cs: radius=1cm, angle=\x){};
            }
        \end{scope}
     
        \foreach \x/\y in {1/6,2/7,3/8,4/9,5/10}{
            \draw[edge_style] (\x) -- (\y);
        }
        \foreach \x/\y in {1/3,2/4,3/5,4/1,5/2}{
            \draw[edge_style] (\x) -- (\y);
        }
        
        \foreach \x/\y in {6/7,7/8,8/9,9/10,10/6}{
            \draw[edge_style] (\x) -- (\y);
        }
        \node[] at (0,-1.2) {$\Petersen$};
    \end{scope}
\end{tikzpicture}
\caption{The family of graphs $\cC_5$.}
\label{fig:family c5}
\end{figure}
%--------------------------------------%

Recall that an $(r,g)$-cage is an $r$-regular graph of girth $g$.
We let $\Petersen$ be the unique up to isomorphism $(3,5)$-cage on $10$ vertices.
We let $H_6$ denote the graph on $[6]$ with edges $\set{12, 13, 14, 25, 26}$ (the `$H$-graph'), $H_7$ the graph obtained by glueing two copies of $C_5$ along a path on three vertices (i.e.\ the graph on $[7]$ with edges $\set{12, 23, 34, 45, 15, 16, 67, 74}$), and $H_9$ the graph obtained from the hexagon by joining opposite vertices by a copy of $P_2$ (i.e.\ the graph on $[9]$ with edges $\set{12, 23, 34, 45, 56, 16, 17, 74, 28, 85, 39, 96}$).
Finally, we let $\cC_5$ denote the family $\set{K_{1,3}, C_5, H_6, H_7, H_9, \Petersen}$.
See \Cref{fig:family c5}.

For each of the patterns $P$ associated to the graphs defined above, we determine the maximal sum of edge multiplicities $\SIGMA{P}{s}$ in a blow-up of $P$ on $s$ vertices. 
%and more generally $K_{\ell_1,\ell_2}$ the complete bipartite graph on vertices with $\ell_1$ vertices in one part and $\ell_2$ vertices in the other part.

%--------------------------------------%
\begin{proposition}
\label{prop: sum for optimal blow-ups of stars paths and cycles}
The following hold for all $a\geq 1$:
\begin{enumerate}[(i)]
    \item For all $\ell \geq 2$, we have $\SIGMA{K_{1,\ell}^{(a)}}{s} \leq a\binom{s}{2}+ \left \lfloor \frac{\ell-1}{6\ell+2}s^2+\frac{s}{2} \right \rfloor$, with equality for $\ell \in \set{2,3,4}$.

    \item For all $\ell_1 \geq 4$ and all $\ell_2 \geq 6$, we have $\SIGMA{P_{\ell_1}^{(a)}}{s} = \SIGMA{C_{\ell_2}^{(a)}}{s} = a\binom{s}{2}+\left \lfloor \frac{s^2}{12}+\frac{s}{2}\right \rfloor$.

    \item For all $3 \leq \ell \leq 5$, we have $\SIGMA{C_\ell^{(a)}}{s} = a \binom{s}{2}+ \left \lfloor \frac{s^2}{2\ell}+\frac{s}{2} \right \rfloor$.

    \item For the pattern $P_{+2}=P_{+2}^{(a)}$ consisting of an edge of multiplicity $a+2$ with multiplicity $a-1$ loops around its endpoints, we have $\SIGMA{P_{+2}}{s} = a\binom{s}{2}+ \left \lfloor \frac{s^2}{4}+\frac{s}{2} \right \rfloor$.
\end{enumerate}
\end{proposition}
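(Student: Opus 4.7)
The key observation is that for any graph $G$, any blow-up $H$ of the pattern $G^{(a)}$ on $s$ vertices with part sizes $(x_v)_{v \in V(G)}$ summing to $s$ satisfies
\begin{equation*}
    e(H) = a\binom{s}{2} + f(G, x),
    \quad \text{where } f(G, x) \defined \sum_{uv \in E(G)} x_u x_v - \sum_{v \in V(G)} \binom{x_v}{2}.
\end{equation*}
This follows by writing each of the $\binom{s}{2}$ pairs as contributing base multiplicity $a$, shifted by $-1$ for pairs inside a common part and by $+1$ for pairs across parts joined by an edge of $G$. For part~(iv), the analogous computation for the two-vertex pattern $P_{+2}^{(a)}$ yields $e(H) = (a-1)\binom{s}{2} + 3 x_1 x_2$; maximising over $x_1 + x_2 = s$ gives $x_1 x_2 = \lfloor s^2/4 \rfloor$, and a short parity check of the identity $\binom{s}{2} + \lfloor s^2/4 + s/2 \rfloor = 3\lfloor s^2/4 \rfloor$ settles (iv) directly. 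For parts (i)--(iii) the plan is to solve the continuous relaxation of the quadratic program $\max\set{f(G, x) : x \geq 0,\ \sum_v x_v = s}$ using Lagrange multipliers, and then verify integer achievability by explicit construction.

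For part~(i), symmetrising over the $\ell$ leaves of $K_{1,\ell}$ reduces the Lagrangian to two variables (centre value $u$ and common leaf value $t$), with critical point $(u, t) = ((\ell+1)s/(3\ell+1),\ 2s/(3\ell+1))$ and critical value $(\ell-1)s^2/(6\ell+2) + s/2$; since $f$ is integer-valued, this yields the stated floor upper bound, and constructions approximating the critical point (with a short case analysis on $s \bmod (3\ell+1)$) achieve equality for $\ell \in \set{2,3,4}$. For part~(iii) with $G = C_\ell$ and $\ell \leq 5$, rotational symmetry together with the KKT conditions forces the unique interior critical point to be uniform, $x_v = s/\ell$, giving the value $s^2/(2\ell) + s/2$. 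For the lower bound in part~(ii), the $P_4$-optimum $(m, 2m, 2m, m)$ with $s = 6m$ achieves $f = 3m^2 + 3m = s^2/12 + s/2$, and embeds as a subpattern of any $P_{\ell_1}$ with $\ell_1 \geq 4$ or $C_{\ell_2}$ with $\ell_2 \geq 6$ by zeroing unused parts; the other residue classes of $s$ modulo $6$ are handled by small perturbations such as $(m, 2m, 2m+1, m)$ when $s = 6m + 1$.

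The main obstacle is the matching upper bound in part~(ii): showing $f(G, x) \leq s^2/12 + s/2$ whenever $G$ is a path $P_{\ell_1}$ with $\ell_1 \geq 4$ or a cycle $C_{\ell_2}$ with $\ell_2 \geq 6$. My plan is to induct on $\abs{\set{v : x_v > 0}}$, using trivial bounds for $P_1, P_2, P_3$ as the base. If some $x_v = 0$ at an optimiser, removing $v$ splits $G$ into paths of weights $s_1, \dotsc, s_k$, and the inductive hypothesis yields $f \leq \sum_j (s_j^2/12 + s_j/2) \leq s^2/12 + s/2$ via $\sum_j s_j^2 \leq (\sum_j s_j)^2$. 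If instead $x_v > 0$ everywhere, the KKT stationarity conditions reduce to the second-order linear recurrence $x_{i+1} - x_i + x_{i-1} = \mu$, whose general solution $x_i = \mu + A\cos(i\pi/3) + B\sin(i\pi/3)$ has period $6$; I would then check the resulting system of boundary/periodicity conditions, showing that on paths with $\ell_1 \geq 6$ such strictly positive solutions do not exist, while on cycles every full-support stationary point evaluates to $f = s^2/(2\ell_2) + s/2 \leq s^2/12 + s/2$. The most delicate step is verifying this uniform evaluation for the two-parameter family of period-$6$ stationary points on $C_\ell$ when $6 \mid \ell$, which amounts to showing that the trigonometric cross-terms arising in $\sum_i y_i y_{i+1}$ and $\frac12 \sum_i y_i^2$ (with $y_i = A\cos(i\pi/3) + B\sin(i\pi/3)$) cancel exactly.
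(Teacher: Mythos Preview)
Your approach is broadly correct but differs from the paper's in framing and misses a key simplification.

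\textbf{Framing.} The paper does not work directly with your $f(G,x)$. Instead it computes the normalised density $\sigma_P$ via \Cref{cor: balanced degrees in optimal pattern weightings}: for any vertex $v$ with $x_v>0$ in an optimal weighting (with $\sum x_v=1$), one has $\sigma_P = (a-1)x_v + \sum_{u\neq v}P(uv)x_u$. It then invokes the general \Cref{lemma: upper bounds for sigma p} to get $\Sigma_P(s)\le\lfloor \sigma_P s^2/2 - (a-1)s/2\rfloor$, which is exactly your floor bound once $\sigma_P$ is known. This separation (find $\sigma_P$; then discretise) is cleaner than optimising $f$ directly for each $s$.

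\textbf{The simplification you are missing.} Your ``most delicate step'' for cycles --- checking trigonometric cancellation over the two-parameter family of period-$6$ stationary points --- is unnecessary. At any full-support stationary point on $C_\ell$, the balanced-degree equations read $\sigma_{C_\ell^{(a)}}-a = x_{i-1}+x_{i+1}-x_i$ for all $i$. Summing over $i$ gives $\ell(\sigma_{C_\ell^{(a)}}-a)=\sum_i x_i=1$, so $\sigma_{C_\ell^{(a)}}=a+1/\ell$ \emph{for every} interior critical point, with no need to identify them. The paper does exactly this. A similar device handles paths: for $P_4$ the combination (endpoint equations)${}+2\cdot{}$(interior equations) yields $6(\sigma-a)=\sum x_i=1$, and for $P_5$ an analogous weighted sum works. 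So both your claim that positive solutions fail to exist on $P_{\ell_1}$, $\ell_1\ge 6$, and your trigonometric verification for $C_{\ell_2}$ can be replaced by this one-line averaging.

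\textbf{A gap.} In your part~(ii) upper bound, the induction on support size reduces to full-support critical points on paths $P_m$ with $m\ge 4$. You address $m\ge 6$ (no positive solution) and cycles, but never evaluate the full-support critical value for $P_4$ and $P_5$. These cases are not covered by your base ($P_1,P_2,P_3$) and must be handled explicitly; the averaging trick above does this, giving $\sigma_{P_4^{(a)}}=\sigma_{P_5^{(a)}}=a+1/6$ directly.
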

%--------------------------------------%

%--------------------------------------%
\begin{proposition}
\label{prop: sum-optimal blow-ups for C5}
For all $G \in \cC_5$, we have $\SIGMA{G^{(a)}}{s} = a\binom{s}{2} + \left \lfloor \frac{s^2}{10}+\frac{s}{2}\right \rfloor$.
\end{proposition}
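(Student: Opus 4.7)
The plan is to reduce the proposition to a continuous inequality via the identity
\[
\Sigma_s\p[\big]{G^{(a)}} = a\binom{s}{2} + f_G(\mathbf{n}), \qquad f_G(\mathbf{n}) \defined \sum_{uv \in E(G)} n_u n_v - \sum_v \binom{n_v}{2},
\]
where $\mathbf{n} = (n_v)_{v \in V(G)}$ records the part sizes of a blow-up, together with the exact rewriting
\[
f_G(\mathbf{n}) = s^2\, h_G(\mathbf{n}/s) + \tfrac{s}{2}, \qquad h_G(\mathbf{p}) \defined \sum_{uv \in E(G)} p_u p_v - \tfrac{1}{2}\|\mathbf{p}\|_2^2,
\]
obtained by expanding $\binom{n_v}{2}$. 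Since $f_G(\mathbf{n})$ is always an integer, the proposition splits into (i) the continuous bound $h_G(\mathbf{p}) \leq 1/10$ on the simplex, and (ii) an integer construction matching $\lfloor s^2/10 + s/2 \rfloor$.

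For (i), the key observation is that every member of $\cC_5$ embeds as an induced subgraph of the Petersen graph: $K_{1,3}$ appears around any vertex (girth $\geq 4$ makes its three neighbours independent), $C_5$ is the outer cycle, $H_6$ consists of an edge together with its endpoints' other neighbours, $H_7$ arises from the outer cycle plus two adjacent inner vertices whose spokes land on a path of length $2$ in the outer cycle, and $H_9$ is Petersen minus any one vertex. Extension by zero then yields $h_G(\mathbf{p}) = h_{\Petersen}(\tilde{\mathbf{p}})$, so it suffices to prove $h_{\Petersen}(\mathbf{p}) \leq 1/10$ on the hyperplane $\mathbf{1}^T \mathbf{p} = 1$. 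Using the spectrum $\set{3, 1^{(5)}, (-2)^{(4)}}$ of $A_{\Petersen}$, a direct check shows that $M \defined \tfrac{1}{5} J_{10} + I_{10} - A_{\Petersen}$ has eigenvalue $0$ on $\mathbf{1}$ and on the $1$-eigenspace of $A_{\Petersen}$, and eigenvalue $3$ on the $(-2)$-eigenspace; so $M \succeq 0$. Expanding $0 \leq \mathbf{p}^T M \mathbf{p}$ under $\mathbf{1}^T \mathbf{p} = 1$ rearranges exactly to $\mathbf{p}^T(A_{\Petersen} - I_{10}) \mathbf{p} \leq 1/5$, i.e.\ $h_{\Petersen}(\mathbf{p}) \leq 1/10$.

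For the lower bound (ii), I first identify via Lagrange the unique optimal fractional weighting $\mathbf{x}^*$ for each graph in $\cC_5$: $(1/10)^{10}$ for $\Petersen$, $(1/5)^5$ for $C_5$, $(2/5, (1/5)^3)$ for $K_{1,3}$, $((3/10)^2, (1/10)^4)$ for $H_6$, $((1/5)^3, (1/10)^4)$ for $H_7$, and $((2/15)^6, (1/15)^3)$ for $H_9$. Write $s = Dk + r$ where $D \in \set{5, 10, 15}$ is the denominator of $\mathbf{x}^*$. I set $\mathbf{n}$ equal to $s \mathbf{x}^*$ rounded, with the $r$ extra units placed on those parts chosen so as to maximise the number of induced edges in the "upgraded" set $S \subseteq V(G)$. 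A direct evaluation of $f_G$ then shows that matching $\lfloor s^2/10 + s/2 \rfloor$ requires $e_G(S) = \lfloor r^2/10 + r/2 \rfloor$; for $\Petersen$ the required sequence $0, 0, 1, 2, 3, 5, 6, 8, 10, 12, 15$ is precisely the induced-edge profile of the Petersen graph on $r = 0, \dotsc, 10$ vertices, and for the other graphs in $\cC_5$ it is a finite verification over the $D - 1$ non-zero residues modulo $D$.

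The main obstacle is the upper bound: a direct KKT analysis for each of the six graphs works but requires six separate computations and separate treatment of the boundary critical points on the simplex. The spectral/PSD shortcut via Petersen containment avoids this by unifying all six cases into a single spectral identity for $\Petersen$. A minor bonus is that the PSD inequality controls $h_{\Petersen}(\mathbf{p})$ on the entire hyperplane $\mathbf{1}^T \mathbf{p} = 1$ (not just on the simplex), so no separate boundary analysis is needed.
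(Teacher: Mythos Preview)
Your upper bound is correct and takes a genuinely different route from the paper. The paper proves the equivalent inequality $\sigma_{G^{(a)}}\le a+\tfrac15$ (i.e.\ $h_G\le\tfrac1{10}$) via a local Lagrangian argument: if the optimum assigns positive weight to a degree-$3$ vertex $v_1$ with neighbours $v_2,v_3,v_4$, the balanced-degree equations at $v_1,\dots,v_4$ combine to give $5\sigma_{G^{(a)}}\le 5a+1$, using only that the closed second neighbourhood of $v_1$ is a disjoint union (girth $\ge5$). Your spectral argument is slicker for the six graphs at hand and pleasantly uniform; the paper's argument, by contrast, works for \emph{every} subcubic graph of girth $\ge5$, not just those embeddable in the Petersen graph, and is what the paper actually needs later in \Cref{theorem: additive}(iv).

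Your lower bound, however, is more laboured than it needs to be, and the stated formula does not transfer cleanly. The identity $f_G(\mathbf{n})=10k^2+2kr+5k+e_G(S)$ relies on Petersen being $3$-regular with the uniform optimum $\mathbf{x}^*=(1/10)^{10}$; for $K_{1,3},H_6,H_7,H_9$ the base point $s\mathbf{x}^*$ has unequal coordinates, the fractional parts are not all equal, and ``add $r$ units to a set $S$'' with target $e_G(S)=\lfloor r^2/10+r/2\rfloor$ is no longer the right bookkeeping. Your fallback ``finite verification over the $D-1$ residues'' is true but hides real case-work (fourteen residues for $H_9$). The paper sidesteps all of this: every $G\in\cC_5\setminus\{C_5\}$ has a degree-$3$ vertex whose closed neighbourhood is an induced $K_{1,3}$ (girth $\ge5$), so $\Sigma_s(G^{(a)})\ge\Sigma_s(K_{1,3}^{(a)})$, which equals $a\binom{s}{2}+\lfloor s^2/10+s/2\rfloor$ by \Cref{prop: sum for optimal blow-ups of stars paths and cycles}(i); and $\Sigma_s(C_5^{(a)})$ is handled by part~(iii) of the same proposition. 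You could adopt this shortcut and drop the per-graph rounding entirely.
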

%--------------------------------------%

With these results in hand, we can state our results.

%--------------------------------------%
\subsubsection{Small ambient multiplicity on the Füredi--Kündgen multigraph problem}

We first observe that the case $a=0$ of \Cref{problem: Furedi Kundgen} follows from previous work on the Erdős graph problem for $\ex(n,s,q)$ mentioned in the introduction. (Recall that the asymptotics of $\ex(n,s,q)$ follow easily from the Erdős--Stone theorem.)

%--------------------------------------%
\begin{proposition}
\label{prop: a=0 case additive}
Let $s\in \ZZ_{\geq 2}$, and let $q$ be an integer with: $0\leq q<\binom{s}{2}$. Then
\begin{align*}
   \ex_\Sigma(n,s, q) =\ex(n,s,q)+o(n^2). 
\end{align*}
\end{proposition}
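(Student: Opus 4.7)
The lower bound $\ex_\Sigma(n,s,q) \geq \ex(n,s,q)$ is immediate, since every $(s,q)$-graph is an $(s,q)$-multigraph (with edge multiplicities in $\set{0,1}$). For the matching upper bound, let $G = (V,w)$ be an $(s,q)$-multigraph on $n$ vertices, and let $\bar G$ denote its underlying simple graph, i.e.\ the set of pairs $uv$ with $w(uv) \geq 1$. Since each positive-multiplicity pair contributes at least $1$ to $\sum_{uv \in \binom{X}{2}} w(uv) \leq q$ for every $s$-set $X$, the graph $\bar G$ is itself an $(s,q)$-graph, and so $e(\bar G) \leq \ex(n,s,q)$.

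Now set $J \defined \set{uv \st w(uv) \geq 2}$. Each $J$-pair in $\binom{X}{2}$ contributes at least $2$ to the total multiplicity of $X$, and hence $\card{J \cap \binom{X}{2}} \leq \floor{q/2}$ for every $s$-set $X$; in other words, $J$ is an $(s,\floor{q/2})$-graph. A short arithmetic check using $q \leq \binom{s}{2}-1$ shows $\floor{q/2} < \floor{s^2/4}$, so the balanced complete bipartite graph $K_{\floor{s/2},\ceil{s/2}}$ (which has $\floor{s^2/4}$ edges) lies in the forbidden family for $(s,\floor{q/2})$-graphs. Since this forbidden graph is bipartite (chromatic number $2$), the Erdős--Stone--Simonovits theorem gives $\ex(n,s,\floor{q/2}) = o(n^2)$, and consequently $\card{J} = o(n^2)$.

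Combining the two estimates, and using that every edge multiplicity is bounded by $q$ (since any pair extends to some $s$-set), we obtain
\begin{equation*}
    e(G) = e(\bar G) + \sum_{uv \in J} \p[\big]{w(uv) - 1} \leq \ex(n,s,q) + (q-1)\card{J} = \ex(n,s,q) + o(n^2),
\end{equation*}
which yields the required upper bound. The one substantive point in the argument is the passage to $J$: doubly-covered pairs are constrained by the \emph{strictly} tighter $(s,\floor{q/2})$-condition, and since $\floor{q/2}$ sits below the critical threshold $\floor{s^2/4}$, Erdős--Stone forces $\card{J} = o(n^2)$ regardless of whether $\ex(n,s,q)$ itself turns out to be linear, subquadratic, or quadratic in $n$.
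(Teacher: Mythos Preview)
Your proof is correct and follows essentially the same approach as the paper's: both arguments show that the set $J$ of pairs with multiplicity at least $2$ has size $o(n^2)$ via Erd\H{o}s--Stone (the paper phrases this as the pattern $P_{+2}^{(0)}$ failing to be $(s,q)$-admissible, which amounts to the same bipartite-forbidden-subgraph observation you make), and then bound the contribution of $J$ trivially using the global multiplicity cap $q$. Your version is a bit more explicit in checking the arithmetic $\floor{q/2} < \floor{s^2/4}$, which is a nice touch.
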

%--------------------------------------%

The first non-trivial case of \Cref{problem: Furedi Kundgen} is thus when the ambient edge multiplicity $a=1$. For this case, we prove the following:

%--------------------------------------%
\begin{theorem}
\label{theorem: additive}
Let $s\in \ZZ_{\geq 2}$, and let the ambient edge multiplicity be $a=1$ in all graph patterns below. Then the following hold:
\begin{enumerate}[(i)]
    \item for $\binom{s}{2}\leq q< \SIGMA{K_{1,2}}{s}$, we have $\ex_\Sigma(s, q) = 1$;
    
    \item for $\SIGMA{K_{1,2}}{s} \leq q < \SIGMA{P_4}{s}$, we have $\ex_\Sigma(s, q) = 1 + 1/7$;
    
    \item for $\SIGMA{P_4}{s} \leq q < \SIGMA{K_{1,3}}{s}$, we have $\ex_\Sigma(s, q) = 1 + 1/6$;

    \item for $\SIGMA{K_{1,3}}{s} \leq q < \SIGMA{K_{1,4}}{s} $, we have $\ex_\Sigma(s, q) = 1 + 1/5$.
\end{enumerate}
\end{theorem}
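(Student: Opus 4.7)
For each of the four ranges the lower bound is witnessed by a blow-up of an explicit graph-pattern. For (i) we take the all-ones multigraph, trivially an $(s,\binom{s}{2})$-graph of density~$1$. For (ii)–(iv) we take optimally-weighted blow-ups of $K_{1,2}^{(1)}$, $P_4^{(1)}$ and $K_{1,3}^{(1)}$ respectively; by \Cref{prop: sum for optimal blow-ups of stars paths and cycles} these are $(s,\SIGMA{P}{s})$-graphs, and maximising the associated quadratic form in the part proportions yields $\sigma_{K_{1,2}^{(1)}}=8/7$ (centre $3/7$, each leaf $2/7$), $\sigma_{P_4^{(1)}}=7/6$ (endpoints $1/6$, middle parts $1/3$ each), and $\sigma_{K_{1,3}^{(1)}}=6/5$ (centre $2/5$, each leaf $1/5$). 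These computations furnish the claimed values in (ii)–(iv).

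\textbf{Reduction to patterns.}
For the upper bounds, the plan is to deploy the standard averaging/convexity reduction of the kind used by Mubayi--Terry~\cite{Mubayi2020-fb} and by Day--Falgas-Ravry--Treglown~\cite{day2022extremal} to show
\[
\ex_\Sigma(s,q)=\max\bigl\{\sigma_P:P\text{ is a pattern with }\SIGMA{P}{s}\le q\bigr\},
\]
thereby reducing the task to classifying $(s,q)$-admissible patterns. Admissibility already imposes tight bounds on individual multiplicities of $P$: placing all $s$ blown-up vertices in a single part forces each loop to have multiplicity at most $\lfloor q/\binom{s}{2}\rfloor$, which equals $1$ throughout ranges~(i)–(iv); a two-vertex placement together with the explicit sums from \Cref{prop: sum for optimal blow-ups of stars paths and cycles} limits pairwise multiplicities to $2$ in range~(i), and to small constants in the remaining ranges. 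Admissible patterns are therefore essentially ``background'' graphs in which edge multiplicities lie in $\{0,1,2\}$, possibly extended by a few isolated higher-multiplicity pairs in the later ranges.

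\textbf{Classification and chain.}
The core of the argument is then a case analysis showing that, within this restricted family, any pattern whose density strictly exceeds the claimed threshold must embed the next graph-pattern in the chain
\[
K_{1,2}^{(1)}\to P_4^{(1)}\to K_{1,3}^{(1)}\to K_{1,4}^{(1)},
\]
forcing $\SIGMA{P}{s}\ge\SIGMA{P'}{s}$ for the corresponding next threshold $P'$ and contradicting admissibility. For instance, in range~(i) any pattern containing a loop-$1$ vertex joined by a multiplicity-$2$ edge to a vertex that is itself joined by a multiplicity-$2$ edge or incident to a further loop-$1$ vertex already embeds $K_{1,2}^{(1)}$ in a way that pushes $\SIGMA{P}{s}\ge\SIGMA{K_{1,2}^{(1)}}{s}$; every remaining admissible pattern is shown, by directly maximising the quadratic form, to satisfy $\sigma_P\le 1$. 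The structural alternatives for (ii)–(iv) are analogous but require progressively more casework as more two- and three-vertex substructures become available.

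\textbf{Main obstacle.}
The main obstacle is this discrete ``jump'' phenomenon: ruling out intermediate admissible patterns whose densities would lie strictly between the consecutive values $1,\,8/7,\,7/6,\,6/5$ while $\SIGMA{P}{s}$ remains below the next chain threshold. The plan for handling it is to combine the KKT first-order conditions for the quadratic programme defining $\sigma_P$—which sharply restrict the support of an optimal part-weight vector to only a few vertices of $P$, allowing one to reduce to patterns on at most $4$ or $5$ vertices—with the explicit formulae from \Cref{prop: sum for optimal blow-ups of stars paths and cycles} for $\SIGMA{P^{(1)}}{s}$ of the chain patterns. This should render the classification in each range a finite verification; the delicacy lies in controlling the lower-order $O(s)$ terms in $\SIGMA{P}{s}$ to ensure the inequality is strict at the endpoints of each range.
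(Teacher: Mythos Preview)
Your overall architecture (lower bounds from explicit blow-ups, upper bounds by classifying admissible patterns) matches the paper, and your lower-bound computations are correct. However, there are two substantive issues with the upper-bound plan.

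\textbf{The reduction to clean graph patterns.} The paper does not reduce to patterns via a bare averaging argument. It applies colourful regularity plus a blow-up lemma (\Cref{cor: applying colourful regularity}) to replace a near-extremal $(s,q)$-graph by a blow-up of an admissible pattern $P$ in which \emph{every} loop has multiplicity $m_-=0$, and then uses the cloning lemma (\Cref{lemma: cloning}) to pass to a subpattern $P'$ in which every edge has multiplicity strictly greater than $0$. Since $q<\SIGMA{P_{+2}^{(1)}}{s}$ throughout ranges (i)--(iv), edges of multiplicity $\geq 3$ are excluded uniformly (not just in range (i)), so $P'$ is exactly a graph pattern $H^{(1)}$ with loops $0$ and edges in $\{1,2\}$. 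Your proposal only bounds loops by $1$ and leaves open a mixture of loop values $0$ and $1$ together with ``a few isolated higher-multiplicity pairs'' in the later ranges; this is both incorrect (multiplicities stay $\leq 2$ in all four ranges) and insufficient to reach a clean graph pattern, which is what makes the subsequent structural analysis tractable.

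\textbf{The KKT reduction fails in case (iv).} Your plan for the classification is to use first-order conditions to restrict the support of an optimal weighting to at most four or five vertices, rendering each case a finite check. This breaks down in case (iv). Once one knows that $H$ is a graph with $\Delta(H)\leq 3$ and girth at least $5$ (from non-admissibility of $K_{1,4}$, $C_3$, $C_4$), the extremisers for $\sigma_H$ include the Petersen graph, whose unique sum-optimal weighting is the \emph{uniform} weighting on all ten vertices; the support is not small. The paper handles this not by reducing to few vertices but by proving a uniform structural bound (\Cref{lemma: subcubic girth at least 5}): for \emph{every} subcubic graph $H$ of girth at least $5$, one has $\sigma_{H^{(a)}}\leq a+\tfrac{1}{5}$. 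The proof is a short linear-combination argument using the balanced-degree equations around a degree-$3$ vertex and its neighbours, exploiting that girth $\geq 5$ forces disjoint second neighbourhoods. You would need an analogue of this lemma; a small-support reduction cannot substitute for it.

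Cases (i)--(iii) are closer to your sketch: forbidding $K_{1,2}$, $P_4$, $K_{1,3}$, $C_3$, $C_4$, $C_5$ in turn forces $H$ to have maximum degree $\leq 1$, then to be a union of paths on $\leq 3$ vertices, then to be a max-degree-$2$ graph of girth $\geq 6$, and one concludes via \Cref{prop: connected patterns best} and the explicit values of $\sigma$ for paths and cycles in \Cref{prop: sum for optimal blow-ups of stars paths and cycles}.
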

%--------------------------------------%

%--------------------------------------%
\begin{remark}[Instability]
\Cref{theorem: additive} shows that the Füredi--Kündgen problem exhibits massive instability:
\begin{enumerate}[ --- ]
    \item Tight lower-bound constructions for (iii) include sum-optimal blow-ups of the graph patterns $P_4$, $P_5$ and $C_6$, which are pairwise far from each other in edit distance (i.e.\ edit distance $\Omega(n^2)$ for $n$-vertex multigraphs). 
    
    \item Tight lower-bound constructions for (iv) include blow-ups of every graph pattern in $\cC_5$ (all of which are uniform blow-ups, with the exception of the blow-up of the $K_{1,3}$ pattern). What is more, for every fixed $x\in [0,\frac{1}{5}]$, there exist asymptotically sum-optimal blow-ups of the Petersen pattern on $n$ vertices in such a way that half of the vertices $v$ are blown-up to sets $U_v$ containing  $xn +O(1)$ vertices, while the other half are blown-up to sets containing $(\frac{1}{5}-x)n +O(1)$ vertices. This gives a whole continuum of asymptotically extremal examples for (iv), all of which are pairwise far from each other in edit distance!
\end{enumerate}

In addition, tight lower-bounds for (i) and (ii) are given by sum-optimal blow-ups of the all $1$ pattern and of the $K_{1,2}$-pattern respectively. For some values of $q$ within the intervals covered by (i) and (ii), other near-extremal constructions are possible --- for example, a sum-optimal blow up of the pattern $K_2$ in the case where $\binom{s}{2}+\left(2\lfloor \frac{s^2}{4}\rfloor -\binom{s}{2}\right) \leq q < \SIGMA{K_{1,2}}{s} $.

In principle, our results allow us to determine the structure of all near-extremal multigraphs in terms of the solutions to an explicit, bounded optimisation problem --- see \Cref{remark: computational reduction in additive case}. 
\end{remark}
%--------------------------------------%

%--------------------------------------%
\subsubsection{Small ambient multiplicity on the Mubayi--Terry multigraph problem}

Much as for the additive problem, one can show that the case $a=1$ of \Cref{problem: Mubayi Terry} follows from previous work on the Erdős problem on $\ex(n,s,q)$. (Recall that the asymptotics of $\ex(n,s,q)$ follow easily from the Erdős--Stone theorem.)

%--------------------------------------%
\begin{proposition}
\label{prop: a=1 case multiplicative}
Let $s\in \ZZ_{\geq 2}$, and let $q$ be an integer with $\binom{s}{2}\leq q<2\binom{s}{2}$. Then
\begin{align*}
   \ex_\Pi(n,s, q) =2^{\ex(n,s,q-\binom{s}{2})+o(n^2)}. 
\end{align*}
\end{proposition}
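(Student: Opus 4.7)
The plan is to prove asymptotically matching lower and upper bounds, reducing the problem to the $a=0$ case already handled by \Cref{prop: a=0 case additive}. The key is that because $q<2\binom{s}{2}$, after subtracting $1$ from every edge multiplicity of an $(s,q)$-multigraph we obtain an $(s,q')$-multigraph with $q'\defined q-\binom{s}{2}<\binom{s}{2}$, which lies in the regime where sum-extremal multigraphs are essentially graphs.

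For the \emph{lower bound}, let $H$ be a graph on $[n]$ with $e(H)=\ex(n,s,q')$ in which every $s$-set spans at most $q'$ edges, and define a multigraph $G$ by $w_G(uv)=2$ if $uv\in E(H)$ and $w_G(uv)=1$ otherwise. For every $s$-set $X$ we have
\begin{equation*}
    \sum_{uv\in \binom{X}{2}} w_G(uv) \;=\; \binom{s}{2} + e_H(X) \;\leq\; \binom{s}{2}+q' \;=\; q,
\end{equation*}
so $G$ is an $(s,q)$-graph, and $P(G)=2^{e(H)}=2^{\ex(n,s,q')}$.

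For the \emph{upper bound}, let $G$ be an $(s,q)$-graph on $[n]$. If some multiplicity vanishes then $P(G)=0$ and the bound is trivial, so assume every multiplicity is at least $1$ and define $w'(uv)\defined w_G(uv)-1\ge 0$. For every $s$-set $X$,
\begin{equation*}
    \sum_{uv\in\binom{X}{2}} w'(uv) \;=\; \sum_{uv\in\binom{X}{2}} w_G(uv) - \binom{s}{2} \;\leq\; q - \binom{s}{2} \;=\; q',
\end{equation*}
so $(V,w')$ is an $(s,q')$-multigraph. Since $q'<\binom{s}{2}$, \Cref{prop: a=0 case additive} applies and gives
\begin{equation*}
    \sum_{uv} w'(uv) \;\leq\; \ex_\Sigma(n,s,q') \;=\; \ex(n,s,q')+o(n^2).
\end{equation*}
Combining this with the elementary inequality $1+x\leq 2^{x}$ for all integers $x\geq 0$, which follows by induction, we obtain
\begin{equation*}
    \log_2 P(G) \;=\; \sum_{uv} \log_2\!\bigl(1+w'(uv)\bigr) \;\leq\; \sum_{uv} w'(uv) \;\leq\; \ex(n,s,q')+o(n^2).
\end{equation*}
Matching the two bounds yields the claim.

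There is no real obstacle; the only point requiring any care is to verify that the reduction $w_G \mapsto w_G-1$ actually produces an $(s,q')$-multigraph (which needs both $q\ge \binom{s}{2}$, so that $q'\ge 0$, and the assumption that all multiplicities are positive, which is free since otherwise $P(G)=0$), and that $q'<\binom{s}{2}$ so that \Cref{prop: a=0 case additive} is applicable. Once these conditions are checked the rest is the elementary bound $1+x\leq 2^x$.
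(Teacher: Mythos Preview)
Your proof is correct and follows essentially the same strategy as the paper's: subtract $1$ from every multiplicity to land in the $a=0$ additive regime, bound the resulting edge-sum, and convert back to a product bound. The one organisational difference is that the paper first argues (via an Erd\H{o}s--Stone type step) that edges of multiplicity $\geq 3$ are $o(n^2)$, replaces them with multiplicity $1$, and only then applies the integer AM--GM inequality to a $\{1,2\}$-valued multigraph; you instead apply \Cref{prop: a=0 case additive} directly to the shifted multigraph and use the pointwise inequality $1+x\leq 2^x$ for $x\in\ZZ_{\geq 0}$. This is slightly cleaner, since the Erd\H{o}s--Stone work is already packaged inside \Cref{prop: a=0 case additive}, and the pointwise bound sidesteps the need to verify the hypothesis $t\leq n$ of \Cref{prop: integer AM-GM}.
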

%--------------------------------------%

Our results on the first non-trivial case $a=2$ are then as follows.

%--------------------------------------%
\begin{theorem}
\label{theorem: multiplicative}
Let $s\in \ZZ_{\geq 2}$, and let the ambient edge multiplicity be $a=2$ in all graph patterns below.
Then the following hold:
\begin{enumerate}[(i)]
    \item for $2\binom{s}{2}\leq q< \SIGMA{P_4}{s} $, $\ex_\Pi(s, q)= 2$;
    
    \item for $\SIGMA{P_4}{s} \leq q<\SIGMA{K_{1,3}}{s}  $, $\ex_\Pi(s, q)=  2 \left(\frac{9}{8} \right)^{\frac{1}{6}}$;

    \item for $\SIGMA{K_{1,3}}{s} \leq q< \SIGMA{K_{1,4}}{s} $, $\ex_\Pi(s, q)=  2 \left( \frac{27}{16}\right)^{\frac{1}{10}}$.
    %\sqrt[5]{2^3} \sqrt[10]{3^3}$;
\end{enumerate}
\end{theorem}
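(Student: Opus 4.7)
The plan is to prove the three cases by matching lower and upper bounds, with the lower bounds coming from explicit blow-up constructions and the upper bounds from a structural reduction to a finite optimisation over admissible graph patterns of the form $H^{(2)}$.

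For the lower bounds, I would exhibit three extremal families and verify admissibility using the $\SIGMA{\cdot}{s}$ formulae already established. The construction for (i) is the all-$2$ multigraph, which is an $(s, 2\binom{s}{2})$-graph and hence an $(s,q)$-graph for every $q\geq 2\binom{s}{2}$, with product $2^{\binom{n}{2}}$. The construction for (ii) is a uniform $6$-part blow-up of $C_6^{(2)}$: by \Cref{prop: sum for optimal blow-ups of stars paths and cycles} it is $\bigl(s,\SIGMA{P_4^{(2)}}{s}\bigr)$-admissible, and the fractions of pairs of multiplicities $1, 2, 3$ in such a blow-up are respectively $1/6, 1/2, 1/3$, giving limiting product density $2^{1/2}\cdot 3^{1/3}=2(9/8)^{1/6}$. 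The construction for (iii) is a uniform $10$-part blow-up of $\Petersen^{(2)}$: by \Cref{prop: sum-optimal blow-ups for C5} it is $\bigl(s,\SIGMA{K_{1,3}^{(2)}}{s}\bigr)$-admissible, and the corresponding fractions $1/10, 3/5, 3/10$ yield density $2^{3/5}\cdot 3^{3/10}=2(27/16)^{1/10}$.

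For the upper bounds, I would follow the design-theoretic reduction sketched in \Cref{subsection: our contributions} and argue that any near-extremal $(s,q)$-graph on $n$ vertices is at edit distance $o(n^2)$ from a blow-up of some pattern $H^{(2)}$ on a bounded number of vertices, where $H$ is an ordinary graph. The two key ingredients for this reduction are, first, the observation that $\ex_\Pi(s,q)$ is realised as a supremum of $e^{\pi_P}$ over patterns $P$ with $\Blow(P)\subseteq \cA(\cdot,s,q)$, and second, a compactness argument restricting the edge multiplicities appearing in near-optimal patterns to $\set{1,2,3}$ — multiplicity $0$ annihilates the product, whereas a significant density of edges of multiplicity $\geq 4$ forces a compensating density of low multiplicities through the $(s,q)$-constraint, which is unprofitable by concavity of $x\mapsto \log(2+x)$. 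Once this reduction is in place, the problem reduces to the finite optimisation of maximising $\pi_{H^{(2)}}$ subject to $\SIGMA{H^{(2)}}{s}\leq q$.

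The finite optimisation is then handled case by case. For (i), the bound $\SIGMA{H^{(2)}}{s}<\SIGMA{P_4^{(2)}}{s}$ severely restricts the possible $H$ (in particular, $H$ has no induced $P_4$ and only very small components), and a Lagrangian computation on each admissible component shape gives $\pi_{H^{(2)}}\leq \log 2$. For (ii), the complementary bound $K_{1,3}\not\subseteq H$ forced by $\SIGMA{H^{(2)}}{s}<\SIGMA{K_{1,3}^{(2)}}{s}$ reduces $H$ to maximum degree at most $2$, and among disjoint unions of long paths and cycles the uniform blow-up of $C_6$ emerges as the winner by direct comparison of $\pi_{H^{(2)}}$ over the allowed window. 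For (iii), the admissible graphs form a richer class including all members of $\cC_5$ (by \Cref{prop: sum-optimal blow-ups for C5}), and the central claim is that the uniform $\Petersen$-blow-up uniquely attains the optimum — this leverages the $(3,5)$-cage property of $\Petersen$ (maximum edge density subject to max degree $\leq 3$ and girth $\geq 5$), and is where the design-theoretic flavour is sharpest. The \textbf{main obstacle} I foresee is establishing the structural reduction rigorously: proving that near-extremal $(s,q)$-graphs in the small-$a$ regime lie close to blow-ups of $H^{(2)}$ for some bounded ordinary graph $H$, with multiplicities confined to $\set{1,2,3}$, likely requires either adapting the hypergraph-container argument underlying \eqref{eq: MT reduction of counting to extremal} or developing a direct ``heavy sets''-style partition analogous to the authors' large-$a$ technique.
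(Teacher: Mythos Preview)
Your overall architecture---lower bounds from explicit blow-ups, upper bounds via reduction to a finite pattern optimisation---matches the paper's. The lower-bound constructions you name are exactly the ones used, and your case-(i) and case-(ii) sketches are close to what the paper does. However, there are two genuine gaps.

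\textbf{The reduction.} You flag this as the main obstacle and propose either containers or a heavy-sets argument. Neither is what works here. The paper's reduction is standard regularity: apply the colourful Szemer\'edi regularity lemma (packaged as \Cref{cor: applying colourful regularity}) to a near-extremal $(s,q)$-graph to obtain an $(s,q)$-admissible pattern $P$ on boundedly many vertices with all loops of multiplicity $1$ and edges of multiplicity at most $3$ (the bound $3$ coming from the fact that $P_{+2}^{(2)}$, the multiplicity-$4$ edge, fails to be $(s,q)$-admissible in the whole range). Then the cloning lemma (\Cref{lemma: cloning}) passes to a subpattern in which every \emph{edge} has multiplicity $\geq 2$, so $P'=H^{(2)}$ for an ordinary graph $H$. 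Your proposed restriction to multiplicities in $\{1,2,3\}$ is not enough: without the cloning step you never get down to a graph pattern, and your concavity heuristic for ruling out multiplicity $\geq 4$ is replaced in the paper by the cleaner observation that $\SIGMA{P_{+2}^{(2)}}{s}>q$. Containers would give an approximate structure theorem for the \emph{family} $\cA(n,s,q)$, which is a different object; heavy sets are a large-$a$ device and play no role here.

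\textbf{Case (iii).} The $(3,5)$-cage property of the Petersen graph is the starting point, not the argument. What you must show is that $\pi_{H^{(2)}}\leq \pi_{\Petersen^{(2)}}$ for every subcubic $H$ of girth at least $5$, and $\pi$ is a Lagrangian optimised over \emph{all} weightings, not just uniform ones. The paper first uses a vertex-count bound (\Cref{prop: bounded degree patterns bounded number of vertices}(ii)) to conclude that either $\pi_{H'}\leq \log 2 + \tfrac{1}{10}\log(27/16)$ already, or a connected optimiser $H'$ has at most $10$ vertices. It then carries out a delicate analysis of an optimal weighting $\bfx$ via the balanced-degree identities (\Cref{cor: balanced degrees in optimal pattern weightings}), showing by a chain of inequalities on neighbourhood weights that the maximum-weight vertex must have weight exactly $1/10$, which forces $H'$ to be $3$-regular on $10$ vertices with the uniform weighting---hence Petersen. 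Simply invoking that Petersen is the unique cage does not rule out that some non-regular $H$ on, say, $8$ vertices with a skewed weighting might do better; the paper's inequality-chasing (\Cref{claim: weight of nhood is large petersen case} onward) is where the real work is.
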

%--------------------------------------%

%--------------------------------------%
\begin{remark}
One can read out of our proof of \Cref{theorem: multiplicative} that in this case we have stability (see \Cref{remark: extremal examples in small a}):
\begin{enumerate}[ --- ]
    \item in case (i), all near-extremal examples are close in edit distance (i.e.\ edit distance $o(n^2)$ for $n$-vertex multigraphs) to blow-ups of the all $2$ pattern;
    
    \item in case (ii), all near-extremal examples are close in edit distance to balanced blow-ups of the graph pattern $C_6^{(2)}$;
    
    \item in case (iii), all near-extremal examples are close in edit distance to balanced blow-ups of the $\Petersen^{(2)}$ pattern.
\end{enumerate}
\end{remark}
%--------------------------------------%

One can contrast the instability witnessed in \Cref{theorem: additive} with the stability seen in \Cref{theorem: multiplicative}. We believe that this is a more general phenomenon, part of a fundamental difference between the Füredi--Kündgen and Mubayi--Terry multigraph problems:

%--------------------------------------%
\begin{conjecture}[Geometric stability]
\label{conj: geometric stability}
For every pair of non-negative integers $(s,q)$ with $s\geq 2$, there exists a unique multigraph pattern $P$ such that every $(s,q)$-graph $G$ on $n$-vertices with $P(G)\geq \ex_\Pi(s,q)^{\binom{n}{2}+o(n^2)}$ lies within $o(n^2)$ edit distance of a product-optimal blow-up of $P$. Furthermore, if $a\binom{s}{2}\leq q < (a+1)\binom{s}{2}$, then every edge in $P$ has multiplicity at most $a+1$ while every loop in $P$ has multiplicity at most $a$.
\end{conjecture}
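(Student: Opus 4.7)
My overall approach would be to combine a regularity-style reduction to multigraph patterns with a strict-concavity analysis of the resulting pattern-optimisation problem, and finally a local exchange argument to establish the multiplicity bounds. The pattern-optimisation formalism $(\sigma_P, \pi_P)$ developed in \Cref{subsection: patterns} is the right framework: any $(s,q)$-graph should admit, via a weighted Szemer\'edi-type regularisation tailored to the geometric mean (together with a cleaning step that makes multiplicities pointwise close to rational approximations), an approximation to within $o(n^2)$ edit distance by a blow-up of a bounded-size pattern $P'$ whose blow-ups remain $(s,q)$-admissible. This reduces the conjecture to showing that the supremum of $\pi_P$ over finite $(s,q)$-admissible patterns is attained, is attained uniquely up to isomorphism, and that any optimising $P$ satisfies the stated edge- and loop-multiplicity bounds.

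First, I would show the supremum is attained by a compactness/pruning argument: given a sequence of patterns $P_n$ with $\pi_{P_n} \to \log \ex_\Pi(s,q)$, one can pass to a subsequence whose edge-multiplicity matrices (after relabelling and merging any vertex-classes whose optimal mass has collapsed) converge to a finite pattern $P^\ast$. The key input here is that inner vertices in any $P_n$ carrying vanishing mass in the optimal blow-up can be discarded without decreasing $\pi_{P_n}$, so the limit is finite; this in turn follows from the strict log-concavity of $x \mapsto \tfrac{1}{2}\, x^{\top} L_P\, x$, where $L_P$ is the matrix of logarithms of edge-multiplicities of $P$ and $x$ ranges over the simplex of part-fractions.

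Second, to upgrade existence to uniqueness, I would argue that if two non-isomorphic $(s,q)$-admissible patterns $P_1,P_2$ both achieved $\pi_{P_i}=\log \ex_\Pi(s,q)$, then an appropriate gluing / merger of their optimal blow-ups (or a small perturbation of the fraction-vector) would strictly increase $\pi_P$, by the same convexity-on-the-simplex considerations together with a local perturbation lemma in the spirit of those in \Cref{theorem: stability} and in~\cite{day2022extremal}. For the final multiplicity constraints, I would run an exchange argument on the optimal $P$: if some edge $uv$ of $P$ had multiplicity $\geq a+2$ while $q<(a+1)\binom{s}{2}$, then the $(s,q)$-budget on $s$-sets containing $\set{u,v}$ must be strictly slack, so one can decrease $w_P(uv)$ by $1$ and raise another edge that was strictly below $a+1$, strictly increasing the product and contradicting optimality. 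The loop-cap is handled symmetrically, comparing multiplicity at a loop to the ambient-edge slack around it.

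The main obstacle I anticipate is genuine uniqueness in step two. In the large-$a$ regime of \Cref{theorem: base case main conj is true,theorem: large a arbitrary number of deficient blobs}, transcendence arguments (conditional on Schanuel) rule out accidental equalities between $\pi_P$ for non-isomorphic patterns, but for small $a$ the relevant quantities are algebraic and coincidences among non-isomorphic constructions become plausible — indeed the massive instability witnessed in the additive setting of \Cref{theorem: additive} is a cautionary precedent showing that in \emph{some} regimes a direct analogue of the conjecture fails. Separating the multiplicative problem from the additive one in this respect, and proving a quantitative perturbation lemma that pins down exactly how local modifications of $P$ move $\pi_P$, is likely to be the deepest part of the argument, and I expect it to require a substantial finite case analysis in addition to the structural machinery outlined above.
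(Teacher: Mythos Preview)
The statement you are attempting to prove is \emph{Conjecture}~\ref{conj: geometric stability} in the paper, not a theorem: the paper does not contain a proof of it and explicitly leaves it open. The paper's contribution toward this conjecture consists of partial evidence --- the stability result \Cref{theorem: stability} for the specific generalised Tur\'an patterns $\TUR{(r_0,0,\dotsc,0,r_d)}{a}$ with $a$ sufficiently large, and the observation in \Cref{remark: extremal examples in small a} that for $2\binom{s}{2}\leq q<\SIGMA{K_{1,\infty}}{s}$ the conjecture reduces to a finite computational claim about uniqueness of the optimiser in~\eqref{eq: optimisation problem for geometric problem}. So there is no ``paper's own proof'' to compare your proposal against.

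That said, your outline has substantive gaps even as a programme. The claimed ``strict log-concavity of $x\mapsto \tfrac{1}{2}x^\top L_P x$'' is false in general: this is a quadratic form, concave on the simplex only when $L_P$ is negative semidefinite on the tangent space, which it need not be (and strict concavity would in any case only give uniqueness of the optimal weighting \emph{for a fixed} $P$, not uniqueness of $P$ itself among all $(s,q)$-admissible patterns). Your uniqueness step --- gluing two optimal patterns and perturbing to strictly increase $\pi$ --- is precisely the step the paper cannot do, and you correctly flag it as the obstacle; but the proposed mechanism does not work as stated, since a merger of two $(s,q)$-admissible patterns need not remain $(s,q)$-admissible. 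Finally, the exchange argument for the multiplicity caps is incomplete: lowering an edge of multiplicity $\geq a+2$ and raising another need not increase the product (the geometric mean favours equalisation, but only among edges present with positive mass in the optimal blow-up), and you have not argued why the $(s,q)$-constraint guarantees the existence of a profitable swap.
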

%--------------------------------------%

Finally, we note that in both \Cref{theorem: additive} and \Cref{theorem: multiplicative}, we see `flat intervals' of $q$-values of length $\Omega(s^2)$ in which neither of $\ex_\Sigma(s,q)$ nor $\ex_\Pi(s,q)$ increases, and that $\ex_\Sigma(s,q)$ jumps to a value strictly greater than $1$ for $q = \SIGMA{K_{1,2}^{(1)}}{s} = (1+\frac{1}{7})\binom{s}{2}+O(s)$, while $\ex_\Pi(s,q)$ jumps to a value strictly greater than $2$ only for $q = \SIGMA{P_4^{(2)}}{s} = (2+\frac{1}{6})\binom{s}{2}+O(s)$.

%----------------------------------------------------------%
\subsection{Organisation of this paper}\label{subsection: structure of the paper}
In Section~\ref{section: general tools}, we develop some general tools for studying $(s,q)$-graphs. Our results for the small ambient multiplicity regime are gathered in Section~\ref{section: small a}, while those for the large ambient multiplicity regime can be found in Section~\ref{section: large a results}. We end the paper with some remarks and several open problems in Section~\ref{section: concluding remarks}.

\section{General preliminaries: tools in extremal multigraph theory}\label{section: general tools}

%--------------------------------------%
\subsection{Basic notions and notation}

Given a set $X$ and a positive integer $r$, we let $\binom{X}{r}$ denote the collection of subsets of $X$ of size $r$.
We set $[n] \defined \set{1,2,\dotsc, n}$.

As mentioned in the introduction, a multigraph is a pair $G=(V,w)$, where $V = V(G)$ is a set of vertices, and $w = w_G$ is a function $w \from \binom{V}{2} \to \ZZ_{\geq 0}$.
We usually write $ab$ for $\set{a,b}$ and, when the host multigraph is clear from context, we omit the subscript $G$ and simply write $w(ab)$ for $w_G(\set{a,b})$. A submultigraph of $G$ is a multigraph $G'=(V', w')$ with $V' \subseteq V$ and $w' \leq w_{\restriction V'}$, where $w_{\restriction V'}$ denotes the restriction of $w$ to $V'$. 
Such a submultigraph is called an induced submultigraph of $G$ if $w'=w_{\restriction V'}$.

Given a multigraph $G$ and a set $X \subseteq V(G)$, we define
\begin{align*}
    e_G(X) \defined \sum_{uv \in \binom{X}{2}} w(uv),
    \quad \text{and} \quad
    P_G(X) \defined \prod_{uv \in \binom{X}{2}} w(uv),
\end{align*}
to be the sum and product of the edge multiplicities inside $X$ respectively. Further, given a vertex $v \in V(G)$, we define 
\begin{align*}
    d_{G,X}(v) \defined \sum_{u\in X}w(uv),
    \quad \text{and} \quad
    p_{G, X}(v) \defined \prod_{u \in X} w(uv),
\end{align*}
to be the \emph{degree} and \emph{product degree} of $v$ in $X$ respectively. When the host multigraph $G$ is clear from context, we simply write $e(X)$, $P(X)$, $d_X(v)$ and $p_X(v)$ for $e_G(X)$, $P_G(X)$, $d_{G,X}(v)$ and $p_{G,X}(v)$. Further when $X=V$, we drop the subscript $X$ and write $d(v)$ and $p(v)$ rather than $d_V(v)$ and $p_V(v)$.

Given two multigraphs $G$ and $G'$ on a common vertex set $[n]$, we define the edit distance $\Delta(G,G')$ between them to be $\Delta(G,G') \defined \vert \set[\big]{xy \in \binom{[n]}{2} \st w_G(xy) \neq w_{G'}(xy) }\vert$.
We say that $G$ and $G'$ are $\delta n^2$-\emph{close} if $\Delta(G,G') \leq \delta n^2$.

%	Properties: symmetric, hereditary, monotone,decreasing, bounded
%--------------------------------------%
\begin{definition}
A \emph{multigraph property} $\cP$ is a collection of multigraphs.
A property is:
\begin{enumerate}[ --- ]
    % \item \emph{symmetric} if for each $n$, $\cP_n$ is closed under permutations of the vertex set $[n]$;
    
    \item \emph{hereditary} if $\cP$ is closed under taking induced submultigraphs;
    
    \item \emph{monotone decreasing} if $\cP$ is closed under taking submultigraphs; %decreasing the multiplicity of an edge;
    
    \item \emph{bounded} if there exists some absolute constant $M$ such that all multigraphs in $\cP$ have maximum edge multiplicity at most $M$.	
\end{enumerate}
\end{definition}
%--------------------------------------%

Given a family of multigraphs $\cF$, we denote by $\Forb(\cF)$ the family of multigraphs not containing a copy of a member of $\cF$ as a submultigraph, and observe that $\Forb(\cF)$ is a decreasing property of multigraphs. The property $\cA(s,q)$ of being an $(s,q)$-graph can be viewed as $\Forb(\cF)$, where $\cF$ is the finite family of multigraphs on $[s]$ with edge-sum $q+1$. Thus $\cA(s,q)$ is decreasing, and in addition is bounded (since it does not allow edges with multiplicity greater than $q$).

Given a bounded hereditary multigraph property $\cP$, we write $\cP_n$ for the collection of members of $\cP$ on the vertex set $[n]$, and we define
\begin{align*}
    \ex_\Sigma(n, \cP):= \max\set[\big]{e(G) \st G \in \cP_n },
    \quad \text{and} \quad
    \ex_\Pi(n, \cP) := \max\set[\big]{P(G) \st G\in \cP_n}.
\end{align*}
%	Note dropping the symmetry assumption of interest, since encoding of oriented graphs....
Adapting the classical averaging argument of Katona--Nemetz--Simonovits~\cite{katona1964graph}, one obtains the following result on limiting arithmetic and geometric densities for bounded symmetric hereditary properties of multigraphs.

%--------------------------------------%
\begin{lemma}
\label{lemma: multigraph Katona Nemetz Simonovits}
For every bounded symmetric hereditary property $\cP$ of multigraphs, the functions  $n \mapsto \ex_\Sigma(n, \cP)/ \binom{n}{2}$ and $n \mapsto \p[\big]{\ex_\Pi(n, \cP)}^{1/\binom{n}{2}}$ are non-increasing and tend to limits $\sigma(\cP)$ and $e^{\pi(\cP)}$ respectively as $n \to \infty$.\qed
\end{lemma}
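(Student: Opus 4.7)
The plan is to adapt the classical averaging argument of Katona, Nemetz, and Simonovits to the weighted multigraph setting, handling the arithmetic and geometric versions in parallel.

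The key combinatorial identity is that for any multigraph $G$ on $n$ vertices and any pair $uv \in \binom{V(G)}{2}$, the edge $uv$ survives in the induced submultigraph $G - w$ for exactly the $n-2$ choices of $w \in V(G) \setminus \{u, v\}$. Summing multiplicities and taking products over $w$ therefore gives
\begin{equation*}
    \sum_{v \in V(G)} e(G - v) = (n-2)\, e(G)
    \quad \text{and} \quad
    \prod_{v \in V(G)} P(G - v) = P(G)^{n-2}.
\end{equation*}

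First I would do the arithmetic case. Take $G \in \cP_n$ achieving $e(G) = \ex_\Sigma(n, \cP)$. By averaging over $v \in V(G)$ in the first identity, there is some $v$ with $e(G - v) \geq \frac{n-2}{n} \ex_\Sigma(n, \cP)$. Because $\cP$ is hereditary and invariant under relabelling, we may view $G - v$ as an element of $\cP_{n-1}$, so $\ex_\Sigma(n-1, \cP) \geq \frac{n-2}{n} \ex_\Sigma(n, \cP)$. Since $\binom{n-1}{2}/\binom{n}{2} = (n-2)/n$, dividing by $\binom{n-1}{2}$ yields the desired monotonicity $\ex_\Sigma(n-1, \cP)/\binom{n-1}{2} \geq \ex_\Sigma(n, \cP)/\binom{n}{2}$. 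Boundedness of $\cP$ (with edge multiplicities at most some $M$) gives the uniform upper bound $\ex_\Sigma(n, \cP)/\binom{n}{2} \leq M$, so the non-increasing, non-negative sequence converges to a limit $\sigma(\cP)$.

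Next I would do the geometric case analogously. If $\ex_\Pi(n, \cP) = 0$ there is nothing to prove at that step, so assume $\ex_\Pi(n, \cP) > 0$ and take $G \in \cP_n$ attaining this value (so $P(G) > 0$). Taking logarithms in the second identity and averaging yields some $v$ with $\log P(G - v) \geq \frac{n-2}{n} \log P(G)$, i.e.\ $P(G - v) \geq P(G)^{(n-2)/n}$, and again $G - v \in \cP_{n-1}$ after relabelling. Hence $\ex_\Pi(n-1, \cP) \geq \ex_\Pi(n, \cP)^{(n-2)/n}$, and the identity $\frac{n-2}{n \binom{n-1}{2}} = \frac{1}{\binom{n}{2}}$ converts this into $\ex_\Pi(n-1, \cP)^{1/\binom{n-1}{2}} \geq \ex_\Pi(n, \cP)^{1/\binom{n}{2}}$, as required. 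Boundedness of $\cP$ again provides the uniform upper bound $M$, so this non-increasing sequence converges to a limit, which we denote $e^{\pi(\cP)}$.

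The argument is essentially routine; the only small points requiring care are the relabelling step (silently using symmetry of $\cP$ to pass from $G - v$ as a submultigraph on $V(G) \setminus \{v\}$ to a member of $\cP_{n-1}$) and the separate treatment of the case $\ex_\Pi(n, \cP) = 0$ in the product version so that the averaging of logarithms is well-defined. Neither presents a genuine obstacle.
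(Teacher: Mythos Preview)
Your proof is correct and is precisely the standard Katona--Nemetz--Simonovits averaging argument the paper alludes to; the paper itself omits the proof entirely (the lemma is stated with a \qed), so there is nothing further to compare.
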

%--------------------------------------%

We refer to the quantities $\sigma(\cP)$ and $e^{\pi(\cP)}$ as the (asymptotically extremal) arithmetic and geometric mean densities of $\cP$.
By the AM-GM inequality, it is immediate that $\pi(\cP) \leq \log (\sigma(\cP))$.
We shall often need the following integer-variant of the AM-GM inequality.

%--------------------------------------%
\begin{proposition}[Integer AM--GM inequality]
\label{prop: integer AM-GM}
Let $a, n \in \ZZ_{\geq 1}$, $0 \leq t \leq n$, and let $w_1, \dotsc, w_n$ be non-negative integers with $\sum_{i = 1}^n w_i = an + t$.
Then $\prod_{i=1}^n w_i \leq a^{n - t}(a + 1)^t$, with equality attained if and only if $t$ of the $w_i$ are equal to $a+1$ and the remaining $n-t$ are equal to $a$.
\end{proposition}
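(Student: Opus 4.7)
The plan is to prove this by a standard smoothing (or exchange) argument. The key observation is that for any non-negative integers $w_i, w_j$ with $w_i \geq w_j + 2$, replacing the pair $(w_i, w_j)$ by $(w_i - 1, w_j + 1)$ preserves the sum while strictly increasing the product, since
\[
    (w_i - 1)(w_j + 1) - w_i w_j = w_i - w_j - 1 \geq 1.
\]
(This remains valid when $w_j = 0$, in which case the product goes from $0$ to a positive value whenever $w_i \geq 2$.)

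First I would note that if some $w_i = 0$, the product vanishes, and since $an + t \geq n$ one can always exhibit configurations with positive product, so the maximum is certainly attained with all $w_i \geq 1$. Then I would iterate the smoothing step described above as long as there exist indices with $w_i \geq w_j + 2$. Termination is ensured by the monovariant $\sum_{i=1}^n w_i^2$, which strictly decreases at each step by the calculation
\[
    (w_i - 1)^2 + (w_j + 1)^2 - w_i^2 - w_j^2 = -2(w_i - w_j - 1) \leq -2,
\]
and is bounded below by $0$. Upon termination, $\max_i w_i - \min_i w_i \leq 1$, so the multiset $\set{w_1, \dotsc, w_n}$ takes at most two consecutive integer values. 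Combined with the constraint $\sum_i w_i = an + t$ and $0 \leq t \leq n$, this forces exactly $t$ of the $w_i$ to equal $a + 1$ and the remaining $n - t$ to equal $a$, yielding the product $a^{n-t}(a+1)^t$.

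The equality characterisation falls out for free: since each smoothing step \emph{strictly} increases the product, any equality case must admit no such step, i.e.\ must already satisfy $\max_i w_i - \min_i w_i \leq 1$, at which point the sum constraint pins down the multiset of values uniquely. There is no real obstacle here; the only thing to be slightly careful with is the edge case $w_j = 0$ (handled by reducing to the all-positive case first) and the verification that the smoothing process terminates (handled by the monovariant $\sum_i w_i^2$).
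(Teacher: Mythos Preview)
Your smoothing argument is correct and complete; the only delicate points (the $w_j=0$ edge case and termination) are both handled properly. The paper itself states this proposition without proof, treating it as a standard fact, so there is nothing to compare against---your approach is the natural one.
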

%--------------------------------------%	

%	Following framework of FROU, can define a template and entropy for general hereditary properties. In our case decreasing properties, can formulate using extremal sum and product.
	
%\begin{definition}
%A (multigraph) pattern $P$ on $t$ vertices is a function $P \from [t] \cup [t]^{(2)} \to \ZZ_{\geq 0}$. A multigraph $G$ is a blow-up of $P$ if there is a function $f \from V(G) \to [t]$ such that for every pair of distinct elements $v,v'\in V(G)$ we have $w(vv')= P(\set{f(v), f(v')})$ --- in other words if there is a partition $V(G)=\sqcup_{i=1} V_i$ of its vertex set such that all edge from $V_i$ to $V_j$ have multiplicity $P(\set{i,j})$ and all edges internal to $V_i$ have multiplicity $P(\set{i})$. We denote by $\Blow(P)$ the collection of all blow-ups of $P$.
%\end{definition}
%Clearly, given a pattern $P$, $\Blow(P)$ is a symmetric bounded hereditary property of multigraphs. We let 
%\begin{align*}
%	\SIGMA{P}{n} \defined \max\left\set{e(G) \st G\in \Blow_n(P)\right} && \textrm{ and } && \PI{P}{n} \defined \max \set{ P(G) \st G\in \Blow_n(P) }
%\end{align*}
%denote the maximal sum and product of edge multiplicities achievable among $n$-vertex blow-us of $P$.

%--------------------------------------%
\subsection{Patterns}
\label{subsection: patterns}

Given a pattern $P$, observe that $\Blow(P)$ is a  bounded, hereditary property of multigraphs.
By \Cref{lemma: multigraph Katona Nemetz Simonovits}, it follows that there exist non-negative reals $\sigma_P$ and $\pi_P$ such that $\SIGMA{P}{n}= \sigma_P \binom{n}{2}+o(n^2)$ and $\PI{P}{n} = e^{\pi_P\binom{n}{2}+o(n^2)}$.
What is more, $\sigma_P$ and $\pi_P$ are obtained as the solutions to the following  quadratic optimisation problems:
\begin{align}
\label{eq: optimisation problem for arithmetic problem}
    \sigma_P \defined \max\set[\Big]{ \bfx^\rT A_P \, \bfx \st \bfx \in \RR_{\geq 0}^{V(P)} ,\, \textstyle{\sum_{v \in V(P)} x_v = 1}},
\end{align}
and, letting $A'_P$ be the matrix with entries $(A'_P)_{uv} \defined \log P(\set{u, v})$,
\begin{align}
\label{eq: optimisation problem for geometric problem}
    \pi_P \defined \max\set[\Big]{ \bfx^\rT A'_P \, \bfx \st \bfx \in \RR_{\geq 0}^{V(P)},\, \textstyle{\sum_{v \in V(P)} x_v = 1}}.
\end{align}

We call an optimal choice $\bfx$ for~\eqref{eq: optimisation problem for arithmetic problem} and~\eqref{eq: optimisation problem for geometric problem} a sum-optimal and a product-optimal weighting of $V(P)$ respectively.
As we show below, such weightings have a regularity property which helps computing them in practice.	

%--------------------------------------%
\begin{lemma}[Balanced degrees]
\label{lemma: almost balanced degrees in blow-ups of patterns}
Let $P$	be a multigraph pattern.
Then the following hold:
\begin{enumerate}[(i)]
    \item If $G\in \Blow_n(P)$ is sum-extremal, then it is almost regular: for every pair of distinct vertices $v,v'$ in $G$, we have $\abs{d(v)-d(v')} \leq w(vv')$;
    
    \item If $G\in \Blow_n(P)$ is product-extremal and both pairs and loop labels in $P$ are strictly positive, then $G$  is almost product-degree regular: for every pair of distinct vertices $v,v'$ in $G$, we have $ w(vv')^{-1}p(v')\leq p(v)\leq w(vv')p(v')$.
\end{enumerate}
\end{lemma}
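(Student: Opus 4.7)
The plan is to prove both parts by a simple local swap argument: given distinct vertices $v, v'$ in $G$, consider the blow-up $G'$ of $P$ obtained by reassigning $v'$ from its part to the part containing $v$, then use the extremality of $G$ to compare $e(G')$ with $e(G)$ (for (i)) or the ratio $P(G')/P(G)$ with $1$ (for (ii)), and match this against $d(v) - d(v')$ or $p(v)/p(v')$ respectively.

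More precisely, fix a $P$-partition $V(G) = \sqcup_{c \in V(P)} U_c$ and let $a = f(v), b = f(v')$ be the parts containing $v, v'$. If $a = b$, then by symmetry of the blow-up structure $v$ and $v'$ have identical degrees and product degrees (the sizes $u_c := |U_c|$ only see the labels $a, c$ and $b, c$), and the inequalities hold trivially since $w(vv') = P(\{a\}) \geq 1$. Assume now $a \neq b$, let $G'$ be the blow-up obtained by relabelling $v'$ to lie in $U_a$. A direct bookkeeping of the three types of contributions---from vertices in parts $U_c$ with $c \neq a, b$, from vertices in $U_a$, and from vertices in $U_b \setminus \{v'\}$---yields expressions
\begin{align*}
e(G') - e(G) &= \sum_{c \neq a,b} u_c [P(\{a,c\}) - P(\{b,c\})] + u_a[P(\{a\}) - P(\{a,b\})] + (u_b - 1)[P(\{a,b\}) - P(\{b\})], \\
d(v) - d(v') &= \sum_{c \neq a,b} u_c [P(\{a,c\}) - P(\{b,c\})] + (u_a - 1)[P(\{a\}) - P(\{a,b\})] + (u_b - 1)[P(\{a,b\}) - P(\{b\})],
\end{align*}
and comparing these term-by-term gives the clean identity $e(G') - e(G) = d(v) - d(v') + P(\{a\}) - w(vv')$. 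Sum-extremality of $G$ forces $e(G') \leq e(G)$, hence $d(v) - d(v') \leq w(vv') - P(\{a\}) \leq w(vv')$. The symmetric move (relabelling $v$ into $U_b$) yields $d(v') - d(v) \leq w(vv') - P(\{b\}) \leq w(vv')$, proving~(i).

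For~(ii), the same move applied multiplicatively yields
\begin{equation*}
\frac{P(G')}{P(G)} = \prod_{c \neq a,b} \left( \frac{P(\{a,c\})}{P(\{b,c\})} \right)^{u_c} \left( \frac{P(\{a\})}{P(\{a,b\})} \right)^{u_a} \left( \frac{P(\{a,b\})}{P(\{b\})} \right)^{u_b - 1},
\end{equation*}
and the analogous computation of $p(v)/p(v')$ shows that these two expressions differ only in the exponent $u_a$ versus $u_a - 1$, giving the identity $P(G')/P(G) = (p(v)/p(v')) \cdot P(\{a\})/w(vv')$. All ratios are well-defined since we assume all pair and loop labels of $P$ are strictly positive. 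Product-extremality yields $p(v)/p(v') \leq w(vv')/P(\{a\}) \leq w(vv')$, and the symmetric move gives the matching lower bound.

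The main work is the bookkeeping in the first identity; the potential pitfall is to overlook the role of loop multiplicities (the $P(\{a\})$ and $P(\{b\})$ terms), since in fact the argument produces the slightly stronger inequalities $|d(v) - d(v')| \leq w(vv') - \min(P(\{a\}), P(\{b\}))$ and the corresponding multiplicative version, which is why the statement's bound is never tight when loops are positive. No genuine obstacle beyond careful accounting is expected.
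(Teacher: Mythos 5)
Your proof is correct and uses essentially the same argument as the paper: shift one vertex into the other's part, compare the resulting blow-up with $G$ via extremality, and relate the change in edge-sum (resp.\ product) to $d(v)-d(v')$ (resp.\ $p(v)/p(v')$) up to the loop and edge labels. Your version just carries out explicitly the bookkeeping that the paper states in one line, and the identities you derive are correct.
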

%--------------------------------------%
\begin{proof}
In a blow-up of $P$, all vertices belonging to the same part of the $P$-partition have the same degree and the same product-degree. Shifting a vertex $v$ from its part in the $P$-partition of a multigraph $G$ to the part of some other vertex $v'$ increases $e(G)$ by at least $d(v')-d(v)-w(vv')$ (which is tight if edges inside $v'$'s part all have multiplicity $0$), and increases $P(G)$ by a multiplicative factor of at least $\frac{p(v')}{p(v)w(vv')}$ (which is tight if edges inside $v'$'s part all have multiplicity $1$). Both parts of the lemma then follow from the hypothesized sum- and product-extremality repectively.
\end{proof}
%--------------------------------------%
\noindent An essentially identical weight-shifting argument yields the following corollary on optimal weightings.
%--------------------------------------%
\begin{corollary}
\label{cor: balanced degrees in optimal pattern weightings}
Let $P$ be a multigraph pattern. Then any sum-optimal weighting $\bfx$ of $V(P)$ satisfies,
\begin{equation*}
    \sigma_P \defined P(\set{v})x_v+ \sum_{v'\in V(P)\setminus \set{v}} P(\set{v,v'})x_{v'}
    \quad \text{for all $v \in V(P)$ with $x_v > 0$}.
\end{equation*}
Similarly,  any product-optimal weighting $\bfx$ of $V(P)$ satisfies,
\begin{equation*}
    \pi_P \defined \log (P(\set{v}))x_v +\sum_{v'\in V(P)\setminus \set{v}} \log(P(\set{v,v'}))x_{v'}
    \quad \text{for all $v \in V(P)$ with $x_v > 0$}.\qed
\end{equation*}
\end{corollary}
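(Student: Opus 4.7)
The plan is to prove both claims via a single weight-shifting argument, paralleling the method used in Lemma 2.2 but applied to real-valued weightings; this is essentially the KKT stationarity condition for the quadratic programs (2.2) and (2.3), presented elementarily. I would treat the sum case in detail and then observe that the product case is obtained by replacing $A_P$ with $A'_P$ and $\sigma_P$ with $\pi_P$ throughout.

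First, I would introduce the shorthand $d(v) \defined P(\{v\})\,x_v + \sum_{v' \neq v} P(\{v,v'\})\,x_{v'} = (A_P \bfx)_v$, so that the corollary claims $d(v) = \sigma_P$ for every $v \in \mathrm{supp}(\bfx)$. Expanding $\bfx^\rT A_P \bfx$ and using the symmetry of $A_P$ yields the averaging identity
\begin{equation*}
    \sigma_P = \bfx^\rT A_P\,\bfx = \sum_{v \in V(P)} x_v\,d(v),
\end{equation*}
so $\sigma_P$ is a convex combination of the values $\{d(v) : v \in \mathrm{supp}(\bfx)\}$. It therefore suffices to show that $d$ is constant on the support of $\bfx$, for then its common value must equal $\sigma_P$.

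To that end, suppose for contradiction that there exist $v, v'' \in \mathrm{supp}(\bfx)$ with $d(v'') > d(v)$, and let $\bfx^{(\epsilon)}$ be obtained from $\bfx$ by shifting $\epsilon > 0$ units of mass from $x_v$ to $x_{v''}$ (feasible for any $\epsilon \in (0, x_v]$). A direct expansion, again using the symmetry of $A_P$, yields
\begin{equation*}
    (\bfx^{(\epsilon)})^\rT A_P\,\bfx^{(\epsilon)} - \bfx^\rT A_P\,\bfx = 2\epsilon\,\bigl(d(v'') - d(v)\bigr) + \epsilon^2\,\bigl((A_P)_{vv} - 2(A_P)_{vv''} + (A_P)_{v''v''}\bigr),
\end{equation*}
which is strictly positive for all sufficiently small $\epsilon > 0$, contradicting the sum-optimality of $\bfx$. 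Hence $d$ is indeed constant on $\mathrm{supp}(\bfx)$, and by the averaging identity this constant is $\sigma_P$.

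For the product case, I would rerun the same argument with $A'_P$ and $\pi_P$ in place of $A_P$ and $\sigma_P$. The only mild subtlety is that entries of $A'_P$ may equal $-\infty$ when $P$ has a loop or pair of multiplicity $0$; but if some $v \in \mathrm{supp}(\bfx)$ had $(A'_P \bfx)_v = -\infty$, then $\pi_P = -\infty$ and the statement is vacuous, so we may restrict attention to the finite regime in which the computation transfers verbatim. I do not anticipate any real obstacle: the corollary is the standard stationarity condition for a linearly constrained quadratic program, and the perturbation above is its most direct elementary incarnation.
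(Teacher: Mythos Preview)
Your proposal is correct and is precisely the weight-shifting argument the paper has in mind: the paper derives Corollary~2.5 directly from the same idea as Lemma~2.2 (shifting mass between two vertices of positive weight), and your first-order expansion $(\bfx^{(\epsilon)})^\rT A_P\,\bfx^{(\epsilon)} - \bfx^\rT A_P\,\bfx = 2\epsilon\bigl(d(v'')-d(v)\bigr)+O(\epsilon^2)$ together with the averaging identity $\sigma_P=\sum_v x_v\,d(v)$ is exactly the continuous analogue of that vertex-moving step. Your handling of the $-\infty$ case in the product version is also fine.
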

%--------------------------------------%

%--------------------------------------%
\begin{proposition}
\label{prop: sigma P is rational}
Let $P$ be a multigraph pattern. Then $\sigma_P$ is rational, and there exists a sum-optimal weighting $\bfx$ of $V(P)$ with rational weights.
%. Further, there exists $N\in \ZZ_{>0}$ and a sum-optimal weighting $\bfx$ of $V(P)$ with rational weights. % such that $Nx_{v}$ is a non-negative integer for every $v\in V(P)$. 
\end{proposition}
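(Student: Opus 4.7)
The plan is to use the first-order conditions supplied by \Cref{cor: balanced degrees in optimal pattern weightings} to reduce the extraction of $\sigma_P$ and of a rational weighting to linear algebra over $\QQ$. By compactness the maximum in~\eqref{eq: optimisation problem for arithmetic problem} is attained; pick any sum-optimal weighting $\bfx^*$, let $S \defined \set{v \in V(P) \st x^*_v > 0}$ be its support, and let $M$ be the principal submatrix of $A_P$ indexed by $S$. Writing $\bfy^* \defined (x^*_v)_{v \in S}$, \Cref{cor: balanced degrees in optimal pattern weightings} states exactly that $(\bfy^*, \sigma_P)$ is a real solution of the linear system
\begin{equation*}
    M \bfy = c \, \bfone_S, \qquad \bfone_S^\rT \bfy = 1,
\end{equation*}
in the unknowns $(\bfy, c) \in \RR^S \times \RR$, whose coefficients are integers.

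The heart of the argument is to show that $c = \sigma_P$ is forced for \emph{every} real solution $(\bfy, c)$ of this system, even without any sign constraint on $\bfy$. Given such a solution, I would consider the one-parameter family $\bfz_t \defined (1 - t) \bfy^* + t \bfy$ for $t \in \RR$. A direct computation gives $\bfone_S^\rT \bfz_t = 1$ and $M \bfz_t = \bigl(\sigma_P + t(c - \sigma_P)\bigr) \bfone_S$, hence $\bfz_t^\rT M \bfz_t = \sigma_P + t(c - \sigma_P)$. Since every coordinate of $\bfy^*$ is strictly positive, $\bfz_t$ is coordinatewise non-negative for all $t$ in some open interval around $0$; extending by zero off $S$, each such $\bfz_t$ becomes a probability weighting of $V(P)$, so $\bfz_t^\rT A_P \bfz_t \leq \sigma_P$ by the extremality of $\sigma_P$. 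Comparing, $t(c - \sigma_P) \leq 0$ on a symmetric neighbourhood of $0$, forcing $c = \sigma_P$.

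The rest is routine. Since the rational linear system admits a real solution, Gaussian elimination produces a rational solution $(\bfy_{\QQ}, c_{\QQ})$, and the previous paragraph forces $c_{\QQ} = \sigma_P$; thus $\sigma_P \in \QQ$. For the existence of a rational sum-optimal weighting, consider the now-rational affine subspace $L \defined \set{\bfy \in \RR^S \st M \bfy = \sigma_P \bfone_S,\ \bfone_S^\rT \bfy = 1}$. Rational points are dense in any non-empty rational affine subspace, and $\bfy^* \in L$ has all strictly positive coordinates, so some rational $\bfy \in L$ lies arbitrarily close to $\bfy^*$ and in particular satisfies $\bfy > 0$; extending by zero on $V(P) \setminus S$ yields the desired rational sum-optimal weighting. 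The main obstacle is the perturbation step forcing $c = \sigma_P$ on the entire real solution space; once that projection is trivialised, the rationality of $\sigma_P$ and the existence of a rational weighting become routine consequences of linear algebra over $\QQ$ and density.
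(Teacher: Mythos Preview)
Your proof is correct and follows essentially the same idea as the paper's: reduce to linear algebra by observing that a sum-optimal weighting satisfies a linear system with integer coefficients (the paper phrases this via Lagrange multipliers, you via \Cref{cor: balanced degrees in optimal pattern weightings}, which amounts to the same first-order condition restricted to the support). Your argument is in fact more careful than the paper's on two points the paper leaves implicit: you explicitly show that \emph{every} real solution of the system on the support has $c = \sigma_P$ (via the perturbation $\bfz_t$), which is what legitimises concluding $\sigma_P \in \QQ$ from the existence of a rational solution, and you use density in the rational affine subspace to guarantee a rational solution with strictly positive coordinates.
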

%--------------------------------------%
\begin{proof}
For $\bfx=(x_v)_{v\in V(P)}$, consider the polynomial $f(\bfx) \defined \bfx^t A_P \bfx$. This is a degree $2$ polynomial in $\card{V(P)}$ variables with integer coefficients. %In particular, the coordinates of $\nabla f$ are linear polynomials with integer coefficients.
We can use the method of Lagrangian multipliers to determine the maximum of $f(\bfx)$ subject to the constraint that $g(\bfx) \defined 1-\sum_{v\in V(P)} x_v$ satisfies $g(\bfx)=0$. Now the coordinates of $\nabla (f(\bfx) +\lambda g(\bfx))$ corresponding to the variables $(x_v)_{v\in V(P)}$ are linear polynomials in $\bfx$ with integer coefficients that vanish at a maximum of $f$. It follows that an optimal choice of $\bfx$ is the solution to a system of linear equations with integer coefficients, and hence that there exists such an optimal $\bfx$ with coordinates in $\QQ$. The result follows. 
\end{proof}
%--------------------------------------%

%Stopped here

%--------------------------------------%
\subsection{Properties of almost extremal multigraphs}

%--------------------------------------%
\begin{lemma}[Low degree vertex removal]
\label{lemma: removal of low deg vertices}
Let $\cP$ be a bounded decreasing property of multigraphs. Then for every $\eps\in (0, 1/2)$, there exists $\eta>0$ such that for all $n$ sufficiently large, the following hold:
\begin{enumerate}[(i)]
    \item in every multigraph $G\in \cP_n$ with $e(G)\geq (\sigma(\cP)-\eta)\binom{n}{2}$, one can remove a set of at most $\eps n$ vertices to obtain a multigraph $G'\in \cP_{n'}$ in which every vertex has degree at least $(\sigma(\cP)-\eps)n'$;
    
    \item in every multigraph $G\in \cP_n$ with $P(G)\geq e^{(\pi(\cP)-\eta)\binom{n}{2}}$, one can remove a set of at most $\eps n$ vertices to obtain a multigraph $G'\in \cP_{n'}$ in which every vertex has product-degree at least $e^{(\pi(\cP)-\eps)n'}$.
\end{enumerate}
\end{lemma}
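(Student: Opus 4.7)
The plan is to run the standard iterative low-degree vertex removal argument, treating both parts in parallel. Choose $\eta > 0$ of order $\eps^2$ (say $\eta \defined \eps^2/4$). Starting from $G_0 \defined G$, apply the following rule at each step $i$: if the current multigraph $G_i$ (with $n_i \defined n-i$ vertices) contains a vertex whose degree is below $(\sigma(\cP)-\eps)n_i$ (for part~(i)), respectively product-degree below $e^{(\pi(\cP)-\eps)n_i}$ (for part~(ii)), delete any such vertex to form $G_{i+1}$; otherwise halt and set $G' \defined G_i$. Since $\cP$ is decreasing, each $G_i$ lies in $\cP_{n_i}$, so the process is well-defined, and its stopping criterion immediately gives the required lower bound on the minimum (product) degree of $G'$.

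All that remains to check is that the process halts before step $k \defined \lceil \eps n \rceil$. For part~(i), suppose for contradiction that it does not. The deletion rule then yields
\begin{equation*}
    e(G) \;=\; e(G_k) + \sum_{i=0}^{k-1} d_{G_i}(v_i)
    \;<\; e(G_k) + (\sigma(\cP)-\eps)\p[\big]{kn - \tbinom{k}{2}}.
\end{equation*}
By \Cref{lemma: multigraph Katona Nemetz Simonovits}, one has $e(G_k) \leq \ex_\Sigma(n_k,\cP) = (\sigma(\cP)+o(1))\binom{n_k}{2}$ as $n \to \infty$. Substituting this in and expanding in powers of $\eps$ will give
\begin{equation*}
    e(G) \;<\; \sigma(\cP)\tbinom{n}{2} - \tfrac{1}{2}\eps^2 n^2 + o(n^2),
\end{equation*}
which contradicts the hypothesis $e(G) \geq (\sigma(\cP)-\eta)\binom{n}{2}$ for $n$ sufficiently large and our choice of $\eta$.

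For part~(ii), observe first that the hypothesis $P(G) > 0$ forces every edge of $G$ (and hence of each $G_i$) to have strictly positive multiplicity, so $\log p_{G_i}(v)$ is well-defined throughout the process. The argument will then go through verbatim upon replacing $e(\cdot)$ by $\log P(\cdot)$, $\sigma(\cP)$ by $\pi(\cP)$, $d_{G_i}$ by $\log p_{G_i}$, and using the multiplicative identity $\log P(G) = \log P(G_k) + \sum_{i<k}\log p_{G_i}(v_i)$ in place of the additive decomposition of $e(G)$.

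The only point of mild care --- rather than a real obstacle --- is the bookkeeping of lower-order terms: the $\eps^2 n^2$ surplus generated by the $k = \lceil \eps n \rceil$ many below-threshold deletions must dominate both the $o(n^2)$ error from applying \Cref{lemma: multigraph Katona Nemetz Simonovits} at $G_k$ and the $\tfrac{1}{2}\eta n^2$ slack afforded by the hypothesis. Both are handled by picking $\eta$ of order $\eps^2$ and $n$ sufficiently large in terms of $\eps$ and $\cP$.
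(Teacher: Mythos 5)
Your proposal is correct and follows essentially the same argument as the paper: iteratively delete below-threshold vertices, and if the process survives $\lceil \eps n\rceil$ deletions, combine the degree-sum decomposition with the upper bound $\ex_\Sigma(n_k,\cP)\leq(\sigma(\cP)+o(1))\binom{n_k}{2}$ from \Cref{lemma: multigraph Katona Nemetz Simonovits} to contradict the hypothesis, taking $\eta=\Theta(\eps^2)$. The quantitative bookkeeping (the $\tfrac12\eps^2n^2$ surplus versus the $\eta\binom n2$ slack) checks out, and the observation that $P(G)>0$ makes the logarithmic version of the argument legitimate is the right remark for part (ii).
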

%--------------------------------------%
\begin{proof}
For part (i), pick $\eta>0$ sufficiently small and $n_0$ sufficiently large, ensuring in particular that for all $n\geq n_0$, we have $\ex_\Sigma(n, \cP) \leq (\sigma(\cP)+\eta)\binom{n}{2}$.

For $n\geq 2n_0$, consider a multigraph $G\in \cP_n$ with $e(G)\geq (\sigma(\cP)-\eta)\binom{n}{2}$. Repeatedly remove vertices of minimum degree from $G$ to obtain a multigraph sequence $G_0=G$, $G_1$, $\dotsc$, with $G_i$ being a multigraph on $n-i$ vertices. If for all $i\leq \eps n$ we have $\delta(G_i) <(\sigma(\cP)-\eps)(n-i)$, then, setting $n' \defined \lfloor (1-\eps )n\rfloor\geq n_0$, we have
\begin{align*}
    (\sigma(\cP)+\eta)\binom{n'}{2} &\geq e(G_{n'})\geq e(G)-(\sigma(\cP)-\eps)\left(\binom{n}{2}-\binom{n'}{2}\right)\\
    &\geq \left(\sigma(\cP)+\eta\right)\binom{n'}{2} +\eps\left(\binom{n}{2}-\binom{n'}{2}\right)- \eta \left(\binom{n}{2}+\binom{n'}{2}\right),
\end{align*}
which for $\eta$ chosen sufficiently small with respect to $\eps$ and $n_0$ sufficiently large yields a contradiction. Part (ii) is proved mutatis mutandis.
\end{proof}
%--------------------------------------%

%--------------------------------------%
\begin{lemma}
\label{lemma: passing to subproperties}
Let $\cP, \cQ$ be bounded decreasing properties of multigraphs. 
\begin{enumerate}[(i)]
    \item If for every $\eps>0$ fixed we have that for all $n$ sufficiently large and every multigraph $G\in \cP_n$, there exists a multigraph $G'\in \cQ_n$ with $e(G)\leq e(G') +\eps \binom{n}{2}$, then $\sigma(\cP)\leq \sigma(\cQ)$.
    
    \item If for every $\eps>0$ fixed we have that for all $n$ sufficiently large every multigraph $G\in \cP_n\setminus \cQ_n$ contains a vertex of degree at most $(c+\eps)n$, then $\sigma(\cP)\leq \max \set{ \sigma(\cP\cap \cQ), c }$. In particular if $\cQ$ is the empty property, then $\sigma(\cP)\leq c$.
    
    \item If for every $\eps>0$ fixed we have that for all $n$ sufficiently large and every multigraph $G\in \cP_n$, there exists a multigraph $G'\in \cQ_n$ with $P(G)\leq P(G')e^{\eps \binom{n}{2}}$, then $\pi(\cP)\leq \pi(\cQ)$.		

    \item If for every $\eps>0$ fixed we have that for all $n$ sufficiently large every multigraph $G\in \cP_n\setminus \cQ_n$ contains a vertex of product degree at most $e^{(c+\eps)n}$, then $\pi(\cP)\leq \max \set{ \pi(\cP\cap \cQ), c }$. In particular if $\cQ$ is the empty property, then $\pi(\cP)\leq c$.
\end{enumerate}	
\end{lemma}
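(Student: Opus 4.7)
The four parts of the lemma split into two parallel pairs: parts (i) and (iii) follow almost immediately from the definitions of $\sigma(\cP)$ and $\pi(\cP)$, while parts (ii) and (iv) require an iterative vertex-removal argument. In each pair, the ``multiplicative'' version (iii)/(iv) is obtained from the ``additive'' version (i)/(ii) by passing to logarithms: if every vertex of $G$ has product-degree at most $e^{(c+\eps)n}$, then $\log P(G) = \tfrac12 \sum_{v \in V(G)} \log p(v)$ plays the same role as the edge count. So the plan is to write out parts (i) and (ii) carefully and indicate that (iii) and (iv) go through mutatis mutandis.

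For part (i), fix $\eps > 0$. For each $n$ sufficiently large, pick $G \in \cP_n$ with $e(G) = \ex_\Sigma(n, \cP)$ and apply the hypothesis to obtain $G' \in \cQ_n$ with $e(G') \geq e(G) - \eps\binom{n}{2}$. Then $\ex_\Sigma(n, \cQ) \geq \ex_\Sigma(n, \cP) - \eps \binom{n}{2}$. Dividing by $\binom{n}{2}$ and letting $n \to \infty$ (using \Cref{lemma: multigraph Katona Nemetz Simonovits}) yields $\sigma(\cQ) \geq \sigma(\cP) - \eps$, and the conclusion follows since $\eps > 0$ was arbitrary. Part (iii) is identical after taking logs.

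For part (ii), fix $\eps > 0$, let $M$ be the bound on edge multiplicities of $\cP$, and pick $n_0 = n_0(\eps)$ large enough that the hypothesis of (ii) applies on multigraphs of size at least $n_0$ and that $\ex_\Sigma(m, \cP \cap \cQ) \leq (\sigma(\cP \cap \cQ) + \eps)\binom{m}{2}$ for all $m \geq n_0$ (if $\cP \cap \cQ$ is empty, the second condition is vacuous). For $n \gg n_0$, take $G_0 \in \cP_n$ with $e(G_0) = \ex_\Sigma(n, \cP)$ and iteratively build a sequence: while $G_i \in \cP \setminus \cQ$ and $\card{V(G_i)} \geq n_0$, the hypothesis furnishes a vertex $v_i$ of degree at most $(c+\eps)(n-i)$ in $G_i$, and we set $G_{i+1} \defined G_i - v_i$, which lies in $\cP_{n-i-1}$ because $\cP$ is decreasing. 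Let $i^\star$ be the stopping index. Then either $G_{i^\star} \in \cP \cap \cQ$ with $n - i^\star \geq n_0$, so that $e(G_{i^\star}) \leq (\sigma(\cP \cap \cQ) + \eps)\binom{n-i^\star}{2}$, or $n - i^\star < n_0$, so that $e(G_{i^\star}) \leq M \binom{n_0}{2} = O(1)$. In both cases
\begin{equation*}
    e(G_0) \leq (c + \eps) \sum_{i=0}^{i^\star - 1}(n - i) + e(G_{i^\star}) \leq \max\set[\big]{c + \eps,\, \sigma(\cP \cap \cQ) + \eps}\tbinom{n}{2} + O(1).
\end{equation*}
Dividing by $\binom{n}{2}$, letting $n \to \infty$ and then $\eps \to 0$ gives $\sigma(\cP) \leq \max\set{c, \sigma(\cP \cap \cQ)}$. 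The empty-$\cQ$ special case is handled by the ``$\card{V(G_{i^\star})} < n_0$'' branch only, which yields the bound $c$ on its own.

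Part (iv) follows the same blueprint with the substitutions $e \leftrightarrow \log P$ and $d \leftrightarrow \log p$: the removal of a vertex with product-degree at most $e^{(c+\eps)(n-i)}$ decreases $\log P$ by at most $(c+\eps)(n-i)$, so the identical telescoping estimate applies. There is no real obstacle here; the only mild subtlety is making sure the iterative process terminates cleanly when $\cQ$ is never reached, which is taken care of by stopping at size $n_0$ and using boundedness of $\cP$ to control the leftover $e(G_{i^\star})$ (resp.\ $\log P(G_{i^\star})$).
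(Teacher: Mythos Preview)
Your proof is correct and follows exactly the approach the paper sketches: (i) and (iii) are immediate from the definitions, and (ii) and (iv) are handled by the same iterative low-degree vertex removal argument as in \Cref{lemma: removal of low deg vertices}. One cosmetic slip: in the displayed telescoping bound the error term should be $O(n)$ rather than $O(1)$ (since $\sum_{i=0}^{i^\star-1}(n-i) + \binom{n-i^\star}{2} = \binom{n}{2} + n - (n-i^\star)$), but this is harmless after dividing by $\binom{n}{2}$.
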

%--------------------------------------%
\begin{proof}
Parts (i) and (iii) are trivial, while parts (ii) and (iv) are proved similarly to \Cref{lemma: removal of low deg vertices} by removing minimum-degree vertices until one either obtains a contradiction or a multigraph in $\cP\cap \cQ$; see~\cite[Proposition 2.14]{falgas2024extremal} for an explicit version of this argument. 
%We leave the details to the reader.
\end{proof}
%--------------------------------------%

%--------------------------------------%	
\subsection{Arithmetic averaging and the additive problem}

For all bounded, hereditary properties of multigraphs $\cP$, an easy averaging argument over $n$-vertex subgraphs shows that 
\begin{align}
\label{eq: sum-averaging bound on ex sigma}
    \ex_\Sigma(n+1, \cP) \leq\left\lfloor \left(\frac{n+1}{n-1}\right)\ex_\Sigma(n, \cP)  \right\rfloor.
\end{align}
This simple fact has a useful corollary (which in fact was one of the main tools used by Füredi and Kündgen for proving upper bounds on $\ex_\ZZ(n,s,q)$):

%--------------------------------------%
\begin{corollary}
\label{cor: arithmetic averaging}
Let $f \from \ZZ_{\geq 0} \to \ZZ_{\geq 0}$.
Suppose $\cP$ is a bounded hereditary property of multigraphs with $\ex_\Sigma(s, \cP) \leq f(s)$, and that for every $n \geq s$ the inequality 
\begin{align*}
    \p[\Big]{\frac{n+1}{n-1}} f(n) < f(n + 1) + 1
\end{align*}
is satisfied. Then $\ex_\Sigma(n, \cP) \leq f(n)$ for all $n \geq s$. \qed
\end{corollary}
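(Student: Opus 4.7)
The plan is a straightforward induction on $n$, starting from the base case $n=s$, which is given directly by the hypothesis $\ex_\Sigma(s, \cP) \leq f(s)$. The inductive engine is the sum-averaging bound~\eqref{eq: sum-averaging bound on ex sigma} recalled just above the corollary statement.

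For the inductive step, assume $\ex_\Sigma(n, \cP) \leq f(n)$ for some $n \geq s$. Applying~\eqref{eq: sum-averaging bound on ex sigma} and the monotonicity of the floor function, we get
\begin{equation*}
\ex_\Sigma(n+1, \cP) \leq \left\lfloor \frac{n+1}{n-1} \ex_\Sigma(n, \cP) \right\rfloor \leq \left\lfloor \frac{n+1}{n-1}\, f(n) \right\rfloor.
\end{equation*}
By the assumed inequality $\frac{n+1}{n-1} f(n) < f(n+1) + 1$, and since $f(n+1)$ is a non-negative integer, any real number strictly less than $f(n+1)+1$ has floor at most $f(n+1)$. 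Hence $\ex_\Sigma(n+1, \cP) \leq f(n+1)$, completing the induction.

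There is essentially no obstacle here: the only small point requiring care is the elementary observation that $x < m+1$ with $m \in \ZZ$ forces $\lfloor x \rfloor \leq m$, which is exactly what is needed to convert the strict inequality in the hypothesis into the non-strict bound on the floor. Everything else is a mechanical unpacking of~\eqref{eq: sum-averaging bound on ex sigma} and the inductive hypothesis.
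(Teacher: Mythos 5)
Your proof is correct and is precisely the argument the paper intends: the corollary is stated with an omitted proof immediately after the averaging bound~\eqref{eq: sum-averaging bound on ex sigma}, and the induction you describe (base case from the hypothesis, inductive step via the averaging bound plus the observation that a real strictly less than $f(n+1)+1$ has floor at most $f(n+1)$) is exactly how it follows. Nothing is missing.
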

%--------------------------------------%

%--------------------------------------%
\subsection{Weighted geometric averaging and stability}

The following simple averaging proposition was a key tool used in~\cite{falgas2024extremal} to study the Mubayi--Terry problem.

%--------------------------------------%
\begin{proposition}[Weighted geometric averaging]
\label{prop: geometring averaging}
Let $U$ be  a set of vertices in an $n$-vertex multigraph $G$ and let $(\alpha_u)_{u\in U}$ be non-negative real numbers with $\sum_{u\in U}\alpha_u=1$. %Suppose that for every vertex $v\in V\setminus U$ we have 
%\begin{align*}
%	\prod_{u\in U} (w(uv))^ {\alpha_u}\leq p.
%\end{align*}
Then there is a vertex $v \in U$ such that $p_G(v)\leq \prod_{u\in U} {p_G(u)}^ {\alpha_u}$.
%p^{n-\card{U}}$.
\qed
\end{proposition}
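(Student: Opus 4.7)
The proposition is a weighted geometric mean inequality disguised in graph-theoretic language: it asserts that the minimum of the product-degrees $p_G(u)$ over $u \in U$ is at most any weighted geometric mean of those same product-degrees, with non-negative weights summing to one. My plan is therefore to simply exhibit a suitable minimiser as the witness $v$.

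Concretely, I would let $v \in U$ be any vertex achieving $p_G(v) = \min_{u \in U} p_G(u)$; such a vertex exists since $U$ is finite and non-empty, the latter forced by the constraint $\sum_{u \in U} \alpha_u = 1$. For each $u \in U$, the map $x \mapsto x^{\alpha_u}$ is monotone non-decreasing on $[0,\infty)$ (with the standard convention $0^0 = 1$), and so $p_G(v) \leq p_G(u)$ yields $p_G(v)^{\alpha_u} \leq p_G(u)^{\alpha_u}$. Taking the product over $u \in U$ and collapsing the left-hand side via $p_G(v) = p_G(v)^{\sum_{u \in U} \alpha_u} = \prod_{u \in U} p_G(v)^{\alpha_u}$ then delivers the desired inequality $p_G(v) \leq \prod_{u \in U} p_G(u)^{\alpha_u}$.

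The only minor subtlety worth checking is the degenerate case $p_G(v) = 0$: by the choice of $v$ as a minimiser, every $u \in U$ with $p_G(u) = 0$ also contributes $p_G(v) = 0 \leq p_G(u)$, and the right-hand side of the target inequality is either $0$ (if some $u \in U$ with $\alpha_u > 0$ has $p_G(u) = 0$) or strictly positive, so the inequality is trivially satisfied either way. There is no real obstacle in the argument; I expect the proposition is stated as a named lemma precisely because it will be repeatedly invoked when product-degree bounds need to be extracted from averages in the treatment of the multiplicative Mubayi--Terry problem.
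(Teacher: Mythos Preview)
Your proof is correct and is precisely the standard one-line argument the paper has in mind; the paper itself gives no proof beyond the \qed, indicating the result is immediate from the observation that the minimum of a finite set of non-negative reals is at most any weighted geometric mean of them.
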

%--------------------------------------%

%--------------------------------------%
\begin{lemma}[Stability from geometric averaging]
\label{lemma: stability geometric averaging}
Let $p>1$ and $M, t\in \ZZ_{>0}$ be fixed. Then for every $\eps>0$ fixed, there exist $\eta>0$ and $n_0\in \ZZ_{>0}$ such that for all $n\geq n_0$, the following holds: 
    
if $G$ is a multigraph on $n\geq n_0$ vertices with edge multiplicities bounded by $M$ and minimum product-degree at least $p^{(1-\eta)n}$, $U\subseteq V(G)$ is a $t$-set of vertices and  $(\alpha_u)_{u\in U}$ is a collection of non-negative reals with $\sum_{u\in U}\alpha_u=1$ such that for all $v\in V\setminus U$, $\prod_{u\in U} (w(uv))^ {\alpha_u}$ is either equal to $p$ or is at most $p^{1-2\eta}$, then there exists a partition of $V\setminus U$ into $\bigcup_{i=0}^k V_i$ such that

\begin{enumerate}[(i)]
    \item
    for every $i\in [k]$ and every $v,v'\in V_i$ and $u\in U$, $w(uv)=w(uv')$;
    
    \item
    for every $i\in [k]$ and every $v \in V_i$,  $\prod_{u\in U}(w(uv))^ {\alpha_u}= p$;
    
    \item
    $\card{V_0} \leq \eps n$.
\end{enumerate}	

Here $k<M^t$ is the number of distinct ways of assigning strictly positive integer weights to the pairs $(uv)_{u\in U}$ so as to achieve $\prod_{u\in U}(w(uv))^ {\alpha_u}= p$.
\end{lemma}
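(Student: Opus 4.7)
The plan is to construct the partition explicitly from the $U$-profiles of vertices and to bound the exceptional set $V_0$ via weighted geometric averaging of product-degrees over $U$.

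First, since the minimum product-degree is at least $p^{(1-\eta)n} > 0$, every edge multiplicity incident to any vertex is strictly positive, so for each $v \in V \setminus U$ the \emph{$U$-profile} $\bfw_v \defined (w(uv))_{u \in U}$ lies in $\set{1, \dotsc, M}^t$. There are at most $M^t$ such profiles; let $k$ denote the number of these realising $\prod_u w(uv)^{\alpha_u} = p$. Grouping vertices of $V \setminus U$ by their profile, put those realising one of the $k$ good profiles into the corresponding classes $V_1, \dotsc, V_k$, and put the remaining vertices---which necessarily satisfy $\prod_u w(uv)^{\alpha_u} \leq p^{1-2\eta}$ by hypothesis---into $V_0$. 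Conditions (i) and (ii) are then immediate from this construction, and the bound $k < M^t$ follows since not every tuple in $\set{1, \dots, M}^t$ realises $\prod_u w_u^{\alpha_u} = p$.

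The heart of the argument is the bound $\card{V_0} \leq \eps n$. To obtain it, I would evaluate $\Phi \defined \prod_{u \in U} p_G(u)^{\alpha_u}$ in two ways. The minimum product-degree hypothesis immediately yields $\Phi \geq p^{(1-\eta)n}$ (the weighted geometric mean of quantities each at least $p^{(1-\eta)n}$). Conversely, expanding $p_G(u) = \prod_{v \neq u} w(uv)$ and swapping the order of products gives
\begin{equation*}
    \Phi = \prod_{v \in V} \p[\Big]{\prod_{u \in U \setminus \set{v}} w(uv)^{\alpha_u}}.
\end{equation*}
Splitting $V = U \sqcup V_{\mathrm{good}} \sqcup V_0$: for $v \in U$ the inner factor is at most $M^{1-\alpha_v}$ (combining to $M^{t-1}$ overall since the exponents sum to $t-1$); for $v \in V_{\mathrm{good}}$ it equals $p$ exactly; and for $v \in V_0$ it is at most $p^{1-2\eta}$ by the hypothesis. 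Combining gives $\Phi \leq M^{t-1} \cdot p^{n-t-2\eta\card{V_0}}$, from which a linear inequality for $\card{V_0}$ emerges after taking logarithms.

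The main obstacle is that a naive one-shot comparison of the two bounds yields only $\card{V_0} \leq n/2 + O_{\eta, t, M, p}(1)$, which suffices only when $\eps \geq 1/2$. To close the gap, the plan is to exploit the fact that the set of bad profiles is finite and discrete: for each bad profile $\bfw^{(j)}$ with $\prod_u (w^{(j)}_u)^{\alpha_u} \leq p^{1-2\eta}$, applying the minimum product-degree hypothesis at each $u \in U$ individually (rather than only as a weighted average) gives $t$ separate linear constraints on the counts $\card{V_0^{(j)}} \defined \card{\set{v \in V_0 : \bfw_v = \bfw^{(j)}}}$; the slack in these constraints is strictly positive because at least one coordinate $w_u^{(j)}$ of a bad profile must be strictly less than the corresponding good-profile coordinate. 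Choosing $\eta$ sufficiently small in terms of $\eps, t, M, p$, and $n_0$ sufficiently large, one can then argue that each $\card{V_0^{(j)}}$ must be $o(n)$, and summing over the finitely many bad profiles gives $\card{V_0} \leq \eps n$. The main technical step is the bookkeeping that tracks how $\eta$ must depend on the parameters $\eps, t, M, p$.
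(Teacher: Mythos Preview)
Your first two paragraphs are exactly the paper's argument: partition $V\setminus U$ by $U$-profile, then sandwich $\Phi=\prod_{u\in U}p_G(u)^{\alpha_u}$ between $p^{(1-\eta)n}$ (from the minimum product-degree) and $M^{t-1}p^{(n-t)-2\eta|V_0|}$ (from the profile decomposition). The paper stops right there, asserting without further comment that ``for $n_0$ sufficiently large and $\eta>0$ sufficiently small this implies $|V_0|\le\eps n$.''

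You are right that this comparison only yields $2\eta|V_0|\le\eta n+O_{t,M,p}(1)$, i.e.\ $|V_0|\le n/2+O(1/\eta)$, which is useless for $\eps<1/2$; this is a genuine gap the paper glosses over. But your proposed repair does not close it. The $t$ individual lower bounds $p_G(u)\ge p^{(1-\eta)n}$, once weighted by $(\alpha_u)$ and summed, collapse back to the very inequality you already have; and since different bad profiles can be deficient in \emph{different} coordinates, no single-$u$ constraint isolates $|V_0|$. Concretely: with $t=2$, $p=2$, $M=3$, $\alpha_1=\alpha_2=\tfrac12$, and $\eta$ chosen so that $\sqrt{3}=2^{1-2\eta}$, put $n/4$ vertices on each of the profiles $(1,3)$ and $(3,1)$, the rest on $(2,2)$, and give all remaining edges multiplicity $3$. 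Every hypothesis is satisfied, \emph{including} both individual product-degree bounds at $u_1$ and $u_2$ (each holds with equality up to $O(1)$), yet $|V_0|=n/2-O(1)$. The real rescue is not coordinate-wise constraints but a decoupling of parameters: in the paper's applications the weights $(\alpha_u)$ are fixed in advance, so the possible contributions form a fixed finite set with a fixed gap $\delta>0$ below $p$, while the product-degree deficit is $o(1)$; with the gap $\delta$ in place of $2\eta$ in the upper bound, the same sandwich gives $|V_0|\le (\eta/\delta)n+O(1)\le\eps n$ once $\eta\le\eps\delta/2$. As literally stated---with the same $\eta$ in both hypotheses and $(\alpha_u)$ quantified after $\eta$---the lemma does not follow from the argument given.
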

%--------------------------------------%
\begin{proof}
For $i\in[k]$ and each of the $k$ ways of assigning strictly positive integer weights to the pairs $(uv)_{u\in U}$ so as to achieve $\prod_{u\in U}(w(uv))^ {\alpha_u}= p$, set $V_i$ to be the collection of vertices $v\in V\setminus U$ assigning such weights to the pairs $uv$. Let $V_0$ denote the remaining vertices in $V\setminus U$. Then by the minimum product degree assumption, 
\begin{align*}
    p^{(1-\eta)n} \leq \prod_{u\in U} (p_G(u))^ {\alpha_u} \leq M^{t -1} p^{\left(n-t\right) -2\eta \card{V_0}}.
\end{align*}
For $n_0$ chosen sufficiently large and $\eta>0$ chosen sufficiently small, this implies $\card{V_0} \leq \eps n$. The lemma follows.
\end{proof}
%--------------------------------------%	
	
%--------------------------------------%
\subsection{Colourful regularity and symmetrisation}

%--------------------------------------%
\begin{definition}
Let $\cP$ be a property of multigraphs. A pattern $P$ is $\cP$-admissible if every blow-up of $P$ belongs to $\cP$, that is, if $\Blow(P) \subseteq \cP$.
\end{definition}
%--------------------------------------%

In the special case where $\cP=\cA_{s,q}$, the multigraph property of being an $(s,q)$-graph, we write ``$(s,q)$-admissible'' as a shorthand for $\cA_{s,q}$-admissible.

Given a colouring of the edges of a graph $G$ and  non-empty sets $X,Y\subseteq V(G)$, the density of the colour $c$ between $X$ and $Y$, is
\begin{equation*}
    d_{c}(X,Y) \defined \frac{e_{G_c}(X,Y)}{\card{X}\card{Y}},
\end{equation*}
where by $e_{G_c}(X,Y)$ we denote the number of edges in colour $c$ in $G$ that lie between $X$ and $Y$.

%--------------------------------------%
\begin{definition}
\label{def:regularpair}
Given $\eps \in [0,1]$, a colouring of the edges of  a graph $G=(V,E)$  and two subsets of vertices $V_1, V_2 \subseteq V$, we say that the pair $(V_1, V_2)$ is \emph{$\eps$-regular} with respect to colour $c$ if for every $U_1 \subseteq V_1$ and $U_2 \subseteq V_2$ with $\card{U_1}  \geq \eps \card{V_1}$, $\card{U_2} \geq \eps \card{V_2}$, we have $\abs[\big]{d_c(V_1, V_2) - d_c(U_1, U_2)} \leq \eps$.
\end{definition}
%--------------------------------------%

%--------------------------------------%
\begin{definition}
\label{def:equipartition}
Given a set $V$, we say that a partition $\Pi \defined (V_1, \dotsc, V_k)$ of $V$ into $k$ parts, for some integer $k \geq 1$, is an \emph{equipartition} if $\abs[\big]{\card{V_i}  - \card{V_j}}  \leq 1$ for any $i,j \in [k]$.
\end{definition}
%--------------------------------------%

The following is the Colourful Regularity Lemma~\cite{Komlos1996-uq}.

%--------------------------------------%
\begin{lemma}[Colourful Regularity Lemma]
\label{lemma:ColRegLem}
Fix an integer $m \geq 2$.
For any $\eps > 0$ and for any fixed positive integer $k_0$, there exist integers $k_1 \defined k_1(\eps, k_0, m)$ and $n_0 \defined n_0(\eps, k_0, m)$ such that for any $n \geq n_0$, if one colours the edges of a graph $G$ on $n$ vertices with $m$ colours, then there exists an equipartition $V_1 \sqcup \dotsb \sqcup V_k$ of the vertex set $V(G)$, for some $k_0 \leq k \leq k_1$, such that all but at most $\eps k^2$ pairs $(V_i, V_j)$ are $\eps$-regular with respect to all $m$ colours.
\end{lemma}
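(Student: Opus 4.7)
The plan is to adapt the standard energy-increment proof of Szemerédi's regularity lemma, with the crucial modification that the energy is summed across all $m$ colour classes simultaneously. Given an $m$-edge-colouring, I view the host graph as $m$ graphs $G_1, \ldots, G_m$ on the same vertex set. For a partition $\Pi = (V_1, \ldots, V_k)$, I define the colourful index
\begin{equation*}
    q(\Pi) \defined \sum_{c=1}^m \sum_{1 \le i \le j \le k} \frac{\card{V_i}\card{V_j}}{n^2}\, d_c(V_i,V_j)^2,
\end{equation*}
which is bounded above by $m$ (since each density lies in $[0,1]$) and, by a defect Cauchy--Schwarz argument applied colour by colour, is non-decreasing under partition refinement.

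The heart of the proof is the refinement step. If an equipartition $\Pi$ into $k$ parts has more than $\eps k^2$ pairs $(V_i, V_j)$ that fail to be $\eps$-regular in at least one colour, then I claim there is a refinement $\Pi'$ into at most $k \cdot 2^{2km}$ parts with $q(\Pi') \ge q(\Pi) + \eps^5/(2m)$. To produce it, for each offending pair and each offending colour I pick witness subsets $U_i \subseteq V_i$, $U_j \subseteq V_j$ certifying the irregularity as in \Cref{def:regularpair}, and then refine each $V_i$ by the common refinement of all witnesses in which it participates (at most $2m(k-1)$ of them, giving the $2^{2km}$ factor). The standard defect Cauchy--Schwarz computation shows that each irregular pair contributes at least $\eps^4 \card{V_i}\card{V_j}/n^2$ to the increase of $q$ in the associated colour class; summing over the at least $\eps k^2$ bad pairs and dividing appropriately yields the claimed increment.

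Starting from any equipartition into $k_0$ parts and iterating, I obtain a sequence along which $q$ strictly increases by $\eps^5 / (2m)$ at each step. Since $q \le m$, at most $2m^2 / \eps^5$ iterations can occur, so the process terminates at a partition in which all but at most $\eps k^2$ pairs are $\eps$-regular with respect to every colour. The number of parts at the final step is bounded by a tower-type function of $m$, $\eps$, and $k_0$, giving the desired $k_1 = k_1(\eps, k_0, m)$. Finally, a routine cleanup — cutting each part down to a common size and collecting the remainders into a single small dump-set that is then redistributed — converts this into an equipartition in the sense of \Cref{def:equipartition}, at the cost of weakening $\eps$ by at most a constant factor, which is absorbed by choosing the initial $\eps$ slightly smaller.

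The main obstacle is bookkeeping rather than any genuine mathematical difficulty: because every part may need to be refined simultaneously by witnesses from many pairs across many colours, one must carefully bound the blow-up of the number of parts at each step and verify that the energy increment $\eps^5/(2m)$ depends only on $\eps$ and $m$, not on the current $k$. These are all standard manipulations carried over from the monochromatic case, now carried out in a slightly richer indexing; modulo these verifications, the proof reduces to the classical template.
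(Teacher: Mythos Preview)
The paper does not prove this lemma at all: it is stated with a citation to the Koml\'os--Simonovits survey and then used as a black box. Your outline is the standard energy-increment derivation of the multicolour regularity lemma and is essentially correct; the only quibble is that the increment per step should be of order $\eps^5$ rather than $\eps^5/(2m)$ (each bad pair contributes $\eps^4\card{V_i}\card{V_j}/n^2$ to the \emph{total} energy via a single witnessing colour, and the factor $m$ enters only through the upper bound $q(\Pi)\le m$ on the energy, giving at most $m/\eps^5$ iterations), but this is harmless bookkeeping.
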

%--------------------------------------%

In addition, we shall require a version of the blow-up lemma of Komlós, Sárközy and Szemerédi~\cite{komlos1997blow} due to Komlós~\cite[Lemma 23]{komlos1999blow}. Recall that the \emph{blow-up} $H(t)$ of a graph $H$ is the graph obtained from $H$ by replacing each vertex $v\in V(H)$ by an independent set $I_v$ of size $t$, and each edge $uv\in E(H)$ by a complete bipartite graph between $I_u$ and $I_v$.

%--------------------------------------%
\begin{lemma}[Blow-up Lemma]
\label{lemma:Blowuplem}
Fix $\delta, \eps>0$. Let $H$ be a graph of maximum degree $\Delta$. Replace each vertex $v$ of $H$ by an independent set $I_v$ of size $N$, and each edge $uv$ of $H$ by an (arbitrarily chosen) bipartite graph in such a way that the pair $(I_u, I_v)$ is an $\eps$-regular pair of density at least $\delta+\eps$. Denote the resulting graph by $H'$.

Set $\eps_0 = \delta^{\Delta}/(2+\Delta)$, and suppose that $\eps\leq \eps_0$ and $N>t/\eps_0$ both hold. Then $H'$ contains a copy of $H(t)$ as a subgraph.
\end{lemma}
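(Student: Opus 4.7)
The statement is the blow-up lemma in its ``embedding a blow-up $H(t)$ rather than a spanning subgraph'' form, which is considerably softer than the full Komlós--Sárközy--Szemerédi result. My plan is to prove it via a greedy embedding that processes the vertices of $H(t)$ one at a time while maintaining large, regularity-protected \emph{candidate sets}.

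First, I would fix an ordering $v_1, \dotsc, v_h$ of $V(H)$ and, for each $v_j$, consider the task of choosing $t$ vertices in $I_{v_j}$ to play the role of the $t$ copies of $v_j$ in $H(t)$. For each pair $v_i, v_j \in V(H)$ with $v_iv_j \in E(H)$, the pair $(I_{v_i}, I_{v_j})$ is $\eps$-regular of density at least $\delta+\eps$ in the chosen bipartite graph. I would process the vertices of $H(t)$ in blocks (one block per $v_j$, of $t$ vertices each), and at each stage maintain, for every yet-to-be-embedded $v_j$ and every already-embedded copy $x$ of a neighbour $v_i$ of $v_j$, a candidate subset $C_{v_j}^x \subseteq I_{v_j}$ consisting of vertices that are still adjacent to $x$ (and hence to all previously embedded copies of neighbours of $v_j$).

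The heart of the argument is the following \emph{regularity-preservation} step. Given an $\eps$-regular pair $(A,B)$ of density $d \geq \delta+\eps$, all but at most $\eps \card{A}$ vertices of $A$ have degree at least $(d - \eps)\card{B} \geq \delta \card{B}$ into $B$. Consequently, when we pick the next vertex $x$ to embed and shrink candidate sets by intersecting with $N_{H'}(x)$, we expect each candidate set to shrink by a factor of at most $\delta$, provided the vertex $x$ we pick is ``typical''. Iterating this over the at most $\Delta$ neighbours of $v_j$ that have already been embedded, each candidate set for (copies of) $v_j$ retains size at least $\delta^{\Delta} N$, minus a small bad subset of size at most $\Delta \cdot \eps \cdot N$ arising from atypical picks. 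The condition $\eps \leq \eps_0 = \delta^{\Delta}/(2+\Delta)$ ensures that throughout the process, candidate sets have size at least $(\delta^{\Delta} - \Delta \eps_0)N \geq 2\eps_0 N \geq 2t$, so in particular there always remains at least one unused vertex to choose. The condition $N > t/\eps_0$ provides the slack needed to accommodate the $t$ vertices embedded per block and the cumulative losses.

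The main obstacle is managing the error bookkeeping cleanly: one must show that at each of the at most $ht$ embedding steps, a valid choice exists \emph{avoiding} the union of (i) the already-embedded vertices, (ii) the small bad set in each still-active candidate set where the regularity inheritance fails, and (iii) the small set of vertices that would cause some \emph{future} candidate set to shrink below threshold. The cleanest way is to argue inductively: after placing $k$ vertices, for each remaining $v_j$ and each embedded neighbour-copy $x$, the candidate set $C_{v_j}^x$ has size at least $\delta^{r} N - r \eps N$ where $r$ is the number of already-embedded neighbours of $v_j$, and hence size at least $\eps_0 N \geq t$ by the choice of $\eps_0$. The next vertex to embed is then found in the intersection of at most $\Delta$ such candidate sets minus previously embedded vertices, which has size at least $\delta^{\Delta}N - \Delta \eps N - t \geq 1$ by the choice of parameters. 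This completes the greedy construction and yields the required copy of $H(t)$.
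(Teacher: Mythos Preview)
The paper does not supply its own proof of this lemma: it is quoted, without argument, as Lemma~23 from Koml\'os's survey on the blow-up lemma, so there is no in-paper proof to compare against. Your greedy-embedding outline is indeed the standard route to the Key Lemma, but there is a genuine gap in the bookkeeping.

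You claim that, iterating over ``the at most $\Delta$ neighbours of $v_j$ that have already been embedded'', each candidate set retains size at least $\delta^{\Delta} N$. But a copy of $v_j$ in $H(t)$ is adjacent to \emph{all $t$ copies} of each of the up to $\Delta$ $H$-neighbours of $v_j$, so its degree in $H(t)$ is $\Delta t$, not $\Delta$. The candidate set for a copy of $v_j$ is therefore intersected with up to $\Delta t$ neighbourhoods, and the greedy analysis only guarantees size at least $\delta^{\Delta t} N$, far below the $\delta^{\Delta} N$ you assert and far below what the hypotheses $\eps\le\eps_0$, $N>t/\eps_0$ provide. In fact the lemma \emph{as literally stated} appears to be false: taking $H=K_2$ it would assert that every $\eps$-regular pair of density $\delta+\eps$ between sets of size $N>3t/\delta$ contains a $K_{t,t}$, whereas a random bipartite graph with edge probability $\delta+\eps$ (which is $\eps$-regular with high probability for $N$ large) contains no $K_{t,t}$ once $t$ is moderately large relative to $1/\delta$.

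What seems to have happened is a transcription slip in the paper's formulation of Koml\'os's result: in the standard Key Lemma, the parameter $\Delta$ is the maximum degree of the graph being \emph{embedded} --- here $H(t)$, which has maximum degree $\Delta(H)\cdot t$ --- not of the pattern $H$. With that correction (replacing $\Delta$ by $\Delta(H(t))$ in the definition of $\eps_0$), your greedy argument becomes precisely the standard proof and goes through. The paper's sole use of the lemma, in the corollary on applying colourful regularity that immediately follows it, only ever embeds a forbidden multigraph on at most $t$ vertices (hence of maximum degree at most $t-1$) and correctly chooses $\eps_1<\delta^{t-1}/(t+1)$, so the slip is harmless for the paper's purposes.
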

%--------------------------------------%

Using \Cref{lemma:ColRegLem,lemma:Blowuplem}, we obtain the following corollary, which is our main tool in applying colourful regularity to our multigraph optimisation problems, by allowing us to reduce them asymptotically to pattern optimisation problems.

%--------------------------------------%
\begin{corollary}
\label{cor: applying colourful regularity}
Let $\cP=\Forb(\cF)$ be a bounded property of multigraphs defined by a family of forbidden multigraphs on at most $t$ vertices.
%Let $\cF$ be a finite family of multigraphs on at most $t$ vertices, let $\cP = \Forb(\cF)$, and assume $\cP$ is bounded.
 Then for every $\eps>0$ fixed, there exist positive integers $n_1$ and $k_1$ such that for all $n\geq n_1$ the following holds: if $G \in \cP_n$ has all edges of multiplicity at least $m_-$ and multiplicity at most $m_+$, then there exists a pattern $P$ on $k$ vertices such that
\begin{enumerate}[(i)]
    \item $k  \leq k_1$;
    
    \item all loops in $P$ have multiplicity $m_-$ and all edges in $P$ have multiplicity in $[m_-, m_+]\cap \ZZ$;
    
    \item $P$ is $\cP$-admissible;
    
    \item  if $G'$ is a balanced blow-up of $P$ on $n$ vertices, then
    \begin{equation*}
        e(G')\geq e(G)-\eps \tbinom{n}{2},
        \quad \text{and} \quad
        P(G')\geq P(G)e^{-\eps \binom{n}{2}}.
    \end{equation*}

\end{enumerate}
\end{corollary}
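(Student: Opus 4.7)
The plan is to apply the Colourful Regularity Lemma, \Cref{lemma:ColRegLem}, to $G$ viewed as an edge-coloured complete graph $K_n$, where each pair $uv$ takes $w_G(uv) \in [m_-, m_+] \cap \ZZ$ as its colour. For parameters $\delta, \eps_0 > 0$ and $k_0 \in \ZZ_{>0}$ to be fixed below, the lemma yields an equipartition $V_1, \dotsc, V_k$ with $k_0 \leq k \leq k_1 \defined k_1(\eps_0, k_0, m_+ - m_- + 1)$ and all but $\eps_0 k^2$ of the pairs $(V_i, V_j)$ being $\eps_0$-regular in every colour. I would then define the pattern $P$ on $[k]$ by giving every loop multiplicity $m_-$; for each regular pair $(V_i, V_j)$, setting $\mu_{ij}$ equal to the largest integer $c \in [m_-, m_+]$ for which the colour-$c$ density $d_c(V_i, V_j)$ is at least $\delta$ (which exists provided $\delta < 1/(m_+ - m_- + 1)$, since densities sum to $1$); and setting $\mu_{ij} = m_-$ on non-regular pairs. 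Conditions (i) and (ii) are then immediate, and the bounds $\mu_{ij} \in [m_-, m_+] \cap \ZZ$ are satisfied by construction.

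For condition (iv), given any balanced blow-up $G'$ of $P$ on $n$ vertices, I would decompose the shortfalls $e(G) - e(G')$ and $\log P(G) - \log P(G')$ into within-blob, non-regular-pair, and regular-pair contributions. The within-blob contribution is bounded by $(m_+ - m_-) \cdot k \binom{\lceil n/k \rceil}{2} = O(n^2/k)$, small for $k \geq k_0$ large. The non-regular-pair contribution is bounded by $m_+ \cdot \eps_0 k^2 (n/k)^2 = m_+ \eps_0 n^2$, small for $\eps_0$ small. For the regular-pair contribution, the key estimate is that if $\mu_{ij}$ is the largest colour of density $\geq \delta$ between $V_i$ and $V_j$, then colours strictly above $\mu_{ij}$ have total density at most $(m_+ - m_-)\delta$, whence the arithmetic and logarithmic means $s_{ij} \defined \sum_c c \cdot d_c(V_i, V_j)$ and $\ell_{ij} \defined \sum_c (\log c) d_c(V_i, V_j)$ satisfy $s_{ij} - \mu_{ij} \leq (m_+ - m_-)^2 \delta$ and $\ell_{ij} - \log \mu_{ij} \leq (m_+ - m_-) \log(m_+ /m_-) \delta$ respectively. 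Summed over regular pairs, these contribute $O(\delta \binom{n}{2})$, which is $\leq \eps \binom{n}{2}/2$ for $\delta$ small. (If $m_- = 0$ the product bound is trivial since $P(G) = P(G') = 0$, so one may assume $m_- \geq 1$ for the log estimate.)

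The main obstacle is condition (iii): verifying that $P$ is $\cP$-admissible. Suppose for contradiction that some blow-up of $P$ contains a forbidden multigraph $F \in \cF$ on at most $t$ vertices; then there is a not-necessarily-injective map $\phi \from V(F) \to [k]$ with $w_F(uu') \leq \mu_{\phi(u)\phi(u')}$ for all distinct $u, u' \in V(F)$, with the convention $\mu_{ii} = m_-$. The plan is to embed $F$ into $G$, contradicting $G \in \Forb(\cF) = \cP$. For each $(u,u')$ with $\phi(u) \neq \phi(u')$, the bipartite subgraph of $G$ between $V_{\phi(u)}$ and $V_{\phi(u')}$ of edges of multiplicity $\geq w_F(uu')$ contains all colour-$\mu_{\phi(u)\phi(u')}$ edges (since $\mu_{\phi(u)\phi(u')} \geq w_F(uu')$) and thus has density $\geq \delta$; as a union of at most $m_+ - m_- + 1$ individually $\eps_0$-regular bipartite graphs, it is also $(m_+ - m_- + 1)\eps_0$-regular. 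With $\eps_0$ chosen small enough relative to $\delta$ and $t$, the Blow-Up Lemma (\Cref{lemma:Blowuplem}) produces the desired copy of $F$; distinct vertices of $F$ sharing a common blob may be placed at distinct vertices of $V_i$ because $\card{V_i} \gg t$ for $n$ large, while within-blob pairs or between-blob pairs with $w_F(uu') \leq m_-$ impose no constraint since every multiplicity in $G$ is $\geq m_-$.

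The technical crux lies in the nested parameter selection: given $\eps > 0$ and $t$, first choose $\delta$ so small that both $(m_+ - m_-)^2 \delta$ and $(m_+ - m_-) \log(m_+/m_-) \delta$ are at most $\eps/10$; then pick $\eps_0$ small enough that $(m_+ - m_- + 1)\eps_0$ meets the density-gap requirement $\delta^{\Delta}/(2+\Delta)$ of \Cref{lemma:Blowuplem} for all $\Delta \leq t-1$, and also so that $m_+ \eps_0 \leq \eps/10$; then take $k_0 \geq 10(m_+ - m_-)/\eps$; and finally feed $(\eps_0, k_0)$ into \Cref{lemma:ColRegLem} to obtain $k_1$ and a threshold $n_1$ large enough that balanced blobs have size $\gg t/\eps_0$.
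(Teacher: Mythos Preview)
Your proposal is correct and follows essentially the same approach as the paper's own proof: apply colourful regularity, define the pattern by taking the largest colour of density above a threshold on each regular pair (and $m_-$ on loops and irregular pairs), bound the edit loss via the three-way decomposition into within-blob, irregular, and regular contributions, and verify admissibility by embedding any would-be forbidden $F$ back into $G$ via the blow-up lemma. The paper is slightly terser on the embedding step and uses threshold $2\delta$ rather than $\delta$, but the argument and parameter hierarchy are the same.
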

%--------------------------------------%
\begin{proof}
Fix $\eps>0$. We may view the multigraph $G\in \cP_n$ as a colouring of the edges of $K_n$ with colours from the set $[m_+, m_-]\cap \mathbb{Z}$. Fix reals $\delta, \eps_1$ and an integer $k_0$ with $0<\delta<\frac{\varepsilon}{8m_+}$, $0< \eps_1 < \delta^{t-1}/(1+t)$ and $k_0 > (\eps_1)^{-1}$. Apply the colourful regularity lemma to $G$ with parameters $m_+-m_-+1$, $\eps_1$ and $k_0$. Let $\sqcup_{i=1}^k V_i$ be the equipartition guaranteed by \Cref{lemma:ColRegLem}.  Now do the following:
\begin{enumerate}[(a)]
    \item for each $i\in [k]$, set $P(i) \defined m_-$ and change the multiplicity of all edges internal to some $V_i$ to $m_-$. For parts $(V_i, V_j)$ which do not form a $\eps_1$-regular pair for some colour $c$, set $P(ij) \defined m_-$ and change the multiplicity of all edges between $V_i$ and $V_j$ to $m_-$;
    
    \item for every pair $(V_i, V_j)$ which is $\eps_1$-regular in every colour, let $P(ij)$ be the largest $c\in [m_-, m_+]\cap \mathbb{Z}$ such that $d_c(V_i, V_j)>2\delta$ (such a $c$ exists, by our choice of $\delta$), and change the multiplicities of all edges in $G$ between $V_i$ and $V_j$ to $P(ij)$.
\end{enumerate}
Let $G' \in \Blow_n(P)$ denote the multigraph thus obtained, where $P$ is the pattern on $[k]$ defined in steps (a)--(b).

Clearly in step (a) we do not create any copy of $F\in \cF$ in $G$, since we have only decreased edge multiplicities. Provided $n$ is chosen sufficiently large so that $\frac{n}{2k_1}\geq \frac{t}{\eps_1}$, it easily follows from the blow-up lemma, \Cref{lemma:Blowuplem}, together with the upper bound of $t$ on the order of multigraphs in $\cF$ that no copy of a member of $\cF$ is created in step (b) either, and in particular that $P$ is a $\cP$-admissible pattern on $k\leq k_1$ vertices.

Finally, we observe that, provided $n$ is sufficiently large  and $\eps_1$ is chosen sufficiently small 
\begin{align*}
e(G')&\geq  e(G)- (m_+-m_-)\left(\eps_1  k^2 {\lceil\frac{n}{k}\rceil}^2  +k\binom{\lceil n/k\rceil}{2} + 2\delta \binom{k}{2}{\lceil\frac{n}{k}\rceil}^2  \right)%\\
&\geq  e(G)-\frac{\eps}{2}\binom{n}{2},
\end{align*}
and more generally for any balanced blow-up $G''$ of $P$ on $n$ vertices, $e(G'')\geq e(G)-\eps \binom{n}{2}$. The bound $P(G'')\geq P(G)\exp \left(-\eps \binom{n}{2}\right)$ follows similarly (observing that both sides are $0$ if $m_-=0$).
The result follows.
\end{proof}
%--------------------------------------%

A key tool in pattern optimisation will be the following simple cloning lemma, due to Räty and the first author~\cite{falgasravryraty}.

%--------------------------------------%
\begin{lemma}[Cloning lemma]
\label{lemma: cloning}
Let $P$ be a pattern in which every loop has multiplicity $m$. Then there exist subpatterns $P'_+$  (respectively $P'_{\times}$) of $P$ such that for every pair of vertices $ii'$ from $V(P'_{+})$ (respectively $V(P'_{\times})$) we have $P'_{+}(ii')> m$ (respectively $P'_{\times}(ii')>m$) and in addition  $\sigma(P'_+)= \sigma(P)$ (respectively $\pi(P'_{\times})=\pi(P)$).
%in which every loop has multiplicity $m$ and at least one edge has multiplicity at least $m+1$.  
%Then there exist subpatterns $P'_+$ and $P'_\star$ such that every edge in $P'_+$ and $P'_\star$ has multiplicity at least $m+1$  and $\sigma(P'_+)= \sigma(P)$, $\pi(P'_\star)=\pi(P_\star)$.
%%%%and in addition for all $n$ sufficiently large and every $n$-vertex blow-up $G$ of $P$ there exist $n$-vertex blow-ups $G'_+$ of $P'_+$ and $G'_\star$ of $P'_\star$ with $e(G)\leq e(G'_+)+O(n)$ and $ P(G)\leq P(G'_\star)e^{O(n)}$.
\end{lemma}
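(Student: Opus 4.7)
The plan is to fix an optimal weighting of $V(P)$ and iteratively shrink its support via a second-order argument, ending up with a subpattern whose off-diagonal entries all exceed $m$. I would start with a sum-optimal probability weighting $\bfx$ of $V(P)$ (which exists by compactness of the simplex), and let $S \defined \set{v \in V(P) \st x_v > 0}$ denote its support. If there exist distinct $i, i' \in S$ with $P(\set{i, i'}) \leq m$, I will show that another sum-optimal weighting exists whose support is strictly smaller, and iterate.

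To see this, I would consider the one-parameter family $\bfx(t) \defined \bfx + t(e_{i'} - e_i)$ for $t \in [-x_{i'}, x_i]$, which stays within the simplex. Expanding gives
\[
    \bfx(t)^\rT A_P\, \bfx(t) = \sigma_P + 2t\,(e_{i'} - e_i)^\rT A_P \bfx + t^2 (e_{i'} - e_i)^\rT A_P (e_{i'} - e_i).
\]
Since both $i, i' \in S$, \Cref{cor: balanced degrees in optimal pattern weightings} will yield $(A_P \bfx)_i = (A_P \bfx)_{i'} = \sigma_P$, so the linear term vanishes. The two loops have multiplicity $m$, so the quadratic term simplifies to $2t^2 \p[\big]{m - P(\set{i, i'})} \geq 0$ by hypothesis. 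Combined with the upper bound $\bfx(t)^\rT A_P \bfx(t) \leq \sigma_P$ from optimality, this forces equality for every admissible $t$; in particular, taking $t = x_i$ produces a sum-optimal weighting supported on $S \setminus \set{i}$.

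Iterating must terminate since the support is finite, producing a sum-optimal weighting $\bfx^\star$ whose support $S^\star$ satisfies $P(\set{i, i'}) > m$ for all distinct $i, i' \in S^\star$. I would then set $P'_+$ to be the subpattern of $P$ induced on $S^\star$: the restriction $\bfx^\star|_{S^\star}$ will witness $\sigma(P'_+) \geq \sigma_P$, while the reverse inequality is immediate since any weighting of $V(P'_+)$ extends by zero to a weighting of $V(P)$ with the same objective value. Hence $\sigma(P'_+) = \sigma_P$, as desired.

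The product version will follow from the identical argument applied to the log-weight matrix $A'_P$ with entries $(A'_P)_{uv} \defined \log P(\set{u,v})$: the first-order condition from \Cref{cor: balanced degrees in optimal pattern weightings} again kills the linear term, and the quadratic term becomes $2t^2 \log \p[\big]{m / P(\set{i, i'})}$, which is non-negative precisely when $P(\set{i, i'}) \leq m$. I do not anticipate any real obstacle; the only point requiring care is the degenerate product case where some entry of $P$ is zero, but such entries cannot occur between positive-weight vertices of a product-optimal weighting (else the objective is $-\infty$), so the support-shrinking argument goes through unchanged in that case.
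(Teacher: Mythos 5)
Your proof is correct and takes essentially the same approach as the paper: both arguments shift weight between two support vertices $i,i'$ with $P(ii')\le m$ and use this hypothesis to show the objective does not decrease, then iterate until the support induces a subpattern with all edge multiplicities exceeding $m$. The only (minor) difference is that you justify the shift via the stationarity condition of \Cref{cor: balanced degrees in optimal pattern weightings} plus the sign of the quadratic term, whereas the paper does a direct one-step comparison after a without-loss-of-generality choice of which vertex absorbs the other's weight.
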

%--------------------------------------%
\begin{proof}
Let $[k]$ denote the vertex set of $P$. Suppose $P(ii')\leq m$ for some pair $i,i' \in [k]$. Let $\bfx$ be a sum-optimal weighting of $P$, and let $P - \set{i}$ and $P - \set{i'}$ denote the sub-patterns of $P$ induced by $[k]\setminus \set{i}$ and $[k] \setminus \set{i'}$ respectively.  Assume without loss of generality that the inequality $\sum_{j\in [k] \setminus \set{i,i'}} P(ij)x_j \geq \sum_{j\in [k]\setminus \set{i,i'}} P(i'j)x_j$ holds. Then, we have 
\begin{align*}
    \sigma_P
    &=m (x_i+x_{i'})^2 -2(m-P(ii'))x_ix_{i'}+2x_i \sum_{j \in [k] \setminus \set{i,i'}} P(ij)x_j +2x_{i'} \sum_{j \in [k] \setminus \set{i,i'}} P(i'j)x_j \\
    &\qquad \qquad \qquad \qquad \qquad \qquad \qquad \qquad  +\sum_{j,j' \in [k] \setminus \set{i,i'}} P(jj')x_j x_{j'} \\
&\leq m(x_i+x_{i'})^2 + 2(x_i+x_{i'})\sum_{j\in [k] \setminus \set{i,i'}} P(ij) x_j + \sum_{j, j' \in [k] \setminus \set{i,i'}} P(jj') x_j x_{j'} \leq \sigma_{P - \set{i'}}.
\end{align*}
Passing to the subpattern $P - \set{i'}$ and iterating, the additive part of the lemma follows. The multiplicative part follows similarly.
\end{proof}
%--------------------------------------%

%The symmetrisation algorithm.

%----------------------------------------------------------%

\section{The small ambient multiplicity regime}
\label{section: small a}

%--------------------------------------%
\subsection{Preliminaries: sum- and product-optimal blow-ups of graph patterns}

%--------------------------------------%
\begin{lemma}
\label{lemma: upper bounds for sigma p}
Let $P$ be a pattern such that all loops have multiplicity $c$. Then, we have
\begin{align*}
    \SIGMA{P}{s} \leq \floor[\Big]{\sigma_P \frac{s^2}{2} - \frac{cs}{2}}.
\end{align*}   
\end{lemma}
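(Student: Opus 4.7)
The plan is to express $e(G)$ for an arbitrary blow-up $G \in \Blow_s(P)$ as a quadratic form in the part sizes, plus a linear correction coming from the loop multiplicities, and then apply the definition~\eqref{eq: optimisation problem for arithmetic problem} of $\sigma_P$.

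Concretely, let $G \in \Blow_s(P)$ and fix a $P$-partition $V(G) = \sqcup_{v\in V(P)} U_v$, writing $s_v \defined \card{U_v}$, so that $\sum_{v \in V(P)} s_v = s$. Counting edges within and between parts,
\begin{align*}
    e(G) = \sum_{v \in V(P)} P(\set{v}) \binom{s_v}{2} + \sum_{\set{v,v'} \in \binom{V(P)}{2}} P(\set{v,v'}) s_v s_{v'}.
\end{align*}
Using the hypothesis that $P(\set{v}) = c$ for every $v \in V(P)$, I would then rewrite $\binom{s_v}{2} = s_v^2/2 - s_v/2$ and collect terms to obtain
\begin{align*}
    e(G) = \frac{1}{2} \p[\bigg]{ c \sum_{v \in V(P)} s_v^2 + 2 \sum_{\set{v,v'} \in \binom{V(P)}{2}} P(\set{v,v'}) s_v s_{v'} } - \frac{cs}{2}.
\end{align*}

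Next, I would normalise by setting $x_v \defined s_v/s$, so $\bfx \in \RR_{\geq 0}^{V(P)}$ and $\sum_v x_v = 1$. The expression in large parentheses above is exactly $s^2 \cdot \bfx^\rT A_P \bfx$, since the diagonal entries of $A_P$ are $P(\set{v}) = c$ and the off-diagonal entries are $P(\set{v,v'})$. Hence
\begin{align*}
    e(G) = \frac{s^2}{2} \, \bfx^\rT A_P \, \bfx - \frac{cs}{2} \leq \frac{\sigma_P \, s^2}{2} - \frac{cs}{2},
\end{align*}
where the last inequality uses the definition of $\sigma_P$ in~\eqref{eq: optimisation problem for arithmetic problem}. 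Since $e(G)$ is a non-negative integer, the floor function may be applied to the right-hand side, yielding $e(G) \leq \floor{\sigma_P s^2/2 - cs/2}$. Taking the maximum over all $G \in \Blow_s(P)$ gives the claimed bound on $\SIGMA{P}{s}$.

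There is no genuine obstacle here: the statement is a direct consequence of the fact that the loop-multiplicity normalisation lets one absorb the difference between $\binom{s_v}{2}$ and $s_v^2/2$ into a single linear term $-cs/2$, after which the quadratic form interpretation of $\sigma_P$ does the rest. The only care needed is to distinguish the matrix entry $(A_P)_{vv} = P(\set{v})$ from $\binom{s_v}{2}$ versus $s_v^2/2$, which is precisely what the loop-regularity hypothesis accommodates.
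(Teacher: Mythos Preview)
Your proof is correct and follows essentially the same approach as the paper: express $e(G)$ for an arbitrary blow-up in terms of the quadratic form $\bfx^\rT A_P \bfx$ scaled by $s^2$, with the loop-multiplicity hypothesis producing the linear correction $-cs/2$, then bound by $\sigma_P$ and take the floor using integrality of $e(G)$. The paper's version differs only cosmetically, writing $x_i s$ for the part sizes from the outset rather than normalising at the end.
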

%--------------------------------------%
\begin{proof}
Suppose $P$ is a pattern on $N$ vertices. Consider a multigraph $G \in \Blow_s(P)$, and let $V(G) = \bigsqcup_{i=1}^{N} V_i$ be the $P$-partition of $G$.
Furthermore, for each $i \in [N]$, let $\card{V_i} = x_i s$.
Applying~\eqref{eq: optimisation problem for arithmetic problem}, we have
\begin{align*}
    e(G) &= \sum_{1 \leq i < j \leq N} x_i x_j P(ij) s^2 + c \sum_{i=1}^{N} \binom{x_i s}{2}
    = \frac{\bfx^{t}A_P\bfx s^2}{2} - c \sum_{i=1}^{N}  \frac{x_i s}{2} \\
    &= \frac{\bfx^{t}A_P\bfx s^2}{2} - \frac{cs}{2} \leq \frac{\sigma_P s^2}{2}-\frac{cs}{2}. 
\end{align*}
Thus, we have $\SIGMA{P}{s} \leq \sigma_P s^2/2 - cs/2$, and the lemma follows.
\end{proof}
%--------------------------------------%

%The proof of \Cref{lemma: upper bounds for sigma p} in fact yields the following result.

%--------------------------------------%
\begin{corollary}
\label{cor: equality in sigma P when xs is rational}
Let $P$ be a pattern and $\bfx$ be a sum-optimal weighting of $V(P)$. Then for every $s$, we have
\begin{equation*}
    \SIGMA{P}{s} \leq \floor[\bigg]{ \frac{\sigma_P s^2}{2} - \frac{s}{2}\sum_{i\in V(P)} P(\set{i}) x_i }.
\end{equation*}
What is more, if $x_is$ is an integer for every $i \in V(P)$, then we have equality in the above.
\end{corollary}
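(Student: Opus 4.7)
The strategy is to write $e(G)$ for any blow-up $G \in \Blow_s(P)$ as an explicit quadratic in the part-proportions, apply the variational upper bound for $\sigma_P$, and identify the blow-up coming from a sum-optimal weighting as the equality case.

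Given $G \in \Blow_s(P)$ with $P$-partition $V(G) = \bigsqcup_{i \in V(P)} V_i$ of sizes $s_i$, set $y_i \defined s_i/s$. Splitting $e(G)$ into contributions from edges inside parts and edges between parts, and expanding $\binom{s_i}{2} = (s_i^2 - s_i)/2$, a routine calculation gives the identity
\[
    e(G) = \frac{s^2}{2}\, \bfy^{\rT} A_P\, \bfy \;-\; \frac{s}{2} \sum_{i \in V(P)} P(\set{i})\, y_i,
\]
where the full adjacency matrix $A_P$ (loop multiplicities on the diagonal, edge multiplicities off-diagonal) supplies the quadratic term, and the $-s_i/2$ residues from the binomial expansion combine into the linear correction. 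This generalises the calculation in the proof of \Cref{lemma: upper bounds for sigma p} to non-uniform loops: there $\sum_i P(\set{i}) y_i \equiv c$, so the correction is independent of $\bfy$.

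Applying the variational characterisation~\eqref{eq: optimisation problem for arithmetic problem} of $\sigma_P$ to conclude $\bfy^{\rT} A_P \bfy \leq \sigma_P$ and substituting yields $e(G) \leq \tfrac{s^2 \sigma_P}{2} - \tfrac{s}{2}\sum_i P(\set{i}) y_i$. Comparing the linear term against $\sum_i P(\set{i}) x_i$ gives the claimed inequality once $\bfx$ is taken to minimise the loop-sum $\sum_i P(\set{i}) x_i$ among sum-optimal weightings: on the sum-optimal face this comparison is immediate from the choice of $\bfx$, and for $\bfy$ bounded away from that face the $\Theta(s^2)$ slack in $\bfy^{\rT} A_P \bfy \leq \sigma_P$ dwarfs the $O(s)$ discrepancy in the linear correction. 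Since $e(G) \in \ZZ$, the floor may be taken, yielding the upper bound. For the equality statement, if $x_i s \in \ZZ$ for every $i$, then the blow-up $G^\star$ with $\card{V_i} = x_i s$ is a valid member of $\Blow_s(P)$, and substituting $\bfy = \bfx$ into the identity above (together with $\bfx^{\rT} A_P \bfx = \sigma_P$) returns precisely $\tfrac{s^2 \sigma_P}{2} - \tfrac{s}{2}\sum_i P(\set{i}) x_i$; this value is already an integer and hence matches the floored upper bound exactly.

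The main obstacle lies in the linear-term comparison: for an arbitrary sum-optimal weighting the inequality $\sum_i P(\set{i}) y_i \geq \sum_i P(\set{i}) x_i$ can fail on other sum-optimal $\bfy$, so the argument must pick $\bfx$ among sum-optimal weightings to minimise the loop-sum (for instance by supporting $\bfx$ on a subset of vertices of minimum loop multiplicity within some sum-optimal support, using \Cref{cor: balanced degrees in optimal pattern weightings}). For the generalised Turán patterns $\TUR{\bfr}{a}$ and the other patterns appearing as extremal constructions in this paper, the sum-optimal weighting is essentially unique up to symmetries of $P$, so this choice is canonical and the argument goes through without further effort.
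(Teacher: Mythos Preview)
Your approach mirrors the paper's: derive the identity $e(G)=\tfrac{s^2}{2}\,\bfy^{\rT}A_P\bfy-\tfrac{s}{2}\sum_i P(\set{i})y_i$ and then bound. The paper's proof is a single sentence (``similar to \Cref{lemma: upper bounds for sigma p}''); you expand this correctly and isolate the genuine obstruction, namely that replacing $\sum_i P(\set{i})y_i$ by $\sum_i P(\set{i})x_i$ is not free when the loop multiplicities are non-uniform.

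Your proposed slack fix does not close the gap. The quadratic slack $\tfrac{s^2}{2}(\sigma_P-\bfy^{\rT}A_P\bfy)$ scales like $s^2\|\bfy-\bfx\|^2$, while the linear discrepancy scales like $s\|\bfy-\bfx\|$; for $\bfy$ at distance $\Theta(1/s)$ from $\bfx$ both terms are $\Theta(1)$ and no domination occurs. In fact the upper bound as stated is \emph{false}: take $P$ on $\set{1,2}$ with $P(\set{1})=0$, $P(\set{2})=2$, $P(\set{1,2})=3$. Then $\sigma_P=9/4$ at the unique optimum $\bfx=(1/4,3/4)$ and $\sum_i P(\set{i})x_i=3/2$, so the claimed bound gives $\SIGMA{P}{3}\leq\lfloor 63/8\rfloor=7$; but the blow-up with part sizes $(1,2)$ has $e(G)=2+6=8$. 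The equality clause also fails in general: with $V(P)=\set{1,2,3}$, $P(\set{1})=P(\set{2})=0$, $P(\set{3})=2$, $P(\set{1,2})=3$, $P(\set{1,3})=P(\set{2,3})=0$, one has $\sigma_P=2$ uniquely at $\bfx=(0,0,1)$, so $x_is\in\ZZ$ for $s=2$ and the claim gives $\SIGMA{P}{2}=2$; yet part sizes $(1,1,0)$ give $e(G)=3$.

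So the obstacle you flagged is real and cannot be repaired in the stated generality. The corollary is correct precisely when all loops carry the same multiplicity --- the hypothesis of \Cref{lemma: upper bounds for sigma p}, and the situation of the graph patterns $G^{(a)}$ where the paper actually invokes it. For the generalised Tur\'an patterns, $\SIGMA{T}{s}$ is computed directly from the discrete optimisation (\Cref{prop: partition sizes in sum-optimal blow-ups of T}), not via this corollary. Your closing caveat that the argument goes through for the patterns appearing in the paper is therefore the right landing point; you have, in effect, spotted an overstatement in the paper rather than failed to prove a true claim.
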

%--------------------------------------%
\begin{proof}
The upper bound can be derived in a similar manner as in the proof of \Cref{lemma: upper bounds for sigma p}, while the lower bound is given by taking $\card{V_i} = x_i s$ for each $i\in V(P)$.    
\end{proof}
%--------------------------------------%

%--------------------------------------%
\begin{remark}
Recall from \Cref{prop: sigma P is rational} that every pattern $P$ has a rational sum-optimal weighting $\bfx$. In particular, there exists some $N\in \ZZ_{>0}$ such that for all $s\in N\ZZ_{>0}$, $s\bfx$ is an integer-valued vector and by \Cref{cor: equality in sigma P when xs is rational}, we then have $\SIGMA{P}{s} = \floor[\big]{ \frac{\sigma_P s^2}{2} - \frac{s}{2}\sum_{i \in V(P)} P(\set{i})x_i }$.   
\end{remark}
%--------------------------------------%

%--------------------------------------%
\begin{proof}[Proof of \Cref{prop: sum for optimal blow-ups of stars paths and cycles}]
\noindent\textbf{Part (i):}
A sum-optimal weighting of $K_{1, \ell}^{(a)}$ must assign weight $x$ to the central vertex and weight $(1-x)/\ell$ to every leaf vertex, where $x \in (0,1)$.
By \Cref{cor: balanced degrees in optimal pattern weightings}, we obtain the equalities
\begin{equation}
\label{eq: eqn1}
    \sigma_{K_{1,\ell}^{(a)}} = a+1 - 2x,
\end{equation}
and 
\begin{equation}
\label{eq: eqn2}
    \sigma_{K_{1,\ell}^{(a)}} = \frac{a\ell +(\ell+1)x-1}{\ell}
\end{equation}
by considering the central vertex and a leaf vertex in the star-pattern $K_{1, \ell}^{(a)}$ respectively. Solving the resulting linear equation system, we get $\sigma_{K_{1,\ell}^{(a)}}=a+\frac{\ell-1}{3 \ell +1}$, and the upper bound follows from \Cref{lemma: upper bounds for sigma p}. 
Consider $G \in \Blow_s(K_{1,\ell}^{(a)})$ with $K_{1,\ell}^{(a)}$-partition $V(G)=V_0 \bigsqcup (\sqcup_{i=1}^{\ell}V_i)$, where $V_0$ corresponds to the central vertex, and the $V_i$ correspond to the leaf vertices. Let $s=q(3\ell+1)+r$, where $r \in \set{0} \cup [3\ell]$. It can now be checked that for $\ell \in \set{2,3,4}$, setting $\card{V_0} = q(\ell + 1)+\left \lceil \frac{r}{3}\right \rceil$ and letting the $\card{V_i}$ be as equal as possible yields $e(G)=a \binom{s}{2}+ \left \lfloor \frac{\ell-1}{6\ell+2}s^2+\frac{s}{2} \right \rfloor $.
Thus, for $\ell \in \set{2,3,4}$, we have $\SIGMA{K_{1,\ell}^{(a)}}{s} \geq a\binom{s}{2}+\left \lfloor \frac{\ell-1}{6\ell+2}s^2+\frac{s}{2} \right \rfloor$, and the result follows.

\noindent\textbf{Part (ii):} We must consider $P_4$, $P_5$ and cycles of length at least $6$ separately.

\noindent\textbf{Subcase: $P_4^{(a)}$.} Let $\bfx$ be a sum-optimal weighting of $P_4^{(a)}$. For each $i \in [4]$, let $x_i$ be the weight assigned to the $i$th vertex on the path, following the natural order along the path. Then, either we would have $\sigma_{P_4^{(a)}} = \sigma_{K_{1,2}^{(a)}}= a+\frac{1}{7}$ or $\bfx$ assigns positive weight to each vertex in the pattern $P_4^{(a)}$. In the latter case, by \Cref{cor: balanced degrees in optimal pattern weightings} applied to each of the four vertices in the pattern $P_4^{(a)}$:
\begin{align}
\label{eqn3}
    \sigma_{P_4^{(a)}}& =a+x_2-x_1, \\
\label{eqn4}
    \sigma_{P_4^{(a)}}&=a + x_3-x_4, \\
\label{eqn5}
    \sigma_{P_4^{(a)}}& =a + x_1+x_3-x_2, \\
\label{eqn6}
    \sigma_{P_4^{(a)}}&=a+x_2+x_4-x_3.
\end{align}
By considering the linear combination \eqref{eqn3} + \eqref{eqn4} + $2\cdot$\eqref{eqn5} + $2\cdot$\eqref{eqn6}, we get $6 \sigma_{P_4^{(a)}} = 6a + \sum_{i=1}^4 x_i = 6a+1$.
Thus, we have $\sigma_{P_4^{(a)}} = a+\frac{1}{6} > a+\frac{1}{7}$.
By \Cref{lemma: upper bounds for sigma p}, we have $\SIGMA{P_4^{(a)}}{s}  \leq a\binom{s}{2}+ \left\lfloor \frac{s^2}{12}+\frac{s}{2}\right \rfloor$. 
Consider $G \in \Blow_s(P_4^{(a)})$, and let $V(G)=\bigsqcup_{i=1}^4 V_i$ be the $P_4^{(a)}$-partition of $G$. Here, $V_1$ and $V_2$ correspond to the endpoints of the path, while $V_3$ and $V_4$ correspond to the remaining pair of vertices. Let $s= 6q +r$, where $r \in \set{0} \cup [5]$. It is now easy to check that setting $\card{V_i} = 2q+ \left \lfloor \frac{r+i-1}{4}\right \rfloor$ for $i=3,4$ and $\card{V_i}= q+ \left \lfloor \frac{r+i-1}{4}\right \rfloor$ for $i=1,2$ yields $e(G)=a\binom{s}{2}+  \left \lfloor \frac{s^2}{12}+\frac{s}{2}\right \rfloor$. Thus, we have $\SIGMA{P_4^{(a)}}{s}  \geq a\binom{s}{2}+\left \lfloor \frac{s^2}{12}+\frac{s}{2}\right \rfloor$. Hence, $\SIGMA{P_4^{(a)}}{s}  =a\binom{s}{2}+ \left \lfloor \frac{s^2}{12}+\frac{s}{2}\right \rfloor$.

\noindent\textbf{Subcase: $P_5^{(a)}$.}   Let now $\bfx$ be a sum-optimal weighting of the pattern $P_5^{(a)}$. For each $i \in [5]$, let $x_i$ be the weight assigned to the $i$-th vertex on the path, following the natural order along the path. Then, either we would have $\sigma_{P_5^{(a)}} = \sigma_{P_4^{(a)}}=a+ \frac{1}{6}$ or $\bfx$ assigns positive weight to each vertex in the pattern $P_5^{(a)}$. In the latter case, by \Cref{cor: balanced degrees in optimal pattern weightings}, we have 
\begin{align}
\label{eqn7}
    \sigma_{P_5^{(a)}}&=a+x_2-x_1,\\
\label{eqn8}
    \sigma_{P_5^{(a)}}& =a + x_4-x_5,\\
\label{eqn9}
    \sigma_{P_5^{(a)}}&=a + x_4+x_2-x_3,\\
\label{eqn10}
    \sigma_{P_5^{(a)}}&=a+x_1+x_3-x_2,\\
\label{eqn11}
    \sigma_{P_5^{(a)}}&=a+x_3+x_5-x_4.
\end{align}
Similarly to the way we handled the case of $P_4^{(a)}$, we consider the linear combination \eqref{eqn7} + $2\cdot$\eqref{eqn9} + $2\cdot$\eqref{eqn10} + \eqref{eqn11} and rearrange terms to deduce that $\sigma_{P_5^{(a)}}=a+\frac{1}{6}$. By \Cref{lemma: upper bounds for sigma p}, we have $\SIGMA{P_5^{(a)}}{s}  \leq a\binom{s}{2}+\left \lfloor \frac{s^2}{12}+\frac{s}{2}\right \rfloor$. We also have, $\SIGMA{P_5^{(a)}}{s}  \geq \SIGMA{P_4^{(a)}}{s} =a\binom{s}{2}+\left \lfloor \frac{s^2}{12}+\frac{s}{2}\right \rfloor$, which in turn implies that $\SIGMA{P_5^{(a)}}{s} =a\binom{s}{2}+\left \lfloor \frac{s^2}{12}+\frac{s}{2}\right \rfloor$.

\noindent\textbf{Subcase: $C_\ell^{(a)}$, $\ell\geq 6$.} Now let $\bfx$ be a sum-optimal weighting of $C_\ell^{(a)}$, and let $x_i$ be the weight assigned to the $i$th vertex along the cycle, in a natural cyclic ordering. Suppose $\bfx$ assigns positive weight to every vertex. Then, by \Cref{cor: balanced degrees in optimal pattern weightings}, for each $i \in [\ell]$, we would have $\sigma_{C_\ell^{(a)}}=a+x_{i-1}+x_{i+1}-x_i$, where the indices are taken modulo $\ell$. Then,
\begin{align*}
    \ell \sigma_{C_\ell^{(a)}}=a\ell + \sum_{i=1}^{\ell}(x_{i-1}+x_{i+1}-x_i)=a\ell+\sum_{i=1}^{\ell}x_i=a\ell+1.
\end{align*}
Thus, under our assumption we would have $\sigma_{C_{\ell}^{(a)}}=a+\frac{1}{\ell}$. For $\ell\geq 6$, we note that this is at most $a+\frac{1}{6}=\sigma_{C_6^{(a)}}$.
        
\noindent\textbf{Bringing it together.}
We now conclude the proof of (ii), beginning with cycles, for which we proceed by induction on $\ell_2$. Clearly, we have $\SIGMA{C_{\ell_2}^{(a)}}{s} \geq \SIGMA{P_4^{(a)}}{s} =a\binom{s}{2}+\left \lfloor \frac{s^2}{12}+\frac{s}{2} \right \rfloor$, for all $\ell_2\geq 6$. Therefore, it suffices to prove the upper bound. Suppose $\bfx$ is a sum-optimal weighting of the pattern $C_6^{(a)}$. Then, either $\sigma_{C_6^{(a)}} = \sigma_{P_5^{(a)}}=a+\frac{1}{6}$ or $\bfx$ assigns positive weight to every vertex in the pattern $C_6^{(a)}$. In the latter case, we have $\sigma_{C_6^{(a)}}=a+\frac{1}{6}$. Thus, by \Cref{lemma: upper bounds for sigma p}, we have $\SIGMA{C_6^{(a)}}{s}  \leq a\binom{s}{2} +\left \lfloor \frac{s^2}{12}+\frac{s}{2} \right \rfloor$. We now proceed by induction on $\ell_2$. Let $\bfx$ be a sum-optimal weighting of $C_{\ell_2}^{(a)}$. Then, either $\sigma_{C_{\ell_2}^{(a)}}=\sigma_{P_{\ell_2-1}^{(a)}} \leq \sigma_{C_{\ell_2 - 1}^{(a)}}=a+\frac{1}{6}$, or $\bfx$ assigns positive weight to each vertex in the pattern $C_{\ell_2}^{(a)}$, in which case we have $\sigma_{C_{\ell_2}^{(a)}} = a +\frac{1}{\ell_2} \leq a+ \frac{1}{6}$. The upper bound then follows from \Cref{lemma: upper bounds for sigma p}. For $\ell_1 \geq 6$, we have $a \binom{s}{2}+\left \lfloor \frac{s^2}{12}+\frac{s}{2} \right \rfloor = \SIGMA{P_5^{(a)}}{s}  \leq \SIGMA{P_{\ell_1}^{(a)}}{s} \leq \SIGMA{C_{\ell_1}^{(a)}}{s} = a\binom{s}{2}+\left \lfloor \frac{s^2}{12}+\frac{s}{2} \right \rfloor$. This concludes the proof of part (ii).

\textbf{Part (iii):} Consider $G \in \Blow_s(C_\ell^{(a)})$, with part sizes as equal as possible, and the parts of size $\left \lceil \frac{s}{\ell}\right \rceil$ lying along a path. It is straightforward to check that for $3 \leq \ell \leq 5$, we have $e(G)=a\binom{s}{2}+\left \lfloor \frac{s^2}{2\ell}+\frac{s}{2}\right  \rfloor$. Thus, for $3 \leq \ell \leq 5$, we have $\SIGMA{C_\ell^{(a)}}{s}  \geq a\binom{s}{2}+ \left \lfloor \frac{s^2}{2\ell}+\frac{s}{2}\right  \rfloor $, and it suffices to prove the upper bound. Let $\bfx$ be a sum-optimal weighting of the pattern $C_\ell^{(a)}$. Then, either we would have $\sigma_{C_\ell^{(a)}} =\sigma_{P_{\ell-1}^{(a)}} \leq \sigma_{P_4^{(a)}}=a+\frac{1}{6}$, or $\bfx$ assigns positive weight to every vertex in the pattern $C_\ell^{(a)}$, in which case we have $\sigma_{C_\ell^{(a)}}=a+\frac{1}{\ell} > a+\frac{1}{6}$ for $\ell \leq 5$. By \Cref{lemma: upper bounds for sigma p}, we have $\SIGMA{C_\ell^{(a)}}{s}  \leq a\binom{s}{2}+ \left \lfloor \frac{s^2}{2\ell}+\frac{s}{2}\right  \rfloor $, and the proposition follows.
       
\noindent\textbf{Part (iv):} An optimal blow-up of this pattern $P_{+2}^{(a)}$ is a balanced blow-up (this follows from Mantel's theorem), from which the result is immediate. 
% \item Consider $G \in \Blow_s(P)$, with one part of size $\left \lfloor \frac{s}{2} \right \rfloor$ and the other of size $\left \lceil \frac{s}{2} \right \rceil$. It is easy to see that $e(G)=\left \lfloor \frac{5}{4}s^2-\frac{s}{2} \right \rfloor$, and therefore, $\SIGMA{P}{s}  \geq \left \lfloor \frac{5}{4}s^2-\frac{s}{2} \right \rfloor$. Let $\bfx$ be a sum-optimal weighting of $P$. We may assume that $\bfx$ assigns weight $x$ to one vertex, and weight $1-x$ to the other vertex, where $x \in (0,1)$. By \Cref{cor: balanced degrees in optimal pattern weightings}, we have the following equations:
% \begin{equation}
%    \label{equation12}\sigma_P=2+2(1-x)-x= 4-3x,
%\end{equation}
%and 
%\begin{equation}
%   \label{equation13}
%  \sigma_P=2+2x-(1-x)=1+3x.
%\end{equation}
%Adding Equations \eqref{equation12} and \eqref{equation13} and dividing by $2$, we get $\sigma_P=\frac{5}{2}$. By \Cref{lemma1}, we have $\SIGMA{P}{s}  \leq  \left \lfloor \frac{5}{4}s^2-\frac{s}{2} \right \rfloor$. Combining the upper and lower bounds, the proposition follows.
%\end{enumerate}
\end{proof}
%--------------------------------------%

%\begin{remark}\label{remark: petersen continuum}
%	The Petersen pattern has a continuum of different sum-optimal weighting: put weight $x<\frac{1}{10}$ on the vertices of an induced $5$-cycle in $\Petersen$, and weight $\frac{1}{5}-x$ on each of the remaining vertices, and observe this gives a sum-optimal weighting of the vertices of $\Petersen$.
%\end{remark}
%\begin{remark}
%	In fact, for the $\Petersen$ pattern, we shall observe that we have 
%\end{remark}
%--------------------------------------%
\begin{lemma}
\label{lemma: subcubic girth at least 5}
Let $G$ be a graph with $\Delta(G) \leq 3$ and girth at least $5$. Then, $\sigma_{G^{(a)}} \leq a + \frac{1}{5}$.
\end{lemma}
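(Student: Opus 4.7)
The plan is to fix a sum-optimal weighting $\bfx = (x_v)_{v \in V(G)}$ of $G^{(a)}$ and work on its support $S \defined \set{v \st x_v > 0}$, noting that $\sigma_{G^{(a)}} = \sigma_{G[S]^{(a)}}$ and that $G[S]$ inherits the hypotheses $\Delta \leq 3$ and girth at least $5$. Applying \Cref{cor: balanced degrees in optimal pattern weightings} at each $v \in S$ and using that loops in $G^{(a)}$ have multiplicity $a-1$, edges multiplicity $a+1$, and non-edges multiplicity $a$, I would first derive the clean optimality relation
\begin{equation*}
    x_v + \delta = \sum_{v' \in N_S(v)} x_{v'} \qquad \text{for every } v \in S,
\end{equation*}
where $\delta \defined \sigma_{G^{(a)}} - a$ and $N_S(v)$ denotes the neighbourhood of $v$ in $G[S]$. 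It then suffices to prove $\delta \leq 1/5$, and I may assume $\delta > 0$ throughout, which in turn forces $G[S]$ to contain no isolated vertex (else the relation would give $\delta = -x_v \leq 0$).

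The main step will be a walks-of-length-$2$ calculation in the case where some $v \in S$ has $d_{G[S]}(v) = 3$. Summing the optimality relation over the three $v' \in N_S(v)$, the left-hand side evaluates to $x_v + 4\delta$, and the right-hand side rearranges to $\sum_{v'' \in S} x_{v''} \card[\big]{N_S(v) \cap N_S(v'')}$. The girth-$5$ hypothesis is then used twice: the absence of triangles forces $\card[\big]{N_S(v) \cap N_S(v'')} = 0$ whenever $v'' \in N_S(v)$, while the absence of $4$-cycles forces $\card[\big]{N_S(v) \cap N_S(v'')} \leq 1$ whenever $v'' \notin N_S[v]$. Combined with the trivial $\card{N_S(v) \cap N_S(v)} = 3$ and the bound $\sum_{v'' \in S \setminus N_S[v]} x_{v''} = 1 - 2x_v - \delta$, this estimates the right-hand side by $x_v + 1 - \delta$, yielding $5\delta \leq 1$.

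If instead every vertex of $S$ has degree at most $2$ in $G[S]$, then, since $G[S]$ has girth at least $5$ and no isolated vertex, it decomposes as a disjoint union of paths $P_\ell$ with $\ell \geq 2$ and cycles $C_\ell$ with $\ell \geq 5$. By \Cref{prop: sum for optimal blow-ups of stars paths and cycles} (together with the trivial calculation $\sigma_{K_2^{(a)}} = a$), every such component $H$ satisfies $a \leq \sigma_{H^{(a)}} \leq a + 1/5$, with equality in the upper bound only for $H = C_5$. A short convexity calculation for any two patterns $P_1, P_2$ with $\sigma_{P_i} \geq a$ (the quadratic $f(\alpha) = \alpha^2 \sigma_{P_1} + (1-\alpha)^2 \sigma_{P_2} + 2 a \alpha(1-\alpha)$ has $f''(\alpha) = 2(\sigma_{P_1} + \sigma_{P_2} - 2a) \geq 0$) gives $\sigma_{P_1 \sqcup P_2} = \max(\sigma_{P_1}, \sigma_{P_2})$, which lifts the componentwise bound to $G[S]$ and finishes the proof.

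The main obstacle I anticipate is executing the walks-of-length-$2$ calculation cleanly: the factor $4$ on the left-hand side has to match exactly the girth-driven constraints on common neighbourhoods to produce the bound $1/5$, so the book-keeping has no slack. The argument is genuinely tight in the girth hypothesis --- dropping the girth to $4$ would allow hypercube-like subgraphs for which $\delta = 1/4 > 1/5$ is attainable --- and this is the place where the hypothesis is essentially used.
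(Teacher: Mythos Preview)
Your proposal is correct and follows essentially the same approach as the paper: reduce to the support of a sum-optimal weighting, handle the case of a degree-$3$ vertex via the balance relations at that vertex and its three neighbours (your walks-of-length-$2$ bookkeeping is exactly the paper's linear combination $2\cdot\eqref{equation14}+\sum_i\eqref{equation15}$, using the girth-$5$ disjointness in the same way), and dispose of the $\Delta\le 2$ case via \Cref{prop: sum for optimal blow-ups of stars paths and cycles}. The only cosmetic difference is that for the $\Delta\le 2$ case you reprove the reduction to connected components by a direct convexity argument, whereas the paper cites \Cref{prop: connected patterns best}.
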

%--------------------------------------%
\begin{proof}
Let $\bfx$ be a sum-optimal weighting of the pattern $G^{(a)}$, and let $G'^{(a)}$ be the sub-pattern induced by the vertices of $G^{(a)}$ which receive strictly positive weight. Note that $\sigma_{G^{(a)}}=\sigma_{G'^{(a)}}$. 
  
Suppose $\Delta(G') \leq 2$. By \Cref{prop: connected patterns best}, either $\sigma_{G'^{(a)}} \leq a$, or there exists a connected induced subgraph of $G'$, say $G''$, such that $\sigma_{G''^{(a)}}=\sigma_{G'^{(a)}}$. This graph $G''$ must either be a path, or a cycle of length at least $5$, since $G$ has girth at least $5$. By \Cref{prop: sum for optimal blow-ups of stars paths and cycles}, we have $\sigma_{G^{(a)}}=\sigma_{G''^{(a)}} \leq a+\frac{1}{5}$.
  
Now, suppose $\Delta(G')=3$. Then, $G'$ has a vertex $v_1$ of degree 3, with neighbours $v_2, v_3$ and $v_4$. For $i \in [4]$, denote by $x_i$ the weight assigned to $v_i$. For $i \in \set{2,3,4}$, we write $N(v_i)$ for the set of neighbours of $v_i$, excluding $v_1$, and $X_i$ for the sum total of weights over all vertices in  $N(v_i)$. Note that the sets $\set{x_1, x_2, x_3, x_4}$, $N(v_2)$, $N(v_3)$ and $N(v_4)$ are pairwise disjoint since $G$ has girth at least 5. By \Cref{cor: balanced degrees in optimal pattern weightings}, we have 
\begin{align}
\label{equation14}
    \sigma_{G^{(a)}} &= a + x_2 + x_3 + x_4 - x_1,  \text{ and} \\
\label{equation15}
    \sigma_{G^{(a)}} &= a + x_1 + X_i - x_i, \quad \text{for all $i \in \set{2,3,4}$.}
\end{align}
Therefore,
\begin{align*}
    5\sigma_{G^{(a)}}=2(a+x_2+x_3+x_4-x_1)+\sum_{i=2}^4 (a+x_1+X_i-x_i) = 5a+\sum_{i=1}^4 x_i+\sum_{i=2}^4 X_i \leq 5a+1.
\end{align*}
Thus, in this case as well, we have $\sigma_{G^{(a)}} \leq a+\frac{1}{5}$. This proves the proposition.   
\end{proof}
%--------------------------------------%

%--------------------------------------%
\begin{proof}[Proof of \Cref{prop: sum-optimal blow-ups for C5}]
Every graph $G \in \cC_5$ has maximum degree at most $3$ and girth at least 5. By \Cref{lemma: subcubic girth at least 5} and \Cref{lemma: upper bounds for sigma p}, we have that for every $G \in \cC_5$, $\SIGMA{G^{(a)}}{s}  \leq a\binom{s}{2}+ \left \lfloor \frac{s^2}{10}+\frac{s}{2}\right \rfloor$. It therefore suffices to prove the lower bound. 

Since every graph $G \in \cC_5 \setminus C_5$ contains a degree 3 vertex, we have $\SIGMA{G^{(a)}}{s}  \geq \SIGMA{K_{1,3}^{(a)}}{s} =a\binom{s}{2}+\left \lfloor \frac{s^2}{10}+\frac{s}{2} \right \rfloor$. The last equality follows from part (i) of \Cref{prop: sum for optimal blow-ups of stars paths and cycles}. We have already shown in part (iii) of \Cref{prop: sum for optimal blow-ups of stars paths and cycles} that $\SIGMA{C_5^{(a)}}{s} =a \binom{s}{2}+\left \lfloor \frac{s^2}{10}+ \frac{s}{2} \right \rfloor$. The proposition follows.
\end{proof}
%--------------------------------------%

%\begin{proof}[Proof of \Cref{prop: sum extremal for generalised cycles and non-trivial blow ups attain extremum}]

%\textcolor{red}{To do!}

%\end{proof}

%--------------------------------------%
\begin{proposition}
\label{prop: connected patterns best}
Let $G$ be a graph which is not connected. Then the following hold:
\begin{enumerate}[(i)]
    \item if $\sigma_{G^{(a)}}>a$, then there exists a connected  induced subgraph $G_+$ of $G$ such that $	\sigma_{{G_+^{(a)}}}= \sigma_{G^{(a)}}$.
    
    \item if $\pi_{G^{(a)}}>\log a$, then there exists a connected  induced subgraph $G_{\times}$ of $G$ such that $	\pi_{G_{\times}^{(a)}}= \pi_{G^{(a)}}$.
\end{enumerate}
\end{proposition}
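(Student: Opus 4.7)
The plan is to decompose $G$ into its connected components $H_1, \dotsc, H_k$ and exploit the fact that every pair of vertices from distinct components is a non-edge of $G$, and so carries the same multiplicity $a$ (resp.\ $\log a$) in the adjacency matrix $A_{G^{(a)}}$ (resp.\ $A'_{G^{(a)}}$) of the pattern $G^{(a)}$. This uniformity will make the quadratic forms in~\eqref{eq: optimisation problem for arithmetic problem} and~\eqref{eq: optimisation problem for geometric problem} separable across components, reducing each optimisation problem to a trivial maximisation over a simplex with coefficients $\sigma_{H_i^{(a)}} - a$ (resp.\ $\pi_{H_i^{(a)}} - \log a$).

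Concretely, for any probability vector $\bfx \in \RR_{\geq 0}^{V(G)}$, I would restrict $\bfx$ to each $V(H_i)$ to obtain vectors $y_i$ of $\ell^1$-mass $\alpha_i$, and, when $\alpha_i > 0$, factor $y_i = \alpha_i \tilde y_i$ with $\tilde y_i$ a probability vector on $V(H_i)$. A direct computation will then give
\begin{align*}
    \bfx^{\rT} A_{G^{(a)}} \bfx
    = \sum_{i=1}^k \alpha_i^2 \, \tilde y_i^{\rT} A_{H_i^{(a)}} \tilde y_i + a\bigl(1 - \textstyle\sum_i \alpha_i^2\bigr),
\end{align*}
where the second term comes from the $2a\sum_{i<j}\alpha_i\alpha_j$ contribution of cross-component non-edges. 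Maximising the right-hand side --- first over the $\tilde y_i$, using the definition of $\sigma_{H_i^{(a)}}$, and then over the $\alpha_i$ --- will yield, with $c_i \defined \sigma_{H_i^{(a)}} - a$,
\begin{align*}
    \sigma_{G^{(a)}}
    = a + \max\Bigl\{\textstyle\sum_i \alpha_i^2 c_i \st \alpha_i \geq 0,\, \sum_i \alpha_i = 1\Bigr\}.
\end{align*}
The matching lower bound is immediate by placing all of $\bfx$ on a single $V(H_i)$ and reading off $\sigma_{H_i^{(a)}}$.

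To finish part (i), I would observe that the hypothesis $\sigma_{G^{(a)}} > a$ forces $\max_i c_i > 0$, and then use the simplex estimate
\begin{align*}
    \textstyle\sum_i \alpha_i^2 c_i \leq (\max_j c_j) \textstyle\sum_i \alpha_i^2 \leq \max_j c_j,
\end{align*}
which relies only on $\alpha_i \geq 0$ together with $\sum_i \alpha_i^2 \leq (\sum_i \alpha_i)^2 = 1$ and $\max_j c_j > 0$. This identifies $\sigma_{G^{(a)}} = a + \max_i c_i = \max_i \sigma_{H_i^{(a)}}$, so I may take $G_+$ to be a connected component $H_{i^\star}$ realising the maximum. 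Part (ii) will then follow verbatim by replacing $A_P$ with $A'_P$, $\sigma$ with $\pi$, and $a$ with $\log a$ throughout, using that the off-component entries of $A'_{G^{(a)}}$ are the constant $\log a$. I do not foresee any serious obstacle: the one substantive ingredient is the separability identity above, which is forced by the uniform non-edge multiplicity in the $G^{(a)}$ pattern.
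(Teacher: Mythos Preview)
Your proposal is correct and follows essentially the same approach as the paper: both decompose into components, derive the identity $\bfx^{\rT} A_{G^{(a)}} \bfx = \sum_i \alpha_i^2\,\tilde y_i^{\rT} A_{H_i^{(a)}}\tilde y_i + a(1-\sum_i\alpha_i^2)$, and then argue that the optimum concentrates on a single component. Your version is in fact slightly more self-contained, since you give the explicit simplex estimate where the paper invokes ``a standard optimisation argument'', and you avoid the paper's (unproved) claim that restrictions of an optimal weighting are themselves optimal by simply bounding each $\tilde y_i^{\rT} A_{H_i^{(a)}}\tilde y_i$ by $\sigma_{H_i^{(a)}}$.
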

%--------------------------------------%
\begin{proof}
Let $C_1, \dotsc, C_k$ denote the connected components of $G$. 
%Assume without loss of generality that $\sigma_{G[C_i]}$ is a non-increasing sequence for $i\in[k]$. 
Consider a sum-optimal weighting $\bfx$ of the pattern $G^{(a)}$.  Let $I = \set{i_1, i_2, \dotsc, i_m} \subseteq [k]$ denote those indices $i$ for which $\exists v\in C_i$ with $x_{v}>0$. Then by the sum-optimality of $\bfx$, we have that the rescaled vectors $(x_v/\sum_{u\in C_i}x_u)_{v\in C_i}$ are sum-optimal for $\sigma_{G[C_i]^{(a)}}$ for every $i\in I$. In particular, we have
\begin{align*}
    a <	\sigma_{G^{(a)}}
    = \sum_{i \in I} \p[\bigg]{\sum_{v \in C_i} x_v }^2 \sigma_{G[C_i]^{(a)}} + \sum_{i\in I}\sum_{i' \in I\setminus\{i\}} a \p[\bigg]{\sum_{v \in C_i} x_v } \p[\bigg]{ \sum_{v \in C_{i'}} x_v }.	
\end{align*}
It follows that $\max_{i\in I} \sigma_{G[C_i]^{(a)}}>a$ and hence, by a standard optimisation argument, that $\sigma_{G^{(a)}}=\max_{i\in I} \sigma_{G[C_i]^{(a)}}$, as desired. This concludes the proof of part (i). Part (ii) is proved mutatis mutandis.	
\end{proof}
%--------------------------------------%

%--------------------------------------%
\begin{proposition}
\label{prop: bounded degree patterns bounded number of vertices}
Let $G$ be an $N$-vertex graph of maximum degree $\Delta$.
Then the following hold:
\begin{enumerate}[(i)]
    \item if $\sigma_G \defined \sigma_{G^{(a)}} > a$, then $N \leq (\Delta - 1)/(\sigma_G - a)$;
    
    \item if $\pi_G \defined \pi_{G^{(a)}} > \log a$, then $N \leq  \p[\big]{ \log\left((a+1)^{\Delta}(a-1)/a^{\Delta+1} \right)}/(\pi_G - \log a)$.

    %\Delta \log(a + 1) - (\Delta + 1)\log a+\log(a-1)}/(\pi_G - \log a)$.
\end{enumerate}
\end{proposition}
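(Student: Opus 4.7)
The strategy is to exploit the first-order optimality conditions from \Cref{cor: balanced degrees in optimal pattern weightings} and to average them over the vertices of $G$.

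For part~(i), the plan is to start with a sum-optimal weighting $\bfx$ of $G^{(a)}$. By \Cref{prop: connected patterns best} (noting that $\sigma_G > a$), after a further passage to the subpattern induced by the support of $\bfx$ --- a step that does not alter $\sigma_G$ --- I may assume without loss of generality that $G$ is connected and that $x_v > 0$ for every $v \in V(G)$. Plugging the three possible pair-multiplicities of $G^{(a)}$ (namely $a-1$, $a$, $a+1$) into the identity from \Cref{cor: balanced degrees in optimal pattern weightings} and using $\sum_u x_u = 1$, I obtain the clean pointwise identity
\begin{align*}
\sigma_G - a = \sum_{u \in N(v)} x_u - x_v, \quad \text{valid for every } v \in V(G).
\end{align*}
Summing over $v$ and using $\sum_v x_v = 1$ together with the handshake-type estimate $\sum_v \sum_{u \in N(v)} x_u = \sum_u d(u) x_u \leq \Delta$, yields $N(\sigma_G - a) \leq \Delta - 1$, which rearranges to the claimed bound.

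For part~(ii), I would run the exact same strategy on a product-optimal weighting, but using the matrix $A'_{G^{(a)}}$ of logarithmic entries instead. The pointwise identity from \Cref{cor: balanced degrees in optimal pattern weightings} then reads
\begin{align*}
\pi_G - \log a = \log\tfrac{a+1}{a}\, \sum_{u \in N(v)} x_u - \log\tfrac{a}{a-1}\, x_v.
\end{align*}
Summing over $v$ and bounding $\sum_u d(u) x_u \leq \Delta$ as before gives
\begin{align*}
N(\pi_G - \log a) \leq \Delta \log \tfrac{a+1}{a} - \log \tfrac{a}{a-1},
\end{align*}
which yields the stated bound in the $a=2$ regime of primary interest in \Cref{section: small a} (where $\log(a/(a-1)) = \log a$).

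The main obstacle is not the computation itself, which is routine once set up, but the preliminary reduction to a pattern on which the chosen optimum has full support: this is what allows the summation over \emph{every} vertex of $G$ to produce the factor of $N$ on the left-hand side. \Cref{prop: connected patterns best}, combined with the observation that restricting a pattern to the support of any optimal weighting leaves both $\sigma_G$ and $\pi_G$ unchanged, handles this step cleanly; the rest is a two-line algebraic simplification.
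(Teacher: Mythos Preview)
Your approach---apply \Cref{cor: balanced degrees in optimal pattern weightings} pointwise and sum over vertices---is exactly what the paper does, and your algebra is clean. However, there is a genuine issue with your reduction step.

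You write that passing to the support of $\bfx$ is ``without loss of generality''. It is not: this replaces $G$ by an induced subgraph $G'$ on $N' \leq N$ vertices, with $\sigma_{G'} = \sigma_G$ and $\Delta(G') \leq \Delta$. Your argument then yields $N' \leq (\Delta-1)/(\sigma_G - a)$, which does not bound the original $N$. In fact the proposition is literally false as stated: take $G = K_{1,2}$ together with five isolated vertices; then $N = 8$, $\Delta = 2$, $\sigma_G = a + 1/7$, but $(\Delta-1)/(\sigma_G - a) = 7$. To be fair, the paper's own proof has the very same gap (it tacitly applies \Cref{cor: balanced degrees in optimal pattern weightings} to \emph{every} vertex of $G$, not just those in the support of $\bfx$). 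What both arguments actually establish, and what suffices for every application in the paper, is the bound with $N$ replaced by the support size of an optimal weighting.

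For part~(ii) you are right to flag the $a=2$ restriction. Your computation gives
\[
N(\pi_G - \log a) \;\leq\; \Delta \log\tfrac{a+1}{a} - \log\tfrac{a}{a-1},
\]
whereas the stated bound has $-\log a$ in place of $-\log\tfrac{a}{a-1}$; these coincide precisely when $a=2$. The paper's proof silently drops the loop contribution $\log(a-1)\,x_v$ from the identity, which is only legitimate when $a=2$ (so that $\log(a-1)=0$). Since the proposition is only ever invoked with $a=2$ in \Cref{section: small a}, this is harmless in context, and your caveat is well-placed.
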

%--------------------------------------%
\begin{proof}
For the first part, applying \Cref{cor: balanced degrees in optimal pattern weightings} to a sum-optimal weighting $\bfx$ of $G^{(a)}$, we get
\begin{align*}
    N \sigma_G - (a - 1)
    = \sum_{v \in V(G)} \p[\big]{ \sigma_G - (a-1)x_v}
    &= \sum_{v \in V(G)} \sum_{u \in V(G)\setminus\set{v}} (a + \ind_{u v \in E(G)}) x_u\\
    &\leq \sum_{v \in V(G)} \p[\big]{a(N - 1) + \Delta} x_v = a N + \Delta - a.
\end{align*}
Rearranging terms, (i) follows. Similarly for part (ii), we have
\begin{align*}
    N \pi_G-\log(a-1)
    = \sum_{v \in V(G)} \pi_G-x_{v}\log(a-1)
    &= \sum_{v \in V(G)} \sum_{u \in V(G) \setminus\set{v}} \p[\Big]{\log a + \ind_{uv \in E(G)}\log\p[\Big]{\frac{a+1}{a}} } x_u\\
    &\leq  \sum_{v\in V(G)} \p[\Big]{(N - 1)\log a + \Delta \log\p[\Big]{\frac{a+1}{a}} } x_v \\
    &= (N - 1)\log a + \Delta \log\p[\Big]{\frac{a+1}{a}},	
\end{align*}	
and again we obtain the desired conclusion by rearranging terms.
\end{proof}
%--------------------------------------%

Finally, we shall need some bounds on the value of $\pi_{G^{(a)}}$ for certain graph patterns $G=G^{(a)}$ in the specific case $a=2$.

%--------------------------------------%
\begin{proposition}
\label{prop: product extremal for cycles paths stars}
For ambient edge multiplicity $a=2$, we have the following bounds on $\pi_G$ for the graph patterns $G=G^{(2)}$:
\begin{enumerate}[(i)]
    \item for all $\ell \geq 2$, $\pi_{K_{1,\ell}} = \ell (\log 3)^2/\p[\big]{2\ell \log 3 - (\ell - 1)\log 2}$;
    
    \item $\pi_{P_4} = \p[\big]{(\log 3)^2 + (\log 2)^2 + \log 2 \log 3}/(2 \log 6)$;
    
    \item $\pi_{P_5} = \p[\big]{2 \log 2 (\log 6 \log(9/2) - \log 3 \log(3/2)}/\p[\big]{5 \log 2 \log(9/2) - \log 6 \log(3/2)}$;
    
    \item $\pi_{C_6} = (\log 3)/3 + (\log 2)/2$;
    
    \item for all $\ell \geq 3$, $\pi_{C_\ell}\geq \pi_{C_{\ell+1}}$.
\end{enumerate}
\end{proposition}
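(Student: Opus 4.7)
The unifying idea for all five parts is to apply Corollary~\ref{cor: balanced degrees in optimal pattern weightings}, which says that at any product-optimal weighting $\mathbf{x}$ of a pattern $P$, the identity
\begin{equation*}
    \pi_P = \log P(\set{v})\, x_v + \sum_{v' \neq v} \log P(\set{v,v'})\, x_{v'}
\end{equation*}
holds for every $v$ with $x_v > 0$. For the patterns $G^{(2)}$ under consideration, loops have multiplicity $1$, non-edges multiplicity $2$, and edges multiplicity $3$, so this simplifies to the convenient form
\begin{equation*}
    \pi_G = (\log 2)(1 - x_v) + \log(3/2) \sum_{v' \in N_G(v)} x_{v'}
\end{equation*}
for every $v$ in the support. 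The plan is to use this to set up and solve a small linear system in each case, after reducing the number of variables by exploiting the symmetries of $G$.

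For \textbf{(i), (ii), (iv)} the natural symmetries of the graphs give obvious candidate weightings: uniform on the leaves of the star (with an unknown weight $x$ at the centre), palindromic on $P_4$ (two weights $y,z$), and the uniform $1/6$ on $C_6$. In each case two balance equations plus the normalisation $\sum x_v=1$ determine the weighting uniquely, and substitution yields the stated closed forms. To certify that the resulting interior critical point is the global maximum, I will rule out that a boundary solution, supported on a proper subset of $V(G)$, could do better. By Proposition~\ref{prop: connected patterns best}(ii) it suffices to check connected induced sub-patterns: $K_{1,k}$ for $k<\ell$ and the empty graph for \textbf{(i)}; $K_{1,2}$, $K_2$, $K_1$ for \textbf{(ii)}; and paths $P_k$ with $k\leq 5$ for \textbf{(iv)}. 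In each case, direct calculation of the (already explicit) $\pi$-values shows the full-support symmetric candidate strictly dominates.

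Part \textbf{(iii)} is the calculationally heaviest. Palindromic symmetry on $P_5$ reduces the problem to three weights $y,z,w$ attached respectively to the endpoints, their neighbours, and the centre. The three balance equations combined with $2y+2z+w=1$ form a $3\times 3$ linear system, whose solution in closed form (using $\log 6 = \log 2 + \log 3$, $\log(9/2) = 2\log 3 - \log 2$ and $\log(3/2) = \log 3 - \log 2$) can then be plugged into $\pi = (\log 2)(1-y) + \log(3/2)\cdot z$ to yield the stated expression. The main obstacle in the whole proof is simply the bookkeeping at this step: several applications of the above logarithmic identities are needed to bring the raw answer into the advertised form.

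Finally, for \textbf{(v)} I argue as follows. Let $\mathbf{x}^*$ be a product-optimal weighting of $C_{\ell+1}^{(2)}$. If $\mathbf{x}^*$ has full support, the circulant structure of the Lagrange system forces $x^*_v = 1/(\ell+1)$ uniformly, giving $\pi_{C_{\ell+1}} = \log 2 + \log(9/8)/(\ell+1)$, which is strictly less than the lower bound $\log 2 + \log(9/8)/\ell$ on $\pi_{C_\ell}$ obtained from the uniform weighting on $C_\ell$. If $\mathbf{x}^*$ has proper support, the induced subgraph is a disjoint union of paths, and Proposition~\ref{prop: connected patterns best}(ii) lets us restrict to a single connected component, so $\pi_{C_{\ell+1}} = \pi_{P_k}$ for some $k\leq \ell$. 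When $k\leq \ell-1$ the path $P_k$ is also a sub-pattern of $C_\ell$, giving $\pi_{C_\ell}\geq \pi_{P_k}$. When $k=\ell$, $P_\ell$ and $C_\ell$ share the same vertex set but $C_\ell$ has one additional edge; substituting a product-optimal weighting $\mathbf{y}$ of $P_\ell$ into the $C_\ell$-objective augments the value by $2\log(3/2)\, y_1 y_\ell \geq 0$, whence $\pi_{C_\ell} \geq \pi_{P_\ell}$. In all cases $\pi_{C_\ell}\geq \pi_{C_{\ell+1}}$.
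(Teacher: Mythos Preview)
Your approach is essentially the paper's: both apply the balance equations of Corollary~\ref{cor: balanced degrees in optimal pattern weightings} at each vertex, split into the boundary-versus-interior cases the same way, and for part~(v) compare the uniform value on $C_\ell$ against the full-support value on $C_{\ell+1}$ obtained by summing the balance equations (your justification that $\pi_{C_\ell}\ge\pi_{P_\ell}$ via the extra edge is actually more explicit than the paper's bare assertion).

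The one point to tighten concerns parts~(ii)--(iv). You \emph{assume} from the outset that any interior optimum is palindromic (for $P_4,P_5$) or uniform (for $C_6$), and then solve the reduced system; but the objective is an indefinite quadratic, so averaging an optimum with its reflection need not preserve optimality, and this symmetry is not automatic. The paper sidesteps the issue entirely: it writes down the balance equations at \emph{all} vertices without any symmetry assumption and takes a linear combination chosen so the individual $x_i$'s cancel (for $P_4$, it is $\log 2$ times the two endpoint equations plus $\log 3$ times the two middle ones), which yields $\pi_G$ directly regardless of whether the optimum is symmetric. Your ansatz can be rescued---for $P_4$, subtracting the balance equations at vertices $1,4$ and separately at $2,3$ forces $x_1=x_4$, $x_2=x_3$---but as written this step is missing.
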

%--------------------------------------%
\begin{proof}
Applying \Cref{cor: balanced degrees in optimal pattern weightings}, parts (i)--(iv) are straightforward exercises in optimisation.
See the Appendix for details of the proof.

For part (v), by considering a uniform weighting of the pattern $C_\ell^{(2)}$, we note that $\pi_{C_\ell} \geq \p[\big]{(\ell - 3)\log 2 + 2\log 3}/\ell$. Let $\bfx$ be a product-optimal weighting of $C_{\ell+1}$. For $i \in [\ell+1]$, let $x_i$ denote the weight assigned to the $i$th vertex on the cycle, following a natural cyclic ordering. Then, either $\pi_{C_{\ell+1}}=\pi_{P_\ell}\leq \pi_{C_\ell}$ or $\bfx$ assigns strictly positive weight to every vertex of the $C_{\ell+1}$.  In the latter case, by \Cref{cor: balanced degrees in optimal pattern weightings}, for all $i \in [\ell+1]$, we have 
\begin{equation*}
    \pi_{C_{\ell+1}} = \log 2 + (x_{i-1} + x_{i+1}) \log(3/2) -x_i \log 2,
\end{equation*}
where the indices are taken modulo $\ell+1$. Therefore, we have 
\begin{align*}
    (\ell + 1)\pi_{C_{\ell+1}} &=(\ell + 1)\log 2 +\sum_{i=1}^{\ell+1} \p[\Big]{ (x_{i-1} + x_{i+1})\log(3/2) -x_i \log 2}\\
    &= (\ell + 1) \log 2 + 2 \log(3/2) - \log 2 = (\ell - 2) \log 2 + 2\log 3 .
\end{align*}
Finally, this gives $\pi_{C_{\ell + 1}} = \p[\big]{(\ell - 2)\log 2 + 2 \log 3}/(\ell + 1) < \p[\big]{(\ell-3) \log 2 + 2 \log 3}/\ell  \leq \pi_{C_\ell}$.
\end{proof}
%--------------------------------------%

%--------------------------------------%
\subsection{Additive case}

We first give a proof of the trivial $a=0$ case.

%--------------------------------------%
\begin{proof}[Proof of \Cref{prop: a=0 case additive}]
Observe that the pattern $P_{+2}=P_{+2}^{(0)}$ consisting of an edge of multiplicity $2$ and two loops of multiplicity $0$ satisfies $\sum_{P_{+2}}(s)> \binom{s}{2}$. It then follows from the Erdős--Stone theorem that for $q<\binom{s}{2}$, any $(s,q)$-graph on $n$ vertices contains $o(n^2)$ edges of multiplicity at least $2$, each of which has multiplicity at most $q$. Replacing such edges by edges of multiplicity $0$, we obtain a multigraph in which every edge has multiplicity $0$ or $1$, i.e.\ an ordinary graph. Thus, our original multigraph could have had at most $\ex(n,s,q)+o(n^2)$ edges.
\end{proof}
%--------------------------------------%

We now turn our attention to the case $a=1$, the first non-trivial ambient edge multiplicity for the Füredi--Kündgen problem.

%--------------------------------------%
\begin{proof}[Proof of \Cref{theorem: additive}]
We work with ambient edge multiplicity $a=1$, and view the graph $G$ as a graph pattern $G^{(1)}$ when convenient. \Cref{prop: sum for optimal blow-ups of stars paths and cycles} and simple arithmetic gives us the following inequalities:
\begin{align}
\label{ineq: sigma G}
    \SIGMA{K_{1,2}}{s}  \leq \SIGMA{P_4}{s} =\SIGMA{C_6}{s}  \leq \SIGMA{K_{1,3}}{s} =\SIGMA{C_5}{s}  \leq \SIGMA{K_{1,4}}{s} \leq \SIGMA{C_4}{s} \leq \SIGMA{{P_{+2}}^{(1)}}{s},
\end{align}
where $P_{+2}$ is the pattern corresponding to an edge of multiplicity $3$ with multiplicity $0$ loops around its endpoints. In particular, for $q< \SIGMA{P_{+2}}{s} $, $P$ is not $(s,q)$-admissible. Further, we have $\SIGMA{C_6}{s} =\SIGMA{C_\ell}{s} $ for all $\ell \geq 6$.

%\begin{align}
%\SIGMA{K_{1,4}^{(2)}}{?}	=\left\lfloor \frac{29s^2-13s}{26}\right\rfloor\leq \SIGMA{C_4^{(2)}}{?} =  \left\lfloor \frac{9s^2-4s}{6} \right\rfloor \leq \SIGMA{K_3^{(2)}}{?} =\left\lfloor  \frac{7s^2-3s}{6}\right \rfloor< \SIGMA{P}{s} = \left\lfloor  \frac{5s^2-2s}{4}\right\rfloor, \end{align}
%where $P$ is the pattern consisting of an edge of multiplicity $4$ with multiplicity $1$ loops around its endpoints. In particular for $q<\SIGMA{P}{s} $, $P$ is not $(s,q)$-admissible.

Let $s\geq 2$ and $q< \SIGMA{{P_{+2}}^{(1)}}{s} = \binom{s}{2}+\left\lfloor  \frac{s^2+2s}{4}\right\rfloor$ be fixed integers. Pick $\eps>0$ and apply \Cref{cor: applying colourful regularity} to a sum-extremal $G\in \cA_{s,q}$ on $n$ vertices, for $n$ sufficiently large. Then there exists an $(s,q)$-admissible pattern $P$ on at most a fixed number $k_1=k_1(\eps)$ of vertices in which all loops have multiplicity $0$ and all edges have  multiplicity at most $2$, such that an $n$-vertex balanced blow-up $G'$ of $P$ satisfies $e(G') \geq e(G)-\eps\binom{n}{2}$.

By the cloning lemma, \Cref{lemma: cloning}, it follows that for $n$ sufficiently large, there exists a subpattern $P'$ of $P$ in which every edge has multiplicity $1$ or $2$ and a blow-up $G''$ of $P'$ such that $e(G'')\geq 
\sigma_P \binom{n}{2} -\eps n^2\geq e(G')-2\eps \binom{n}{2}\geq e(G)-3 \eps \binom{n}{2}$.  Given our edge conditions, this subpattern $P'$ is in fact a graph pattern $P'=H^{(1)}$, for some auxiliary graph $H$ on $N$ vertices. We now analyse $H$'s structure and the bounds this gives on the size of blow-ups of $P'$ (and hence on $e(G)$).

\textbf{Case (i): $\binom{s}{2}\leq q< \SIGMA{K_{1,2}}{s} =\binom{s}{2}+\left\lfloor \frac{s^2}{14}+\frac{s}{2}\right\rfloor$}. In this range, $H$ has maximum degree $1$, and in particular is a subgraph of the graph $H'$ consisting of a perfect matching on $2\lceil N/2\rceil$ vertices. An easy optimisation shows that $\sigma_P\leq \sigma_{H'^{(1)}}=1$, whence we deduce that $\ex_\Sigma(s,q)\leq 1$ as well. This is tight, with a matching lower bound provided by the multigraph in which all edges have multiplicity $1$.

\textbf{Case (ii): $\SIGMA{K_{1,2}}{s} \leq q < \SIGMA{P_4}{s}  = \binom{s}{2}+\left \lfloor \frac{s^2}{12}+\frac{s}{2}\right\rfloor$}.  The lower bound  $\ex_\Sigma(s,q)\geq \sigma_{K_{1,2}}$ is trivial. For the upper bound, note that by~\eqref{ineq: sigma G} we have $q<\SIGMA{P_4}{s}  \leq \SIGMA{K_{1,3}}{s} $ and $q<\SIGMA{P_4}{s}  \leq C_\ell(s)$ for all $\ell \geq 3$. It follows in particular that $H$ is a graph of maximum degree two whose components consist of paths on at most three vertices. By \Cref{prop: connected patterns best}, either $\sigma_H\leq 1$ or there exists a connected subpattern $H'$ of $H$ with $\sigma_{H'}=\sigma_H$. Such a connected subpattern must be a path on at most $3$ vertices, whence $\sigma_P =\sigma_{H'} \leq \sigma_{P_3}=1+\frac{1}{7}$, with the last equality following from \Cref{prop: sum for optimal blow-ups of stars paths and cycles} part (i) applied to $P_3=K_{1,2}$ and $s$ large.

\textbf{Case (iii): $\SIGMA{P_4}{s} \leq q < \SIGMA{C_5}{s}  = \binom{s}{2}+\left \lfloor \frac{s^2}{10}+\frac{s}{2}\right\rfloor$}.  Then we have the lower bound  $\ex_\Sigma(s,q)\geq \sigma_{P_4}=1+\frac{1}{6}$, with the value of $\sigma_{P_4}$ following from \Cref{prop: sum for optimal blow-ups of stars paths and cycles} part (ii).

For the upper bound, by~\eqref{ineq: sigma G}, we have $\SIGMA{C_3}{s} \geq \SIGMA{C_4}{s} \geq  \SIGMA{K_{1,3}}{s} =\SIGMA{C_5}{s} >q$.  In particular, $H$ is a graph of maximum degree two and girth at least $6$. By \Cref{prop: connected patterns best}, either $\sigma_H\leq 1$ or there exists a connected subpattern $H'$ of $H$ with $\sigma_{H'}=\sigma_H$. Such a connected subpattern must be either a path, or a cycle on at least $6$ vertices. By \Cref{prop: sum for optimal blow-ups of stars paths and cycles} part (ii), it follows that $\sigma_{H'}\leq 1 +\frac{1}{6}$.

\textbf{Case (iv): $\SIGMA{C_5}{s} \leq q < \SIGMA{K_{1,4}}{s}  = \binom{s}{2}+\left \lfloor \frac{3s^2}{26}+\frac{s}{2}\right\rfloor$}.  Then we have the lower bound  $\ex_\Sigma(s,q)\geq \sigma_{C_5}=1+\frac{1}{5}$, with the value of $\sigma_{C_5}$ following from \Cref{prop: sum for optimal blow-ups of stars paths and cycles} part (iii).

For the upper bound, by~\eqref{ineq: sigma G} we have $\SIGMA{C_3}{s} \geq \SIGMA{C_4}{s} \geq  \SIGMA{K_{1,4}}{s} >q$.  In particular, $H$ is a graph of maximum degree three and girth at least $5$, whence by \Cref{lemma: subcubic girth at least 5} we have $\sigma_H \leq 1+\frac{1}{5}$.

\end{proof}
%--------------------------------------%

%\textcolor{red}{To add: applications of \Cref{cor: arithmetic averaging} to get exact results when $q\in \set{\SIGMA{K_{1,2}}{s} , \SIGMA{C_6}{s} , \SIGMA{C_5}{s} , \SIGMA{K_3}{s} }$. Maybe make (iv) and (v) into exact results?}

%--------------------------------------%
\begin{remark}
\label{remark: computational reduction in additive case}
Let $K_{1, \infty}$ denote the graph pattern consisting of an edge of multiplicity $2$ with a loop of multiplicity $1$ at one end and a loop of multiplicity $0$ at the other end. It is not hard to show $\SIGMA{K_{1,\infty}}{s} =\binom{s}{2} +\lfloor \frac{s^2}{6}+\frac{s}{6}\rfloor
%\leq \frac{2s^2}{3}+\frac{s}{9}-\frac{1}{24}
\leq \binom{s}{2} +2\lfloor \frac{s^2}{4}\rfloor =\SIGMA{P_{+2}}{s}$.

Then our proof method in \Cref{theorem: additive} reduces the determination of $\ex_\Sigma(s,q)$ for all $q$: $\binom{s}{2} \leq q < \SIGMA{K_{1, \infty}}{s}$ to the resolution of a finite computational optimisation problem. For such $q$, we have that $(s,q)$-admissible graph patterns $G$ have degree bounded by some $\Delta=\Delta(q)$. Suppose $q\geq \SIGMA{K_{1,4}}{s} $ (the first value not covered by \Cref{theorem: additive}). Then, by \Cref{prop: bounded degree patterns bounded number of vertices} together with our arguments from the beginning of the proof of \Cref{theorem: additive}, it follows that there exists an $(s,q)$-admissible pattern $P$  on $N\leq (\Delta -1)/(3/13)$ vertices such that $\ex_\Sigma(s,q)=\sigma_P$. In addition, we have that all loops in $P$ have multiplicity $0$ and all edges in $P$ have multiplicity $1$ or $2$ (the latter by \Cref{prop: connected patterns best} and since $q<\SIGMA{P_{+2}}{s} $).

In particular, it suffices to compute $\sigma_P$ for all such patterns $P$ on at most $N$ vertices to determine $\ex_\Sigma(s,q)$; the Füredi--Kündgen problem in this range is thus reduced to a finite computational problem (albeit one that is unfeasible in practice). What is more, one can determine the approximate structure of extremal examples via sum-optimal weightings $\bfx$ of extremal patterns $P$, i.e.\ by determining the space of solutions to the optimisation problem~\eqref{eq: optimisation problem for arithmetic problem} for the set of $(s,q)$-admissible patterns $P$ with $\sigma_P=\ex_\Pi(s,q)$.
\end{remark}
%--------------------------------------%

%Auxiliary graph has degree at most two, girth at least 6 (q inequality):  $C_6$, path on 4 or 5 vertices. Averaging argument enough for upper bound, may even give exact result.

%Auxiliary graph has degree at most three, girth at least 5:  upper bound follows from averaging argument. 

%$K_{1,3}$, $C_5$, $H$-graph, glued pentagon, messed up hexagon, Petersen...  observe that patterns on at most ten vertices.

%--------------------------------------%
\subsection{Multiplicative case}

We first deal with the trivial $a=1$ case.

%--------------------------------------%
\begin{proof}[Proof of \Cref{prop: a=1 case multiplicative}]
We may restrict our attention to multigraphs in which every edge has multiplicity at least $1$. As in the proof of \Cref{prop: a=0 case additive}, for $q<2\binom{s}{2}$, any $(s,q)$-graph $G$ on $n$ vertices can contain at most $o(n^2)$ edges of multiplicity $3$ or more, and no edge of multiplicity greater than $q$. Replacing such edges by edges of multiplicity $1$, we obtain a multigraph in which every edge has multiplicity $1$ or $2$, which we can view as an ordinary graph by subtracting $1$ from each edge multiplicity. Thus, our original $(s,q)$-graph $G$ could have had at most $\binom{n}{2}+\ex(n,s,q-\binom{s}{2})+o(n^2)$ edges in total, and hence, by the integer AM-GM inequality, must have satisfied $P(G)\leq 2^ {\ex(n,s,q-\binom{s}{2}) +o(n^2)}$.
\end{proof}
%--------------------------------------%

We now turn to the $a=2$ case, the first non-trivial ambient edge multiplicity for the Mubayi--Terry problem.

%--------------------------------------%
\begin{proof}[Proof of \Cref{theorem: multiplicative}]
In all that follows, the ambient edge multiplicity is $a=2$, and the graphs $G$ are identified with the graph patterns $G^{(2)}$ when convenient. We begin as in the proof of \Cref{theorem: additive}, by observing that for all $s\geq 2$ and all $q<\SIGMA{P_{+2}}{s} $, the pattern $P_{+2}=P_{+2}^{(2)}$ consisting of an edge of multiplicity $4$ with multiplicity $1$ loops around its endpoints fails to be $(s,q)$-admissible. In addition, extremal $(s,q)$-graphs only have edges of multiplicity $1$ or higher, so we may restrict our attention to multigraphs with that property.

Let $s\geq 2$ and $q< \SIGMA{P_{+2}}{s} =2\binom{s}{2}+\lfloor \frac{s^2}{4}+\frac{s}{2}\rfloor$ be fixed integers. Pick $\eps>0$ and apply \Cref{cor: applying colourful regularity} to a product-extremal $G\in \cA_{s,q}$ on $n$ vertices, for $n$ sufficiently large. Then there exists an $(s,q)$-admissible pattern $P$ on at most a fixed number $k_1=k_1(\eps)$ vertices in which all loops have multiplicity $1$ and all edges have multiplicity at most $3$,  such that an $n$-vertex balanced blow-up $G'$ of $P$ satisfies $P(G') \geq P(G)e^{-\eps\binom{n}{2}}$.

By the cloning lemma, \Cref{lemma: cloning}, it follows that for $n$ sufficiently large there exists a subpattern $P'$ of $P$ in which every edge has multiplicity $2$ or $3$ and a blow-up $G''$ of $P'$ such that $P(G'')\geq 
\exp \left(\pi_P\binom{n}{2} -\eps n^2\right) \geq P(G')\exp(-2\eps \binom{n}{2})\geq P(G)\exp(-3 \eps \binom{n}{2})$.  Given our edge conditions, this subpattern $P'$ is in fact a graph pattern $P'=H^{(2)}$, for some auxiliary graph $H$ on $N$ vertices. We now analyse $H$'s structure to bound $\pi_H=\pi_{P'}=\pi_P$,  and hence $P(G)$ (since $P(G)^{1/\binom{n}{2}} \leq e^{\pi_H +3\eps}$ and $\eps>0$ was chosen arbitrarily).

\textbf{Case (i): $2\binom{s}{2}\leq q < \SIGMA{P_4}{s}  =2\binom{s}{2} +\left \lfloor \frac{s^2}{12}+\frac{s}{2}\right\rfloor$}.  The lower bound  $\ex_\Pi(s,q)\geq 2$ follows by considering multigraphs in which every edge has multiplicity exactly $2$.

For the upper bound, note that by \Cref{prop: sum for optimal blow-ups of stars paths and cycles}, we have $\SIGMA{K_{1,3}}{s}  \geq\SIGMA{P_4}{s} $ and $\SIGMA{C_{\ell}}{s} \geq \SIGMA{P_4}{s}$ for all $\ell \geq 3$.  In particular, $H$ is a graph of maximum degree at most two whose components consist of paths on at most three vertices.

By \Cref{prop: connected patterns best}, either $\pi_H\leq \log(2)$ or there exists a connected subpattern $H'$ of $H$ with $\pi_{H'}=\pi_H$. Such a connected subpattern must be a path on at most $3$ vertices, whence $\pi_{H'} \leq \pi_{P_3}=\pi_{K_{1,2}}= 2(\log (3))^2 /\log(81/2)<\log(2)$, with the value of $\pi_{K_{1,2}}$ coming from \Cref{prop: product extremal for cycles paths stars} (i) and the inequality can be verified using e.g.\ Wolfram Alpha. Thus, we must have $\pi_H\leq \log (2)$, as required.

\textbf{Case (ii): $\SIGMA{P_4}{s} \leq q < \SIGMA{K_{1,3}}{s} =2\binom{s}{2} +\left \lfloor \frac{s^2}{10}+\frac{s}{2}\right\rfloor$}.
Then we have the lower bound  $\ex_\Pi(s,q) \geq \exp(\pi_{C_6}) = 2^{1/2} 3^{1/3}$ by considering balanced blow-ups of the $C_6$ pattern (which is $(s,q)$-admissible since $\SIGMA{C_6}{s} =\SIGMA{P_4}{s} $ by \Cref{prop: sum for optimal blow-ups of stars paths and cycles} (ii), and for which the uniform weighting is product optimal by \Cref{prop: product extremal for cycles paths stars} (iv) and a simple computation).

For the upper bound, by  \Cref{prop: sum for optimal blow-ups of stars paths and cycles}, we have  $\SIGMA{C_3}{s} \geq \SIGMA{C_4}{s} \geq  \SIGMA{C_5}{s} =\SIGMA{K_{1,3}}{s} $.
In particular, $H$ is a graph of maximum degree two and girth at least $6$.
By \Cref{prop: connected patterns best}, either $\pi_H \leq \log 2$ or there exists a connected subpattern $H'$ of $H$ with $\pi_{H'} = \pi_H$. Such a connected subpattern must be either a path, or a cycle on at least $6$ vertices.
By \Cref{prop: product extremal for cycles paths stars} (v), it follows that $\pi_{H'}\leq \pi_{C_6}$ (since $\pi_{P_\ell}\leq \pi_{C_\ell} \leq \pi_{C_6}$ for all $\ell \geq 6$, and since for all $\ell \leq 6$ we have $\pi_{P_\ell} \leq \pi_{P_6} \leq \pi_{C_6}$ by the monotonicity of $\pi_P$ under taking subpatterns).

\textbf{Case (iii): $\SIGMA{K_{1,3}}{s} \leq q < \SIGMA{K_{1,4}}{s}  = 2\binom{s}{2}+\left \lfloor \frac{3s^2}{26}+\frac{s}{2}\right\rfloor$}.  Then we have the lower bound  $\ex_\Pi(s,q)\geq \exp\left(\pi_{\Petersen}\right)\geq \sqrt[5]{2^3} \sqrt[10]{3^3} $ by considering balanced blow-ups of the $\Petersen$ pattern (which is $(s,q)$-admissible since $\SIGMA{\Petersen}{s} = \SIGMA{K_{1,3}}{s}$ by \Cref{prop: sum-optimal blow-ups for C5}).

For the upper bound, note that by \Cref{prop: sum for optimal blow-ups of stars paths and cycles}, we have $\SIGMA{C_3}{s} \geq \SIGMA{C_4}{s} \geq  \SIGMA{K_{1,4}}{s} $.  In particular, $H$ is a graph of maximum degree three and girth at least $5$. By \Cref{prop: connected patterns best}, either $\pi_H\leq \log(2)$ or there exists a connected subpattern $H'$ of $H$ on $N'$ vertices with $\pi_{H'}=\pi_H$. Further, by \Cref{prop: bounded degree patterns bounded number of vertices} part (ii) (applied with $a=2$ and $\Delta=3$), we have that either $\pi_{H'}\leq \log (2)  +\frac{\log(27/16)}{10}$, and we are done,  or $N'\leq 10$.

Suppose that the latter occurs. Let $\bfx$ be a product-optimal weighting of the vertices of the pattern $H'$. Given a vertex $u\in V(H')$ with strictly positive weight $x_u>0$, we denote by $x_{N(u)} \defined \sum_{v \st uv \in E(H)} x_v$ the sum of the weights of the neighbours of $u$ in $H'$.

%--------------------------------------%
\begin{claim}
\label{claim: weight of nhood is large petersen case}
For all vertices $u$ with $x_u = 1/10 + \alpha_u > 0$, we have
\begin{align*}
    x_{N(u)} \geq \frac{3}{10} + \alpha_u \, \frac{\log 2}{\log 3 -\log 2}.
\end{align*}	
\end{claim}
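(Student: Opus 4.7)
The plan is to exploit the balanced product-degree identity satisfied by product-optimal weightings (the multiplicative half of the earlier balanced-degrees corollary on optimal pattern weightings), applied at the vertex $u$. Since the ambient edge multiplicity is $a = 2$, every loop in $H'$ has multiplicity $1$, every non-edge pair has multiplicity $2$, and every edge pair has multiplicity $3$. Applied at $u$ (with $x_u > 0$) and using $\log 1 = 0$, the corollary yields
\[
\pi_{H'} = (\log 3)\, x_{N(u)} + (\log 2)\,\bigl(1 - x_u - x_{N(u)}\bigr) = (\log 2)(1 - x_u) + (\log 3 - \log 2)\, x_{N(u)},
\]
which I will rearrange to express $x_{N(u)}$ in terms of $\pi_{H'}$ and $x_u$.

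Next I would observe that we may assume $\pi_{H'} \geq \pi_{\Petersen} = \tfrac{3}{5}\log 2 + \tfrac{3}{10}\log 3$, since otherwise the upper bound $\pi_H \leq \pi_{\Petersen}$ on which Case (iii) of the theorem rests is already secured and the remainder of the case analysis (including this claim) is vacuous. Substituting $x_u = \tfrac{1}{10} + \alpha_u$ together with this lower bound on $\pi_{H'}$ into the rearranged identity and performing a direct computation gives
\[
x_{N(u)} = \frac{\pi_{H'} - (\log 2)(1 - x_u)}{\log 3 - \log 2} \geq \frac{\tfrac{3}{10}(\log 3 - \log 2) + \alpha_u \log 2}{\log 3 - \log 2} = \frac{3}{10} + \alpha_u\, \frac{\log 2}{\log 3 - \log 2},
\]
which is exactly the claimed bound.

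The proof is almost entirely computational, so there is no real obstacle: the key conceptual inputs are the balanced-degree identity from the earlier corollary and the harmless WLOG comparison $\pi_{H'} \geq \pi_{\Petersen}$, and everything else is elementary algebra. The only small care needed is to verify that on the right-hand side of the identity the contribution of the loop at $u$ vanishes, which it does because loops carry multiplicity $a - 1 = 1$ in the pattern $H^{(2)}$.
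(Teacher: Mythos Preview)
Your proof is correct and follows essentially the same approach as the paper: both apply the balanced product-degree identity from \Cref{cor: balanced degrees in optimal pattern weightings} at the vertex $u$, invoke the standing assumption $\pi_{H'} \geq \pi_{\Petersen}$ (equivalently $\log 2 + \tfrac{1}{10}\log(27/16)$), and rearrange. Your exposition is in fact slightly cleaner in making the WLOG assumption explicit and in handling the loop contribution.
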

%--------------------------------------%
\begin{proof}
By \Cref{cor: balanced degrees in optimal pattern weightings} applied at the vertex $u$, we have 
\begin{align*}
    \log 2 + \frac{\log(27/16)}{10}
    &< \pi_{H'}
    = \sum_{v \neq u} \p[\Big]{\log 2 + \ind_{uv \in E(H')} \log (3/2) } x_v \\
    &= (9/10 - \alpha_u) \log 2 + x_{N(u)} \log(3/2).
\end{align*}
Rearranging terms, we obtain the desired inequality.
\end{proof}
%--------------------------------------%

Now consider a vertex $u$ in $H'$ receiving maximal weight $x_u$. By averaging, $x_u\geq \frac{1}{N'}\geq \frac{1}{10}$. Set $\alpha_u \defined x_u-\frac{1}{10}\geq 0$.

%--------------------------------------%
\begin{claim}
\label{claim: alpha u is zero then done Petersen proof}
If $\alpha_u=0$, then $\pi_{H'}\leq \log (2)  +\frac{\log(27/16)}{10}$.
\end{claim}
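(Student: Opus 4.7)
The plan is to observe that the hypothesis $\alpha_u = 0$ collapses the product-optimal weighting $\bfx$ of $H'$ to the uniform weighting on exactly $10$ vertices, after which the bound becomes a one-line numerical computation.

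First I would use the definition of $\alpha_u$ and the maximality of $x_u$. Since $\alpha_u = 0$, we have $x_u = 1/10$, and because $u$ was chosen to maximise $x_v$ over $V(H')$, this gives $x_v \leq 1/10$ for every $v \in V(H')$. Combined with $\sum_{v \in V(H')} x_v = 1$ and $N' \leq 10$, the only possibility is $N' = 10$ and $x_v = 1/10$ for all $v$. In particular, $\bfx$ is the uniform weighting.

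Next I would evaluate $\pi_{H'}$ directly at this uniform weighting. Recalling that $H'$ is a subgraph of $H = H^{(2)}$, so every loop has multiplicity $1$ (contributing $\log 1 = 0$), every non-edge has multiplicity $2$, and every edge has multiplicity $3$, the quadratic form \eqref{eq: optimisation problem for geometric problem} gives
\begin{equation*}
    \pi_{H'} = \bfx^\rT A'_{H'} \bfx = \frac{2}{100}\p[\Big]{ e(H') \log 3 + \p[\big]{ \tbinom{10}{2} - e(H')}\log 2 } = \frac{9}{10}\log 2 + \frac{e(H')}{50}\log(3/2).
\end{equation*}

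Finally I would apply the degree constraint. Since $H' \subseteq H$ has maximum degree at most $3$ on $10$ vertices, $e(H') \leq 15$, so
\begin{equation*}
    \pi_{H'} \leq \frac{9}{10}\log 2 + \frac{15}{50}\log(3/2) = \frac{9}{10}\log 2 + \frac{3}{10}\log(3/2) = \log 2 + \frac{\log(27/16)}{10},
\end{equation*}
as required. There is no real obstacle here — the step that does the work is the forced uniformity of $\bfx$, after which the bound is tight precisely when $H'$ is $3$-regular on $10$ vertices (consistent with $H' = \Petersen$ being extremal), and the claim acts as a warm-up for the substantive case $\alpha_u > 0$ that presumably follows.
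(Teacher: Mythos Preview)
Your proof is correct and, in fact, more economical than the paper's. Both arguments begin identically by forcing $N'=10$ and $\bfx$ uniform. From there, the paper invokes the preceding neighbourhood claim (which is itself proved under the standing assumption $\pi_{H'}>\log 2+\tfrac{\log(27/16)}{10}$) to deduce $x_{N(v)}=3/10$ for every $v$, hence $H'$ is $3$-regular; then it appeals to the girth-$5$ constraint and the uniqueness of the Petersen graph among cubic graphs of girth $\geq 5$ on $10$ vertices before evaluating $\pi_{H'}$ at the uniform weighting. You bypass all of this: once the weighting is uniform, the handshake bound $e(H')\leq 3\cdot 10/2=15$ from $\Delta(H')\leq 3$ is enough to get the inequality directly, with no need for the neighbourhood claim, the girth hypothesis, or the identification of $H'$. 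The paper's route does yield the extra structural information that equality forces $H'\cong\Petersen$, but that is not required for the claim as stated.
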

%--------------------------------------%
\begin{proof}
Indeed, if $\alpha_u=0$, then in fact averaging implies we must have $N'=10$ and $x_v=\frac{1}{10}$ for every $v\in V(H')$. Recall that $H'$ has maximum degree at most $3$ and girth at least $5$. By \Cref{claim: weight of nhood is large petersen case} we have $x_{N(v)}=\frac{3}{10}$ for every vertex $v\in V(H')$ which implies $H'$ is $3$-regular. Since there is a unique (up to isomorphism) cubic graph of girth at least $5$ on $10$ vertices, namely the Petersen graph, and since our putative product-optimal weighting $\bfx$ is uniform, it follows  that $\pi_{H'} \leq \log (2)  +\frac{\log(27/16)}{10}$.
\end{proof}
%--------------------------------------%

In view of \Cref{claim: alpha u is zero then done Petersen proof}, we may thus assume $\alpha_u>0$. We claim that $u$ has at least $3$ neighbours in $H'$. Indeed, otherwise by \Cref{claim: weight of nhood is large petersen case} and the maximality of $x_u$, we would have
\begin{align*}
    2x_u =\frac{2}{10}+2\alpha_u\geq  x_{N(v)}\geq \frac{3}{10} +\alpha_u \frac{\log 2}{\log3 - \log 2}.
\end{align*}
Rearranging terms yields the lower bound $\alpha_u \geq \frac{\log(3/2)}{10 \log(9/8)}>\frac{1}{3}$. On the other hand, applying \Cref{claim: weight of nhood is large petersen case} again, we have
\begin{align*}
    1\geq x_u +  x_{N(v)}\geq \frac{4}{10} +\alpha_u \frac{\log 3}{\log3 - \log 2},
\end{align*}
which after rearranging terms implies the upper bound $\alpha_u \leq\frac{3(\log 3 -\log 2 )}{5 \log 3}<\frac{1}{3}$, a contradiction. Let therefore $v_i$, $i=1,2,3$, denote the neighbours of $u$ in $H'$, and set $\alpha_{v_i} \defined x_{v_i}-\frac{1}{10}$. Further, set $Z_i$ denote the set of neighbours of $v_i$ in $V(H')\setminus \set{u}$, and let $x_{Z_i} \defined \sum_{w\in Z_i} x_w$.

Since $H'$ has girth at least $5$, the sets $Z_1$, $Z_2$, $Z_3$ and $\set{u, v_1, v_2, v_3} = \set{u} \cup N(u)$ are pairwise disjoint. Together with \Cref{claim: weight of nhood is large petersen case}, this disjointness implies 
\begin{align}
\label{eq: upper bound on second hood weight petersen case}
    \sum_i x_{Z_i}\leq 1- x_u - x_{N(u)} \leq \frac{6}{10} - \alpha_u \frac{\log 3}{\log 3 - \log 2}.
\end{align}
On the other hand, by \Cref{claim: weight of nhood is large petersen case} applied to each of the $v_i$ and to $u$, we have that 
\begin{align}
\label{eq: lower bound on second hood weight petersen case}
    \sum_i x_{Z_i} \geq \sum_i \left(\frac{3}{10}  +\alpha_{v_i}\frac{\log 2}{\log 3-\log 2} -x_u\right)&=\frac{6}{10}  -3\alpha_u +\left(x_{N(u)} -\frac{3}{10}\right)\frac{\log 2}{\log 3-\log 2}   \notag \\
    &\geq \frac{6}{10}  -3\alpha_u +\alpha_u \left(\frac{\log 2}{\log 3 -\log 2}\right)^2. 
\end{align}
Combining the lower bound~\eqref{eq: lower bound on second hood weight petersen case} with the upper bound~\eqref{eq: upper bound on second hood weight petersen case}, we get 
\begin{align*}
    0\geq &  \frac{\alpha_u }{(\log 3-\log 2)^2}\left(  \log (2)^2 +\log(3)^2 - \log (2)\log(3) -3\log(3)^2 +6\log(3)
    \log(2)-3\log(2)^2  \right)\\
    &=\frac{\alpha_u}{(\log 3-\log 2)^2}\left( \log(3)\log(2) - 2(\log (3/2))^2\right)>\frac{0.4326 \alpha_u}{(\log 3-\log 2)^2}>0,
\end{align*}
a contradiction. Thus, $\alpha_u=0$, and we have $\pi_{H'}\leq \log (2)  +\frac{\log(27/16)}{10}$, as desired, and our argument implies that this is precisely the value of $\pi_{\Petersen}$, and that it is uniquely attained by taking the uniform weighting.
\end{proof}
%--------------------------------------%

%--------------------------------------%
\begin{remark}
\label{remark: extremal examples in small a}
Here again, just as we noted in \Cref{remark: computational reduction in additive case}, letting $K_{1, \infty}$ denote the graph pattern consisting of an edge of multiplicity $3$ with a loop of multiplicity $2$ at one end and a loop of multiplicity $1$ at the other end, it is not hard to show that our methods reduce the problem of determining $\ex_\Pi(s,q)$ for $q$ in the range $2\binom{s}{2}\leq q < \SIGMA{K_{1,\infty}}{s} =2\binom{s}{2}+ \lfloor \frac{s^2}{6}+\frac{s}{6}\rfloor$ to a finite computational problem. What is more, one can read out the approximate structure of the extremal examples out of solutions to the optimisation problem~\eqref{eq: optimisation problem for geometric problem}. In particular, in this range of $q$, our \Cref{conj: geometric stability} amounts to the claim that there is a unique $(s,q)$-admissible pattern $P$ with $\pi_P = \log(\ex_{\pi}(s,q))$, and what is more that there is a unique product-optimal weighting $\bfx$ for $P$ --- provided, naturally, that one adds the technical requirement that $x_v>0$ for all $v\in V(P)$ in every product-optimal weighting of $P$, to rule out the addition of dummy vertices to $P$.
\end{remark}
%--------------------------------------%

%----------------------------------------------------------%

\section{The large ambient multiplicity regime}
\label{section: large a results}

%--------------------------------------%
\subsection{Strategy}
\label{section: large a strategy}

Let $T=\TUR{\bfr}{a}$ denote a generalised Turán pattern. Our main aim in this section is to prove that, for sufficiently large ambient edge multiplicity $a$, every product-extremal $(s, \SIGMA{T}{s})$-graph $G$ must be close in edit distance to a product-optimal blow-up of $T$.

To do this, we prove for some well-chosen $S>s$ that it is enough to show this for product-extremal $(s, \SIGMA{T}{s})$-graphs $G$ that are in addition $(s', \SIGMA{T}{s'})$-graphs for all $s'\leq S$. Here we use some averaging tools previously used by Füredi and Kündgen in the additive case as well as some precise information about the values of $\SIGMA{T}{s}$ and $\pi_T$ --- obtaining such information is a straightforward but tedious exercise in optimisation, requiring calculations and case-checking, which is in fact the main bottleneck for our approach.

We refer to $s'$-sets $U$ in $G$ with $e(G[U])= \SIGMA{T}{s'}$ as heavy sets. For our well-chosen $S>s$, we show that we can find a heavy $S$-set $U$ in $G$. A crucial property of $S$, besides being suitably large, is that there is a unique up to isomorphism sum-optimal blow-up of $T$ on $S$ vertices, and that this blow-up is regular. Now while $e(G[U])=\SIGMA{T}{S}$, we do not know a priori that $G[U]$ is a blow-up of $T$. We show that this must in fact be the case by first observing that $G[U]$ must be regular, then considering the interactions of $U$ with the rest of $G$ and applying the integer AM-GM inequality and some structural graph-theoretic argument.

Finally, once we have shown that $G[U]$ is an $S$-vertex blow-up of $T$, we are able to combine this with geometric averaging arguments similar to those deployed in~\cite{falgas2024extremal} to deduce that the overall structure of $G$ must be close to a blow-up of $T$.

The main innovation from our approach is precisely the use of large, regular heavy sets to find large blow-ups of $T$ --- in~\cite{falgas2024extremal}, blow-ups of $T$ were constructed `bottom-up', starting with a single edge of multiplicity $a+1$ and finding progressively larger structured pieces inside $G$. In the last stages of the construction of a suitably large blow-up of $T$, that approach required long chains of technical ad hoc graph-theoretic arguments that did not generalise in a natural fashion nor coped with the possible presence of edges with very low multiplicity. Indeed, the blow-ups found in~\cite{falgas2024extremal} after great effort only had three vertices per part and relied crucially on edges of multiplicity less than $a$ consisting of `clone-cliques' of multiplicity $a-1$ edges. As we show, it turns out to be significantly advantageous to first find large regular heavy sets without any a priori structure and to then derive the structure indirectly by considering their interactions with the rest of the multigraph.

%--------------------------------------%
\subsection{Preliminaries: properties of optimal blow-ups of generalised Turán patterns}
\label{section: preliminaries large a}

Fix $r_0, d, r_d \in \ZZ_{>0}$. Let $\bfr$ denote the $(d+1)$-tuple $\bfr=(r_0, 0, \dotsc, 0, r_d)$. For $a\geq d+1$, let $T=T(a)$ denote the generalised Turán pattern $\TUR{\bfr}{a}$.

We begin by establishing asymptotic properties of product-optimal blow-ups of $T$. Set
\begin{align*}
    x_\star=x_\star(\bfr, a) \defined \frac{\log\left((a+1)/a\right)}{\log \left( (a+1)^{r_d(r_0+1)}/\left(a^{r_d} (a-d+1)^{r_0(r_d-1)}(a-d)^{r_0}\right)  \right) }.
\end{align*}

%--------------------------------------%
\begin{proposition}[Properties of product-optimal blow-ups of $T$]
\label{prop: product-optimal blow ups of generalised Turan}
The following hold:
\begin{enumerate}[(i)]
    \item the unique product-optimal weighting $\bfx$ of $T$ satisfies $\bfr_i = (1-r_dx_\star/r_0)$ for all $i$: $1 \leq i \leq r_0$, and $\bfr_i = x_\star$  for all $i$: $r_0 < i \leq r_0 + r_d$;
    
    \item the function $x_\star = x_\star(a)$ is strictly increasing in $a$ over the interval $a \in [d + 1, \infty)$, and tends to the limit $\lim_{a \to \infty} x_\star(a) = 1/(r_d + r_0(d r_d + 1))$;
    
    \item $\pi_T = \log a + \p[\big]{1 - (1 - r_d x_\star)/r_0} \log\p[\big]{(a+1)/a}$.
\end{enumerate}
\end{proposition}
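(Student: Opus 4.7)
The plan is to exploit the $S_{r_0}\times S_{r_d}$-symmetry of the pattern $T = \TUR{\bfr}{a}$: the $r_0$ \emph{type-$0$} vertices (loop multiplicity $a$, pairwise edge multiplicity $a+1$) are interchangeable, the $r_d$ \emph{type-$d$} vertices (loop multiplicity $a-d$, pairwise edge multiplicity $a-d+1$) are interchangeable, and all cross-block edges have multiplicity $a+1$.

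For (i), I first establish strict concavity of $\bfx \mapsto \bfx^{\rT} A'_T \bfx$ on the hyperplane $\set{\bfx \st \sum_v x_v = 1}$ by decomposing $\ker(\mathbf{1}^{\rT})$ into three mutually orthogonal subspaces: the $(r_0-1)$-dimensional $V_1$ of type-$0$-supported vectors orthogonal to $\mathbf{1}_{r_0}$, the analogous $(r_d-1)$-dimensional $V_2$ on type-$d$, and the one-dimensional $V_3$ spanned by the vector $\bfv$ with coordinates $r_d$ on type-$0$ and $-r_0$ on type-$d$. The subspaces $V_1$ and $V_2$ are $A'_T$-eigenspaces with eigenvalues $-\log((a+1)/a) < 0$ and $-\log((a-d+1)/(a-d)) < 0$ respectively, so the cross terms in the decomposition $\bfw = \bfw_1 + \bfw_2 + c\bfv$ vanish under the quadratic form, and a direct computation gives
\begin{equation*}
    \bfv^{\rT} A'_T \bfv = -r_0 r_d \Bigl( r_d \log\tfrac{a+1}{a} + r_0 \log\tfrac{a-d+1}{a-d} + r_0 r_d \log\tfrac{a+1}{a-d+1} \Bigr) < 0
\end{equation*}
for $a \geq d+1$. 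Uniqueness of the maximiser combined with $S_{r_0} \times S_{r_d}$-symmetry then forces it to be constant within each type. Writing $y$ and $z$ for the common type-$0$ and type-$d$ weights (so that $r_0 y + r_d z = 1$), \Cref{cor: balanced degrees in optimal pattern weightings} applied at one vertex of each type gives
\begin{align*}
    \pi_T &= y\log a + ((r_0-1)y + r_d z)\log(a+1),\\
    \pi_T &= z\log(a-d) + (r_d-1)z\log(a-d+1) + r_0 y\log(a+1).
\end{align*}
Equating these and using the constraint linearises to give $z = x_\star$ and $y = (1 - r_d x_\star)/r_0$, with $x_\star$ as claimed (positivity being immediate).

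Part (iii) follows by substituting $(r_0-1)y + r_d z = 1 - y$ into the first KKT identity to obtain $\pi_T = \log a + (1-y)\log((a+1)/a)$ and plugging in the expression for $y$. For (ii), rearranging (i) gives $1/x_\star = r_d + r_0 D(a)/\log((a+1)/a)$, where $D(a) \defined r_d\log(a+1) - (r_d-1)\log(a-d+1) - \log(a-d) > 0$. A telescoping identity yields $D(a) = r_d\log((a+1)/a) + r_d\sum_{k=1}^{d-1}\log((a-k+1)/(a-k)) + \log((a-d+1)/(a-d))$; dividing by $\log((a+1)/a)$, strict monotonicity of $x_\star$ in $a$ reduces to the assertion that, for each fixed $k \geq 1$, the ratio $\log((a-k+1)/(a-k))/\log((a+1)/a)$ strictly decreases in $a$. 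This in turn follows from strict monotonicity of $h(t) \defined t(t+1)\log((t+1)/t)$ on $t \geq 1$, a one-line calculus check using $\log(1+1/t) > 2/(2t+1)$. The limit $x_\star \to 1/(r_d + r_0(dr_d+1))$ drops out of the Taylor expansions $\log((a+1)/a) = 1/a + O(1/a^2)$ and $D(a) = (dr_d+1)/a + O(1/a^2)$ as $a \to \infty$ (the leading $\log a$ contributions in $D(a)$ cancel).

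The main obstacle is the strict monotonicity in (ii): the limit and the KKT analysis are routine, but proving that $D(a)/\log((a+1)/a)$ strictly decreases for \emph{all} $a \geq d+1$ requires the auxiliary monotonicity of $h$ and careful bookkeeping of the telescoping decomposition of $D(a)$.
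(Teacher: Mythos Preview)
Your proof is correct and fills in the details that the paper leaves as an exercise (the paper's entire proof reads: ``Parts (i) and (ii) are exercises in calculus (in fact part (i) can be deduced directly from \Cref{cor: balanced degrees in optimal pattern weightings}), while part (iii) follows from part (i)''). Your use of \Cref{cor: balanced degrees in optimal pattern weightings} to pin down the critical point is exactly what the paper suggests; your additional contribution is the strict-concavity argument via the orthogonal eigenspace decomposition of $\ker(\mathbf{1}^{\rT})$, which is needed to justify the \emph{uniqueness} asserted in (i) but which the paper does not indicate how to prove. Your calculus for (ii)---reducing monotonicity of $x_\star$ to monotonicity of $h(t)=t(t+1)\log((t+1)/t)$ via the telescoping decomposition of $D(a)$---is clean and correct, and the limit computation is routine once the $\log a$ cancellation in $D(a)$ is noted.
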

%--------------------------------------%
\begin{proof}
Parts (i) and (ii) are exercises in calculus (in fact part (i) can be deduced directly from \Cref{cor: balanced degrees in optimal pattern weightings} on balanced degrees in optimal weightings), while part (iii) follows from part (i).
\end{proof}
%--------------------------------------%

We now turn our attention to sum-optimal blow-ups of $T$. Set $s_0 \defined r_0+r_d+ dr_0r_d+1$.

%--------------------------------------%
\begin{proposition}[Partition sizes in sum-optimal blow-ups of $T$]
\label{prop: partition sizes in sum-optimal blow-ups of T}
Let $G$ be a sum-optimal $n$-vertex blow-up of $T$, and let $\sqcup_{i\in [r_0+r_d]}V_i $ be a $T$-partition of $G$. Let $n= q\left(s_0-1\right)+t$, where $q\in \ZZ_{\geq 0}$ and $0\leq t <s_0-1$. Then, setting $U \defined \sqcup_{i=r_0+1}^{r_0+r_d} V_i$, we have
\begin{equation*}
    \card{U} = q r_d + \begin{cases}
        0, & \text{if $t = 0$}, \\
        k - 1 \text{ or } k, &\text{if $(k - 1)(r_0 d + 1) + 1 \leq t \leq (k - 1)(r_0 d + 1) + r_0$, $k \in [r_d]$ }, \\
        k, & \text{if $(k - 1)(r_0 d + 1) + r_0 + 1 \leq t \leq k(r_0 d + 1)$, $k \in [r_d - 1]$}, \\
        r_d, & \text{if $(r_d - 1)(r_0 d + 1) + r_0 + 1 \leq t < s_0 - 1$.}
    \end{cases}
\end{equation*}   

\end{proposition}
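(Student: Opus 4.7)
My proof would unfold in three stages: a reduction to a one-variable discrete optimisation, a clean computation of the forward difference, and a sign analysis that yields the four cases.

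For the first stage, set $u \defined |U|$ and $N_0 \defined n - u$. The pattern $T = \TUR{\bfr}{a}$ assigns exactly three non-loop edge multiplicities: pairs inside the block $\set{1, \dotsc, r_0}$ and pairs across the cut carry $a+1$, pairs inside $U$ carry $a - d + 1$, and loops carry $a$ or $a - d$. Assembling these contributions and applying the identity $\binom{N_0}{2} + N_0 u + \binom{u}{2} = \binom{n}{2}$ gives the clean expression
\begin{equation*}
    e(G) = (a+1)\binom{n}{2} - d\binom{u}{2} - \sum_{i=1}^{r_0+r_d} \binom{|V_i|}{2}.
\end{equation*}
Shifting a vertex between two parts of the same type leaves $u$ and the global $\binom{n}{2}$-term invariant and alters only the last sum, so by a standard weight-shift the multisets $\set{|V_i| \st i \leq r_0}$ and $\set{|V_i| \st i > r_0}$ must each be as balanced as possible. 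Writing $\Phi_r(m) \defined r\binom{\lfloor m/r \rfloor}{2} + (m - r\lfloor m/r \rfloor)\lfloor m/r \rfloor$ for the value of $\sum_j \binom{x_j}{2}$ on the balanced split of $m$ into $r$ parts, the problem then reduces to maximising $E(u) \defined -d\binom{u}{2} - \Phi_{r_0}(n - u) - \Phi_{r_d}(u)$ over $u \in \set{0, 1, \dotsc, n}$.

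For the second and third stages, I would use the elementary identity $\Phi_r(m) - \Phi_r(m-1) = \lfloor (m-1)/r \rfloor$ (the largest part of a balanced split of $m$ into $r$ has size $\lceil m/r \rceil$) to compute
\begin{equation*}
    \Delta(u) \defined E(u+1) - E(u) = \lfloor (n - u - 1)/r_0 \rfloor - \lfloor u/r_d \rfloor - du.
\end{equation*}
The estimate $\Delta(u+1) - \Delta(u) \leq -d < 0$ makes $\Delta$ strictly decreasing, so the optimisers of $E$ form either a singleton or a consecutive pair. Substituting $n = q(s_0 - 1) + t$ and $u = q r_d + k$ with $0 \leq k \leq r_d - 1$, and using $s_0 - 1 = r_0 + r_d + d r_0 r_d$, causes the $q$-dependence to cancel, leaving the $q$-free formula $\Delta(q r_d + k) = \lfloor (t - k - 1)/r_0 \rfloor - dk$. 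Reading off signs, this quantity is zero exactly for $t \in [(d r_0 + 1)k + 1, (d r_0 + 1)k + r_0]$, positive for $t \geq (d r_0 + 1)k + r_0 + 1$, and negative for $t \leq (d r_0 + 1)k$. Together with monotonicity this immediately yields the alternation in the proposition: ambiguous intervals of length $r_0$ (two tied optimisers $q r_d + k$ and $q r_d + k + 1$) interleaved with unique intervals of length $r_0(d-1) + 1$ (single optimiser $q r_d + k$). The final range $|U| = r_d$ is handled separately using $\lfloor r_d / r_d \rfloor = 1$ to verify $\Delta(q r_d + r_d) < 0$ throughout, and the boundary $t = 0$ by the direct computation $\Delta(q r_d - 1) = d + 1 > 0 > \Delta(q r_d) = -1$.

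\textbf{Expected obstacle.} There is no conceptual hurdle in this argument: the work is essentially bookkeeping, and the clean identity for $e(G)$ in terms of $u$ alone makes the reduction straightforward. The real care is required in the sign analysis, where two floor functions with distinct periods $r_0$ and $r_d$ interact with the arithmetic-progression term $-dk$. The crux is recognising early that, after the substitution $u = q r_d + k$, the $q$-dependence cancels to leave the clean single-floor-function formula $\lfloor (t - k - 1)/r_0 \rfloor - dk$; without this observation one quickly becomes mired in mixed-period floor-function manipulations.
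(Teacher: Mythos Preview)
Your proposal is correct and follows essentially the same approach as the paper's proof: both reduce to balancing the parts within each block (the paper cites Tur\'an's theorem, you use a weight-shift) and then analyse how the edge count changes as $|U|$ moves by $\pm 1$; the paper's two vertex-switch inequalities \eqref{eq: moving from Urd to V1} and \eqref{eq: moving from Vr0 to U1} are precisely your conditions $\Delta(u-1)\geq 0$ and $\Delta(u)\leq 0$ (using $\lceil u/r_d\rceil = \lfloor (u-1)/r_d\rfloor + 1$). Your presentation is slightly more explicit in writing down the one-variable objective $E(u)$, observing that $\Delta$ is strictly decreasing, and carrying out the substitution that cancels the $q$-dependence, but the underlying argument is the same.
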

%--------------------------------------%
\begin{proof}
This is a generalisation of~\cite[Proposition 5.3]{day2022extremal}, and is proved in exactly the same way. We know from Turán's theorem and the sum-optimality of $G$ that the vertices in the set $U \defined \sqcup_{i=r_0+1}^{r_0+r_d} V_i$ are distributed between the parts $U_i \defined V_{r_0 + i}$, $i \in [r_d]$ in a balanced fashion, and similarly that the parts $V_i$, $i \in [r_0]$ have balanced sizes. We may assume without loss of generality that
$\floor[\big]{\frac{n - \card{U}}{r_0}} = \card{V_1} \leq \card{ V_2} \leq \dotsb \leq \card{V_{r_0}} =\ceil[\big]{\frac{n - \card{U}}{r_0}}$, and similarly that  $\floor[\big]{\frac{\card{U}}{r_d}} = \card{U_1} \leq \card{U_2} \leq \dotsb \leq \card{U_{r_d}} = \ceil[\big]{\frac{\card{U}}{r_d}}$.

Switching a vertex from $U_{r_d}$ to $V_1$ changes $e(G)$ by
\begin{align}
\label{eq: moving from Urd to V1}
    -\card{V_1} + (d + 1)(\card{U_{r_d}} -1) +  d \p[\big]{\card{U} - \card{U_{r_d}}}
    = - \floor[\bigg]{\frac{n-\card{U}}{r_0}} + d \card{U} + \ceil[\bigg]{\frac{\card{U}}{r_d}} - (d + 1).
\end{align}
Similarly, moving a vertex from $V_{r_0}$ to $U_1$ changes $e(G)$ by
\begin{align}
\label{eq: moving from Vr0 to U1}
    -(d + 1)\card{U_1} - d \p[\big]{\card{U} - \card{U_1}} +  \card{V_{r_0}} - 1
    = -d\card{U} -\floor[\bigg]{\frac{\card{U}}{r_d}} + \ceil[\bigg]{\frac{n-\card{U}}{r_0}} - 1.
\end{align}
Since $G$ is sum-optimal, both of these quantities must be non-positive. In the case~\eqref{eq: moving from Urd to V1}, this yields an upper bound on the value of $\card{U}$, while in the case~\eqref{eq: moving from Vr0 to U1} we obtain an almost matching lower bound on $\card{U} $. Analysing the various cases depending on the value of $n$, this yields the claimed bounds on $\card{U}$. Explicit calculations can be found in the third author's master's thesis \cite[Proposition 5.2]{Sarkar2025-ms}.
\end{proof}
%--------------------------------------%

%--------------------------------------%
\begin{corollary}[Growth rate of $\SIGMA{T}{n}$]
\label{cor: growth rate of Sigma T}
Let $n= q\left(s_0-1\right)+t$, where $q\in \ZZ_{\geq 0}$ and $0\leq t < s_0-1$.
Then $\SIGMA{T}{n + 1} -\SIGMA{T}{n}$ is equal to 
\begin{align*}
    \begin{cases}
    (a+1)n-q(dr_d+1)-dk, & \text{if } t = k(r_0 d+1), \, 0 \leq k \leq r_d-1, \\
    (a+1)n-q(dr_d+1)-dk-\ell, & \text{if } k(r_0 d+1)+1+\ell r_0 \leq t \leq k(r_0 d+1)+(\ell+1)r_0, \\
    & \hspace{0.4cm} 0 \leq k \leq r_d-1, \, 0 \leq \ell \leq d-1, \\ 
    (a+1)n-(q+1)(dr_d+1)+1, & \text{if } r_d(r_0 d+1) \leq t <s_0-1.
    \end{cases}
\end{align*}    
\end{corollary}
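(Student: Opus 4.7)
\medskip

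The plan is to give an explicit formula for $e(G)$ in a blow-up $G$ of $T = \TUR{(r_0, 0, \dots, 0, r_d)}{a}$ and then read off the increment from $n$ to $n+1$ using the part-sizes provided by \Cref{prop: partition sizes in sum-optimal blow-ups of T}. Let $G$ have $V$-parts of sizes $n_1, \dots, n_{r_0}$ and $U$-parts of sizes $m_1, \dots, m_{r_d}$, and write $N = \sum_i n_i$, $M = \sum_j m_j = |U|$. Separating edges by multiplicity (a $+1$ shortfall from $V$-internal loops, a $+d$ shortfall on all edges inside $U$, a further $+1$ on $U$-loops) gives $e(G) = (a+1)\binom{n}{2} - d\binom{M}{2} - \sum_{i=1}^{r_0}\binom{n_i}{2} - \sum_{j=1}^{r_d}\binom{m_j}{2}$. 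Maximising $e(G)$ amounts to minimising these three penalty terms, which by convexity of $\binom{\cdot}{2}$ forces the parts to be as balanced as possible for the given $M$, with $M$ in turn prescribed by \Cref{prop: partition sizes in sum-optimal blow-ups of T}.

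Next I would observe that a sum-optimal $(n+1)$-vertex blow-up can be obtained from a sum-optimal $n$-vertex blow-up by inserting a single vertex, either into a smallest $V$-part (Option A, with increment $(a+1)n - \lfloor N/r_0 \rfloor$) or into a smallest $U$-part (Option B, with increment $(a+1)n - dM - \lfloor M/r_d \rfloor$). Consistency is enforced by applying \Cref{prop: partition sizes in sum-optimal blow-ups of T} at both $n$ and $n+1$, which gives $M(n+1) \in \set{M(n), M(n)+1}$, so the sum-optimal increment is exactly $\max(\text{A}, \text{B})$. With $n = q(s_0 - 1) + t$ and the appropriate value of $M$ substituted, both options become explicit functions of $q$, $k$, $\ell$.

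Case-by-case verification then proceeds as follows. At a jump point $t = k(r_0 d + 1)$ one has $M = qr_d + k$ and $N = r_0\p[\big]{q(1 + dr_d) + kd}$; both Options A and B simplify to $(a+1)n - q(dr_d+1) - dk$, matching the first line of the statement. On a plateau $k(r_0 d + 1) + \ell r_0 + 1 \leq t \leq k(r_0 d + 1) + (\ell+1)r_0$ with $\ell \geq 1$, \Cref{prop: partition sizes in sum-optimal blow-ups of T} forces $M = qr_d + k + 1$; Option A then evaluates to $(a+1)n - q(dr_d+1) - dk - \ell$ while Option B is strictly smaller (by $d - \ell$ when $k \leq r_d - 2$, and by $d + 1$ when $k = r_d - 1$), so Option A dominates. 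The transitional subcase $\ell = 0$ allows $M \in \set{qr_d + k, qr_d + k + 1}$, and a short calculation shows both choices return the same increment $(a+1)n - q(dr_d+1) - dk$. Finally, for $t \geq r_d(r_0 d + 1)$ one has $M = (q+1)r_d$ with all $V$-parts balanced at size $(q+1)(dr_d+1) - 1$, and Option A yields $(a+1)n - (q+1)(dr_d+1) + 1$, beating Option B by exactly $1$.

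The only obstacle is bookkeeping: one must carefully check that the expressions $\lfloor N/r_0 \rfloor$ and $\lfloor M/r_d \rfloor$ indeed match the part-sizes dictated by \Cref{prop: partition sizes in sum-optimal blow-ups of T} across the various boundary cases — most delicately at $k = r_d - 1$, where the $U$-parts saturate and Option B's $\lfloor M/r_d \rfloor$ changes from $q$ to $q+1$. Once these floor evaluations are tabulated, the identities in the corollary follow by direct substitution.
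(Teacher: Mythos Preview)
Your proposal is correct and follows essentially the same approach as the paper: compute the increment as the maximum of the two options ``add to smallest $V$-part'' (gain $(a+1)n - \lfloor N/r_0\rfloor$) and ``add to smallest $U$-part'' (gain $(a+1)n - dM - \lfloor M/r_d\rfloor$), with $M=|U|$ supplied by \Cref{prop: partition sizes in sum-optimal blow-ups of T}, and then work through the cases. One minor arithmetic slip: when $k = r_d - 1$ and $\ell \geq 1$, Option A exceeds Option B by $d+1-\ell$, not $d+1$; this does not affect the conclusion since $\ell \leq d-1$.
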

%--------------------------------------%
\begin{proof}
We may read off the value of $\SIGMA{T}{n + 1}-\SIGMA{T}{n}$ from \Cref{prop: partition sizes in sum-optimal blow-ups of T}, which tells us the part sizes in the $T$-partition $\sqcup_i V_i$ of a sum-optimal blow-up of $T$ on $n$ vertices as well as which part we should add the $(n+1)$-th vertex to.
More precisely, setting $U = \sqcup_{i>r_0} V_i$ and assuming as in the proof of \Cref{prop: partition sizes in sum-optimal blow-ups of T} that $\card{V_1} = \floor[\big]{\frac{n-\card{U}}{r_0}}$ and $\card{V_{r_0+1}} = \floor[\big]{\frac{\card{U}}{r_d}}$, we should either add a new vertex to $V_1$, which keeps the size of $U$ constant and increases the edge-sum by $(a+1)n - \floor[\big]{\frac{n-\card{U}}{r_0}}$, or add it to $V_{r_0+1}$, which increases the size of $U$ by $1$ and increases the edge sum by $(a + 1)n - d \card{U} - \floor[\big]{\frac{\card{U}}{r_d}}$.
Analysing the various cases depending on the value of $n$, this yields the claimed bounds on  $\SIGMA{T}{n + 1} - \SIGMA{T}{n}$. Again, a reader interested in explicitly worked out computations can find these in~\cite[Proposition 5.3]{Sarkar2025-ms}.
\end{proof}
%--------------------------------------%

%--------------------------------------%
\begin{corollary}
\label{cor: growth rate of ex sigmaT s_0 up to s_1}
For all $n$ with $s_0 \leq n \leq 2s_0-dr_0-1$, $\ex_\Sigma(n,s_0,\SIGMA{T}{s_0})=\SIGMA{T}{n}$. 
\end{corollary}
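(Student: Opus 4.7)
The lower bound $\ex_\Sigma(n,s_0,\SIGMA{T}{s_0}) \geq \SIGMA{T}{n}$ is immediate: a sum-optimal blow-up of $T$ on $n$ vertices is an $(s_0, \SIGMA{T}{s_0})$-graph, since every $s_0$-vertex induced sub-blow-up is itself a blow-up of $T$ on $s_0$ vertices and therefore supports at most $\SIGMA{T}{s_0}$ edges.

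For the matching upper bound, I plan to iterate the simple averaging inequality~\eqref{eq: sum-averaging bound on ex sigma} (equivalently, to invoke \Cref{cor: arithmetic averaging}) with $\cP = \cA(s_0, \SIGMA{T}{s_0})$ and test function $f(n) = \SIGMA{T}{n}$. The base case $\ex_\Sigma(s_0, \cP) = \SIGMA{T}{s_0}$ holds by the very definition of an $(s_0,\SIGMA{T}{s_0})$-graph. To push the induction from $n$ to $n+1$ for $s_0 \leq n \leq 2s_0-3$, it suffices to verify the strict inequality
\begin{equation*}
    \tfrac{n+1}{n-1}\,\SIGMA{T}{n} < \SIGMA{T}{n+1} + 1,
\end{equation*}
which rearranges to $(n-1)\bigl(\SIGMA{T}{n+1} - \SIGMA{T}{n}\bigr) + (n-1) > 2\,\SIGMA{T}{n}$.

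Throughout the range $s_0 \leq n \leq 2s_0 - 3$ we have $q=1$ in the decomposition $n = q(s_0-1) + t$ used in \Cref{cor: growth rate of Sigma T}, so that corollary provides an explicit closed form $\SIGMA{T}{n+1} - \SIGMA{T}{n} = (a+1)n - c_t$, with $c_t$ a non-negative integer depending only on $t$, $r_0$, $r_d$ and $d$ (crucially \emph{not} on $a$ or $n$). An analogous telescoping from the explicit part sizes in \Cref{prop: partition sizes in sum-optimal blow-ups of T} gives a closed form for $\SIGMA{T}{n}$ itself. Applying \Cref{cor: balanced degrees in optimal pattern weightings} to the two symmetry classes of vertices of $T$, the sum-optimal weighting assigns weight $(r_d d + 1)/(s_0-1)$ to each vertex of the $r_0$-blob and $1/(s_0-1)$ to each vertex of the $r_d$-blob, which yields $\sigma_T = (a+1) - (r_d d + 1)/(s_0-1)$. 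With these expressions in hand, the required inequality reduces to a linear relation in $a$ for each $t$: the leading terms on the two sides are $(a+1)n$ and $\sigma_T n$ respectively, and the gap $(r_d d + 1)n/(s_0-1)$ between them is at least $r_d d + 1$ for $n \geq s_0-1$, which comfortably dominates the bounded correction $c_t$.

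The main obstacle is bookkeeping rather than conceptual: one must check the inequality case by case as $t$ ranges over $1, 2, \ldots, s_0-2$, tracking the discrete corrections to $\SIGMA{T}{n}$ and the exact value of $c_t$ across the three subcases of \Cref{cor: growth rate of Sigma T}. The stated restriction $n < 2s_0 - 1$ is precisely what keeps $q = 1$ throughout, avoiding the proliferation of subcases that would arise for larger $q$ and ensuring $c_t$ stays small.
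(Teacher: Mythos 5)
Your proposal is correct and follows essentially the same route as the paper: induction on $n$ via \Cref{cor: arithmetic averaging} with $f(n)=\SIGMA{T}{n}$, reducing to the inequality $\frac{n+1}{n-1}\SIGMA{T}{n}<\SIGMA{T}{n+1}+1$, which is then verified using \Cref{cor: growth rate of Sigma T} with $q=1$ together with the part sizes from \Cref{prop: partition sizes in sum-optimal blow-ups of T}, exactly as the paper does. One caution on your heuristic: the claim that the gap of roughly $(r_d d+1)n/(s_0-1)$ ``comfortably dominates'' the correction $c_t$ is not quite right, since $c_t$ can be as large as $2 d r_d+1$ (in the third subcase of \Cref{cor: growth rate of Sigma T}) while the leading gap only reaches about $2(d r_d+1)$ near $n=2s_0-2$, so the deferred case-by-case check genuinely requires the exact floor-function corrections to $\SIGMA{T}{n}$ rather than the leading term alone — which is precisely the bookkeeping the paper also leaves implicit.
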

%\rik{I think we are proving the result in the interval $s_{0} \leq n \leq 2s_{0}-dr_0-2$.}
%--------------------------------------%
\begin{proof}
We proceed by induction on $n$. The base case $n=s_0$ is trivial. For the inductive step, by \Cref{cor: arithmetic averaging}, it is enough to show that 
%     \begin{align*}
$ \p[\big]{\frac{n+1}{n-1}} \SIGMA{T}{n} < \SIGMA{T}{n + 1} + 1$ 
%   \end{align*}
holds for all $n$: $s_0\leq n < 2s_0-dr_0-1$.  Rearranging terms, this is equivalent to showing 
\begin{align}
\label{eq: ineq needed in diff of sigma T below s_1}
    \left( \frac{2}{n-1}\right)\SIGMA{T}{n} < \SIGMA{T}{n + 1} -\SIGMA{T}{n} +1.
\end{align}
The value of $\SIGMA{T}{n}$ can be read out of the bounds on $|U|$ given in \Cref{prop: partition sizes in sum-optimal blow-ups of T}, while the difference $\SIGMA{T}{n + 1} -\SIGMA{T}{n}$ is given by \Cref{cor: growth rate of Sigma T}, with $q=1$. Plugging in these bounds in the three ranges of $n-s_0+1$ featured in the statements of \Cref{prop: partition sizes in sum-optimal blow-ups of T} and \Cref{cor: growth rate of Sigma T} yields the desired inequality~\eqref{eq: ineq needed in diff of sigma T below s_1}. Detailed calculations are given in~\cite[Lemma 6.4.3]{Sarkar2025-ms}.
\end{proof}
%--------------------------------------%

%--------------------------------------%
\begin{corollary}
\label{cor: growth rate not too large}
We have $ \SIGMA{T}{n + 1}-\SIGMA{T}{n}< \p[\big]{a+1 -\frac{dr_d +1}{s_0-1}}n + 1$. 
In particular, for all $a$ sufficiently large, we have 
\begin{align}
\label{eq: growth rate in Sigma less than xstar expected}
    \SIGMA{T}{n + 1} - \SIGMA{T}{n} - 1
    < \p[\Big]{a+1 -\frac{1 - r_d x_\star}{r_0} } n.
\end{align}
\end{corollary}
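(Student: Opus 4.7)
The plan is to prove the first inequality by direct case analysis, using the explicit formula for $\SIGMA{T}{n+1} - \SIGMA{T}{n}$ from \Cref{cor: growth rate of Sigma T}, and then deduce the second via the asymptotic behaviour of $x_\star$. Writing $n = q(s_0-1) + t$ with $q \geq 1$ and $0 \leq t < s_0-1$, the increment takes the form $(a+1)n - S(n)$ for a case-dependent subtraction $S(n)$, so the first inequality is equivalent to showing
\begin{equation*}
    S(n) > \frac{(r_d+1)n}{s_0-1} - 1.
\end{equation*}
The main algebraic simplification will be the identity $(r_0 d + 1)(d r_d + 1) = d(s_0-1) + 1$, obtained by expanding $s_0 - 1 = r_0 + r_d + d r_0 r_d$. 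Using this, $S(n)$ can be rewritten in each of the three cases as $\frac{(dr_d+1)n}{s_0-1} + E$ for some bounded correction $E$, at which point the inequality reduces to a statement about $E$ alone.

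In Case 1 ($t = k(r_0d+1)$, $0 \leq k \leq r_d-1$), the identity yields $S(n) = \bigl((dr_d+1)n - k\bigr)/(s_0-1)$, and the bound follows immediately from $k \leq r_d-1 < s_0-1$. In Case 2 ($k(r_0d+1) + 1 + \ell r_0 \leq t \leq k(r_0d+1) + (\ell+1)r_0$, with $0 \leq k \leq r_d-1$ and $0 \leq \ell \leq d-1$), I plan to substitute the largest admissible $t$ and collapse terms using the identity, reducing the inequality to
\begin{equation*}
    (k - (\ell+1)r_d)(r_0(d-1)+1) < (s_0-1)(d-1)(qr_d + k).
\end{equation*}
The left-hand side is strictly negative since $k \leq r_d - 1 < (\ell+1)r_d$, while the right-hand side is non-negative, so the inequality holds. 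In Case 3 ($r_d(r_0d+1) \leq t < s_0-1$), using $t \leq s_0-2$ reduces the inequality to $(r_d+1)(s_0-2) < (s_0-1)(dr_d+1) + q(s_0-1)r_d(d-1)$, which is verified by a direct computation showing the difference $(r_d+1)(s_0-2) - (s_0-1)(dr_d+1)$ is already negative.

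For the ``in particular'' clause, I use \Cref{prop: product-optimal blow ups of generalised Turan}(ii) which gives $x_\star(a) \to 1/(s_0-1)$ from below as $a \to \infty$; hence $(1-r_d x_\star)/r_0$ tends from above to the limit $(s_0-1-r_d)/(r_0(s_0-1)) = (dr_d+1)/(s_0-1)$. For $a$ sufficiently large, the gap $(1-r_d x_\star)/r_0 - (dr_d+1)/(s_0-1)$ is so small that the refined bound on $S(n)$ obtained in the proof of the first inequality (where the correction $E$ is bounded below by an explicit positive constant in each of the three cases) already yields the second inequality in the relevant range of $n$.

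The main obstacle will be Case 2, which is the most delicate: one must track three independent parameters (the block index $k$, the sub-block index $\ell$, and the internal slack of $t$ within its interval) and apply the key identity at just the right moment for the sign of the resulting expression to be manifest.
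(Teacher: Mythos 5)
Your treatment of the first inequality is correct and amounts to a careful expansion of what the paper dispatches with ``can be read out directly from \Cref{cor: growth rate of Sigma T}'': the identity $(r_0d+1)(dr_d+1)=d(s_0-1)+1$ is right, and your three case reductions check out (in Case 1 one gets $S(n)=\frac{(dr_d+1)n-k}{s_0-1}$; in Case 2 your reduced inequality $(k-(\ell+1)r_d)(r_0(d-1)+1)<(s_0-1)(d-1)(qr_d+k)$ is algebraically equivalent to what is needed and holds by the sign argument you give; Case 3 is fine). In fact you prove the sharper bound with $\frac{dr_d+1}{s_0-1}$ in place of $\frac{r_d+1}{s_0-1}$, which is what one actually needs later.

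The gap is in the ``in particular'' step, and it is twofold. First, your claim that the correction $E$ in $S(n)=\frac{(dr_d+1)n}{s_0-1}+E$ is ``bounded below by an explicit positive constant'' is false: in Case 1 you yourself computed $E=-k/(s_0-1)\leq 0$ (with equality when $t=0$). The only slack available is $1+E\geq \frac{s_0-r_d}{s_0-1}>0$, coming from the additive ``$-1$'' on the left of \eqref{eq: growth rate in Sigma less than xstar expected}, not from a positive $E$. Second, and more substantively, since $x_\star(a)<1/(s_0-1)$ strictly for every finite $a$, the gap $\varepsilon(a):=\frac{1-r_dx_\star}{r_0}-\frac{dr_d+1}{s_0-1}$ is strictly positive, and the inequality you must verify is $n\,\varepsilon(a)<1+E$. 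This fails once $n>1/\varepsilon(a)$ (e.g.\ at $t=0$, $k=0$, where the increment equals $(a+1)n-\frac{(dr_d+1)n}{s_0-1}$ exactly), so \eqref{eq: growth rate in Sigma less than xstar expected} cannot hold uniformly in $n$ for a fixed $a$. The statement is only true when $n$ is fixed (or bounded) before $a$ is taken large — which is how it is used in \Cref{prop: step up from the base case}, with $n=s$ a constant — and your phrase ``in the relevant range of $n$'' conceals rather than resolves this quantifier issue. To be fair, the paper's own one-line justification (deduce \eqref{eq: growth rate in Sigma less than xstar expected} from the first inequality plus the convergence $x_\star\to 1/(s_0-1)$) suffers from the same problem, and worse: since $\frac{1-r_dx_\star}{r_0}>\frac{dr_d+1}{s_0-1}\geq\frac{r_d+1}{s_0-1}$, the second inequality is \emph{stronger} than the first and cannot be deduced from it by convergence alone; one genuinely needs your exact expressions for $S(n)$ together with the bound $n\leq 1/\varepsilon(a)$. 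So your instinct to return to the exact formula is the right one — you just need to state the correct lower bound $1+E\geq\frac{s_0-r_d}{s_0-1}$ and make the dependence of ``$a$ sufficiently large'' on $n$ explicit.
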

%--------------------------------------%
\begin{proof}
The first inequality can be read out directly from \Cref{cor: growth rate of Sigma T} (this is done explicitly in~ \cite[Proposition 5.4]{Sarkar2025-ms}). The `in particular' part of the corollary then follows from \Cref{prop: product-optimal blow ups of generalised Turan} (ii), and the monotone convergence of $x_\star(a)$ to $1/\left(r_d+r_0(dr_d+1)\right)$ as $a \to \infty$.    
\end{proof}
%--------------------------------------%

%--------------------------------------%
\begin{corollary}
\label{cor: bound on diff SigmaT (t) small t}
The following statements hold:
\begin{enumerate}[(i)]
    \item For every $s'$: $2\leq s' <s_0$, we have
    \begin{align*}
        \frac{\SIGMA{T}{s' + 1} -\SIGMA{T}{s'} -1-as'}{s'} \leq 1 - \frac{d(r_d-1)+1}{r_0\left(d(r_d-1)+1\right)+r_d-1}.
    \end{align*}
    
    \item For every $s'$: $s_0\leq s'<    2s_0-dr_0-2$, we have
    \begin{align*}
        \frac{\SIGMA{T}{s' + 1} -\SIGMA{T}{s'} -1-as'}{s'} \leq  1 - \frac{d(2r_d-1)+2}{r_0\left(d(2r_d-1)+2\right)+2r_d-1}.
    \end{align*}
\end{enumerate}
\end{corollary}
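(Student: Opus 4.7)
The plan is to apply \Cref{cor: growth rate of Sigma T} directly to obtain an explicit expression for the left-hand side, then verify the claimed inequality case by case. Writing $s' = q(s_0 - 1) + t$ and rearranging each of the three cases in that corollary,
\begin{align*}
    \frac{\SIGMA{T}{s' + 1} - \SIGMA{T}{s'} - 1 - as'}{s'} = 1 - \frac{N(q, k, \ell)}{s'},
\end{align*}
where $N = q(dr_d + 1) + dk + 1$ in Case 1, $N = q(dr_d + 1) + dk + \ell + 1$ in Case 2, and $N = (q + 1)(dr_d + 1)$ in Case 3. The task therefore reduces to lower-bounding $N/s'$ in each subcase.

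For part (i), $s' < s_0$ forces $q \in \set{0, 1}$, with $q = 1$ possible only at $s' = s_0 - 1$, $t = 0$. Setting $B \defined d(r_d - 1) + 1$, the target right-hand side is $1 - B/(r_0 B + r_d - 1)$. The critical subcase is Case 2 with $q = 0$: there $N$ does not depend on $s'$, so over the interval $k(r_0 d + 1) + 1 + \ell r_0 \leq s' \leq k(r_0 d + 1) + (\ell + 1) r_0$ the ratio $N/s'$ is minimised at the right endpoint $s'_\star = r_0(dk + \ell + 1) + k$, giving
\begin{align*}
    \frac{N}{s'_\star} = \frac{dk + \ell + 1}{r_0(dk + \ell + 1) + k} = \frac{1}{r_0 + k/(dk + \ell + 1)}.
\end{align*}
The function $k/(dk + \ell + 1)$ is increasing in $k$ and decreasing in $\ell$, so its maximum over $0 \leq k \leq r_d - 1$ and $0 \leq \ell \leq d - 1$ is attained at $(k, \ell) = (r_d - 1, 0)$ with value $(r_d - 1)/B$. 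The resulting minimum $N/s'_\star = B/(r_0 B + r_d - 1)$ matches the claimed bound exactly, so it is attained and cannot be exceeded in this subcase. It then remains to check that Cases 1 and 3 with $q = 0$, as well as the single $q = 1$ point, all give $N/s' \geq B/(r_0 B + r_d - 1)$; each such check reduces by cross-multiplication to non-negativity of an explicit polynomial in $r_0, r_d, d$ (for instance in Case 1 with $q = 0$, $k \geq 1$, the inequality simplifies to $(r_0 - 1) B + (r_d - 1)(d + 1) \geq 0$), which holds trivially since $r_0, r_d, d \geq 1$.

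Part (ii) proceeds by the same route. The hypothesis $s_0 \leq s' < 2s_0 - dr_0 - 2$ forces $q = 1$ and, by a direct calculation, also rules out Case 3 entirely: one verifies $(s_0 - 1) + r_d(r_0 d + 1) > 2s_0 - dr_0 - 3$ whenever $r_0(d - 1) \geq 0$. Repeating the Case 2 analysis with $K \defined r_d + k$, the ratio at the right endpoint becomes $1/(r_0 + K/(dK + \ell + 2))$; maximising $K/(dK + \ell + 2)$ over $0 \leq k \leq r_d - 1$, $0 \leq \ell \leq d - 1$ again lands at $(k, \ell) = (r_d - 1, 0)$, giving $K = 2r_d - 1$ and producing the minimum $B'/(r_0 B' + 2r_d - 1)$ with $B' \defined d(2r_d - 1) + 2$, matching the stated bound. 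A final cross-multiplication check rules out Case 1 and the non-extremal $(k, \ell)$ values in Case 2 as giving strictly smaller values of the LHS.

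The main obstacle is bookkeeping rather than conceptual: one must track which subcase each $s'$ falls into, verify the extremal configurations actually lie in the allowed range (and indeed for part (ii) the equality $s'_\star = 2s_0 - dr_0 - 3$ at $(k, \ell) = (r_d - 1, 0)$ pins down the slightly unusual upper bound $s' < 2s_0 - dr_0 - 2$ in the statement), and dispatch a finite list of cross-multiplication inequalities in the non-extremal subcases. All such inequalities reduce to non-negativity of explicit polynomials in $r_0, r_d, d \geq 1$.
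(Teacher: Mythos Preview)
Your proposal is correct and takes the same approach as the paper's proof, which consists of the single sentence ``Apply \Cref{cor: growth rate of Sigma T} with $n=s'$ and $s' \leq s_0-1$ for part (i), and $s_0 \leq s' < 2 s_0 - d r_0 - 2$ for part (ii).'' You have carried out precisely this application in detail, correctly identifying the extremal configuration $(k,\ell)=(r_d-1,0)$ in each part and verifying that all other subcases are dominated; your observation that the extremal $s'_\star$ in part (ii) equals $2s_0 - dr_0 - 3$ and thus explains the choice of upper bound is accurate and worth noting.
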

%--------------------------------------%
\begin{proof}
Apply \Cref{cor: growth rate of Sigma T} with $n=s'$ and  $s' \leq s_0-1$ for part (i), and $s_0 \leq s' < 2 s_0 - d r_0 - 2$ for part (ii) (these two calculations are also given in full detail in \cite[Proposition A2]{Sarkar2025-ms} and \cite[Proposition A3]{Sarkar2025-ms} respectively). 
\end{proof}
%--------------------------------------%

Finally, we use the previous results to obtain the following generalisation of~\cite[Theorem 3.11]{day2022extremal}, which for sufficiently large values of the ambient edge multiplicity $a$ allows us to deduce the value of $\ex_\Pi(s+1, \SIGMA{T}{s+1})$ from that of $\ex_\Pi(s_0, \SIGMA{T}{s_0})$, where $s_0=r_0(dr_d+1)+r_d+1$ is our `base case'.

%--------------------------------------%
\begin{proposition}
\label{prop: step up from the base case}
Let $s$ be an integer with $s \geq s_0=r_0(dr_d+1)+r_d+1$. Suppose there exists $a_0\geq d+1$ such that for all $a \geq a_0$ we have
\begin{align*}
    \ex_\Pi(s, \SIGMA{T(a)}{s})=e^{\pi_{T(a)}}.
\end{align*}
Then, there exists $a_1 \geq a_0$ such that for all $a \geq a_1$ we have
\begin{align*}
    \ex_\Pi(s+1, \SIGMA{T(a)}{s + 1})=e^{\pi_{T(a)}}.
\end{align*}
Further, if $G$ is a near product-extremal $(s+1,\SIGMA{T(a)}{s + 1})$-graph on $n$ vertices, then $G$ is within edit distance $o(n^2)$ of a near product-extremal $(s,\SIGMA{T(a)}{s})$-graph on $n$ vertices.
\end{proposition}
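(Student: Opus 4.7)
The strategy is to show that for $a$ sufficiently large, any near product-extremal $(s+1, \SIGMA{T(a)}{s+1})$-graph $G$ on $n$ vertices is within $o(n^2)$ edit distance of an $(s, \SIGMA{T(a)}{s})$-graph $G^*$ with $\log P(G^*) \geq \log P(G) - o(n^2)$. Combined with the base case hypothesis applied to $G^*$, this delivers both the equality $\ex_\Pi(s+1, \SIGMA{T(a)}{s+1}) = e^{\pi_{T(a)}}$ and the claimed stability statement.

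First, I would establish the matching lower bound as a consequence of \Cref{cor: arithmetic averaging} together with the growth-rate bounds of \Cref{cor: growth rate of Sigma T}: for $a$ sufficiently large, the inequality $(s-1)\SIGMA{T(a)}{s+1} \leq (s+1)\SIGMA{T(a)}{s}$ holds, and hence the elementary averaging identity $(s-1)e(Y) = \sum_{X \subset Y, \card{X}=s} e(X)$ for $(s+1)$-sets $Y$ shows that every $(s, \SIGMA{T(a)}{s})$-graph is automatically an $(s+1, \SIGMA{T(a)}{s+1})$-graph. The base case then gives $\ex_\Pi(s+1, \SIGMA{T(a)}{s+1}) \geq e^{\pi_{T(a)}}$.

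For the upper bound and stability, the plan is to apply \Cref{lemma: removal of low deg vertices} to remove an arbitrarily small fraction of vertices and reduce to a subgraph $G'$ of minimum product-degree at least $e^{(\pi_{T(a)} - \eta)n'}$, and then show that for $a$ sufficiently large $G'$ cannot contain too many \emph{heavy} $s$-subsets, that is, $s$-sets $X$ with $e(G'[X]) > \SIGMA{T(a)}{s}$. The key input is that for any such heavy $X$, the $(s+1, \SIGMA{T(a)}{s+1})$-constraint forces $d_X(v) \leq \SIGMA{T(a)}{s+1} - \SIGMA{T(a)}{s} - 1$ for every $v \notin X$. By \Cref{cor: growth rate not too large}, this upper bound is strictly smaller than $\bigl(a + 1 - (1 - r_d x_\star)/r_0\bigr) s$ for $a$ sufficiently large, and the integer AM--GM inequality (\Cref{prop: integer AM-GM}) translates the additive deficit into a multiplicative penalty of the form $\log p_X(v) \leq s \pi_{T(a)} - \delta$ for some positive constant $\delta = \delta(a)$, using the explicit formula for $\pi_{T(a)}$ provided by \Cref{prop: product-optimal blow ups of generalised Turan} (iii). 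Summing this penalty over pairs $(X, v)$ with $X$ heavy and averaging via \Cref{prop: geometring averaging} then forces the set $V_{\text{heavy}}$ of vertices lying in some heavy $s$-subset to satisfy $\card{V_{\text{heavy}}} = o(n)$; removing these and reinserting them with low-multiplicity edges produces the desired nearby $(s, \SIGMA{T(a)}{s})$-graph $G^*$.

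The main obstacle is the last counting step: a single heavy $s$-set is by itself consistent with product-near-extremality, so the contradiction must arise from the combined multiplicative penalties of many heavy sets adding up against the minimum-product-degree lower bound. The strict gap $\SIGMA{T(a)}{s+1} - \SIGMA{T(a)}{s} < \bigl(a + 1 - (1 - r_d x_\star)/r_0\bigr) s$ from \Cref{cor: growth rate not too large}, which is precisely calibrated to $\pi_{T(a)}$, is the quantitative engine that makes this contradiction go through in the large-$a$ regime and explains the necessity of the assumption $a \geq a_1$.
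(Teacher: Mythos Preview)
Your overall architecture matches the paper's (which simply cites \cite[Theorem~3.11]{day2022extremal} with \Cref{cor: growth rate not too large} substituted in): low product-degree removal, then use the growth-rate gap $\SIGMA{T}{s+1}-\SIGMA{T}{s}-1 < \bigl(a+1-\tfrac{1-r_dx_\star}{r_0}\bigr)s$ together with integer AM--GM and geometric averaging. Two points need correction.

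\textbf{Lower bound.} Your averaging argument does not give the containment you claim. From $(s-1)e(Y)=\sum_{X}e(X)$ and the $(s,\SIGMA{T}{s})$-condition you get $e(Y)\le \tfrac{s+1}{s-1}\SIGMA{T}{s}$, and the inequality $(s-1)\SIGMA{T}{s+1}\le (s+1)\SIGMA{T}{s}$ (which is just Katona--Nemetz--Simonovits monotonicity) places $\SIGMA{T}{s+1}$ \emph{below} this bound, not above it. The lower bound $\ex_\Pi(s+1,\SIGMA{T}{s+1})\ge e^{\pi_T}$ is in any case immediate: blow-ups of $T$ are $(s+1,\SIGMA{T}{s+1})$-graphs by definition of $\SIGMA{T}{\cdot}$.

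\textbf{Upper bound.} The ``obstacle'' you flag is not one. After you have applied \Cref{lemma: removal of low deg vertices} and passed to $G'$ with minimum product-degree $\ge e^{(\pi_T-\eta)n'}$, the existence of even \emph{one} heavy $s$-set $X$ is already contradictory: every $v\notin X$ sends at most $\SIGMA{T}{s+1}-\SIGMA{T}{s}-1$ edges into $X$, so integer AM--GM gives $\bigl(\prod_{u\in X}w(uv)\bigr)^{1/s}\le e^{\pi_T-\delta}$ for a fixed $\delta>0$, and then \Cref{prop: geometring averaging} with \emph{uniform} weights $\alpha_u=1/s$ over $u\in X$ produces a vertex $u\in X$ with $p_{G'}(u)\le e^{(\pi_T-\delta)n'+O(1)}$, contradicting the minimum product-degree bound once $\eta<\delta$. (This is exactly the mechanism of \Cref{lemma: large a good subsets are smaller}.) Hence $G'$ contains \emph{no} heavy $s$-sets and is already an $(s,\SIGMA{T}{s})$-graph; there is no need to define $V_{\mathrm{heavy}}$ or to aggregate penalties over many heavy sets. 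Your sentence ``a single heavy $s$-set is by itself consistent with product-near-extremality'' is true only \emph{before} low-degree removal, not after.
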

%--------------------------------------%
\begin{proof}
The proof is essentially identical~\cite[Theorem 3.11]{day2022extremal}, with~\eqref{eq: growth rate in Sigma less than xstar expected} replacing~\cite[Proposition 5.5]{day2022extremal} as the input at the start of the argument, and all instances of $\PI{T}{n}$ in the proof replaced by $\left(\PI{T}{n}\right)^{1+o(1)}$.
\end{proof}
%--------------------------------------%

%--------------------------------------%
\subsection{Extremal geometric density and stability: proof of \texorpdfstring{\Cref{theorem: base case main conj is true,theorem: large a arbitrary number of deficient blobs,theorem: stability}}{Theorems 1.12, 1.15 and 1.17}}

In this section, we fix $r_0, r_d, d\in \ZZ_{>0}$. Let $\bfr$ denote the $(d+1)$-tuple $(r_0, 0, \dotsc, 0, r_d)$, and write $T=T(a)$ for the generalised Turán pattern $\TUR{\bfr}{a}$. Further, set again $s_0 \defined r_d+r_0(dr_d+1)+1$. We shall derive all of our main results from the following technical statement:

%--------------------------------------%
\begin{theorem}
\label{theorem: large a technical statement}
Suppose the ambient edge multiplicity $a$ satisfies
\begin{equation}
\label{eq: poly equation needed for good a}
    (a+1)^{r_d(d-1)(2r_d-1)+2r_d}(a-d)^{2r_d-1}(a-d+1)^{(2r_d-1)(r_d-1)} > a^{r_dd(2r_d-1)+2r_d}.
\end{equation}
Then $\ex_\Pi(s_0, \SIGMA{T}{s_0}) = e^{\pi_T}$, and further if $G$ is an $(s_0, \SIGMA{T}{s_0})$-graph on $n$ vertices with $P(G)\geq (\Pi_T(n))^{1-o(1)}$, then $G$ lies within $o(n^2)$ edit distance of a product-optimal blow-up of $T$.
\end{theorem}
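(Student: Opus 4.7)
The lower bound $\ex_\Pi(s_0, \SIGMA{T}{s_0}) \geq e^{\pi_T}$ is immediate: a product-optimal blow-up of $T$ on $n$ vertices is an $(s_0, \SIGMA{T}{s_0})$-graph (since $T$ is sum-admissible on $s_0$ vertices) and attains $e^{\pi_T \binom{n}{2} + o(n^2)}$. The substance of the theorem lies in the matching upper bound and stability. The plan is to take a near-extremal $(s_0, \SIGMA{T}{s_0})$-graph $G$, localise inside $G$ a large, highly structured ``heavy set'' that must be a sum-optimal blow-up of $T$, and then propagate this structure to all of $V(G)$ by a geometric averaging argument.

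First I would apply \Cref{lemma: removal of low deg vertices} to pass to a subgraph with minimum product-degree at least $e^{(\pi_T - \eps) n}$, then apply \Cref{cor: applying colourful regularity} and the cloning lemma (\Cref{lemma: cloning}) to reduce the problem to bounding $\pi_P$ for an $(s_0, \SIGMA{T}{s_0})$-admissible pattern $P$ of bounded size whose loops all have multiplicity $m_-$ and whose edges have multiplicity in $[m_-, a+1]$. The next step is to pin down a ``good'' integer $S$ much larger than $s_0$, chosen so that $S - 1$ is divisible by $s_0 - 1$ (so that the sum-optimal blow-up of $T$ on $S$ vertices is unique up to isomorphism and regular) and so that $\ex_\Sigma(S, s_0, \SIGMA{T}{s_0}) = \SIGMA{T}{S}$ still holds. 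The latter is obtained by iterating the averaging \Cref{cor: arithmetic averaging}, plugging in the growth bounds from \Cref{cor: growth rate of Sigma T,cor: growth rate not too large,cor: bound on diff SigmaT (t) small t} just as in the proof of \Cref{cor: growth rate of ex sigmaT s_0 up to s_1}. Near-extremality of $G$ together with an averaging/supersaturation argument then produces, with positive density, \emph{heavy $S$-sets}, i.e. $S$-subsets $U \subseteq V(G)$ with $e_G(U) = \SIGMA{T}{S}$.

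The core structural step is to show that every such heavy $S$-set $U$ induces a sum-optimal blow-up of $T$ in $G$. This proceeds in two substeps. First, using that shifting an edge between two vertices $u,v \in U$ of unequal $U$-degree would strictly increase $e_G(U)$ (and that $G[U]$ saturates the $(s_0, \SIGMA{T}{s_0})$-condition at $S$ vertices), one forces $G[U]$ to be near-regular, and then, using the chosen divisibility of $S-1$ by $s_0-1$, exactly regular with the partition sizes dictated by \Cref{prop: partition sizes in sum-optimal blow-ups of T}. Second, to force the blow-up structure itself, I would run an integer AM--GM argument on product-degrees from $V(G)\setminus U$ into $U$, using \Cref{prop: geometring averaging}: a vertex $v \notin U$ must distribute its edge-multiplicities into $U$ in a way compatible with $T$'s block structure, and any deviation is penalised by a product factor controlled exactly by the polynomial expression on the left-hand side of~\eqref{eq: poly equation needed for good a}. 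This is where hypothesis~\eqref{eq: poly equation needed for good a} does its work: it guarantees that the penalty strictly exceeds the ambient $a^{\cdot}$-gain, and so drives $G[U]$ to be a genuine $T$-blow-up and simultaneously pins the $T$-partition classes that $v$ can join.

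Once we know that any heavy $S$-set $U$ carries a $T$-partition that is near-uniquely extended by every external vertex, applying \Cref{lemma: stability geometric averaging} with the distinguished $t$-tuples inside $U$ shows that all but $o(n)$ vertices of $V(G)\setminus U$ fall into one of the $r_0 + r_d$ legal classes with the correct product-degree profile. This yields a partition $V(G) = \sqcup_i V_i$ that is $o(n^2)$-close in edit distance to a $T$-partition, and plugging this into the quadratic form~\eqref{eq: optimisation problem for geometric problem} bounds $P(G) \leq e^{\pi_T \binom{n}{2} + o(n^2)}$, giving both the extremal value and the stability conclusion. The main obstacle I expect is the second substep above: ruling out competing almost-regular $(s_0, \SIGMA{T}{s_0})$-admissible structures on $U$ that have the same edge-sum as a blow-up of $T$ but different block pattern. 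This is precisely the calculation one needs to reduce to~\eqref{eq: poly equation needed for good a}, and the arithmetic there is both the bottleneck of the method and the reason large $a$ enters the hypothesis.
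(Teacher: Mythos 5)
Your overall strategy --- minimum product-degree reduction, locating a large \emph{heavy} set, forcing it to be a regular blow-up of $T$ via its interactions with external vertices, and then propagating the structure by weighted geometric averaging --- is exactly the paper's strategy. But two concrete steps would fail as written. First, the colourful-regularity/cloning reduction at the start is both unused and unusable here: the heavy-set argument needs the \emph{exact} saturation $e_G(U)=\SIGMA{T}{S}$ and exact degree counts inside $U$, which are destroyed by regularisation, and the paper works with $G$ itself throughout the large-$a$ argument (regularity and cloning appear only in the small-$a$ regime). Since you then revert to working in $G$, this step is vestigial and should be deleted rather than relied on.

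Second, and more seriously, your choice of $S$ is wrong. Requiring $S-1$ to be divisible by $s_0-1$ forces $S\geq 2s_0-1$, which lies outside the range $s_0\leq n<2s_0-1$ in which one can prove $\ex_\Sigma(n,s_0,\SIGMA{T}{s_0})=\SIGMA{T}{n}$ by iterating \Cref{cor: arithmetic averaging} (the unit of slack per step gets consumed, and for $n$ genuinely large the equality fails by a linear term in the Füredi--Kündgen theory). Moreover, divisibility of $S-1$ by $s_0-1$ does not make the sum-optimal blow-up on $S$ vertices regular: at $S=2s_0-1$ the $r_0$ large parts would have to share $2r_0(dr_d+1)+1$ vertices, which is not divisible by $r_0$. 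The paper instead takes $s_1=2s_0-dr_0-2$, for which the optimal part sizes $d(2r_d-1)+2$ and $2$ do yield a regular blow-up while staying inside the provable range. Relatedly, your mechanism for regularity of $G[U]$ (``shifting an edge \ldots would strictly increase $e_G(U)$'') is not available, since $G$ is a fixed graph rather than one being optimised; the correct argument lower-bounds every degree by $\SIGMA{T}{s_1}-\SIGMA{T}{s_1-1}$ using the $(s_1-1)$-condition and closes with the handshake lemma. Finally, the real content of the structural step --- the clone property of external vertices sending the maximal edge count into $U$, the disjointness of the resulting blocks $B_v$, and the counting argument showing there are exactly $r_0$ of them --- is only gestured at, though you are right that the polynomial hypothesis~\eqref{eq: poly equation needed for good a} enters precisely there and in the final averaging.
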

%--------------------------------------%

Our arguments will use the following simple statement about polynomial inequalities.

%--------------------------------------%
\begin{proposition}
\label{prop: appendix poly 1}
Let $r_d, r_0, d, a \in \mathbb{Z}_{>0}$ with $a \geq d+1$. The following statements are equivalent:
% \vspace{0.25cm}
\begin{align*}
    & \text{(a)} \hspace{0.3cm}(a+1)^{r_d(d-1)(2r_d-1)+2r_d} (a-d)^{2r_d-1} (a-d+1)^{(2r_d-1)(r_d-1)} 
    \geq a^{dr_d(2r_d-1)+2r_d}, \\
    &\text{(b)} \hspace{0.3cm} d(2r_d-1)+2\geq (2r_d-1)\frac{1-r_dx_\star}{r_0 r_d x_\star},\\
    &\text{(c)} \hspace{0.3cm} \frac{d(2r_d-1)+2}{r_0\left(d(2r_d-1)+2\right)+2r_d-1} \geq \frac{1-r_d x_\star}{r_0}.
\end{align*}
Moreover, strict inequality holds in one of these statements if and only if it holds in all.
\end{proposition}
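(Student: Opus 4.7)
The plan is a routine algebraic exercise in two steps, centered on the explicit formula for $x_\star$ recorded in the remark just after Theorem~\ref{theorem: large a arbitrary number of deficient blobs}. The only thing to watch is that every multiplication or rearrangement preserves the direction (and strictness) of the inequality, which is automatic here because the assumption $a\geq d+1$ makes $a-d\geq 1$, so every base in (a) is positive, every logarithm in the formula for $x_\star$ is well-defined and positive, and in particular $0<x_\star<1/r_d$ (as can be checked from the formula, or from \Cref{prop: product-optimal blow ups of generalised Turan}(i)--(ii) since $x_\star r_d$ is the total weight of the "deficient" blobs).

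First I would show (b)~$\iff$~(c). Both inequalities are equivalent to the single bound
\begin{equation*}
r_d\, x_\star \;\geq\; \frac{2r_d-1}{r_0\bigl(d(2r_d-1)+2\bigr)+(2r_d-1)},
\end{equation*}
as is seen by multiplying (c) through by $r_0$ and isolating $r_d x_\star$, and by multiplying (b) through by the positive quantity $r_0 r_d x_\star$ and collecting terms. Each manipulation is reversible and preserves strictness, giving the claimed equivalence together with its moreover clause.

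Second I would show (a)~$\iff$~(b). Set $L\defined \log((a+1)/a)$ and let $D$ denote the logarithm appearing in the denominator of the formula for $x_\star$, so that $x_\star=L/D$. A short direct computation using the definition of $D$ gives
\begin{equation*}
\frac{1-r_d x_\star}{r_0 r_d x_\star} \;=\; \frac{D-r_d L}{r_0 r_d L} \;=\; \frac{r_d\log(a+1)-(r_d-1)\log(a-d+1)-\log(a-d)}{r_d L},
\end{equation*}
where many terms cancel when computing $D-r_d L$, leaving a clean expression with a factor of $r_0$ in the numerator that cancels against the $r_0$ in the denominator. Substituting this identity into (b), clearing the positive denominator $r_0 r_d L$, expanding $L=\log(a+1)-\log a$, and grouping coefficients of $\log(a+1)$, $\log(a-d+1)$, $\log(a-d)$ and $\log a$ yields an inequality whose coefficients match exactly the exponents $r_d(d-1)(2r_d-1)+2r_d$, $(2r_d-1)(r_d-1)$, $(2r_d-1)$ and $dr_d(2r_d-1)+2r_d$ appearing in (a); here one uses the identity $r_d\bigl[(d(2r_d-1)+2)-(2r_d-1)\bigr]=r_d(d-1)(2r_d-1)+2r_d$. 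Exponentiating delivers (a), and reversing the steps recovers (b).

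There is no substantive obstacle; the proof is a single careful bookkeeping computation. The only place where one could slip is in matching coefficients in the final step, so the mildly delicate portion is verifying the identity above and confirming that the $\log(a-d+1)$ and $\log(a-d)$ terms carry exactly the exponents recorded in (a). Since strictness is preserved at every step, the final clause on strict inequalities follows for free.
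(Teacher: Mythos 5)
Your proposal is correct and follows essentially the same route as the paper: the paper's (very terse) proof also rests on the identity expressing $(1-r_d x_\star)/(r_0 r_d x_\star)$ as $\bigl(r_d\log\frac{a+1}{a}+\log\frac{a}{a-d}+(r_d-1)\log\frac{a}{a-d+1}\bigr)\big/\bigl(r_d\log\frac{a+1}{a}\bigr)$, which is exactly your computation of $(D-r_dL)/(r_0 r_d L)$, followed by clearing denominators for (b)$\iff$(a) and a direct rearrangement for (b)$\iff$(c). Your coefficient bookkeeping checks out, so there is nothing to add.
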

%--------------------------------------%
\begin{proof}
The equivalence of (a) and (b) can be shown by observing that 
\begin{align*}
    \frac{1 - r_d x_{\star}}{r_0 \, r_d \,  x_{\star}}
    = \frac{r_d \log\p[\big]{\frac{a+1}{a}} + \log\p[\big]{\frac{a}{a-d}} + (r_d - 1)\log\p[\big]{\frac{a}{a-d+1}} }{r_d \log\p[\big]{\frac{a+1}{a}} }.
\end{align*}
The equivalence of (b) and (c) follows from a simple calculation.
\end{proof}
%--------------------------------------%

%--------------------------------------%
\begin{proof}[Proof of \Cref{theorem: large a technical statement}]
Consider an $(s_0, \SIGMA{T}{s_0})$-graph $G$ on $n$ vertices with the property that $P(G) \geq (\Pi_T(n))^{1-o(1)}$.
By \Cref{lemma: removal of low deg vertices}(ii), one can remove at most $o(n)$ vertices from $G$ to obtain an $(s_0, \SIGMA{T}{s_0})$-graph $G'$ on $n'=n-o(n)$ vertices with minimum product degree at least $e^{(\pi_T-o(1))n'}$. Clearly, if $G'$ is $o((n')^2)$-close in edit distance to a product-optimal blow-up of $T$, then $G$ is $o(n^2)$-close in edit distance to a product-optimal blow-up of $T$. Thus, we may assume in all that follows that $G$ has minimum product-degree at least $e^{(\pi_T-o(1))n}$. Recall from \Cref{prop: product-optimal blow ups of generalised Turan} that $e^{\pi_T} = a \p[\big]{(a+1)/a}^{1 - (1 - r_d x_\star)/r_0}$.

We begin by showing that small sets of vertices in $G$ of size $s'\leq s_0$ support no more edges than we would expect in a blow-up of $T$.

%--------------------------------------%
\begin{lemma}
\label{lemma: large a good subsets are smaller}
The multigraph $G$ is an $(s', \SIGMA{T}{s'})$-graph for all $s'\leq s_0$. In particular, the maximum edge multiplicity in $G$ is $a+1$.
\end{lemma}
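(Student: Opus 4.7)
The plan is to argue by downward induction on $s'$, starting from the trivial base case $s' = s_0$. At the inductive step, assume the conclusion for all $s'' \in \{s'+1,\ldots,s_0\}$, and suppose for contradiction that there is an $s'$-set $U \subseteq V(G)$ with $e(U) \geq \SIGMA{T}{s'} + 1$. Applying the inductive $(s'+1, \SIGMA{T}{s'+1})$-property to each set $U \cup \{w\}$ with $w \in V \setminus U$ yields $d_U(w) \leq \SIGMA{T}{s'+1} - \SIGMA{T}{s'} - 1$, and summing over $w$, combined with \Cref{cor: bound on diff SigmaT (t) small t}(i), gives the crossing-edge bound
\begin{equation*}
e(U, V \setminus U) \leq (n - s')\bigl(a s' + s'(1 - f_1)\bigr), \quad f_1 \defined \tfrac{d(r_d - 1) + 1}{r_0(d(r_d - 1) + 1) + r_d - 1}.
\end{equation*}

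Next, applying the integer AM--GM inequality (\Cref{prop: integer AM-GM}) to each of the $n - s'$ stars $(w(vw))_{v \in U}$ translates this into an upper bound on $\sum_{v \in U} \log p_{V \setminus U}(v)$; the $O(1)$ cap on individual multiplicities needed for AM--GM comes for free from the $(s_0, \SIGMA{T}{s_0})$-hypothesis. On the other hand, the minimum product degree assumption, combined with the cap $\log p_U(v) = O(1)$, gives the matching lower bound $\sum_{v \in U} \log p_{V \setminus U}(v) \geq s'(\pi_T - o(1))n$. Substituting $\pi_T = \log a + (1 - \beta)\log\tfrac{a+1}{a}$ from \Cref{prop: product-optimal blow ups of generalised Turan}(iii) with $\beta \defined (1 - r_d x_\star)/r_0$, cancelling $s' n \log a$, and dividing by $s' n \log \tfrac{a+1}{a}$, the comparison of the two estimates yields $f_1 \leq \beta + o(1)$ as $n \to \infty$.

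To produce the contradiction, it suffices to show that the hypothesis~\eqref{eq: poly equation needed for good a} strictly implies $f_1 > \beta$. By \Cref{prop: appendix poly 1}, the hypothesis is equivalent to $f_2 > \beta$, where $f_2 \defined \tfrac{d(2r_d - 1) + 2}{r_0(d(2r_d - 1) + 2) + 2r_d - 1}$. A short cross-multiplication shows $f_1 > f_2$: with $a_i$ the numerators and $b_i = r_d - 1$, $b_i = 2r_d - 1$ the denominator shifts, one computes $a_1 b_2 - a_2 b_1 = 1$, so $f_1 - f_2 = 1/\bigl((r_0 a_1 + b_1)(r_0 a_2 + b_2)\bigr) > 0$. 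Hence $f_1 > f_2 > \beta$, contradicting the conclusion of the previous paragraph. The ``in particular'' statement then follows by specialising to $s' = 2$ and observing $\SIGMA{T}{2} = a + 1$.

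The main obstacle is reconciling the ``$(r_d - 1)$-form'' of the inequality that naturally arises from \Cref{cor: bound on diff SigmaT (t) small t}(i) with the ``$(2r_d - 1)$-form'' appearing in the hypothesis~\eqref{eq: poly equation needed for good a}; the small but strictly positive gap $f_1 - f_2$ is exactly what supplies the needed slack. A secondary technical point is the uniform absorption of the various $O(1)$ contributions (the multiplicity cap from the $(s_0, \SIGMA{T}{s_0})$-hypothesis, the log-product within $U$, and the $(s')^2 \log a$ residual) into the $o(n)$ error; this is straightforward once one fixes $s' \leq s_0 - 1$ and lets $n \to \infty$.
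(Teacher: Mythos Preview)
Your proof is correct and follows essentially the same approach as the paper's: the downward induction is logically equivalent to the paper's ``let $t$ be the largest $s'$ for which the conclusion fails'', and the crossing-edge bound via \Cref{cor: bound on diff SigmaT (t) small t}(i), the integer AM--GM step, and the comparison with the minimum product-degree assumption all match. One nice feature of your write-up is that you make the inequality $f_1 > f_2$ explicit (via $a_1 b_2 - a_2 b_1 = 1$), whereas the paper invokes \Cref{prop: appendix poly 1} directly for $f_1 > \beta$ and leaves the $f_1 \geq f_2$ comparison implicit.
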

%--------------------------------------%
\begin{proof}
Suppose not. Let $t\leq s_0$ be the largest integer such that $G$ fails to be a $(t,\SIGMA{T}{t})$-graph. Observe that $2 \leq t \leq s_0-1=dr_0r_d+r_0+r_d$. Then there exists a set of $t$ vertices with at least $\SIGMA{T}{t}+1$ edges. Let us denote this set by $X$. By the maximality of $t$, every vertex outside $X$ sends at most $\SIGMA{T}{t + 1} -\SIGMA{T}{t} -1 \leq at + \left(1 - \frac{d(r_d-1)+1}{r_0\left(d(r_d-1)+1\right)+r_d-1}\right)t$ edges into $X$, with the inequality following from \Cref{cor: bound on diff SigmaT (t) small t} part (i).

Consider $p \defined \prod_{v \in X}p_{G}(v)^{\frac{1}{t}}$. By the integral AM-GM inequality, \Cref{prop: integer AM-GM}, the contribution of every vertex outside $X$ to $p$ is at most $a \left( \frac{a+1}{a} \right)^{1 - \frac{d(r_d-1)+1}{r_0\left(d(r_d-1)+1\right)+r_d-1}}$. By geometric averaging, \Cref{prop: geometring averaging}, there is a vertex $v\in X$ with
\begin{align*}
    p_G(v) \leq \p[\bigg]{a \p[\Big]{\frac{a+1}{a}}^{1 - \frac{d(r_d-1)+1}{r_0\left(d(r_d-1)+1\right)+r_d-1}} }^{n+o(n)}.
\end{align*}
Now, by \Cref{prop: appendix poly 1} and our assumption~\eqref{eq: poly equation needed for good a} on $a$, we have $1 - \frac{d(r_d-1)+1}{r_0\left(d(r_d-1)+1\right)+r_d-1 }< 1-\frac{1-r_dx_\star}{r_0}$, and thus the above upper bound on $p_G(v)$ contradicts our minimum product degree assumption.
\end{proof}
%--------------------------------------%

Set $s_1 \defined 2s_0 -dr_0-2$. By \Cref{cor: growth rate of ex sigmaT s_0 up to s_1}, we know that for all $s'$ with $s_0\leq s' \leq s_1$, $G$ must be an $(s', \SIGMA{T}{s'})$-graph as well as being an $(s_0, \SIGMA{T}{s_0})$-graph.

%--------------------------------------%
\begin{definition}
A set of vertices $X\subseteq V(G)$ is called a \textbf{heavy set} if  $e_{G}(X)=\SIGMA{T}{\card{X}}$ (i.e.\ if it has the maximum number of edges allowed in $G$).   
\end{definition}
%--------------------------------------%

%--------------------------------------%
\begin{lemma}
\label{lemma: have heavy s_1-sets}
The multigraph $G$ contains a heavy $s_1$-set.   
\end{lemma}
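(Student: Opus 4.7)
The plan is to prove \Cref{lemma: have heavy s_1-sets} by induction on $s'$, showing that for every $s'$ with $2 \leq s' \leq s_1$, the multigraph $G$ contains a heavy $s'$-set. The key leverage point is that, by \Cref{lemma: large a good subsets are smaller} together with \Cref{cor: growth rate of ex sigmaT s_0 up to s_1}, we already know that $G$ is an $(s', \SIGMA{T}{s'})$-graph for every $s' \leq s_1$, so the inequality $e(G[X]) \leq \SIGMA{T}{s'}$ is automatic; the whole content of the lemma is to show that equality is attained somewhere.

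For the base case $s' = 2$, a heavy $2$-set is simply an edge of multiplicity $\SIGMA{T}{2} = a+1$. If no such edge existed, every pair $uv$ would have $w(uv) \leq a$, hence every vertex $v$ would satisfy $p(v) \leq a^{n-1}$; since $e^{\pi_T} > a$ by \Cref{prop: product-optimal blow ups of generalised Turan}(iii), this contradicts our minimum product-degree assumption.

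For the inductive step, suppose $X$ is a heavy $s'$-set with $2 \leq s' < s_1$. If some $v \notin X$ has $d_X(v) = \SIGMA{T}{s'+1}-\SIGMA{T}{s'}$, then $X\cup\{v\}$ is heavy and we are done; otherwise every $v \notin X$ satisfies $d_X(v) \leq \SIGMA{T}{s'+1}-\SIGMA{T}{s'}-1$. Applying the integer AM--GM inequality (\Cref{prop: integer AM-GM}) to the weights $(w(uv))_{u \in X}$ gives
\begin{equation*}
    \prod_{u \in X} w(uv) \; \leq \; a^{s'} \left(\frac{a+1}{a}\right)^{\beta(s') \cdot s'},
\end{equation*}
where $\beta(s') := (\SIGMA{T}{s'+1}-\SIGMA{T}{s'}-1-as')/s'$. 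Multiplying over $v \notin X$, bounding the $O(1)$ contribution $P(G[X])^2$ crudely by $(a+1)^{O(s_1^2)}$, and then invoking the geometric averaging \Cref{prop: geometring averaging} with uniform weights on $X$ yields a vertex $v^\star \in X$ with
\begin{equation*}
    p(v^\star) \;\leq\; \left(a\left(\frac{a+1}{a}\right)^{\beta(s')}\right)^{n+o(n)}.
\end{equation*}
By \Cref{cor: bound on diff SigmaT (t) small t}, $\beta(s')$ is at most one of the two explicit fractions appearing there, according as $s' < s_0$ or $s_0 \leq s' < s_1$; the hypothesis~\eqref{eq: poly equation needed for good a}, interpreted via (an analogue of) \Cref{prop: appendix poly 1}, ensures that both of these fractions are strictly less than $1-(1-r_dx_\star)/r_0$, contradicting $p(v^\star) \geq e^{(\pi_T-o(1))n}$.

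The main thing to verify carefully is that the polynomial inequality~\eqref{eq: poly equation needed for good a}, which is tailored to the $2r_d-1$-expression from part~(ii) of \Cref{cor: bound on diff SigmaT (t) small t}, is enough to drive the contradiction in both ranges of $s'$. This is where the argument hinges: comparing the two fractions in parts~(i) and (ii) of that corollary shows that part~(ii) gives the larger (and hence binding) bound, so the hypothesis that dispatches the larger range $s_0 \leq s' < s_1$ automatically dispatches the smaller range $s' < s_0$ as well. Once this is established, the induction runs cleanly up to $s' = s_1$ and delivers the heavy $s_1$-set required.
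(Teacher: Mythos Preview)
Your proof is correct and follows essentially the same approach as the paper: both arguments find a heavy $t$-set that cannot be extended, bound the degree of every outside vertex into it by $\SIGMA{T}{t+1}-\SIGMA{T}{t}-1$, apply the integer AM--GM inequality to each outside vertex's contribution, and then use geometric averaging over the heavy set to find a vertex of product-degree strictly below $e^{(\pi_T-o(1))n}$. The paper phrases this as ``let $t$ be the largest integer below $s_1$ with a heavy $t$-set'' and derives the same contradiction; your inductive framing is logically equivalent.

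Your treatment is in one respect slightly more careful than the paper's: the paper's proof cites only part~(ii) of \Cref{cor: bound on diff SigmaT (t) small t}, which formally applies only for $s_0 \le t < s_1$, whereas the maximal $t$ could in principle lie below $s_0$. You explicitly handle both ranges and verify that the part~(ii) bound dominates the part~(i) bound (the subtracted fraction $\frac{d(2r_d-1)+2}{r_0(d(2r_d-1)+2)+2r_d-1}$ is smaller than $\frac{d(r_d-1)+1}{r_0(d(r_d-1)+1)+r_d-1}$), so that the single hypothesis~\eqref{eq: poly equation needed for good a} via \Cref{prop: appendix poly 1}(c) suffices for both. This is the right observation, and the paper is tacitly relying on the same comparison (it appears implicitly already in the proof of \Cref{lemma: large a good subsets are smaller}). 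Otherwise the two arguments are identical.
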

%--------------------------------------%
\begin{proof}
Suppose for a contradiction that $G$ contains no heavy $s_1$-set. If every edge in $G$ has multiplicity at most $a$, then every vertex in $G$ has product degree at most $a^{n-1}$, contradicting our minimum product degree assumption. Thus, $G$ contains a heavy $2$-set. Set then $t$: $2\leq t< s_1$ to be the largest integer less than $s_1$ for which $G$ contains a heavy $t$-set $X$. By the maximality of $t$, every vertex $u$ outside $X$ sends at most a total of $\SIGMA{T}{t + 1}-\SIGMA{T}{t}-1$ edges into $X$, which by \Cref{cor: bound on diff SigmaT (t) small t} is at most 
\begin{align}
\label{eq: ub on v contribution to heavy t-set if no heavy t+1 set}
    at + \left(1-\frac{d(2r_d-1)+2}{s_1-1}\right)t.   
\end{align}
Set $p \defined \prod_{v\in X} p_G(v)^{\frac{1}{t}}$. By the integral AM-GM inequality, \Cref{prop: integer AM-GM}, it follows from~\eqref{eq: ub on v contribution to heavy t-set if no heavy t+1 set} that $u$ contributes at most $ a \left((a+1)/a\right)^{1-\frac{d(2r_d-1)+2}{s_1-1}}$ to $p$, which by \Cref{prop: appendix poly 1} part (c) and our assumption~\eqref{eq: poly equation needed for good a} on $a$ is strictly less than $a\left((a+1)/a\right)^{1-\frac{1-r_dx_\star}{r_0}}$. By geometric averaging, it follows that some vertex in $X$ has product degree at most $p\leq a \left((a+1)/a\right)^{\left(1-\frac{d(2r_d-1)+2}{s_1-1}\right)n+O(1)}$, contradicting our minimum product degree assumption on $G$. 
\end{proof}
%--------------------------------------%

In the next few lemmas, we show that all heavy $s_1$-sets have a very specific structure: they are blow-ups of $T$. Let $X \subseteq V(G)$ be an arbitrary heavy set on $s_1$ vertices.

%--------------------------------------%
\begin{lemma}
\label{lemma: heavy sets on S vertices are regular}
For every vertex $v \in X$, $d_{X}(v)=(a+1)(s_1-1)-d(2r_d-1)-1$.
\end{lemma}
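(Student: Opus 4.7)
The plan is to combine the degree lower bound forced by $G$ being an $(s_1-1,\SIGMA{T}{s_1-1})$-graph with a direct computation showing that this lower bound, summed over $X$, already matches $2e(G[X])=2\SIGMA{T}{s_1}$, thereby forcing equality for every vertex.

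First, I would upgrade \Cref{lemma: large a good subsets are smaller} from $s'\leq s_0$ to $s' \leq s_1 - 1$: since $s_0 \leq s_1-1 < 2s_0 - 1$ (this is just $dr_0 + 2 > 0$ and the hypothesis $s_0 \leq dr_0 + 3$), \Cref{cor: growth rate of ex sigmaT s_0 up to s_1} gives that the $(s_0,\SIGMA{T}{s_0})$-graph $G$ is also an $(s_1-1, \SIGMA{T}{s_1-1})$-graph. Consequently, for every $v \in X$, the $(s_1 -1)$-set $X\setminus\set{v}$ satisfies $e(G[X\setminus\set{v}]) \leq \SIGMA{T}{s_1-1}$, and since $X$ is heavy,
\begin{equation*}
    d_X(v) = e(G[X]) - e(G[X\setminus\set{v}]) \geq \SIGMA{T}{s_1} - \SIGMA{T}{s_1-1}.
\end{equation*}

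Second, I would evaluate $\SIGMA{T}{s_1} - \SIGMA{T}{s_1-1}$ by applying \Cref{cor: growth rate of Sigma T} with $n = s_1 - 1 = (s_0 - 1) + t$, where $t = r_d + r_0(d(r_d-1)+1) - 1 = (r_d-1)(r_0 d + 1) + r_0$. This places $t$ in the range $k(r_0d+1)+1 \leq t \leq k(r_0d+1) + r_0$ with $k = r_d -1$ and $\ell = 0$, yielding
\begin{equation*}
    \SIGMA{T}{s_1} - \SIGMA{T}{s_1-1} = (a+1)(s_1 - 1) - (dr_d + 1) - d(r_d-1) = (a+1)(s_1-1) - d(2r_d-1) - 1.
\end{equation*}

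Third, I would check that a sum-optimal blow-up of $T$ on $s_1$ vertices is in fact \emph{regular} with exactly this degree. Using \Cref{prop: partition sizes in sum-optimal blow-ups of T} with the corresponding $t$ for $n = s_1$ (which now falls in case (iv)), one has $\card{U} = 2r_d$, so the $r_0$ block-$0$ parts each have $d(2r_d-1)+2$ vertices and the $r_d$ block-$d$ parts each have $2$ vertices. A direct degree count in this blow-up shows that a vertex in any block-$0$ part and a vertex in any block-$d$ part both have degree precisely $(a+1)(s_1 - 1) - d(2r_d-1) - 1$, so $2\SIGMA{T}{s_1} = s_1 \cdot \bigl[(a+1)(s_1-1) - d(2r_d-1) - 1\bigr]$. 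Summing the per-vertex lower bound over $v \in X$ and using $\sum_{v \in X} d_X(v) = 2e(G[X]) = 2\SIGMA{T}{s_1}$ then forces equality everywhere, giving $d_X(v) = (a+1)(s_1-1) - d(2r_d - 1) - 1$ for each $v \in X$.

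The only real obstacle is bookkeeping: locating $t$ correctly among the four cases of \Cref{prop: partition sizes in sum-optimal blow-ups of T} and the three cases of \Cref{cor: growth rate of Sigma T}, and then verifying that the block-$0$ and block-$d$ contributions to a vertex's degree in the $s_1$-blow-up of $T$ coincide. Both steps are routine once the part sizes $(d(2r_d-1)+2, 2)$ are in hand, so no new ideas beyond the tools already introduced in \Cref{section: preliminaries large a} are needed.
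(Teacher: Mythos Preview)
Your proof is correct and follows essentially the same approach as the paper's: a per-vertex lower bound $d_X(v)\geq \SIGMA{T}{s_1}-\SIGMA{T}{s_1-1}$ from the $(s_1-1,\SIGMA{T}{s_1-1})$-condition, combined with a handshaking count of $2\SIGMA{T}{s_1}$ via the explicit part sizes from \Cref{prop: partition sizes in sum-optimal blow-ups of T}, forces regularity. One small slip: in your parenthetical, the inequality should read $dr_0+3\leq s_0$ (equivalently $s_0\geq dr_0+3$), not $s_0\leq dr_0+3$.
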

%--------------------------------------%
\begin{proof}
By \Cref{cor: growth rate of ex sigmaT s_0 up to s_1}, every set of $s_1-1$ vertices in $G$ contains at most $\SIGMA{T}{s_1 - 1}$ edges. Thus, for every vertex $v \in X$, we have 
\begin{align*}
    \SIGMA{T}{s_1 - 1}\geq e_{G}(X\setminus\set{v}) =e_{G}(X)-d_X(v)=\SIGMA{T}{s_1}-d_X(v).
\end{align*}
Rearranging terms and applying \Cref{cor: growth rate of Sigma T}, we obtain the  lower-bound inequality $d_X(v)\geq \SIGMA{T}{s_1}-\SIGMA{T}{s_1 - 1}=(a+1)(s_1-1)-d(2r_d-1)-1$.  On the other hand, it follows from \Cref{prop: partition sizes in sum-optimal blow-ups of T} and the hand-shaking lemma that 
\begin{align*}
    \sum_{v \in X}d_{X}(v)=2  \SIGMA{T}{s_1}&= 2\left((a+1)\binom{s_1}{2}-r_0 \binom{d(2r_d-1)+2}{2}-d\binom{2r_d}{2}-r_d\right)\\
    &= s_1\left((a+1)(s_1-1)-d(2r_d-1)-1\right)\leq \sum_{v\in X}d_X(v).
\end{align*}
It follows immediately that $G[X]$ is a $\left((a+1)(s_1-1)-d(2r_d-1)-1\right)$-regular multigraph.
\end{proof}
%--------------------------------------%

%--------------------------------------%
\begin{lemma}
\label{lemma: vertices outside don't send too many edges}
Every vertex outside $X$ sends at most $(a+1)s_1-d(2r_d-1)-2$ edges into $X$. 
\end{lemma}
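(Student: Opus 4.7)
The plan is to exploit the regularity of $G[X]$ established in \Cref{lemma: heavy sets on S vertices are regular} together with the fact (from \Cref{cor: growth rate of ex sigmaT s_0 up to s_1}) that $G$ is an $(s_1, \SIGMA{T}{s_1})$-graph.  For any external vertex $v \notin X$ and any $u \in X$, swapping $v$ for $u$ yields an $s_1$-set $(X\setminus\set{u})\cup\set{v}$ that must support at most $\SIGMA{T}{s_1}$ edges.  This swap replaces the $d_X(u) = r$ edges incident to $u$ in $G[X]$ by the $d_X(v) - w(vu)$ edges from $v$ into $X\setminus\set{u}$, so that the bound $e(G[(X\setminus\set{u})\cup\set{v}]) \leq \SIGMA{T}{s_1}$ rearranges to $w(vu) \geq d_X(v) - r$, where $r \defined (a+1)(s_1-1) - d(2r_d-1) - 1$ is the common $X$-degree.

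Summing this inequality over all $u \in X$, I would obtain $d_X(v) \geq s_1\p[\big]{d_X(v) - r}$, which rearranges to $d_X(v) \leq s_1 r / (s_1 - 1)$.  A short calculation substituting the value of $r$ reveals that
\begin{align*}
    \frac{s_1 r}{s_1 - 1} = (a+1)s_1 - \p[\big]{d(2r_d-1) + 1} - \frac{d(2r_d-1) + 1}{s_1 - 1}.
\end{align*}
Since the final term is strictly positive, this quantity lies strictly below the integer $(a+1)s_1 - d(2r_d-1) - 1$, so the integrality of $d_X(v)$ forces $d_X(v) \leq (a+1)s_1 - d(2r_d-1) - 2$, as required.

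I do not foresee a serious obstacle here: both ingredients are already in place, and the arithmetic happens to work out with exactly the margin needed --- the correction term $(d(2r_d-1)+1)/(s_1-1)$ arising from the rounding $s_1/(s_1-1)$ provides just enough slack, in combination with the integrality of $d_X(v)$, to sharpen the bound by the requisite one unit.
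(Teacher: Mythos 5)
Your proof is correct, but it follows a genuinely different route from the paper's. The paper's argument is a two-line deduction: since $s_1+1<2s_0-1$, \Cref{cor: growth rate of ex sigmaT s_0 up to s_1} shows $G$ is an $(s_1+1,\SIGMA{T}{s_1+1})$-graph, so adjoining $v$ to the heavy set $X$ gives $d_X(v)\leq \SIGMA{T}{s_1 + 1}-\SIGMA{T}{s_1}$ directly, and \Cref{cor: growth rate of Sigma T} evaluates this difference as exactly $(a+1)s_1-d(2r_d-1)-2$. You instead bypass both the $(s_1+1)$-set bound and the evaluation of $\SIGMA{T}{s_1+1}-\SIGMA{T}{s_1}$: you swap $v$ for each $u\in X$ in turn, use the regularity of $G[X]$ from \Cref{lemma: heavy sets on S vertices are regular} to extract $w(uv)\geq d_X(v)-r$, sum over $u\in X$, and then recover the last unit by integrality of $d_X(v)$ (the correction term $(d(2r_d-1)+1)/(s_1-1)$ is indeed strictly positive, so your rounding step is valid). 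Both arguments are sound; the paper's is shorter given its preparatory corollaries, whereas yours only requires $G$ to be an $(s_1,\SIGMA{T}{s_1})$-graph rather than an $(s_1+1,\SIGMA{T}{s_1+1})$-graph, at the cost of invoking the regularity lemma. The exchange computation you perform is essentially the one the paper deploys in the subsequent lemma about the set $A$, so your approach sits naturally within the surrounding machinery.
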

%--------------------------------------%
\begin{proof}
By \Cref{cor: growth rate of ex sigmaT s_0 up to s_1},  $G$ is an $(s_1+1 , \SIGMA{T}{s_1 + 1})$-graph. It follows that every vertex outside $X$ sends at most $\SIGMA{T}{s_1 + 1}-\SIGMA{T}{s_1}$ edges into $X$. By \Cref{cor: growth rate of Sigma T}, this latter quantity is equal to $(a+1)s_{1}-d(2r_d-1)-2$, as desired.
\end{proof}
%--------------------------------------%

Let $A \defined \set{v \in V(G) \setminus X \st v \text{ sends } (a+1)s_1-d(2r_d-1)-2 \text{ edges into } X }$ be the set of vertices outside $X$ sending the maximum possible number of edges into $X$.

%--------------------------------------%
\begin{lemma}
\label{lemma: vertices sending many edges only send edge of weight at least a and have cloning property}
For all $v \in A$, we have $w(uv) \geq a$ for all $u \in X$. Furthermore, if $w(uv) = a$ for some $u\in X$, then we have $w(uy) = w(vy)$ for all $y \in X \setminus \set{u}$.       
\end{lemma}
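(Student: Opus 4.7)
The plan is to exploit a simple swap argument. For $v \in A$ and $u \in X$, I would form the $s_1$-set $X' \defined (X \setminus \set{u}) \cup \set{v}$ and compare $e(G[X'])$ with $\SIGMA{T}{s_1}$.

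A direct computation, breaking off the edges incident to $u$ in $X$ and adding back the contribution from $v$ to $X \setminus \set{u}$, will give
\[
    e(G[X']) = \SIGMA{T}{s_1} - d_X(u) + d_X(v) - w(uv).
\]
Since $G$ is an $(s_1, \SIGMA{T}{s_1})$-graph by \Cref{cor: growth rate of ex sigmaT s_0 up to s_1}, the left-hand side is at most $\SIGMA{T}{s_1}$, so rearranging yields $w(uv) \geq d_X(v) - d_X(u)$. I would then plug in the exact values $d_X(u) = (a+1)(s_1-1) - d(2r_d-1) - 1$ from \Cref{lemma: heavy sets on S vertices are regular} and $d_X(v) = (a+1)s_1 - d(2r_d-1) - 2$ from the definition of $A$; the difference telescopes to exactly $a$, proving the first claim.

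For the cloning property, suppose $w(uv) = a$ for some $u \in X$. Then equality holds in the inequality above, so $e(G[X']) = \SIGMA{T}{s_1}$ and $X'$ is itself a heavy $s_1$-set. Applying \Cref{lemma: heavy sets on S vertices are regular} now to $X'$, every $y \in X \setminus \set{u} \subseteq X'$ has $d_{X'}(y) = (a+1)(s_1-1) - d(2r_d-1) - 1 = d_X(y)$. But $d_{X'}(y) - d_X(y) = w(vy) - w(uy)$, and comparing yields $w(uy) = w(vy)$, as required.

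I do not expect any real obstacle here: the whole argument is essentially arithmetic on a single swap, with the heavy lifting (regularity of heavy $s_1$-sets and the precise value of $d_X(v)$ for $v \in A$) already done by the preceding structural lemmas. The conceptual content is that vertices of $A$ are precisely the extremal swap candidates for vertices of $X$, so they are forced both to put weight at least $a$ on every edge into $X$ and, when that weight drops to the minimum $a$, to mimic the removed vertex's weights on the remainder of $X$.
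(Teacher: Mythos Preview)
Your proposal is correct and is essentially identical to the paper's own proof: the paper performs the same swap $X' = (X \setminus \set{u}) \cup \set{v}$, uses the identity $e(G[X']) = \SIGMA{T}{s_1} + d_X(v) - d_X(u) - w(uv) = \SIGMA{T}{s_1} + a - w(uv)$, and then for the equality case applies the regularity of heavy $s_1$-sets to $X'$ to conclude that $u$ and $v$ behave identically on $X \cap X'$.
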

%--------------------------------------%
\begin{proof}
For any $u \in X$, we have by \Cref{lemma: heavy sets on S vertices are regular} that $d_X(u) = d_X(v) - a$, and hence 
\begin{align*}
    \SIGMA{T}{s_1}\geq  e_{G}(\left(X \setminus \set{u}\right)\cup \set{v})  =\SIGMA{T}{s_1}+d_X(v)-d_X(u)-w(uv)= \SIGMA{T}{s_1}+a-w(uv), 
\end{align*}
from which it follows that $w(uv)\geq a$. Further, if $w(uv)=a$, then we have equality in the above and thus $X'=\left(X\setminus \set{u} \right)\cup \set{v}$ is a heavy $s_1$-set. It then follows from \Cref{lemma: heavy sets on S vertices are regular} that, like $G[X]$, the multigraph $G[X']$ is  $\left((a+1)(s_1-1)-d(2r_d-1)-1\right)$-regular. Since $X'$ was obtained from $X$ by replacing $u$ by $v$, $u$ and $v$ must behave identically with respect to the vertices of $X\cap X'$, and the `furthermore' part of the lemma follows.
\end{proof}
%--------------------------------------%

For every vertex $v \in A$, we may thus define a set $B_{v} \defined \set{u \in X \st w(vu) = a}$ of those vertices $u$ in $X$ such that $u,v$ are clones of each other with respect to $X\setminus \set{u}$. Since every vertex in $A$ sends $(a+1)s_1-d(2r_d-1)-2$ edges into $X$ and since those edges all have multiplicity at least $a$ by \Cref{lemma: vertices sending many edges only send edge of weight at least a and have cloning property}, it follows that 
\begin{align}
\label{eq: Bv set sizes}
    \vert B_v\vert=d(2r_d-1)+2
\end{align} for all $v\in A$. Moreover, by the same lemma, all edges within $B_{v}$ have multiplicity $a$, while all edges from $B_{v}$ to $X \setminus B_{v}$ have multiplicity $a+1$.

%--------------------------------------%
\begin{corollary}
\label{cor: the Bv are identical or disjoint}
Let $v_1,v_2 \in A$. Then, either $B_{v_1}=B_{v_2}$ or $B_{v_1} \cap B_{v_2}=\emptyset$.
\end{corollary}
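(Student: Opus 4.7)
The plan is to argue by contradiction: suppose $B_{v_1}\cap B_{v_2}\neq\emptyset$ yet $B_{v_1}\neq B_{v_2}$. By~\eqref{eq: Bv set sizes} we have $\card{B_{v_1}}=\card{B_{v_2}}=d(2r_d-1)+2$, so neither set can be a proper subset of the other; hence, possibly after swapping the roles of $v_1$ and $v_2$, I can choose
\[
u\in B_{v_1}\cap B_{v_2}\qquad\text{and}\qquad u'\in B_{v_1}\setminus B_{v_2}.
\]
Note that $u\neq u'$, since $u\in B_{v_2}$ while $u'\notin B_{v_2}$.

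The main idea is then to read off the value of $w(u,u')$ in two incompatible ways using the ``furthermore'' clause of \Cref{lemma: vertices sending many edges only send edge of weight at least a and have cloning property}. Because $u\in B_{v_1}$, the lemma says $u$ is a clone of $v_1$ on $X\setminus\set{u}$, that is, $w(u,y)=w(v_1,y)$ for every $y\in X\setminus\set{u}$. Taking $y=u'\in B_{v_1}$, and using $w(v_1,u')=a$ from the definition of $B_{v_1}$, this gives $w(u,u')=a$. On the other hand, applying the same clause with $v_2$ in place of $v_1$ (which is allowed because $u\in B_{v_2}$) yields $w(u,u')=w(v_2,u')$. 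Since $u'\in X\setminus B_{v_2}$, the multiplicity $w(v_2,u')$ is at least $a$ by \Cref{lemma: vertices sending many edges only send edge of weight at least a and have cloning property} and at most $a+1$ by \Cref{lemma: large a good subsets are smaller}; moreover it cannot equal $a$, for otherwise $u'$ would lie in $B_{v_2}$. Hence $w(v_2,u')=a+1$, and combining the two identities produces $a=w(u,u')=a+1$, the desired contradiction.

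I do not expect any real obstacle here: once the cloning property of \Cref{lemma: vertices sending many edges only send edge of weight at least a and have cloning property} is in hand, the argument is just a two-line symmetry comparison. The only small point to verify carefully is that $w(v_2,u')$ is exactly $a+1$ rather than something larger, but this is pinned down by the upper bound $a+1$ on edge multiplicities of $G$ from \Cref{lemma: large a good subsets are smaller}, combined with the accounting that forces every edge from $v_2$ to $X\setminus B_{v_2}$ to have multiplicity exactly $a+1$ (as already implicit in the definition of $B_{v_2}$ and the total edge count $(a+1)s_1-d(2r_d-1)-2$).
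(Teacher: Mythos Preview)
Your proof is correct and follows essentially the same route as the paper: compute $w(u,u')$ in two incompatible ways, once via the structure of $B_{v_1}$ and once via that of $B_{v_2}$. The paper's version is simply terser because it first records the observation that all edges inside $B_v$ have multiplicity $a$ while all edges from $B_v$ to $X\setminus B_v$ have multiplicity $a+1$, and then the corollary is immediate; you effectively rederive that observation inline via the cloning clause of \Cref{lemma: vertices sending many edges only send edge of weight at least a and have cloning property}.
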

%--------------------------------------%
\begin{proof}
Follows immediately from our observation that for $i\in [2]$, all edges within $B_{v_i}$ have multiplicity $a$, while all edges between $B_{v_i}$ and $X \setminus B_{v_i}$ have multiplicity $a+1$.
\end{proof}
%--------------------------------------%

Let $m$ be the largest integer such that there exist vertices $v_1,\dotsc,v_{m} \in A$ with $B_{v_1},\dotsc,B_{v_{m}}$ being pairwise disjoint. Since $\vert B_{v_i}\vert =d(2r_d-1)+2$ for each $i \in [m]$, and $\vert X\vert =s_1=2r_d+r_0\left(d(2r_d-1)+2\right)$, we have the inequality $m \leq r_0$.  This must in fact be an equality, as we show in our next lemma.

%--------------------------------------%
\begin{lemma}
\label{lemma: can find good collection of Bv}
We have $m=r_0$.%, and moreover for each $i\in [r_0]$ we have $\vert A_i\vert =\left( \frac{1-r_dx_\star}{r_0}+o(1)\right)n$.
\end{lemma}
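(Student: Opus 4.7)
The plan is to suppose $m<r_0$ for contradiction and to derive a contradiction from the minimum product-degree condition on $G$. By the maximality of $m$ and \Cref{cor: the Bv are identical or disjoint}, every $v\in A$ satisfies $B_v\in\set{B_{v_1},\dotsc,B_{v_m}}$, so setting $A_j \defined \set{v\in A\st B_v = B_{v_j}}$ yields $A = \bigsqcup_{j=1}^m A_j$. Write $V' \defined V(G)\setminus X\setminus A$, $\alpha \defined \card{A}/n$, $\beta \defined \card{V'}/n$, and $Y \defined X\setminus\bigcup_j B_{v_j}$, so that $\card{Y} = 2r_d + (r_0-m)(d(2r_d-1)+2)$.

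The first step is to apply the weighted geometric averaging of \Cref{prop: geometring averaging} on $X$ with weights $\alpha_u \defined (1-r_d x_\star)/(r_0(d(2r_d-1)+2))$ for $u\in\bigcup_j B_{v_j}$ and with the remaining weight $1 - m(1-r_d x_\star)/r_0$ distributed uniformly on $Y$. These weights are constructed so that every $v\in A_j$ contributes exactly $e^{\pi_T}$ to $\prod_{u\in X} w(uv)^{\alpha_u}$. For $v\in V'$, which by definition has at least $d(2r_d-1)+3$ edges of multiplicity at most $a$ into $X$, the maximum possible contribution is attained when these low-multiplicity edges land on the lowest-weight vertices of $X$; when $m<r_0$, a short calculation using $x_\star<2/s_1$ (which follows from $s_1<2(s_0-1)$) shows that $\alpha_u$ is smallest on $Y$, and the resulting worst-case contribution is at most $e^{\pi_T}(a/(a+1))^{\delta}$ for some $\delta>0$ depending only on $r_0,r_d,d$. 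Combined with the minimum product-degree lower bound on the vertex $u^*\in X$ produced by \Cref{prop: geometring averaging}, this forces $\card{V'} = o(n)$, and hence $\alpha = 1-o(1)$.

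Next, I would bound $\card{A_j}$ from above by considering the product degree of an arbitrary vertex $u\in B_{v_j}$. Since $u$ receives an $a$-edge from every $v\in A_j$, an $(a+1)$-edge from every $v\in A\setminus A_j$ (by disjointness of the $B_{v_{j'}}$), and an edge of multiplicity at most $a+1$ from every $v\in V'$, we have $p_G(u)\leq O(1)\cdot a^{\card{A_j}}(a+1)^{n-s_1-\card{A_j}}$. Combined with $p_G(u)\geq e^{\pi_T n(1-o(1))}$, this gives $\card{A_j}\leq n(1-r_d x_\star)/r_0+o(n)$. Summing over $j\in[m]$ and using $\card{A}\geq n(1-o(1))$ then yields $1\leq m(1-r_d x_\star)/r_0+o(1)$. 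Rearranging gives $m\geq r_0/(1-r_d x_\star)-o(1)>r_0$ for $n$ sufficiently large (using $r_d x_\star<1$, which is a consequence of \eqref{eq: poly equation needed for good a}), contradicting $m\leq r_0$.

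The main obstacle is verifying the strict inequality asserted in the first step, uniformly in $m\in\set{0,1,\dotsc,r_0-1}$ with a gap $\delta$ bounded away from $0$. The key algebraic condition $(d(2r_d-1)+3)(r_0-m(1-r_d x_\star))>(1-r_d x_\star)\card{Y}$ reduces after rearrangement and taking the large-$a$ limit to the trivially true inequality $r_d(d-1)<dr_d+1$; at finite $a$, ensuring that this gap survives the $o(1)$ corrections uses the polynomial hypothesis~\eqref{eq: poly equation needed for good a} on the ambient edge multiplicity $a$.
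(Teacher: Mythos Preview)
Your overall strategy is sound and leads to a correct proof, but it diverges from the paper's argument in the first averaging step and is consequently more elaborate than necessary.

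The paper applies geometric averaging over $X$ with \emph{uniform} weights $1/s_1$, so that the integer AM--GM inequality (\Cref{prop: integer AM-GM}) applies directly: every $v\in A$ contributes at most $a((a+1)/a)^{1-L/s_1}$ and every $v\in V'$ contributes at most $a((a+1)/a)^{1-(L+1)/s_1}$, where $L=d(2r_d-1)+2$. This yields not $\alpha=1-o(1)$ but only the weaker $\alpha\ge (r_0-2r_d+r_d s_1 x_\star)/r_0+o(1)$, which is nonetheless enough: combined with your (identical) second step bounding each $\card{A_j}$, one obtains $m\ge (r_0-2r_d+r_d s_1 x_\star)/(1-r_d x_\star)$, and this is shown to exceed $r_0-1$ via \Cref{prop: appendix poly 1}. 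No contradiction hypothesis $m<r_0$ is needed.

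Your non-uniform weighting is tuned so that $A$-vertices contribute exactly $e^{\pi_T}$, which buys you the stronger conclusion $\alpha=1-o(1)$. This is correct, but the price is that bounding the contribution of $v\in V'$ now requires a genuine weighted integer optimisation rather than plain integer AM--GM; your phrase ``these low-multiplicity edges land on the lowest-weight vertices'' glosses over the fact that the deficit from $(a+1)s_1$ need not appear as $L+1$ distinct edges of multiplicity $a$, and one must argue (via convexity of $k\mapsto\log(k/(k-1))$) that spreading the deficit is optimal. The algebraic gap condition you then need, namely $(L+1)w_2>(1-r_d x_\star)/r_0$ at $m=r_0-1$, does follow from the hypothesis~\eqref{eq: poly equation needed for good a} (indeed it rearranges to the same inequality $d(2r_d-1)+3>(2r_d-1)(1-r_d x_\star)/(r_0 r_d x_\star)$ that the paper invokes), though your stated large-$a$ reduction ``$r_d(d-1)<dr_d+1$'' does not match what the computation actually gives. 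Finally, $r_d x_\star<1$ is immediate from $x_\star<1/(s_0-1)$ and does not require~\eqref{eq: poly equation needed for good a}.
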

%--------------------------------------%
\begin{proof}
For each $i \in [m]$, we let $A_i \defined \set{v \in A \st B_{v} = B_{v_i} }$.
By \Cref{cor: the Bv are identical or disjoint} and the maximality of $m$, we have $A= \cup_{i=1}^{m}A_i$.
Set $\alpha_i n \defined \vert A_i\vert $ for each $i\in [m]$, $\alpha n \defined \vert A\vert$, and consider $p \defined \prod_{u \in X}p_G(u)^{\frac{1}{s_1}}$.

By the integral AM-GM inequality (\Cref{prop: integer AM-GM}), the contribution of every vertex in $A$ to $p$ is at most $a\left(\frac{a+1}{a} \right)^{1-\frac{d(2r_d-1)+2}{s_1}}$, while the contribution of every vertex in $(V(G) \setminus X)\setminus A$ to $p$ is at most $a\left(\frac{a+1}{a} \right)^{1-\frac{d(2r_d-1)+3}{s_1}}$. By geometric averaging (\Cref{prop: geometring averaging}), there is a vertex $u \in X$ with $p_G(u) \leq p\leq \left(a\left( (a+1)/a\right)^{ 1-\frac{d(2r_d-1)+3}{s_1}+\frac{\alpha}{s_1}}\right)^{n+O(1)}$.
By our minimum product degree assumption on $G$, this implies $1-\frac{d(2r_d-1)+3}{s_1}+\frac{\alpha}{s_1}\geq 1-\frac{1-r_dx_\star}{r_0}+o(1)$, which after rearranging terms yields:
\begin{align}
\label{eq: lower bound on alpha}
    \alpha\geq \frac{r_0 - 2r_d + r_d s_1 x_\star}{r_0} + o(1).
\end{align}
Consider now a vertex $u \in B_{v_i}$; it receives no edge of multiplicity $a+1$ from $A_i$, and thus we have $p_G(u) \leq \left(a \left( (a+1)/a\right)^{1-\alpha_i}\right)^{n+o(n)}$.
By our minimum product degree assumption on $G$, this implies that $1-\alpha_i +o(1)\geq 1-\frac{1-r_dx_\star}{r_0}$, which after rearranging terms yields $\alpha_i\leq \frac{1-r_dx_\star}{r_0}+o(1)$.
It follows that $\alpha=\sum_{i=1}^{m}\alpha_i\leq m\left( \frac{1-r_dx_\star}{r_0}\right)+o(1)$. Combining this upper bound with~\eqref{eq: lower bound on alpha}, we get 
\begin{align*}
    m \geq \frac{r_0-2r_d+r_ds_1x_\star}{1-r_dx_\star}+o(1).
\end{align*}
We observe that $\frac{r_0-2r_d+r_ds_1x_\star}{1-r_dx_\star} > r_0-1$ is equivalent to $d(2r_d-1)+3 > (2r_d-1)\frac{1-r_dx_\star}{r_0r_dx_\star}$, which holds by \Cref{prop: appendix poly 1} and our assumption~\eqref{eq: poly equation needed for good a} on $a$. Thus, $m > r_0-1$. Since $m \leq r_0$, this means that $m=r_0$.
\end{proof}
%--------------------------------------%

Let $X' \defined X \setminus (\cup_{i=1}^{r_0}B_{v_i})$ be the collection of vertices of $X$ not included in some $B_v$, $v\in A$. Observe that $\vert X'\vert =2r_d$.

%--------------------------------------%
\begin{lemma}
\label{lemma: edges in X' have bounded multiplicity}
All edges in $X'$ have multiplicity at most $a-d+1$.
\end{lemma}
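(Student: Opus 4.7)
I plan to argue by contradiction, assuming some edge $uv \in X'$ has multiplicity $w(uv) \geq a - d + 2$, and derive a violation of the $(s_0, \SIGMA{T}{s_0})$-graph property of $G$ established in \Cref{lemma: large a good subsets are smaller}. Concretely, I will exhibit an $s_0$-subset $Y \subseteq V(G)$ containing $u$ and $v$ whose edge-sum $e(G[Y])$ strictly exceeds $\SIGMA{T}{s_0}$. The construction takes $Y = \{u, v\} \cup W_1 \cup \dotsb \cup W_{r_0} \cup \{z_1, \dotsc, z_t\}$, where each $W_i \subseteq B_{v_i}$ is of size roughly $dr_d$ or $dr_d + 1$, each $z_j$ is drawn from some $A_{\sigma(j)}$, and $t \in \{r_d - 1, r_d\}$ together with the precise sizes $|W_i|$ are chosen so that $|Y| = s_0$.

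The multiplicity of nearly every pair in $Y$ is already pinned down by the structural results we have proved so far: edges within any $B_{v_i}$, hence within $W_i$, have multiplicity $a$; edges across distinct $W_i, W_j$ and from $\{u,v\}$ to any $W_i$ have multiplicity $a+1$; edges from $z_j$ to $W_i$ have multiplicity $a+1$ when $i \neq \sigma(j)$ and $a$ when $i = \sigma(j)$; and edges from $\{u,v\}$ to each $z_j$ have multiplicity $a+1$, which follows from the observation that any $z \in A$ sends only multiplicity-$(a+1)$ edges to $X \setminus B_z$ (a short consequence of $|B_z| = d(2r_d - 1) + 2$ together with the counting in \Cref{lemma: vertices outside don't send too many edges,lemma: vertices sending many edges only send edge of weight at least a and have cloning property}). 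Since $X' \cap B_z = \emptyset$ for all such $z$, in particular every $\{u,v\}$--$z_j$ edge is a mult-$(a+1)$ edge. The only genuinely free quantities are then $w(uv)$ itself and the pairwise multiplicities $w(z_j z_k)$.

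A direct count will then give an identity of the form $e(G[Y]) = w(uv) + \sum_{j<k} w(z_j z_k) + C$ for an explicit constant $C = C(r_0, r_d, d, a)$ depending on the chosen sizes. Combined with the constraint $e(G[Y]) \leq \SIGMA{T}{s_0}$ and the a priori bound $w(z_j z_k) \leq a + 1$ provided by \Cref{lemma: large a good subsets are smaller}, this will rearrange to $w(uv) \leq a - d + 1$, contradicting our assumption. The hard part will be the case analysis: the parameters must be chosen differently depending on whether $r_0 \geq r_d$, where the $z_j$'s can be placed in distinct $A_i$'s so as to minimise the deficit coming from $z$-to-$W$ edges and the pairwise bound $w(z_j z_k) \leq a+1$ is essentially tight, or $r_0 < r_d$, where several $z_j$'s must share an $A_i$, the trivial pairwise bound may no longer suffice, and one must sharpen it either by applying a sub-property of $G$ to auxiliary sets among the $z_j$'s, or by appealing to the minimum product-degree hypothesis and a geometric-averaging argument in the spirit of \Cref{prop: geometring averaging}. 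Verifying that across all regimes the arithmetic gives exactly $C = \SIGMA{T}{s_0} - (a-d+1) - \binom{t}{2}(a+1)$, so that the final bound comes out to $a - d + 1$ on the nose, is the main computational challenge.
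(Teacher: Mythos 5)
There is a genuine gap, and it comes from the vertices $z_1,\dotsc,z_t$ you insert from the sets $A_{\sigma(j)}$. The only constraint you can invoke is the upper bound $e(G[Y])\leq \SIGMA{T}{s_0}$, so to squeeze out an upper bound on $w(uv)$ you need \emph{lower} bounds on every other multiplicity inside $Y$. For the pairs inside the $W_i$'s, between distinct $W_i$'s, from $\set{u,v}$ to the $W_i$'s, and from the $z_j$'s to $X$, the established structure does pin the multiplicities down, as you say. But for the pairs $z_jz_k$ nothing is known from below: \Cref{lemma: large a good subsets are smaller} only gives $w(z_jz_k)\leq a+1$, which is the wrong direction. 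Writing $e(G[Y])=w(uv)+\sum_{j<k}w(z_jz_k)+C$, the constraint rearranges to $w(uv)\leq \SIGMA{T}{s_0}-C-\sum_{j<k}w(z_jz_k)$, so your target bound $w(uv)\leq a-d+1$ would require $w(z_jz_k)=a+1$ for every pair — and this fails even in the intended extremal structure (two $z$'s in the same $A_i$ should have $w(z_jz_k)=a$ in a blow-up of $T$), let alone for arbitrary vertices of $A$, whose pairwise multiplicities could a priori be as small as $0$. The minimum product-degree hypothesis does not rescue this: it controls the product of all $n-1$ multiplicities at a vertex and says nothing pointwise about a bounded number of edges. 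So the issue you flag only for $r_0<r_d$ is in fact fatal in every regime, and your plan as stated cannot close.

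The fix is to drop the $A$-vertices and the insistence on $\card{Y}=s_0$ altogether; this is what the paper does. Since $G$ is an $(s',\SIGMA{T}{s'})$-graph for \emph{all} $s'\leq s_0$ by \Cref{lemma: large a good subsets are smaller}, one may use a much smaller witness set tuned to the excess: writing $w(u_1u_2)=a-d+1+k$ with $k\in[d]$, take $\set{u_1,u_2}$ together with $d-k+1$ vertices from each $B_{v_i}$, $i\in[r_0]$. Every multiplicity in this $\p[\big]{r_0(d-k+1)+2}$-set is known exactly (only $a$'s inside the chosen sub-blocks and $a+1$'s elsewhere, plus the one edge $u_1u_2$), its edge-sum exceeds $\SIGMA{T}{r_0(d-k+1)+2}$, and the contradiction is immediate. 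The choice of block size $d-k+1$ as a function of $k$ is what makes the comparison with the optimal blow-up go through; your fixed block size $\approx dr_d$ combined with the uncontrolled $z_j$'s does not.
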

%--------------------------------------%
\begin{proof}
Suppose there exists an edge $u_1u_2$ in $X'$ with $w(u_1 u_2)=a-d+1+k$ for some $k \in [d]$. Then $\set{u_1, u_2}$, taken along with $d-k+1$ vertices from each of the sets $B_{v_i}$ yields a set of $r_0(d-k+1)+2$ vertices with $\SIGMA{T}{r_0(d-k+1)+2} +1$ edges. This contradicts the fact that $G$ is a $(s',\SIGMA{T}{s'})$-graph for all $2 \leq s' \leq s_0$.
\end{proof}
%--------------------------------------%

%--------------------------------------%
\begin{corollary}
\label{cor: large blow up found}
The multigraph $G[X]$ is a blow-up of $T$, with the $T$-partition $\sqcup_{i \in [r_0+r_d]} X_i$ of $X$ satisfying $\card{X_i} = d(2 r_d - 1) + 2$ for all $i \in [r_0]$ and $\card{X_i} = 2$ for all $i \in [r_0+r_d] \setminus [r_0]$.   
\end{corollary}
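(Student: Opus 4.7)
The plan is to combine the structural information already extracted about $X$ with an elementary degree-counting argument on $X'$. First I would note that the desired $T$-partition is essentially forced on the high side: setting $X_i \defined B_{v_i}$ for $i \in [r_0]$ yields parts of the required size $d(2r_d - 1) + 2$, and by construction all edges inside each $B_{v_i}$ have multiplicity $a$, while all edges from $B_{v_i}$ to $X \setminus B_{v_i}$ (in particular to every other $B_{v_j}$ and to every vertex of $X'$) have multiplicity $a + 1$. Since $a$ and $a + 1$ are exactly the loop and edge multiplicities prescribed by $T$ at its level-$0$ vertices, the only remaining task is to show that $G[X']$ decomposes into $r_d$ pairs $X_{r_0 + 1}, \dotsc, X_{r_0 + r_d}$ such that each pair spans an edge of multiplicity $a - d$ and all edges between distinct pairs have multiplicity $a - d + 1$.

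For this I would invoke \Cref{lemma: heavy sets on S vertices are regular}, which tells us $d_X(u) = (a+1)(s_1 - 1) - d(2r_d - 1) - 1$ for every $u \in X$. A vertex $u \in X'$ has exactly $r_0(d(2r_d - 1) + 2)$ neighbours in $\cup_i B_{v_i}$, all joined to $u$ by edges of multiplicity $a + 1$, and subtracting yields
\begin{equation*}
    d_{X'}(u) = (a - d + 1)(2r_d - 1) - 1.
\end{equation*}
Since $\card{X'} = 2r_d$ and, by \Cref{lemma: edges in X' have bounded multiplicity}, every edge of $G[X']$ has multiplicity at most $a - d + 1$, each $u \in X'$ thus falls short by exactly $1$ of the maximum possible degree $(a - d + 1)(2r_d - 1)$ in $X'$.

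The endgame is then a short integer-counting step. Writing every edge $e$ of $G[X']$ as having multiplicity $a - d + 1 - k_e$ with $k_e \in \ZZ_{\geq 0}$, the deficit condition becomes $\sum_{e \ni u} k_e = 1$ for every $u \in X'$. As the $k_e$ are non-negative integers, this forces every vertex of $X'$ to be incident to exactly one edge with $k_e = 1$ and to no edge with $k_e \geq 2$; hence the edges with $k_e = 1$ form a perfect matching of $X'$, while all other edges have $k_e = 0$. Declaring $X_{r_0 + j}$ for $j \in [r_d]$ to be the $j$-th matched pair then produces the remaining $r_d$ parts, with internal edges of multiplicity $a - d$ and cross-pair edges of multiplicity $a - d + 1$, completing the desired blow-up structure.

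I do not anticipate a substantive obstacle at this stage: the serious structural work has already been carried out by the regularity, cloning, and disjointness lemmas used to extract the $B_{v_i}$'s, and what remains is only the regularity-plus-matching computation sketched above.
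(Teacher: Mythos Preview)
Your proposal is correct and follows essentially the same approach as the paper: compute $d_{X'}(u)$ from the regularity of $G[X]$ (\Cref{lemma: heavy sets on S vertices are regular}), combine with the upper bound from \Cref{lemma: edges in X' have bounded multiplicity}, and deduce that the deficit-$1$ edges form a perfect matching on $X'$. The paper states the matching conclusion in one sentence while you spell out the $k_e$ bookkeeping, but the argument is the same.
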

%--------------------------------------%
\begin{proof}
For every vertex $y \in X'$, we have 
\begin{align*}
    d_{X'}(y)
    &= d_{X}(y) - (a+1)\p[\big]{r_0(d(2 r_d - 1) + 2)}\\
    &= (a + 1)(s_1 - 1) - d(2 r_d - 1) - 1 - (a + 1)\p[\big]{r_0(d(2 r_d - 1) + 2)} \\
    &=(2 r_d - 1)(a - d + 1) - 1.
\end{align*}
Since edges in $X'$ have multiplicity at most $a-d+1$ by \Cref{lemma: edges in X' have bounded multiplicity}, $X'$ must consist of a perfect matching of edges of multiplicity $a-d$, with all other edges having multiplicity exactly $a-d+1$. We can thus write $X'$ as $\sqcup_{i=r_0+1}^{r_0+r_d}X_i$, with the $X_i$ corresponding to the edges of this perfect matching.

We had already shown in \Cref{lemma: can find good collection of Bv} that we can find $r_0$ sets $X_i \defined B_{v_i}$, $i\in [r_0]$, each of which has size $d(2r_d-1)+2$ by~\eqref{eq: Bv set sizes}, such that edges in $G[X_i]$ have multiplicity exactly $a$ and edges from $X_i$ to $X\setminus X_i$ have multiplicity $a+1$. Thus, $\sqcup_{i=1}^{r_0+r_d}X_i$ gives us a $T$-partition of $X$ with the desired properties.
\end{proof}
%--------------------------------------%

Having thus found a large blow-up of $T$, we can conclude our proof with a weighted geometric averaging argument similar to that found in~\cite[Lemma 3.3]{falgas2024extremal}. Given the $T$-partition of $X$ we have obtained in \Cref{cor: large blow up found}, pick one vertex $u_i$ from each of the sets $X_i$, $i \in [r_0+r_d]\setminus [r_0]$, to form a set $U$. Further, let $W \defined \sqcup_{i=1}^{r_0} X_i$ and $L \defined d(2r_d-1)+2$. Set
\begin{align*}
    p \defined \left(\prod_{v\in W} p_G(v)^{\frac{1-r_dx_\star}{Lr_0}}\right) \left(\prod_{v\in U} p_G(v)^{x_\star}\right).
\end{align*}
By weighted geometric averaging, \Cref{prop: geometring averaging}, we know that there is some vertex $v\in U\cup W$ with $p_G(v) \leq p$. We shall now bound the contribution to $p$ from vertices outside $U\cup W$, and then apply our stability version of geometric averaging, \Cref{lemma: stability geometric averaging} together with our minimum product degree assumption to deduce information about the general structure of $G$. To bound the contributions to $p$, we shall require the following simple polynomial inequalities.

%--------------------------------------%
\begin{proposition}
\label{prop: sub-optimal contribution 1}
For all $0 \leq k \leq d$, 
\begin{align*}
    \left(\frac{d-k}{L}\right)\left(\frac{1-r_dx_\star}{r_0}\right)-r_dx_\star\frac{\log \left(\frac{a+1}{a-d+1+k} \right)}{\log \left( \frac{a+1}{a}\right)} \leq 0,
\end{align*}
with equality if and only if $k=d$.
\end{proposition}

%--------------------------------------%
\begin{proof}
For $k=d$, equality holds.  Let $\ell=d-k$. To prove the proposition, we need to show that for all $1 \leq \ell \leq d$, we have 
\begin{align*}
    L > \frac{\ell \log \left( \frac{a+1}{a} \right)}{\log \left( \frac{a+1}{a+1-\ell}\right)}\left( \frac{1-r_dx_\star}{r_0r_dx_\star}\right).
\end{align*}
By the integral AM-GM inequality (\Cref{prop: integer AM-GM}), we have $(a+1)^{\ell}(a+1-\ell) \leq a^{\ell}(a+1)$, which, in turn, implies that $\log \left( \frac{a+1}{a}\right)\leq \frac{1}{\ell}\log \left( \frac{a+1}{a+1-\ell}\right)$. Thus, it suffices to show that $L > \frac{1-r_dx_\star}{r_0r_dx_\star}$. By \Cref{prop: appendix poly 1} and our assumption~\eqref{eq: poly equation needed for good a} on $a$ , we have $L =d(2r_d-1)+2 > (2r_d-1)\frac{1-r_dx_\star}{r_0r_dx_\star} \geq \frac{1-r_dx_\star}{r_0r_dx_\star}$, and the proposition follows.
\end{proof}

\begin{proposition}
\label{prop:sub-optimal contribution 2}
    The following inequality holds:
    \begin{align*}
        \frac{dr_d}{L}\left(\frac{1-r_dx_\star}{r_0}\right)-r_dx_\star\frac{\log \left(\frac{a+1}{a-d+1} \right)}{\log \left( \frac{a+1}{a}\right)} < 0.
    \end{align*}
\end{proposition}

\begin{proof}
    The inequality above is equivalent to the inequality $L > \frac{dr_d \log \left( \frac{a+1}{a} \right)}{\log \left( \frac{a+1}{a-d+1}\right)}\left( \frac{1-r_dx_\star}{r_0r_dx_\star}\right)$. By the integral AM-GM inequality (Proposition \ref{prop: integer AM-GM}), we have $(a+1)^d(a-d+1) \leq a^d(a+1)$, which, in turn, implies that $\log \left(\frac{a+1}{a}\right) \leq \frac{1}{d}\log \left( \frac{a+1}{a-d+1}\right)$. Thus, it suffices to show that $L > r_d \left(\frac{1-r_dx_\star}{r_0r_dx_\star} \right)$. By Proposition \ref{prop: appendix poly 1} and our assumption \eqref{eq: poly equation needed for good a} on $a$, we have $L=d(2r_d-1)+2 > (2r_d-1)\frac{1-r_dx_\star}{r_0r_dx_\star} \geq r_d \left(\frac{1-r_dx_\star}{r_0r_dx_\star} \right)$, and the proposition follows.
\end{proof}
%--------------------------------------%

%--------------------------------------%
\begin{lemma}
\label{lemma: contributions of outside vertices}
For every $v\in V(G)\setminus \left(U \cup W\right)$, the contribution of edges from $v$ to the quantity $p$ is bounded above by $a((a+1)/a)^{1-\frac{1-r_dx_\star}{r_0}}$, with equality attained if and only if one of the following alternatives occur:
\begin{enumerate}[(a)]
    \item $v$ sends edges of multiplicity $a$ to some $X_i\subseteq W$ and sends edges of multiplicity $a+1$ to all other vertices in $\left(U\cup W\right)\setminus X_i$;
    
    \item $v$ sends an edge of multiplicity $a-d$ to some $u\in U$, edges of multiplicity $a-d+1$ to $U\setminus \set{u}$ and edges of multiplicity $a+1$ to vertices in $W$.
\end{enumerate}
\end{lemma}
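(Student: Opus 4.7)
The plan is to bound, for each vertex $v \in V(G) \setminus (U \cup W)$, the weighted geometric mean
\[
C(v) \defined \prod_{w \in W} w(vw)^{(1-x_\star)/(L r_0)} \prod_{u \in U} w(vu)^{x_\star/r_d}
\]
using the integer AM-GM inequality (\Cref{prop: integer AM-GM}) together with the structural constraints imposed by the fact that $G$ is an $(s', \SIGMA{T}{s'})$-graph for every $s' \leq s_1$. Every edge from $v$ to $U \cup W$ has multiplicity at most $a+1$ by \Cref{lemma: large a good subsets are smaller}, so each edge multiplicity may be parameterized by a non-negative integer defect $\delta_u \defined a + 1 - w(vu)$. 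The problem then becomes one of optimizing over defect vectors subject to linear constraints derived from \Cref{lemma: large a good subsets are smaller} and \Cref{cor: growth rate of ex sigmaT s_0 up to s_1} applied to various subsets containing $v$.

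My approach would be the following. First, I would extract the binding constraints: for each $i \in [r_0]$, the $(L+1, \SIGMA{T}{L+1})$-condition applied to $X_i \cup \set{v}$ gives a lower bound on $\sum_{u \in X_i} \delta_u$, and inspection of subsets combining vertices of the $X_i$'s with vertices of $U$ yields further constraints tying $U$-side defects to $W$-side defects. Second, I would apply integer AM-GM within each ``block'' of edges --- edges from $v$ to each single $X_i$, and edges from $v$ to $U$ --- to obtain block-wise upper bounds, which are tight precisely when the defects within a block are as concentrated as integer AM-GM allows. Third, a case analysis on the remaining degrees of freedom shows that only the two configurations (a) (all defect concentrated within a single $X_i \subseteq W$ as $L$ edges of multiplicity $a$) and (b) (defect on $U$-side edges only, with one edge of multiplicity $a-d$ and the remaining $r_d - 1$ edges of multiplicity $a-d+1$) achieve the overall bound, with the definition of $x_\star$ calibrated precisely so that these two configurations yield the same value of $C(v)$.

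The main obstacle will be ruling out ``hybrid'' configurations in which some defect is placed on the $W$-side and some on the $U$-side but neither saturating strategy is used in full. This is exactly where \Cref{prop: sub-optimal contribution} does the heavy lifting: its polynomial inequality quantifies precisely the penalty incurred by a partial reduction on a $U$-side edge, and applying it systematically shows that any configuration other than (a) or (b) yields strictly smaller $C(v)$. The characterization of the equality cases (a) and (b) then follows by tracing through the integer AM-GM equality conditions within each block and observing that the defect must be concentrated in one of exactly two ways.
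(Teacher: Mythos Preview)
Your proposal identifies the right ingredients—the $(s',\Sigma_T(s'))$-constraints, integer AM--GM, and \Cref{prop: sub-optimal contribution}—but the specific constraint you single out and the ``block-wise'' plan do not quite deliver. The $(L+1,\Sigma_T(L+1))$-condition applied to $X_i\cup\{v\}$ gives essentially no information: since $X_i$ has all internal edges of multiplicity $a$, the constraint amounts to $\sum_{u\in X_i}w(vu)\le \Sigma_T(L+1)-a\binom{L}{2}$, which is already implied (or nearly so) by the trivial bound $w(vu)\le a+1$. More importantly, this constraint does not couple the $W$-side to the $U$-side at all, and such coupling is exactly what is needed.

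The paper's proof is organised around the single parameter $m(v)\defined\max_{u\in U}w(uv)$. In Case~I ($m(v)=a-d+1+k$ for some $k\in[d]$), one fixes a witness $v'\in U$ achieving this maximum and tries to select, from each $X_j$, $d-k+1$ vertices joined to $v$ by edges of multiplicity $a+1$. If this succeeds for every $j\in[r_0]$, the resulting $(r_0(d-k+1)+2)$-set (containing $v$ and $v'$) violates the $(s',\Sigma_T(s'))$-condition; hence some $X_i$ has at most $d-k$ such vertices. This is the binding constraint: it depends on $k$ through $m(v)$, and feeding it into \Cref{prop: sub-optimal contribution} with that precise $k$ gives the contribution bound, with equality forcing $k=d$ and alternative (a). In Case~II ($m(v)\le a-d+1$), either some $U$-edge has multiplicity $\le a-d$, giving alternative (b) directly, or all $U$-edges are exactly $a-d+1$, in which case a similar subset argument (now using $v$ together with all of $U$ and $dr_d+1$ vertices from each $X_j$) yields a strict inequality.

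The point your plan misses is that the relevant subset always contains $v$ \emph{and} one or more vertices of $U$, and its size depends on $m(v)$; this is what forces the trade-off between $U$-side and $W$-side defects. A purely per-block treatment of the $X_i$'s cannot see this, and without the $m(v)$-case split the promised ``case analysis on the remaining degrees of freedom'' has no natural starting point.
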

%--------------------------------------%
\begin{proof}
Let $m(v) \defined \max_{u \in U} w(uv)$, and let $v'\in U$ be a vertex with $w(vv')=m(v)$. We split into two cases, corresponding to our two alternatives. %We consider the contribution of $v$ to $p$ in various cases.

\textbf{Case I}: $m(v) = a - d + 1 + k$, where $ 1 \leq k \leq d$.
In this case, there exists a part $X_i$ to which $v$ sends at most $d-k$ edges of multiplicity $a+1$. Otherwise, picking $v,v'$ and $d-k+1$ vertices from each of the sets $\set{v''\in X_j \st w(vv'') = a + 1}$, $j \in [r_0]$, we obtain a set of $r_0(d-k+1)+2<s_0$ vertices in $G$ supporting $\SIGMA{T}{r_0(d-k+1)+2} + 1$ edges, contradicting \Cref{lemma: large a good subsets are smaller}. 
%which contradicts our assumption that $G$ is a $(s',\SIGMA{T}{s'})$-graph for all $2 \leq s' \leq s_0$. 
Thus, the contribution of $v$ to $p$ in this case is at most 
\begin{align*}
    &(a+1)\left(\frac{a-d+1+k}{a+1}\right)^{r_dx_\star}\left( \frac{a}{a+1} \right)^{\left(1-\frac{d-k}{L}\right)\left( \frac{1-r_dx_\star}{r_0}\right)}\\
    &=a \left( \frac{a+1}{a}\right)^{1-\frac{1-r_dx_\star}{r_0}+\left( \left(\frac{d-k}{L}\right)\left(\frac{1-r_dx_\star}{r_0}\right)-r_dx_\star\frac{\log \left(\frac{a+1}{a-d+1+k} \right)}{\log \left( \frac{a+1}{a}\right)}\right)}.
\end{align*} 
By \Cref{prop: sub-optimal contribution 1}, the contribution of $y$ to $p$ is at most $a \left( \frac{a+1}{a}\right)^{1-\frac{1-r_dx_\star}{r_0}}$, with equality if and only if $k=d$ and $v$ sends edges of multiplicity $a$ to some $X_i$, $i\in [r_0]$, and edges of multiplicity $a+1$ to $U\cup W\setminus X_i$ --- i.e.\ if and only if alternative (a) occurs.

\textbf{Case II}: $m(v)\leq a-d+1$. Suppose first of all that all edges from $v$ to $U$ have multiplicity exactly $a-d+1$. Then there exists a part $X_i$ to which $v$ sends at most $dr_d$ edges of multiplicity $a+1$. Indeed, otherwise it is easily seen that $G$ contains a set of $s_0=r_0(dr_d+1)+r_d+1$ vertices with $\SIGMA{T}{s_0}+1$ edges, a contradiction. Thus, the contribution of $v$ to $p$ is at most
\begin{align*}
    &(a+1)\left(\frac{a-d+1}{a+1}\right)^{r_dx_\star}\left( \frac{a}{a+1} \right)^{\left(1-\frac{dr_d}{L}\right)\left( \frac{1-r_dx_\star}{r_0}\right)}\\
    &=a \left( \frac{a+1}{a}\right)^{1-\frac{1-r_dx_\star}{r_0}+\left(\frac{dr_d}{L}\left(\frac{1-r_dx_\star}{r_0}\right)-r_dx_\star\frac{\log \left(\frac{a+1}{a-d+1} \right)}{\log \left( \frac{a+1}{a}\right)}\right)}.
\end{align*}

By \Cref{prop:sub-optimal contribution 2}, this is strictly less than $a \left( (a+1)/a\right)^{1-\frac{1-r_dx_\star}{r_0}}$. 
% Note that by the integral AM-GM inequality we have $(a+1)^{d-1} (a-d+1) \leq a^{d}$, and hence $\log \left( \frac{a+1}{a}\right)\leq \frac{1}{d}\log \left( \frac{a+1}{a+1-d}\right)$. By \Cref{prop: appendix poly 1} and our assumption~\eqref{eq: a ineq to ensure poly ok} on $a$, we thus have $L =d(2r_d-1)+2 >  (2r_d-1)\frac{1-r_dx_\star}{r_0r_dx_\star} \geq  r_d \left(\frac{1-r_dx_\star}{r_0r_dx_\star}\right) \geq \frac{r_dd \log \left( \frac{a+1}{a} \right)}{\log \left(\frac{a+1}{a+1-d} \right)} \left(\frac{1-r_dx_\star}{r_0r_dx_\star}\right)$. Therefore, $\frac{dr_d}{L}\left(\frac{1-r_dx_\star}{r_0}\right)-r_dx_\star\frac{\log \left(\frac{a+1}{a-d+1} \right)}{\log \left( \frac{a+1}{a}\right)} < 0$ and the contribution of $v$ to $p$ is strictly less than $a \left( (a+1)/a\right)^{1-\frac{1-r_dx_\star}{r_0}}$. 

On the other hand, if $v$ sends at least one edge of multiplicity at most $a-d$ into $U$, then the contribution of $v$ to $p$ is trivially at most $a \left( \frac{a+1}{a}\right)^{1-\frac{1-r_dx_\star}{r_0}}$, with equality achieved if and only if $v$ sends one edge of multiplicity $a-d$ to $U$, $r_d-1$ edges of multiplicity $a-d+1$ to $U$, and edges of multiplicity $a+1$ to all vertices in $W$ --- i.e.\  if and only if alternative (b) occurs. 
\end{proof}
%--------------------------------------%

For all $i \in [r_0+r_d]\setminus[r_0]$, we set $U_i$ to be the set of vertices $v \in V(G)\setminus \left(U\cup W\right)$ such that $w(u_iv)=a-d$, $w(vv')=a-d+1$ for all $v' \in U\setminus\set{u_i}$ and $w(vv') = a + 1$ for all $v' \in W$. Further, for all $j \in [r_0]$, we define $W_{j}$ to be the set of vertices $v \in V(G) \setminus \left(U\cup W\right)$ such that $w(vv') = a$ for all $v' \in X_{j}$ and $w(vv')=a+1$ for all $v' \in \left(U\cup W\right)\setminus X_{j}$.

We let $\card{U_i} \defined \mu_in$, $\vert W_{j}\vert \defined \nu_{j}n$, and set $\mu \defined \sum_{i=r_0+1}^{r_0+r_d} \mu_i$, $\nu \defined \sum_{j=1}^{r_0} \nu_{j}$.

%--------------------------------------%
\begin{lemma}
\label{lemma: stability part sizes}
For all $i\in [r_0+r_d]\setminus [r_0]$, we have $\mu_i =x_\star+o(1)$, while for all $j \in [r_0]$, we have $\nu_j=(1-r_dx_\star)/r_0 +o(1)$.
\end{lemma}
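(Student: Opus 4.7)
The plan is to convert the weighted geometric averaging setup just before the lemma into quantitative constraints on the $\mu_i$ and $\nu_j$. First, applying \Cref{prop: geometring averaging} to the weighting defining $p$ produces a vertex $v_0 \in U \cup W$ with $p_G(v_0) \leq p$; together with the minimum product-degree hypothesis $p_G(v) \geq e^{(\pi_T - o(1))n}$, this forces $p \geq e^{(\pi_T - o(1))n}$ as well. By \Cref{lemma: contributions of outside vertices}, the contribution of any vertex $y \notin U \cup W$ to $p$ is at most $e^{\pi_T}$, with equality if and only if $y$ belongs to some $\cU_i$ or $\cW_j$; moreover, inspection of the proof of that lemma shows that the non-equality contributions are bounded away from $e^{\pi_T}$ by a multiplicative factor depending only on $a$, $d$, $r_0$, $r_d$. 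Applying \Cref{lemma: stability geometric averaging} with this gap, we conclude that all but $o(n)$ vertices of $V(G) \setminus (U \cup W)$ lie in some $\cU_i$ or $\cW_j$, and in particular $\mu + \nu = 1 - o(1)$.

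Next, we would extract individual upper bounds on each $\nu_j$ and $\mu_i$ by applying the minimum product-degree inequality to specific vertices of $W$ and $U$. For $v \in X_j \subseteq W$, tallying the multiplicities of the edges from $v$ (which are $a$ inside $X_j$ and $\cW_j$, and $a+1$ elsewhere, up to contributions from a set of size $o(n)$) and using $\mu + \nu = 1 - o(1)$ gives
\begin{equation*}
p_G(v) \leq a^{\nu_j n}(a+1)^{(1-\nu_j)n + o(n)}.
\end{equation*}
Combining this with $p_G(v) \geq e^{(\pi_T - o(1))n}$ and the formula $\pi_T = \log(a+1) - \tfrac{1-r_d x_\star}{r_0}\log\bigl((a+1)/a\bigr)$ from \Cref{prop: product-optimal blow ups of generalised Turan}(iii) immediately yields $\nu_j \leq (1-r_d x_\star)/r_0 + o(1)$. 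An analogous analysis for $u_i \in U$, using that its edges into $V(G) \setminus (U \cup W)$ have multiplicity $a-d$ into $\cU_i$, $a-d+1$ into $\cU_k$ for $k \neq i$, and $a+1$ into every $\cW_k$, produces
\begin{equation*}
p_G(u_i) \leq (a-d)^{\mu_i n}(a-d+1)^{(\mu-\mu_i)n}(a+1)^{\nu n + o(n)},
\end{equation*}
which after rearrangement bounds $\mu_i$ from above in terms of $\mu$.

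To close the argument, I would chain these estimates together. Summing the $\nu_j$ inequality over $j \in [r_0]$ gives $\nu \leq 1 - r_d x_\star + o(1)$, and hence $\mu \geq r_d x_\star - o(1)$ via $\mu + \nu = 1 - o(1)$. Substituting this lower bound on $\mu$ into the bound on $\mu_i$ and exploiting the defining identity of $x_\star$, namely $x_\star \cdot \log\bigl((a+1)^{r_d(r_0+1)}/(a^{r_d}(a-d+1)^{r_0(r_d-1)}(a-d)^{r_0})\bigr) = \log\bigl((a+1)/a\bigr)$, then yields $\mu_i \leq x_\star + o(1)$. Summing over $i$ gives $\mu \leq r_d x_\star + o(1)$, so in fact $\mu = r_d x_\star + o(1)$ and consequently $\nu = 1 - r_d x_\star + o(1)$. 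Since the sums $\sum_i \mu_i$ and $\sum_j \nu_j$ thus saturate the individual upper bounds, each $\mu_i = x_\star + o(1)$ and each $\nu_j = (1-r_d x_\star)/r_0 + o(1)$, as required. The main obstacle is the algebraic step showing that the $\mu_i$ upper bound collapses to exactly $x_\star$ when $\mu = r_d x_\star$; this is a direct but slightly involved computation unpacking the definition of $x_\star$ against the formula for $\pi_T$.
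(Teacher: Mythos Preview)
Your proposal is correct and follows essentially the same approach as the paper: first obtain $\mu+\nu=1-o(1)$ via \Cref{lemma: stability geometric averaging}, then use the minimum product-degree at a vertex of $X_j$ to bound $\nu_j$ above, sum to get $\mu\geq r_dx_\star-o(1)$, and finally use the minimum product-degree at $u_i$ together with this lower bound on $\mu$ to force $\mu_i\leq x_\star+o(1)$, after which saturation closes the argument. The paper glosses over the algebraic identity you flag at the end; one clean way to see it is that it is precisely the balanced-degree equation of \Cref{cor: balanced degrees in optimal pattern weightings} applied at a deficient vertex of $T$, which expresses $\pi_T$ in terms of $x_\star$ via the multiplicities $a-d$, $a-d+1$, $a+1$.
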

%--------------------------------------%
\begin{proof}
By \Cref{lemma: stability geometric averaging} and our minimum product degree assumption for $G$, we immediately have that 
\begin{align}
\label{eq: mu plus nu is 1-o(1)}
    \mu+ \nu = 1-o(1).
\end{align}

Further, for each $j \in [r_0]$, consider a vertex $v \in X_{j}$. We have $p_G(v) \leq \left(a\left((a+1)/a\right)^{1-\nu_{j}}\right)^{n+o(n)}$, which by the minimum product degree assumption implies $\nu_{j} \leq \frac{1-r_dx_\star}{r_0}+o(1)$ and hence $\nu \leq (1-r_dx_\star)+o(1)$. Combined with~\eqref{eq: mu plus nu is 1-o(1)}, this gives $\mu \geq r_dx_\star+o(1)$.

Now, for each $i \in [r_0+r_d]\setminus [r_0]$, we have 
\begin{align*}
    p_G(u_i) \leq a^{n}\left( \frac{a-d+1}{a+1}\right)^{\mu n}\left( \frac{a-d}{a-d+1}\right)^{\mu_in}\left(\frac{a+1}{a} \right)^{n+o(n)}.
\end{align*}
The minimum product degree assumption on $G$, together with our lower bound $\mu \geq r_dx_\star+o(1)$ implies that $\mu_i \leq x_\star+o(1)$. Together with our lower bound $\mu\geq r_dx_\star+o(1)$, this immediately implies $\mu_i=x_\star+o(1)$ for every $i \in [r_0+r_d] \setminus [r_0]$. Thus, $\mu = r_dx_\star+o(1)$, and by~\eqref{eq: mu plus nu is 1-o(1)} we have $\nu= 1-r_dx_\star+o(1)$. Together with our upper bound on the $\nu_j$, this implies $\nu_j=(1-r_dx_\star)/r_0+o(1)$ for every $j\in [r_0]$. 
\end{proof}
%--------------------------------------%

We are now almost done. We make the following observations:
\begin{enumerate}[ --- ]
    \item
    Every edge $v_1 v_2$ in $\binom{U_i}{2}$ has multiplicity at most $a-d$. Otherwise, $\set{v_1,v_2}$, taken together with $dr_d+1$ vertices from each of the sets $X_{j}$, $j\in [r_0]$, and with the vertices from $U\setminus \set{u_i}$, yields a set of $s_0$ vertices with strictly more than $\SIGMA{T}{s_0}$ edges, a contradiction.
    
    \item
    Every edge $v_1 v_2$ in $U_{i_1} \times U_{i_2}$, with $i_1 \neq i_2$, has multiplicity at most $a-d+1$. Otherwise, if $w(v_1 v_2)=a-d+1+k$ for some $k \in [d]$, $\set{v_1, v_2}$, taken together with $d-k+1$ vertices from each $X_{j}$, $j\in [r_0]$, yields a set of $r_0(d-k+1)+2$ vertices with $\SIGMA{T}{r_0(d-k+1)+2} + 1$ edges, a contradiction.
    
    \item
    Every edge $v_1 v_2$ in $\binom{W_{j}}{2}$ has multiplicity at most $a$. Otherwise, $\set{v_1, v_2}$, taken together with an arbitrary vertex $u \in U$, and one vertex from each of the $X_{j'}$, $j'\in [r_0]\setminus \set{j}$, yields a set of $r_0+2$ vertices with $\SIGMA{T}{r_0+2} + 1$ edges, a contradiction.
\end{enumerate}

Set $V' \defined \left(\cup_{i=r_0+1}^{r_0+r_d} U_i\right) \cup \left(\cup_{j=1}^{r_0} W_j\right)$. By our observations above, $G[V']$ is a submultigraph of a blow-up $G'$ of $T$ with $T$-partition $W_1 \sqcup \dotsb \sqcup W_{r_0}\sqcup U_{r_0+1}\sqcup \dotsb \sqcup U_{r_0+r_d}$. Further, as shown in \Cref{lemma: stability part sizes}, we have $\card{V'} = n-o(n)$, and $G'$ has near product-optimal part sizes, so that
\begin{align}
\label{eq: product degree in G'}
    p_{G'}(v)= a\left((a+1)/a\right)^{\left(1-\frac{1-r_dx_\star}{r_0}\right)n+o(n)}
\end{align}
for every $v\in V'$. We say that an edge $v_1 v_2$ in $G[V']$ is \emph{light} if its multiplicity in $G$ is strictly less than in $G'$. It follows straightforwardly from our bound on product degrees in $G'$, \eqref{eq: product degree in G'}, together with our minimum product degree assumption on $G$ (and the facts that $\card{V\setminus V'} =o(n)$ and that all edges in $G$ have multiplicity at most $a+1$) that each vertex in $V'$ can be incident with at most $o(n)$ light edges. Thus, there is a total of $o(n^2)$ light edges in $G[V']$. It follows that one may change $G$ into a near product-optimal blow-up of $T$ by changing the $o(n^2)$ light edges together with the at most $n
\card{V \setminus V'} =o(n^2)$ edges incident with $V \setminus V'$. This concludes the proof of \Cref{theorem: large a technical statement}.
\end{proof}
%--------------------------------------%

With \Cref{theorem: large a technical statement} in hand, we now derive all of our main results. We shall need the following polynomial inequality, which is proved in the Appendix.

%--------------------------------------%
\begin{proposition}
\label{prop: polynomial identity}
The polynomial inequality~\eqref{eq: poly equation needed for good a}
is satisfied for all $a$ such that
\begin{equation*}
    a > d(2r_d-1)\left(d(2r_d-1)+1\right) + (d-1)(2r_d-1)(r_d-1)\left(r_d(d-1)(2r_d-1)+2r_d\right).
\end{equation*}
\end{proposition}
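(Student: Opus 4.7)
The plan is to take logarithms in (5.1) and apply elementary Taylor-type bounds. Set $A = r_d(d-1)(2r_d-1) + 2r_d$, $B = 2r_d - 1$, and $C = (2r_d - 1)(r_d - 1)$. A direct expansion confirms both $A + B + C = r_d d (2r_d-1) + 2r_d$ (the exponent of $a$ on the right of (5.1)) and the crucial identity $A - Bd - C(d-1) = 1$. Hence (5.1) is equivalent to $f(a) > 0$, where
\begin{equation*}
f(a) \defined A \log\!\left(1 + \tfrac{1}{a}\right) + B \log\!\left(1 - \tfrac{d}{a}\right) + C \log\!\left(1 - \tfrac{d-1}{a}\right).
\end{equation*}

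I would rely on two strict one-sided inequalities: $\log(1+x) > x - x^2/2$ for every $x > 0$, and $\log(1-y) > -y - y^2/(2(1-y))$ for $y \in (0,1)$. The first follows because $g(x) \defined \log(1+x) - x + x^2/2$ satisfies $g(0)=0$ and $g'(x) = x^2/(1+x) > 0$; the second follows by writing $-\log(1-y) = y + \int_0^y t(1-t)^{-1}\,dt$ and bounding the integrand by $t/(1-y)$. Plugging $x = 1/a$, $y = d/a$, and $y = (d-1)/a$, weighting by $A$, $B$, and $C$, and using $A - Bd - C(d-1) = 1$ to cancel the first-order terms, produces
\begin{equation*}
f(a) > \frac{1}{a} - \frac{A}{2a^2} - \frac{Bd^2}{2a(a-d)} - \frac{C(d-1)^2}{2a(a-d+1)}.
\end{equation*}
Since the Taylor bounds above are strict, it is enough to establish the non-strict inequality
\begin{equation*}
\frac{A}{a} + \frac{Bd^2}{a-d} + \frac{C(d-1)^2}{a-d+1} \leq 2.
\end{equation*}

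The final step is to verify this from the hypothesis $a \geq \alpha \defined dB(dB+1) + (d-1)CA$ by apportioning the budget $2$ between the three summands. In the generic case $r_d \geq 2$, $d \geq 2$: the inequality $\alpha \geq (d-1)CA$ yields $A/a \leq 1/((d-1)C) \leq 1/3$; the inequality $\alpha \geq dB(dB+1)$ yields $a - d \geq (dB)^2$ and hence $Bd^2/(a-d) \leq 1/B \leq 1/3$; and $a - d + 1 \geq (d-1)CA$ yields $C(d-1)^2/(a-d+1) \leq (d-1)/A \leq 1/(r_d B) \leq 1/6$, so the sum is comfortably below $2$. The boundary cases $r_d = 1$ (so $C = 0$) and $d = 1$ (so $(d-1)C = 0$) remove a term; they are handled directly by substituting $\alpha = d(d+1)$ or $\alpha = B(B+1)$ respectively, giving sums $1/d + 1 \leq 2$ and $\leq 2/B \leq 2/3$. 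Equality in the budget inequality occurs only in the extreme case $r_d = d = 1$ at $a = 2$, where the strictness of the Taylor estimates still forces $f(a) > 0$. The main technical obstacle is the bookkeeping across these case splits, but each individual estimate reduces to elementary arithmetic.
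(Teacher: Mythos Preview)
Your proof is correct. Both approaches exploit the same key identity $A - Bd - C(d-1) = 1$ (with your notation $A,B,C$) to cancel the first-order terms, but the execution differs. The paper applies Bernoulli's inequality $(1+x)^m \geq 1+mx$ to each of the three factors in~\eqref{eq: poly equation needed for good a} and multiplies, reducing the problem to showing
\[
(a+A)(a-Bd)(a-C(d-1)) - a^3 \geq 0.
\]
Thanks to the identity, the left-hand side is a monic quadratic in $a$ with non-negative constant term and linear coefficient exactly $-\alpha$, so it is non-negative once $a \geq \alpha$; no case analysis is needed. Your route via Taylor bounds on the logarithm is perfectly valid and makes the cancellation of first-order terms equally transparent, but the price is the budget inequality $A/a + Bd^2/(a-d) + C(d-1)^2/(a-d+1) \leq 2$, which you then have to verify by splitting into the cases $(r_d \geq 2, d \geq 2)$, $r_d = 1$, $d = 1$, and $r_d = d = 1$. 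The paper's multiplicative approach packages all of this into a single polynomial inequality.
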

%--------------------------------------%
\begin{proof}[Proof of \Cref{theorem: base case main conj is true}]
Follows from \Cref{theorem: large a technical statement} and \Cref{prop: polynomial identity} applied with $r_d=1$.
\end{proof}
%--------------------------------------%

%--------------------------------------%
\begin{proof}[Proof of \Cref{cor: main conj asymptotically true}]
Follows from \Cref{theorem: base case main conj is true} and~\cite[Theorem 3.11]{day2022extremal}.
\end{proof}
%--------------------------------------%

%--------------------------------------%
\begin{proof}[Proof of \Cref{theorem: large a arbitrary number of deficient blobs}]
Follows from \Cref{theorem: large a technical statement} and \Cref{prop: polynomial identity} for the base case $s=r_0(dr_d+1)+ r_d+1$; for large $s$, follows from the base case together with \Cref{prop: step up from the base case}.
\end{proof}
%--------------------------------------%

%--------------------------------------%
\begin{proof}[Proof of \Cref{theorem: stability}]
Follows from \Cref{theorem: large a technical statement} and \Cref{prop: polynomial identity} for the base case $s=r_0(dr_d+1)+ r_d+1$; for large $s$, follows from the base case together with \Cref{prop: step up from the base case}.  
\end{proof}
%--------------------------------------%

%--------------------------------------%
\subsection{Flat intervals: proof of \texorpdfstring{\Cref{theorem: flat intervals}}{Theorem 1.18}}

In this section, we fix $r\geq 1$ and set $T = \TUR{(r)}{a}$. Note that $\SIGMA{T}{n} = a\binom{n}{2} + \ex(n, K_{r+1})$, where $\ex(n, K_{r+1})$ is the usual graph Turán number. In particular, it follows from Turán's theorem that
\begin{align}
\label{eq: growth rate turan}
    \SIGMA{T}{n+1} - \frac{n+1}{n-1} \, \SIGMA{T}{n} = n - \floor[\Big]{ \frac{n}{r}} -\frac{2}{n-1}\ex(n, K_{r+1}).    
\end{align}

To prove \Cref{conj: flat interval} on flat intervals, we need the following lemma on the Füredi--Kündgen multigraph problem.

%--------------------------------------%
\begin{lemma}
\label{lemma: additive averaging flat interval}
For all $s\geq 2r+1$ and all $n \geq s$, we have
\begin{equation*}
    \ex_\Sigma\p[\big]{ n, s, \SIGMA{T}{s} + \floor{(s-1)/r} - 1 } \leq \SIGMA{T}{n} + \floor{(n - 1)/r} - 1.
\end{equation*}
\end{lemma}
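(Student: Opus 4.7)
The plan is to apply the arithmetic averaging result \Cref{cor: arithmetic averaging} to the bounded hereditary property $\cP \defined \cA\p[\big]{s, \SIGMA{T}{s} + \floor{(s-1)/r} - 1}$ with test function $f(n) \defined \SIGMA{T}{n} + \floor{(n-1)/r} - 1$, where $T = \TUR{(r)}{a}$. The base case $\ex_\Sigma(s, \cP) \leq f(s)$ holds immediately from the definition of $\cP$, so everything reduces to verifying the averaging hypothesis $\frac{n+1}{n-1} f(n) < f(n+1) + 1$ for all $n \geq s$.

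To do this, I would clear the denominator $n - 1$ and substitute the growth-rate identity~\eqref{eq: growth rate turan} for $\SIGMA{T}{n+1} - \frac{n+1}{n-1} \SIGMA{T}{n}$; after simplification, and noting that the contributions $\floor{n/r}$ cancel, the averaging hypothesis becomes equivalent to the concrete inequality
\begin{equation*}
n^2 + 1 > 2 \ex(n, K_{r+1}) + (n+1)\floor{(n-1)/r}.
\end{equation*}

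To handle this inequality I would write $n = qr + t$ with $0 \leq t \leq r - 1$ and plug in the closed-form Turán expression $2 \ex(n, K_{r+1}) = (r - 1)q(qr + 2t) + t^2 - t$, read off from the edge count of the Turán graph. Splitting on whether $t = 0$ (in which case $\floor{(n-1)/r} = q - 1$) or $t \geq 1$ (in which case $\floor{(n-1)/r} = q$), a direct computation shows that the difference between the two sides of the displayed inequality equals $q(r-1) + 2$ in the first case and $q(t - 1) + t + 1$ in the second. Since $n \geq s \geq 2r + 1$ forces $q \geq 1$, both quantities are always at least $2$, so the strict inequality required by \Cref{cor: arithmetic averaging} holds with room to spare, delivering the lemma.

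The only step requiring care is the polynomial bookkeeping when expanding $n^2 + 1 - 2\ex(n, K_{r+1}) - (n+1)\floor{(n-1)/r}$ under the two cases of $t$; I do not anticipate any conceptual obstacle, merely a pair of short routine computations.
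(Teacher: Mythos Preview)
Your proposal is correct and follows essentially the same route as the paper: both apply the arithmetic averaging bound (\Cref{cor: arithmetic averaging}, packaged in the paper as an explicit induction) with $f(n)=\SIGMA{T}{n}+\lfloor(n-1)/r\rfloor-1$, and both reduce to the identical concrete inequality $2\ex(n,K_{r+1})+(n+1)\lfloor(n-1)/r\rfloor<n^2+1$. The only difference is in verifying that last inequality: the paper dispatches it with the crude estimates $2\ex(n,K_{r+1})\le (1-1/r)n^2$ and $(n+1)\lfloor(n-1)/r\rfloor<n^2/r$, whereas you carry out an exact residue-class computation; your version gives the precise slack $q(r-1)+2$ or $q(t-1)+t+1$, which is more information than needed but perfectly valid.
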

%--------------------------------------%
\begin{proof}
We proceed by induction on $n$. The base case $n=s$ is trivial. For the inductive step, suppose our claim is true for $n$. By the arithmetic averaging bound~\eqref{eq: sum-averaging bound on ex sigma} and the inductive hypothesis, we have
\begin{align*}
    \ex_\Sigma\left(n+1, s, \SIGMA{T}{s} +\left \lfloor\frac{s-1}{r} \right \rfloor-1 \right)&\leq \left\lfloor  \frac{n+1}{n-1}\left(\SIGMA{T}{n} +\left \lfloor \frac{n-1}{r} \right\rfloor -1\right)\right\rfloor.
    %& =\SIGMA{\TUR{(r)}{a}}{n + 1} +\left \lfloor \frac{n}{r} \right\rfloor -1 + \left\lfloor    n-\lfloor \frac{n}{r}\rfloor -\frac{2}{n-1}\ex(n, K_{r+1}) +\frac{n+1}{n-1}\lfloor\frac{n-1}{r}\rfloor-\frac{2}{n-1} \right\rfloor 
\end{align*}
Substituting for $\frac{n+1}{n-1}\SIGMA{T}{n}$ using~\eqref{eq: growth rate turan} and rearranging terms, we see that it is enough for our purposes to show that
\begin{align*}
    2\ex(n, K_{r+1})+(n+1)\left \lfloor\frac{n-1}{r} \right \rfloor < n^2+1 
\end{align*}
holds. But this latter inequality is easily seen to be true since 
\begin{equation*}
    2\ex(n, K_{r+1}) + (n+1)\left \lfloor\frac{n-1}{r} \right \rfloor < 2 \binom{r}{2}\left( \frac{n}{r}\right)^2+ \frac{n^2}{r}=n^2< n^2+1. \qedhere
\end{equation*}
\end{proof}
%--------------------------------------%

%--------------------------------------%
\begin{proof}[Proof of \Cref{theorem: flat intervals}]
By \Cref{lemma: additive averaging flat interval}, we have
\begin{equation*}
    a \binom{n}{2}
    \leq \SIGMA{T}{n}
    \leq \ex_\Sigma\p[\big]{n, s, \SIGMA{T}{s} + \floor{(s-1)/r}-1}
    \leq \SIGMA{T}{n} + \floor{(n-1)/r} - 1
    \leq (a+1)\binom{n}{2}.
\end{equation*}
It then follows directly from the integral  AM-GM inequality (\Cref{prop: integer AM-GM}), that 
\begin{align*}
    \Pi_n(T) = a^{\binom{n}{2}}
    &\p[\Big]{\frac{a+1}{a}}^{\frac{r-1}{r}\binom{n}{2} + O(n)} \leq \ex_\Pi\p[\big]{n, s, \SIGMA{T}{s}} \\
    &\leq \ex_\Pi\p[\big]{n, s, \SIGMA{T}{s} + \floor{(s-1)/r} - 1 } \\
    &\leq a^{\binom{n}{2}} \p[\Big]{\frac{a+1}{a}}^{\ex_\Sigma\left(n,s, \SIGMA{T}{s}+ \floor{(s-1)/r}-1\right) - a\binom{n}{2}}
    \leq a^{\binom{n}{2}} \p[\Big]{\frac{a+1}{a}}^{\frac{r-1}{r}\binom{n}{2} + O(n)}.
\end{align*}
It follows that $\ex_\Pi(s, \SIGMA{T}{s} + t) = a^{1/r}(a + 1)^{(r-1)/r}$ for all $t$: $0 \leq t \leq \floor{(s-1)/r} - 1$.
\end{proof}
%--------------------------------------%

%----------------------------------------------------------%

\section{Concluding remarks and open problems}
\label{section: concluding remarks}

%Our work leaves a number of questions open.

%--------------------------------------%
\subsection{Large ambient multiplicity regime}

First and foremost remains the problem of proving \Cref{conj: large a regime generalised Turan optimal} in full. In principle, given a fixed Turán pattern $T=\TUR{\bfr}{a}$, one can use the method in our proof of \Cref{theorem: large a technical statement} to prove that for all $s\geq s_0$ and all $a$ sufficiently large, we have $\ex_\Pi(s, \SIGMA{T}{s})= e^{\pi_T}$ (here $s_0$ is the smallest number of vertices needed to `recognise' blow-ups of $T$). We have, for instance, verified this in the case where $\bfr=(1,1,1)$. The main roadblock for doing this in general is the technical \Cref{prop: partition sizes in sum-optimal blow-ups of T} on the sizes of parts in sum-optimal blow-ups of $T$: determining these is a finite optimisation problem and a purely mechanical exercise, but the case-checking involved rapidly outgrows our willingness to carry it out when $T$ is a more intricate pattern.

It may be unfeasible to tackle this problem directly, but perhaps one could exploit the recursive structure of generalised Turán patterns and their optimal blow-ups to give an indirect proof that blow-ups of generalised Turán patterns are always asymptotically product-optimal. Indeed, this is, to some extent, what Füredi and Kündgen~\cite{Furedi2002-fi} did for their additive problem: one can read out of their work that for $s, t\in \ZZ_{>0}$ fixed with $t<\binom{s}{2}$, and all $a\in \ZZ_{>0}$ sufficiently large, there is a generalised Turán pattern $T$ whose blow-ups provide asymptotically sum-extremal constructions of $(s, a\binom{s}{2}+t)$-graphs.  However, it requires significant computations to work out \emph{which} generalised Turán pattern it is, and the associated arithmetic density.

The difficulty of computing any of the numbers involved in generalised Turán patterns (size of $s_0$, growth rate of $\SIGMA{T}{n}$ and $\Pi_n(T)$ and associated partition sizes) may be a feature of the problem which makes it ultimately difficult to say significantly more about the general answer to the Mubayi--Terry problem in the large $a$ regime than we do in the present paper.

Turning to the specific results we obtained in this paper, one area where we do feel an improvement is possible is our `large $a$' condition. For some specific cases, such as $T=\TUR{\bfr}{a}$ with $\bfr=(1, 0, 1)$ and $s_0=5$, we have been able to show that as soon as the ambient edge multiplicity $a$ is large enough to make all edge multiplicities in $T$ strictly positive, then blow-ups of $T$ are product-optimal $(s_0, \SIGMA{T}{s_0})$ graphs. However, in general, our condition~\eqref{eq: poly equation needed for good a} leaves a gap. We would guess that this is not correct in general, and that blow-ups of $T$ are optimal as soon as they are possible, i.e. that one can replace `$a$ sufficiently large' by `$a$ is such that all multiplicities in $T$
are strictly positive'. In particular, there should be a way of improving the `step-up' \Cref{prop: step up from the base case} to get $a_1=a_0$.

Finally, regarding our `flat intervals' result, \Cref{theorem: flat intervals}, one could ask two further questions: can one obtain similar flat interval results after other generalised Turán patterns $T=\TUR{\bfr}{a}$ where $\bfr=(r)$, e.g.\ $\bfr=(r,1)$  or $(r, 0, 1)$? Such flat intervals should exist when $s$ is sufficiently large, and our proof of \Cref{theorem: flat intervals} should carry over provided one substitutes in appropriate results from Füredi and Kündgen for \Cref{lemma: additive averaging flat interval}. More interestingly, for $\bfr = (r)$, one can ask about the behaviour of the function $f(n) \defined \log(\ex_\Pi(n, s,q)/\Pi_n(T))$ as $q$ varies over the `flat interval' $[\SIGMA{T}{s}, \SIGMA{T}{s}+\lfloor (s-1)/r\rfloor -1]$. When (for which $q$) is $f(n)$ of order $O(1)$, $\theta(n)$, or $\omega(n)$ but $o(n^2)$ ? It would be interesting to see if an analogue of the known behaviour for the Erdős problem on $\ex(n,s',q')$, $q'\in [0, \lfloor (s')^2/4\rfloor]$ holds in this case. Similar questions could be asked about $\mathrm{ex}_{\Sigma}(n,s,q)$ with respect to its own flat intervals.

%--------------------------------------%
\subsection{Small ambient multiplicity regime}

At the moment, the general picture for the `small $a$' regime is somewhat unclear, for both the additive Füredi--Kündgen and the multiplicative Mubayi--Terry problem. One should note first of all that, if our \Cref{conj: large a regime generalised Turan optimal} is correct for a given $s$ and $t\in [0, \binom{s}{2})$, `small $a$' actually means `$a$ below the thresholds at which blow-ups of some fixed generalised Turán pattern $T=\TUR{\bfr}{a}$  can become sum- and product-optimal for $(s, a\binom{s}{2}+t)$-graphs. In particular, if e.g.\ $\bfr$ is the $(d+1)$-tuple $(1, 0, \dotsc, 0, 1)$, then this would mean $a < d + 1$ (for the multiplicative problem) or $a < d$ (for the additive problem).

In this paper, we have investigated small $a$ for which the edge multiplicities in extremal constructions end up being restricted to $\set{a - 1, a, a + 1}$. But for slightly larger $a$, e.g.\ $a=2$ in the additive problem or $a=3$ in the multiplicative problem, more edge multiplicities are potentially available for a putative extremal construction. A natural and interesting question to ask is whether one continues to only see either graph patterns $G^{(a)}$ or generalised Turán patterns as extremal constructions, or whether there are yet more extremal constructions that fall in neither of these categories.

In a different direction, it would be pleasing to complete the picture of the additive problem in the $a=1$ case and of the multiplicative problem in the $a=2$ case that we have begun drawing in this paper. A challenge in both cases is the difficulty of the design-theoretic problem of constructing optimal graph patterns.  We make some tentative conjectures below regarding the behaviour for a range of $q$ of length $\Omega(s^2)$ not covered by our results above.

Let $K_{1, \infty}=K_{1, \infty}^{(a)}$ denote the pattern consisting of an edge of multiplicity $a+1$, with a loop of multiplicity $a-1$ at one endpoint and a loop of multiplicity $a$ at the other endpoint (note that this is in fact a generalised Turán pattern $T = \TUR{\bfr}{a}$ with $\bfr = (1,1)$). Let also $H_{26}$ denote the point-line incidence graph of the projective plane $\mathrm{PG}(2,3)$, which is the unique up to isomorphism $4$-regular graph on $26$ vertices with girth $6$. Finally, let $\mathrm{Clebsch}$ denote the Clebsch graph; this is the unique $(4,5)$-cage on sixteen vertices. Set $\cC_{4.5} \defined \set{K_{1, 4}, H_{26}}$, $\cC_4 \defined \set{ {K_{1,5}}, {C_4}, {\mathrm{Clebsch}} }$ and $\cC_3 \defined \set{ {K_{1,\infty}}, {C_3}, {K_{3,3}} }$.

%--------------------------------------%
\begin{conjecture}[Additive conjecture]
\label{conj: additive}
Let $s\in \ZZ_{\geq 4}$ and for all graph patterns $G=G^{(a)}$ below, let the ambient edge multiplicity be $a=1$. Then 
\begin{enumerate}[(i)]	
    %\item for $\SIGMA{K_{1,4}}{s} \rfloor\leq q < \SIGMA{K_{1,5}}{s}$, $\ex_\Sigma(s, q)= 1+\frac{3}{13}$;
    %\item for $\SIGMA{K_{1,5}}{s} \leq q<\left\lfloor \frac{43s^2-19s}{38}\right\rfloor $, $\ex_\Sigma(s, q)= 2+\frac{1}{4}$;
    \item for all $k\geq 4$, $\SIGMA{K_{1,k}}{s} \leq q < \min\left(\SIGMA{K_{1,k+1}}{s}, \SIGMA{K_{1,\infty}}{s}\right)$, we have $\ex_\Sigma(s, q)= 1+\frac{k-1}{3k+1}$;
    \item for $q = \SIGMA{K_{1,\infty}}{s}$, we have $\ex_\Sigma(s, q) =  1 + 1/3$.
\end{enumerate}		
\end{conjecture}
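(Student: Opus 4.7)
The plan is to follow the template of \Cref{theorem: additive}, reducing the problem to pattern optimisation via colourful regularity and cloning, and then classifying the admissible patterns. For the lower bound in part (i), note that $K_{1,k}^{(1)}$ is $(s,q)$-admissible since $\SIGMA{K_{1,k}^{(1)}}{s} \leq q$, and its sum-optimal blow-ups give $\ex_\Sigma(s,q) \geq \sigma_{K_{1,k}^{(1)}} = 1 + (k-1)/(3k+1)$ by \Cref{prop: sum for optimal blow-ups of stars paths and cycles}(i). For part (ii), blow-ups of $K_{1,\infty}^{(1)}$ are admissible at $q = \SIGMA{K_{1,\infty}^{(1)}}{s}$ and give $\ex_\Sigma(s,q) \geq \sigma_{K_{1,\infty}^{(1)}} = 4/3$ via a direct calculation. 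The substance of the proof is thus the matching upper bound.

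For the upper bound in part (i), fix $\eps > 0$ and apply \Cref{cor: applying colourful regularity} to a sum-extremal $(s,q)$-graph $G$ on $n$ vertices. This yields an $(s,q)$-admissible pattern $P$ on boundedly many vertices with all loops of multiplicity $0$ and all edges of multiplicity at most $2$. The cloning lemma (\Cref{lemma: cloning}) then allows us to restrict to a subpattern $P' = H^{(1)}$ for some graph $H$, with $\sigma_{P'} = \sigma_P$. Since $q < \SIGMA{K_{1,k+1}^{(1)}}{s}$, we have $\Delta(H) \leq k$; by \Cref{prop: connected patterns best} we may assume $H$ is connected; and by \Cref{prop: bounded degree patterns bounded number of vertices}(i), either $\sigma_H \leq 1 + (k-1)/(3k+1)$ (and we are done), or $\card{V(H)} \leq 3k$. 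The main obstacle is then a structural classification: no connected graph $H \neq K_{1,k}$ on at most $3k$ vertices with $\Delta(H) \leq k$ can be both $(s,q)$-admissible and satisfy $\sigma_{H^{(1)}} > 1 + (k-1)/(3k+1)$. I would attempt this in three stages --- first ruling out graphs containing $C_3$ or $C_4$ by comparing $\SIGMA{C_\ell^{(1)}}{s}$ with $\SIGMA{K_{1,k+1}^{(1)}}{s}$ for $k \geq 4$ (this requires careful lower-order bookkeeping, as $\sigma_{C_4^{(1)}} = 5/4 = \sigma_{K_{1,5}^{(1)}}$ already coincide at leading order); then eliminating denser bipartite-like substructures such as $K_{a,b}$ with $a,b \geq 2$; and finally handling the remaining sparse candidates (trees other than $K_{1,k}$, and graphs of girth $\geq 5$ with an attached high-degree vertex) via a Lagrangian balancing argument in the spirit of \Cref{lemma: subcubic girth at least 5}, using the optimality conditions from \Cref{cor: balanced degrees in optimal pattern weightings}. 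The principal technical challenge is making the analysis uniform in $k$, since the space of candidate graphs on up to $3k$ vertices grows rapidly; a clean proof would likely require extracting a single Lagrangian inequality bounding $\sigma_{H^{(1)}}$ in terms of $\Delta(H)$ and the order or girth of $H$, rather than an exhaustive case-check.

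For the upper bound in part (ii), the cloning reduction to graph patterns fails because $K_{1,\infty}^{(1)}$ itself has a loop of multiplicity $1$, so the class of candidate extremal patterns genuinely includes patterns with non-zero loops. The delicate point is that the Katona--Nemetz--Simonovits monotonicity (\Cref{lemma: multigraph Katona Nemetz Simonovits}) only gives $\sigma_P \leq \SIGMA{P}{s}/\binom{s}{2} \leq \SIGMA{K_{1,\infty}^{(1)}}{s}/\binom{s}{2} = 4/3 + O(1/s)$, which lies strictly above $4/3$, and one can indeed exhibit finite patterns $P$ with $\sigma_P > 4/3$ (e.g.\ loops of multiplicity $1$ on both endpoints of a multiplicity-$2$ edge, giving $\sigma_P = 3/2$). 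The proof must therefore show that any admissible pattern $P$ with $\sigma_P > 4/3$ necessarily has $\SIGMA{P}{s} > \SIGMA{K_{1,\infty}^{(1)}}{s}$ for the specific $s$, contradicting admissibility. I would approach this via a refined version of \Cref{lemma: upper bounds for sigma p} allowing heterogeneous loop multiplicities, together with a precise comparison of the lower-order terms in $\SIGMA{P}{s}/\binom{s}{2}$ and $\SIGMA{K_{1,\infty}^{(1)}}{s}/\binom{s}{2} = 4/3 + O(1/s)$, effectively characterising $K_{1,\infty}^{(1)}$ as the unique pattern (up to equivalence) with $\sigma_P = 4/3$ that minimises this lower-order correction. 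The fact that the target bound is saturated by $K_{1,\infty}^{(1)}$ itself is what makes this the most delicate part of the proof, and where I expect the main technical difficulty to lie.
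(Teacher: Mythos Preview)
This statement is presented in the paper as an open \emph{conjecture} in the concluding remarks (Section~\ref{section: concluding remarks}), not as a proved result. The paper supplies no proof; on the contrary, \Cref{remark: computational reduction in additive case} explains that while the regularity-plus-cloning template reduces part~(i) to a finite pattern optimisation, that optimisation is ``unfeasible in practice'', and the range $q\ge \SIGMA{K_{1,\infty}}{s}$ in part~(ii) lies outside even this reduction. There is therefore no proof in the paper against which to compare your proposal.

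That said, a couple of remarks on your outline in light of the paper's surrounding discussion. For part~(i), you frame the classification step as showing that ``no connected graph $H\neq K_{1,k}$'' achieves $\sigma_{H^{(1)}}>1+\frac{k-1}{3k+1}$; but the paper's comments immediately after the conjecture indicate that $K_{1,k}$ is \emph{not} expected to be the unique extremal pattern. For $k=4$ the conjectured extremal family is $\cC_{4.5}=\{K_{1,4},H_{26}\}$, and for $k=5$ it is $\cC_4=\{K_{1,5},C_4,\mathrm{Clebsch}\}$ --- in particular $\sigma_{C_4^{(1)}}=\tfrac{5}{4}=\sigma_{K_{1,5}^{(1)}}$, so $C_4$ is not to be ``ruled out'' but is conjecturally co-extremal. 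Any clean Lagrangian inequality you extract must therefore allow these as equality cases, which is part of what makes the problem hard.

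For part~(ii), your diagnosis of the obstruction is not quite right. The regularity step of \Cref{cor: applying colourful regularity} forces all loops in the reduced pattern to have multiplicity $m_-=0$ regardless of $q$, so non-zero loops do not enter the pattern class. The genuine obstruction at $q=\SIGMA{K_{1,\infty}}{s}$ is twofold: first, $\SIGMA{K_{1,\ell}}{s}<\SIGMA{K_{1,\infty}}{s}$ for every $\ell$, so $K_{1,\ell}$ is admissible for all $\ell$ and the maximum degree of $H$ is no longer a priori bounded --- hence \Cref{prop: bounded degree patterns bounded number of vertices} no longer yields a finite candidate set; second, $\SIGMA{P_{+2}}{s}<\SIGMA{K_{1,\infty}}{s}$, so edges of multiplicity $\ge 3$ become available in the reduced pattern, and the clean reduction to graph patterns $H^{(1)}$ breaks down. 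These are the issues a proof of part~(ii) would have to overcome.
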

%--------------------------------------%

What is more, for (i), collections of extremal constructions in the cases $k=4$ and $k=5$ are given by blow-ups of the graph patterns in $\cC_{4.5}$ and $\cC_4$ respectively, while for (ii), a collection of extremal constructions is given by blow-ups of the graph patterns in $\cC_3$.

%\textcolor{red}{To think about: could we solve for the whole range $2\binom{s}{2}\leq q\leq 3\binom{s}{2}$ ? This might be within reach of our methods. Also Füredi and Kündgen might give tight bounds for part of the range.}

%\textcolor{blue}{Actually, I think Füredi and Kündgen should give us the correct answer for all parts in the range above where a Turán pattern with nonnegative multiplicities is optimal. To work out: (a) the ranges of $q$ for which we have matching upper and lower bound from Füredi and Kündgen and the Turán patterns, and (b) the ranges where we have a gap. The latter should correspond to cases where Turán patterns with negative weight are optimal. So e.g. work out for which $q$ a blow-up of the triangle of edges of multiplicity $4$ with one vertex having a loop of multiplicity $-1$ and the other two vertices having multiplicity $2$ loops should be additively optimal --- for that particular value, we should have a gap.}

%--------------------------------------%
\begin{conjecture}[Multiplicative conjecture]
\label{conj: mutliplicative}
Let $s\in \ZZ_{\geq 4}$ and for all graph patterns $G=G^{(a)}$ below, let the ambient edge multiplicity be $a=2$. Then
\begin{enumerate}[(i)]	
    \item for $\SIGMA{K_{1,4}}{s} \leq q < \min\left(\SIGMA{K_{1,7}}{s}, \SIGMA{K_{1,\infty}}{s}\right)$, we have $\ex_\Pi(s, q)= e^{\pi_{\Petersen}}$;
    %\item for $\SIGMA{K_{1,5}}{s} \leq q<\left\lfloor \frac{43s^2-19s}{38}\right\rfloor $, $\ex_\Sigma(s, q)= 2+\frac{1}{4}$;
    \item for all $k\geq 7$, $\SIGMA{K_{1,k}}{s} \leq q < \min(\SIGMA{K_{1,k+1}}{s}, \SIGMA{K_{1,\infty}}{s})$, we have $\ex_\Pi(s, q)= e^{\pi_{K_{1,k}}}$;
    \item for $q = \SIGMA{K_{1,\infty}}{s}$, we have $\ex_\Pi(s, q)= e^{\pi_{K_{1,\infty}}}$.
\end{enumerate}		
\end{conjecture}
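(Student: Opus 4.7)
The plan is to follow the strategy used in the proof of \Cref{theorem: multiplicative}, applying the general reduction machinery of Section~\ref{section: general tools} to reduce the problem to a finite optimization over graph patterns, and then carrying out a careful case analysis for each interval of $q$-values. More precisely, fix $s$ and $q$ in one of the ranges above. Starting from a near product-extremal $(s,q)$-graph $G \in \cA_{s,q}(n)$ with $n$ large, we may assume (after deleting edges of multiplicity $0$) that every edge has multiplicity at least $1$. Applying the colourful regularity reduction (\Cref{cor: applying colourful regularity}) followed by the cloning lemma (\Cref{lemma: cloning}) yields a subpattern of the form $H^{(2)}$ for some graph $H$, in which all edges have multiplicity exactly $2$ or $3$, such that $\pi_{H^{(2)}} \geq \log \ex_\Pi(s,q) - O(\eps)$. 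The whole problem thus reduces to determining which graphs $H$ give rise to $(s,q)$-admissible patterns $H^{(2)}$ and computing the corresponding values of $\pi_{H^{(2)}}$.

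For part (i), the condition $q < \SIGMA{K_{1,7}^{(2)}}{s}$ combined with \Cref{prop: sum for optimal blow-ups of stars paths and cycles} and the arithmetic of $\SIGMA{\cdot}{s}$ forces $H$ to have bounded maximum degree (namely $\Delta(H) \leq 6$) and, more importantly, the interval is contained in $[\SIGMA{K_{1,4}^{(2)}}{s}, \SIGMA{C_4^{(2)}}{s})$ so $H$ also has girth at least $5$. The key structural lemma I would establish is a generalization of \Cref{lemma: subcubic girth at least 5} together with a Moore-bound style inequality: for graphs of girth at least $5$ and maximum degree $\Delta$, one has $\pi_{H^{(2)}} \leq \pi_{\Petersen^{(2)}}$ whenever $H \neq \Petersen$. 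The main work is then to rule out graphs of higher maximum degree; this can be done by an argument similar to the Petersen case in \Cref{theorem: multiplicative}, using \Cref{cor: balanced degrees in optimal pattern weightings} to obtain balanced product-degree equations at a maximum-weight vertex $u$, expanding the first and second neighbourhoods of $u$ using girth $\geq 5$ to keep them disjoint, and deriving a contradiction from the resulting system of inequalities when $\Delta(H) \geq 4$ or when $H$ is cubic of girth $\geq 5$ but not Petersen.

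For parts (ii) and (iii), the corresponding range forces $H$ to contain no cycle (since $q < \SIGMA{C_\ell^{(2)}}{s}$ for all relevant $\ell$, using \Cref{prop: sum for optimal blow-ups of stars paths and cycles}). Thus $H$ is a forest, and by \Cref{prop: connected patterns best} we may assume $H$ is in fact a tree. The bound $q < \SIGMA{P_4^{(2)}}{s}$ is violated (so $P_4$ is admissible), but $q < \SIGMA{K_{1,k+1}^{(2)}}{s}$ restricts stars to have at most $k$ leaves; combining these, $H$ must in fact be a star $K_{1,j}$ with $j \leq k$, together with possibly isolated vertices and edges that \Cref{prop: connected patterns best} allows us to discard. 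An application of \Cref{prop: product extremal for cycles paths stars}(i) gives $\pi_{K_{1,j}^{(2)}} \leq \pi_{K_{1,k}^{(2)}}$ for $j \leq k$, yielding the upper bound. For (iii), one additionally needs to allow graphs with loops of multiplicity $2$, forcing $H^{(2)}$ to be replaced by a more general pattern that can include the $K_{1,\infty}$ structure; the required monotonicity $\pi_{K_{1,k}^{(2)}} \leq \pi_{K_{1,\infty}^{(2)}}$ follows from a direct calculation analogous to \Cref{prop: product extremal for cycles paths stars}(i). The matching lower bound in each case is provided by a balanced (respectively weight-optimized) blow-up of the relevant pattern, which is $(s,q)$-admissible by the choice of thresholds.

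The main obstacle will be the classification step for part (i): ruling out all non-Petersen subcubic girth-$5$ graphs, and especially non-cubic graphs of maximum degree $\geq 4$ with girth $\geq 5$. The calculations that worked for Petersen in \Cref{theorem: multiplicative}(iii) exploited very tight numerical coincidences (for instance, $\log 3 \log 2 - 2(\log(3/2))^2 > 0$), and for higher degree or higher $k$ the corresponding inequalities become much more delicate, requiring either new Moore-bound–style structural inputs or a more uniform argument exploiting convexity of $\pi_H$ in the spectral radius of $H$. Moreover, the thresholds involving $\min(\SIGMA{K_{1,k+1}^{(2)}}{s}, \SIGMA{K_{1,\infty}^{(2)}}{s})$ eventually collide (for $k$ large enough), and the correct description of the patterns at this transition requires the design-theoretic input of understanding which sparse graphs $H$ with $\Delta(H) = k$ are forbidden by the $(s,q)$-condition — this is where the combinatorial subtlety pointed out in \Cref{remark: extremal examples in small a} enters in full force.
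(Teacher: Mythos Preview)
This statement is presented in the paper as a \emph{conjecture} (in the concluding remarks, Section~\ref{section: concluding remarks}), not as a theorem; the paper gives no proof and explicitly describes it as ``tentative''. So there is no paper proof to compare against, and your proposal should be read as an attempted attack on an open problem.

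Your reduction via \Cref{cor: applying colourful regularity} and \Cref{lemma: cloning} to graph patterns $H^{(2)}$ is fine and matches the strategy of \Cref{theorem: multiplicative}. However, the structural restrictions you then impose on $H$ are incorrect. For part (i) you assert that the interval $[\SIGMA{K_{1,4}}{s},\SIGMA{K_{1,7}}{s})$ is contained in $[\SIGMA{K_{1,4}}{s},\SIGMA{C_4}{s})$, forcing girth at least~$5$. This is false: from \Cref{prop: sum for optimal blow-ups of stars paths and cycles} one has $\sigma_{C_4^{(2)}}=2+\tfrac14$ while $\sigma_{K_{1,7}^{(2)}}=2+\tfrac{3}{11}>2+\tfrac14$, so for large $s$ the interval extends \emph{past} $\SIGMA{C_4}{s}$, and $H$ may well contain $4$-cycles. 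The same issue recurs more severely in parts (ii) and (iii): you claim $H$ must be acyclic, but once $q\geq \SIGMA{K_{1,7}}{s}$ we certainly have $q\geq \SIGMA{C_4}{s}$ (and eventually $q\geq \SIGMA{C_5}{s}$ as well), so $H$ can contain short cycles and is not a forest. Your subsequent reduction to stars therefore collapses.

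Even granting that $H$ were a tree, the step ``$H$ must in fact be a star $K_{1,j}$'' is unjustified: bounding the maximum degree by $k$ does not by itself rule out, say, paths or more elaborate trees, and you would need a separate argument that among all trees (or indeed all admissible graphs) with $\Delta\leq k$ the star maximises $\pi$. Finally, note that the numerical margins here are extremely thin (for instance $\pi_{K_{1,6}^{(2)}}$ and $\pi_{\Petersen^{(2)}}$ agree to three decimal places), which is precisely why the paper leaves the problem open: the classification of admissible $H$ once $C_4$ enters the picture, and the comparison of their $\pi$-values, is genuinely delicate and not resolved by the Moore-bound-style argument that worked in \Cref{theorem: multiplicative}(iii).
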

%--------------------------------------%

%--------------------------------------% 
\subsection{Other questions}

As suggested by Füredi and Kündgen~\cite[Section 5]{Furedi2002-fi}, one could investigate the families of limits $\set{ \ex_\Sigma(s, q) \st s, q \in \ZZ_{>0}}$ and $\set{ \ex_\Pi(s,q) \st s, q \in \ZZ_{>0} }$.
In particular, are they well-ordered?
And what can one say if one restricts one's attention to a fixed ambient edge density, e.g.\ $a\binom{s}{2} \leq q < (a + 1) \binom{s}{2}$?
Given results of Rödl and Sidorenko~\cite{rodl1995jumping} on the jumping constant conjecture for multigraphs, the cases $a \leq 2$ seem particularly interesting.

In a different direction, in the spirit of the work of Erdős, Faudree, Jagota and Łuczak in~\cite{erdos1996large}, one could consider \Cref{problem: Furedi Kundgen,problem: Mubayi Terry} for $(s, q)$-graphs on $n$ vertices with $s$ and $q$ both varying as functions of $n$.
To formulate this with more precision, say that a multigraph $G$ contains an $(s, q)$-set if there is some $s$-set $X \subseteq V(G)$ with $e_{G}(X) \geq q$.

%--------------------------------------%
\begin{problem}
Let $G$ be a multigraph on $n$ vertices, and let $a\in \ZZ_{>0}$, $p\in (0,1)$ be fixed.
\begin{enumerate}[(i)]
    \item Suppose $e(G)\geq (a+p)\binom{n}{2}$. Given $s=s(n)$, what is the largest $q=q(a, p, s, n)$ such that $G$ must contain an $(s,q)$-set? 
    
    \item Suppose $P(G)\geq (a+p)^{\binom{n}{2}}$. Given $s=s(n)$, what is the largest $q=q(a, p, s, n)$ such that $G$ must contain an $(s,q)$-set? 
\end{enumerate}
\end{problem}
%--------------------------------------%

%Fractional version of our problems (allow s, q to be nonnegative reals, distribute weights)

Finally, one could consider general decreasing bounded multigraph properties, and ask whether blow-ups of patterns always provide some of the extremal examples.

%--------------------------------------%
\begin{question}
Let $\cP$ be a decreasing bounded property of multigraphs. Is it the case that there exist finite $\cP$-admissible patterns $P_+$ and $P_{\times}$ such that $\ex_\Sigma(n, \cP)=(1+o(1))\SIGMA{P_+}{n}$ and 
$\ex_\Pi(n, \cP)=(1+o(1))\PI{P_{\times}}{n}$ ?
\end{question}
%--------------------------------------%

%----------------------------------------------------------%

\section*{Acknowledgements}
Discussions leading to this paper began during a research visit by Vojtěch Dvořák, Adva Mond and Victor Souza to Umeå in late 2021, and an important part of the work was carried out over the course of Rik Sarkar's three months-long research internship in Umeå in Fall 2024. The hospitality and support of the Umeå department of mathematics and mathematical statistics for both of these stays in Umeå is gratefully acknowledged.

The authors would also like to express a special gratitude to Vojtěch Dvořák for contributing to stimulating conversations during the exploratory phase of this project. The first author would also like to thank his colleagues Klas Markström and Eero Räty for fruitful conversations on closely related topics.

The first author is supported by Swedish Research Council grant VR 2021-03687.
The fourth author is partially supported by ERC Starting Grant 101163189 and UKRI Future Leaders Fellowship MR/X023583/1.

%----------------------------------------------------------%

\bibliographystyle{unsrt}
\bibliography{mubayiterry_bibliov2}

\begin{thebibliography}{10}

\bibitem{erdos1967}
Paul Erd\H{o}s.
\newblock Extremal problems in graph theory.
\newblock In {\em A Seminar on Graph Theory}, pages 54--59. Holt, Rinehart, and
  Winston, New York, 1967.

\bibitem{erdos1964}
Paul Erd{\H o}s.
\newblock Extremal problems in graph theory.
\newblock In {\em Theory of Graphs and Its Applications}, pages 29--36. Publ.
  House. Czechoslovak Acad. Sci., Prague, 1964.
\newblock Proc. Sympos. Smolenice, 1963.

\bibitem{turan1941}
Paul Tur\'an.
\newblock On an extremal problem in graph theory.
\newblock {\em Matematikai és Fizikai Lapok}, 48:436--452, 1941.

\bibitem{Dirac1963-vv}
Gabriel Dirac.
\newblock {Extensions of Tur\'{a}n's theorem on graphs}.
\newblock {\em Acta Mathematica Hungarica}, 14(3-4):417--422, 1963.

\bibitem{Erdos1946-df}
Paul Erd{\H o}s and Arthur~Harold Stone.
\newblock {On the structure of linear graphs}.
\newblock {\em Bulletin of the American Mathematical Society},
  52(12):1087--1091, 1946.

\bibitem{gol1987maximum}
AI~Gol'berg and Vladimir~Alexander Gurvich.
\newblock On the maximal number of edges for a graph with n vertices in which a
  given subgraph with k vertices has no more than l edges.
\newblock In {\em Doklady Akademii Nauk}, volume 293, pages 27--32. Russian
  Academy of Sciences, 1987.

\bibitem{griggs1998extremal}
Jerrold~R. Griggs, Mikl{\'o}s Simonovits, and George~Rubin Thomas.
\newblock Extremal graphs with bounded densities of small subgraphs.
\newblock {\em Journal of Graph Theory}, 29(3):185--207, 1998.

\bibitem{Bondy1997-ji}
John~Adrian Bondy and Zsolt Tuza.
\newblock {A weighted generalization of {T}urán's theorem}.
\newblock {\em Journal of Graph Theory}, 25(4):267--275, 1997.

\bibitem{Kuchenbrod1999-as}
John~Allen Kuchenbrod.
\newblock {\em {Extremal Problems On Integer-Weighted Graphs}}.
\newblock PhD thesis, University of Kentucky, 1999.

\bibitem{Furedi2002-fi}
Zolt\'an F\"uredi and Andr\'e K\"undgen.
\newblock Tur\'an problems for integer-weighted graphs.
\newblock {\em Journal of Graph Theory}, 40(4):195--225, 2002.

\bibitem{Mubayi2019-le}
Dhruv Mubayi and Caroline Terry.
\newblock {An Extremal Graph Problem with a Transcendental Solution}.
\newblock {\em Combinatorics, Probability and Computing}, 28(2):303--324, 2019.

\bibitem{Erdos1976-ad}
Paul Erd{\H o}s, Daniel~J Kleitman, and Bruce~Lee Rothschild.
\newblock {Asymptotic enumeration of $K_n$-free graphs}.
\newblock In {\em {Tomo II, Atti dei Convegni Lincei, No. 17}}, pages 19--27,
  1976.

\bibitem{alekseev1993entropy}
Vladimir~E Alekseev.
\newblock On the entropy values of hereditary classes of graphs.
\newblock {\em Discrete Mathematics and Applications}, 3:191--199, 1993.

\bibitem{bollobas1997hereditary}
B{\'e}la Bollob{\'a}s and Andrew Thomason.
\newblock Hereditary and monotone properties of graphs.
\newblock In {\em The Mathematics of Paul Erd{\H o}s II}, pages 70--78.
  Springer, 1997.

\bibitem{Balogh2015-al}
József Balogh, Robert Morris, and Wojciech Samotij.
\newblock {Independent sets in hypergraphs}.
\newblock {\em Journal of the American Mathematical Society}, 28(3):669--709,
  2015.

\bibitem{Saxton2015-cr}
David Saxton and Andrew Thomason.
\newblock {Hypergraph containers}.
\newblock {\em Inventiones Mathematicae}, 201(3):925--992, 2015.

\bibitem{erdos1974some}
Paul Erd{\H{o}}s.
\newblock Some new applications of probability methods to combinatorial
  analysis and graph theory.
\newblock In {\em Proceedings of the Fifth Southeastern Conference on
  Combinatorics, Graph Theory and Computing}, pages 39--51. Boca Raton, 1974.

\bibitem{Falgas-Ravry2019-ne}
Victor Falgas-Ravry, Kelly O'Connell, and Andrew Uzzell.
\newblock {Multicolor containers, extremal entropy, and counting}.
\newblock {\em Random Structures \& Algorithms}, 54(4):676--720, 2019.

\bibitem{hoppen2017rainbow}
Carlos Hoppen, Hanno Lefmann, and Knut Odermann.
\newblock A rainbow {E}rd{\H o}s--{R}othschild problem.
\newblock {\em SIAM Journal on Discrete Mathematics}, 31(4):2647--2674, 2017.

\bibitem{de2020number}
Josefran de~Oliveira Bastos, Fabricio~Siqueira Benevides, and Jie Han.
\newblock The number of gallai k-colorings of complete graphs.
\newblock {\em Journal of Combinatorial Theory, Series B}, 144:1--13, 2020.

\bibitem{pikhurko2024exact}
Oleg Pikhurko and Katherine Staden.
\newblock Exact solutions to the erdős-rothschild problem.
\newblock {\em Forum of Mathematics, Sigma}, 12:e8, 2024.

\bibitem{simonovits1968method}
Mikl{\'o}s Simonovits.
\newblock A method for solving extremal problems in graph theory, stability
  problems.
\newblock In {\em Theory of Graphs (Proc. Colloq., Tihany, 1966)}, pages
  279--319, 1968.

\bibitem{liu2022hypergraph}
Xizhi Liu and Dhruv Mubayi.
\newblock A hypergraph tur{\'a}n problem with no stability.
\newblock {\em Combinatorica}, 42(3):433--462, 2022.

\bibitem{rodl1995jumping}
Vojt{\v{e}}ch R{\"o}dl and Alexander Sidorenko.
\newblock On the jumping constant conjecture for multigraphs.
\newblock {\em Journal of Combinatorial Theory, Series A}, 69(2):347--357,
  1995.

\bibitem{frankl1984hypergraphs}
Peter Frankl and Vojt{\v{e}}ch R{\"o}dl.
\newblock Hypergraphs do not jump.
\newblock {\em Combinatorica}, 4(2):149--159, 1984.

\bibitem{de2000maximum}
Dominique De~Caen and Zolt{\'a}n F{\"u}redi.
\newblock The maximum size of 3-uniform hypergraphs not containing a fano
  plane.
\newblock {\em Journal of Combinatorial Theory, Series B}, 78(2):274--276,
  2000.

\bibitem{brown1973some}
William~G. Brown, Paul Erd{\H o}s, and Vera~T. S{\'o}s.
\newblock Some extremal problems on r-graphs.
\newblock In {\em New directions in the theory of graphs (Proc. Third Ann Arbor
  Conf., Univ. Michigan, Ann Arbor, Mich, 1971)}, pages 53--63. Academic Press
  New York, 1973.

\bibitem{ruzsa1978triple}
Imre~Z. Ruzsa and Endre Szemer{\'e}di.
\newblock Triple systems with no six points carrying three triangles.
\newblock {\em Combinatorics (Keszthely, 1976), Coll. Math. Soc. J. Bolyai},
  18(2):939--945, 1978.

\bibitem{delcourt2024limit}
Michelle Delcourt and Luke Postle.
\newblock The limit in the $(k+ 2, k)$-problem of {B}rown, {E}rd{\H{o}}s and
  {S}{\'o}s exists for all $k\geq 2$.
\newblock {\em Proceedings of the American Mathematical Society},
  152(05):1881--1891, 2024.

\bibitem{glock20246}
Stefan Glock, Felix Joos, Jaehoon Kim, Marcus K{\"u}hn, Lyuben Lichev, and Oleg
  Pikhurko.
\newblock On the (6, 4)-problem of {B}rown, {E}rd{\H{o}}s, and {S}{\'o}s.
\newblock {\em Proceedings of the American Mathematical Society, Series B},
  11(17):173--186, 2024.

\bibitem{keevash2020brown}
Peter Keevash and Jason Long.
\newblock The {B}rown--{E}rd{\H{o}}s conjecture for hypergraphs of large
  uniformity.
\newblock {\em arXiv preprint arXiv:2007.14824}, 2020.

\bibitem{shapira2025new}
Asaf Shapira and Mykhaylo Tyomkyn.
\newblock A new approach for the brown–erdős–sós problem.
\newblock {\em Israel Journal of Mathematics}, pages 1--12, 2025.

\bibitem{katona1964graph}
Gyula~O.H. Katona, T.~Nemetz, and Mikl\'os Simonovits.
\newblock On a graph-problem of {T}ur{\'a}n in the theory of graphs.
\newblock {\em Matematikai Lapok}, 15:228--238, 1964.
\newblock (in Hungarian).

\bibitem{Mubayi2020-fb}
Dhruv Mubayi and Caroline Terry.
\newblock {Extremal Theory of Locally Sparse Multigraphs}.
\newblock {\em SIAM Journal on Discrete Mathematics}, 34(3):1922--1943, 2020.

\bibitem{day2022extremal}
A.~Nicholas Day, Victor Falgas-Ravry, and Andrew Treglown.
\newblock Extremal problems for multigraphs.
\newblock {\em Journal of Combinatorial Theory, Series B}, 154:1--48, 2022.

\bibitem{falgas2024extremal}
Victor Falgas-Ravry.
\newblock On an extremal problem for locally sparse multigraphs.
\newblock {\em European Journal of Combinatorics}, 118:103887, 2024.

\bibitem{gu2024extremal}
Ran Gu and Shuaichao Wang.
\newblock On extremal problems on multigraphs.
\newblock {\em Graphs and Combinatorics}, 40(6):114, 2024.

\bibitem{Komlos1996-uq}
János Komlós and Miklós Simonovits.
\newblock {Szemerédi's Regularity Lemma and its applications in graph theory}.
\newblock In {\em {Paul Erdős is eighty}}, volume~II, pages 295--352. Janos
  Bolyai Mathematical Society, 1996.

\bibitem{komlos1997blow}
J{\'a}nos Koml{\'o}s, G{\'a}bor S{\'a}rk{\"o}zy, and Endre Szemer{\'e}di.
\newblock Blow-up lemma.
\newblock {\em Combinatorica}, 17:109--123, 1997.

\bibitem{komlos1999blow}
J{\'a}nos Koml{\'o}s.
\newblock The blow-up lemma.
\newblock {\em Combinatorics, Probability and Computing}, 8(1-2):161--176,
  1999.

\bibitem{falgasravryraty}
Victor Falgas-Ravry and Eero R\"aty.
\newblock Personal communication.

\bibitem{Sarkar2025-ms}
Rik Sarkar.
\newblock {Extremal Problems for Multigraphs}.
\newblock Master's thesis, Indian Institute of Science Education and Research
  (IISER), Pune, 2025.
\newblock
  \url{http://dr.iiserpune.ac.in:8080/jspui/bitstream/123456789/9859/1/20201122%20_Rik_Sarkar_MS_Thesis.pdf}.

\bibitem{erdos1996large}
Paul Erd{\H o}s, Ralph Faudree, Arun Jagota, and Tomasz {\L}uczak.
\newblock Large subgraphs of minimal density or degree.
\newblock {\em Journal of combinatorial mathematics and combinatorial
  computing}, 22:87--96, 1996.

\end{thebibliography}

%----------------------------------------------------------%

\section*{Appendix}
\label{appendix: product optimisation}

%--------------------------------------%
\begin{proof}[Proof of \Cref{prop: product extremal for cycles paths stars} parts (i)--(iv)]
\noindent\textbf{Part (i):} let $\bfx$ be a product-optimal weighting of the pattern $K_{1,\ell}^{(2)}$. We may assume that $\bfx$ assigns weight $x$ to the central vertex, and weight $(1 - x)/\ell$ to each of the leaf vertices, for some $x \in (0,1)$.
By \Cref{cor: balanced degrees in optimal pattern weightings}, we have 
\begin{equation*}
    \pi_{K_{1,\ell}} = (1 - x)\log 3,
    \quad \text{and} \quad
    \pi_{K_{1,\ell}} = \frac{(\ell - 1)\log 2 + \p[\big]{\ell \log(3/2) + \log 2 } x}{\ell}.
\end{equation*}
Solving for $\pi_{K_{1,\ell}}$, we get $\pi_{K_{1,\ell}} = \ell (\log 3)^2/(2\ell \log 3 -(\ell-1)\log 2)$.

\noindent\textbf{Part (ii):} let $\bfx$ be a product-optimal weighting of the pattern $P_4^{(2)}$. For $i \in [4]$, let $x_i$ be the weight assigned to the $i$th vertex on the path, following the natural order along the path. Then, either $\pi_{P_4}=\pi_{K_{1,2}}$, or $\bfx$ assigns strictly positive weight to every vertex of the $P_4$. By \Cref{cor: balanced degrees in optimal pattern weightings}, we have the following:
\begin{align}
\label{equation18}
    \pi_{P_4} &= \log 2 + x_2 \log(3/2) - x_1 \log 2,\\
\label{equation19}
    \pi_{P_4} &= \log 2 + x_3 \log(3/2) - x_4 \log 2,\\
\label{equation20}
    \pi_{P_4} &= \log 2 + (x_1 + x_3) \log(3/2) - x_2 \log 2,\\
\label{equation21}
    \pi_{P_4} &= \log 2 + (x_2 + x_4) \log(3/2) - x_3 \log 2.
\end{align}
Taking the linear combination $(\log 2) \cdot \big($\eqref{equation18}+\eqref{equation19} $\big)+\log(3) \big( $\eqref{equation20}+\eqref{equation21}$\big)$, we obtain
\begin{align*}
    (2 \log 6) \pi_{P_4} &= 2\log 2 \log 6 + \p[\big]{ \log3 \log(3/2) - (\log 2)^2} \sum_{i=1}^4 x_i\\
    &= (\log 3)^2 + (\log 2)^2 + (\log 2)(\log 3).
\end{align*}
Thus, we have $\pi_{P_4} = \p[\big]{(\log 3)^2 + (\log 2)^2 + \log 2 \log 3}/(2 \log 6) > \pi_{K_{1,2}}$.

\noindent\textbf{Part (iii):} let $\bfx$ be a product-optimal weighting of $P_5^{(2)}$. For $i \in [5]$, let $x_i$ denote the weight assigned to the $i$th vertex on the path, following the natural order along the path. Then, either $\pi_{P_5}=\pi_{P_4}$ or $\bfx$ assigns strictly positive weight to every vertex of the $P_5$. By \Cref{cor: balanced degrees in optimal pattern weightings}, we have
\begin{align}
\label{equation22}
    \pi_{P_5} &= \log 2 + x_2 \log(3/2) - x_1 \log 2,\\
\label{equation23}
    \pi_{P_5} &= \log 2 + x_4 \log(3/2) - x_5 \log 2,\\
\label{equation24}
    \pi_{P_5} &= \log 2 + (x_1 + x_3) \log(3/2) - x_2 \log 2,\\
\label{equation25}
    \pi_{P_5} &= \log 2 + (x_3 + x_5) \log(3/2) - x_4 \log 2,\\
\label{equation26}
    \pi_{P_5} &= \log 2 + (x_2 + x_4) \log(3/2) - x_3 \log 2.
\end{align}
Set $x_1 + x_5 = 2 t_1$, $x_2 + x_4 = 2 t_2$ and $x_3 = 1 - 2(t_1 + t_2)$.
Adding Equations \eqref{equation22} and \eqref{equation23}, and dividing by 2 yields 
\begin{equation}
\label{equation27}
    \pi_{P_5} = \log 2 + t_2 \log(3/2) - t_1 \log 2.
\end{equation}
Adding Equations \eqref{equation24} and \eqref{equation25}, and dividing by 2 yields 
\begin{equation}
\label{equation28}
    \pi_{P_5} = \log 3 - t_1 \log(3/2) - t_2 \log(9/2).
\end{equation}
Finally, plugging in the new variables into Equation \eqref{equation26}, we get 
\begin{equation}
\label{equation29}
    \pi_{P_5} = 2 t_1 \log 2 + 2 t_2 \log 3.
\end{equation}
Equations \eqref{equation27}, \eqref{equation28} and \eqref{equation29} form a system of three equations in three variables. Solving for $\pi_{P_5}$, we get
\begin{equation*}
    \pi_{P_5} = \frac{2 \log 2 \p[\big]{\log 6 \log(9/2) - \log 3 \log(3/2)}}{5 \log 2 \log(9/2) - \log 6 \log (3/2)} > \pi_{P_4}.
\end{equation*}

\noindent\textbf{Part (iv):} let $\bfx$ be a product-optimal weighting of the pattern $C_6^{(2)}$. For $i \in [6]$, let $x_i$ denote the weight assigned to the $i$th vertex on the cycle, following a natural cyclic ordering. Then, either $\pi_{C_6}=\pi_{P_5}$ or $\bfx$ assigns strictly positive weight to every vertex of the $C_6$.  In the latter case, by \Cref{cor: balanced degrees in optimal pattern weightings}, for all $i \in [6]$, we have 
\begin{equation*}
    \pi_{C_6} = \log 2 + (x_{i-1} + x_{i+1}) \log(3/2) - x_i \log 2,
\end{equation*}
where the indices are taken modulo 6. Therefore, we have 
\begin{align*}
    6 \pi_{C_6} &= 6\log 2 + \sum_{i=1}^6 \p[\big]{ (x_{i-1}+x_{i+1}) \log(3/2) - x_i \log 2 }\\
    &= 6\log 2 + 2 \log(3/2) - \log 2 = 3\log 2 + 2\log 3.
\end{align*}
Thus, we have $\pi_{C_6} = (\log 3)/3 + (\log 2)/2 > \pi_{P_5}$, as required.
The proposition follows.
\end{proof}
%--------------------------------------%

%--------------------------------------%
\begin{proof}[Proof of \Cref{prop: polynomial identity}]
The polynomial inequality in the proposition is equivalent to
\begin{align*}
    \p[\Big]{1 + \frac{1}{a}}^{r_d(d - 1)(2r_d - 1) + 2r_d} \p[\Big]{ 1-\frac{d}{a}}^{2r_d-1} \p[\Big]{1 -\frac{d-1}{a}}^{(2r_d-1)(r_d-1)} > 1.
\end{align*}
Applying Bernoulli's inequality: $(1 + x)^m \geq 1 + mx$ for all $x > -1$, it suffices to show that
\begin{align}
\label{eq: a ineq to ensure poly ok}
    (a + r_d(d-1)(2r_d-1) + 2r_d)(a - d(2r_d-1))(a - (2r_d-1)(r_d-1)(d-1)) - a^3 > 0.
\end{align}
to ensure our inequality is satisfied.
The left-hand side is a quadratic function of $a$.
The coefficient of $a^2$ is $1$, and the constant term is non-negative.
The coefficient of $a$ is 
\begin{equation*}
    c_1 \defined -\p[\Big]{ d(2r_d-1)\p[\big]{d(2r_d-1) + 1} + (d-1)(2r_d-1)(r_d-1)\p[\big]{r_d(d-1)(2r_d-1) + 2r_d}}.
\end{equation*}
Thus for $a > c_1$, \eqref{eq: poly equation needed for good a} holds.
The proposition follows.
\end{proof}
%--------------------------------------%

%----------------------------------------------------------%

\end{document}